\newcommand{\plustimes}{\mathpalette\plustimesinner\relax}
\newcommand{\plustimesinner}[2]{%
  \mathbin{\vphantom{+}\ooalign{$#1+$\cr\hidewidth$#1\times$\hidewidth\cr}}%
}
\renewcommand{\theenumi}{\roman{enumi}}
\newtheorem*{thm-plain}{Theorem}
\newtheorem{theorem}{Theorem}
\newtheorem{lemma}[theorem]{Lemma}
\newtheorem{corollary}[theorem]{Corollary}
\newtheorem{proposition}[theorem]{Proposition}
\newtheorem*{qst}{Question}
\def\bysame{\leavevmode\hbox to3em{\hrulefill}\thinspace}
\theoremstyle{definition}
\newtheorem{definition}{Definition}
\theoremstyle{remark}
\newtheorem{remark}{Remark}
\theoremstyle{remark}
\newcommand{\C}{\mathbb{C}}
\newcommand{\R}{\mathbb{R}}
\newcommand{\Z}{\mathbb{Z}}
\newcommand{\alt}{\mathfrak{A}}
\newcommand{\sym}{\mathfrak{S}}
\newcommand{\dih}{\mathfrak{D}}
\newcommand{\cyc}{\mathfrak{C}}
\newcommand{\SOr}{\mathrm{SO}}
\newcommand{\SU}{\mathrm{SU}}
\newcommand{\U}{\mathrm{U}}
\newcommand{\PU}{\mathrm{PU}}
\newcommand{\PSU}{\mathrm{PSU}}
\newcommand{\PSL}{\mathrm{PSL}}
\newcommand{\PSp}{\mathrm{PSp}}
\newcommand{\SL}{\mathrm{SL}}
\newcommand{\GL}{\mathrm{GL}}
\newcommand{\PGL}{\mathrm{PGL}}
\newcommand{\AG}{\mathrm{AG}}
\newcommand{\Or}{\mathrm{O}}
\newcommand{\A}{\mathrm{A}}
\newcommand{\BC}{\mathrm{BC}}
\newcommand{\D}{\mathrm{D}}
\newcommand{\Rr}{\mathrm{R}}
\newcommand{\Qq}{\mathrm{Q}}
\newcommand{\Pp}{\mathrm{P}}
\newcommand{\Ss}{\mathrm{S}}
\newcommand{\Tt}{\mathrm{T}}
\newcommand{\E}{\mathrm{E}}
\newcommand{\F}{\mathrm{F}}
\newcommand{\Hn}{\mathrm{H}}
\newcommand{\I}{\mathrm{I}}
\newcommand{\Wp}{W^{\scriptscriptstyle+}}
\newcommand{\Fp}{F^{\scriptscriptstyle+}}
\newcommand{\Dp}{D^{\scriptscriptstyle+}}
\newcommand{\Wt}{W^{\scriptscriptstyle\times}}
\newcommand{\Mt}{M^{\scriptscriptstyle\times}}
\newcommand{\Gt}{G^{\scriptscriptstyle\times}}
\newcommand{\Gs}{G^{\scriptscriptstyle\plustimes}}
\newcommand{\At}{A^{\scriptscriptstyle\times}}
\newcommand{\As}{A^{\scriptscriptstyle\plustimes}}
\newcommand{\Ws}{W^{\scriptscriptstyle\plustimes}}
\newcommand{\subgr}{<}
\newcommand{\subgreq}{<}
\newcommand{\To}{\rightarrow}
\newcommand{\MTo}{\mapsto}
\title[Classification of finite reflection-rotation groups]{Classification of finite groups generated by\\ reflections and rotations}
\author{Christian Lange and Marina A. Mikha\^ilova}
\thanks{The first named author has been supported by the German Academic Exchange Service (DAAD)}
\address{Christian Lange, Mathematisches Institut der Universit\"at zu K\"oln, Weyertal 86-90, 50931 K\"oln, Germany}
\email{clange@math.uni-koeln.de}
\address{Marina A. Mikha\^ilova, Moscow State Pedagogical University, 1/1 Malaya Pirogovskaya , Moscow, 119991, Russian Federation}
\begin{document}

\begin{abstract} We classify finite groups generated by orthogonal transformations in a finite-dimensional Euclidean space whose fixed point subspace has codimension one or two. These groups naturally arise in the study of the quotient of a Euclidean space by a finite orthogonal group and hence in the theory of orbifolds.
\end{abstract}
\maketitle

\section{Introduction}
\label{sec:Pse_Introduction}

A \emph{finite reflection group} is a finite group generated by \emph{reflections} in a finite-dimensional Euclidean space, i.e. by orthogonal transformations of this space whose fixed point subspace has codimension \emph{one}. Analogously, we say that a finite group is a \emph{finite rotation group}, if it is generated by orthogonal pseudoreflections in a finite-dimensional Euclidean space, i.e. by orthogonal transformations of this space whose fixed point subspace has codimension \emph{two}\footnote{The term ``pseudoreflection'' for a linear transformation in a finite-dimensional real vector space whose fixed point subspace has codimension two was introduced in \cite{Maerchik}. One should however note that some authors, e.g. Bourbaki, use it with a different meaning.}. Since an orthogonal pseudoreflection necessarily rotates the two-dimensional complement of its fixed point subspace, we also call it a \emph{rotation}. A \emph{finite reflection-rotation group} is then a finite group generated by reflections and rotations in a finite-dimensional Euclidean space. From now on the specification \emph{finite} for reflection-rotation groups is understood.

The quotient of a finite-dimensional Euclidean space by a finite group generated by orthogonal transformations in this space inherits many structures from the initial space, e.g. a topology, a metric and a piecewise linear structure. The question when it is a manifold with respect to one of these structures arises naturally, for example in the theory of orbifolds as pointed out by Davis \cite{MR2883685}. If it is a manifold, then, depending on the specific category, it is true or at least almost true that the acting group is a rotation group (cf. \cite{Shvartsman,MR1217530}, \cite{Styrt}). In the topological category a counterexample to this statement is given by the \emph{binary icosahedral group} \cite[p.~9]{MR2883685}. Conversely, it has been verified in many cases that the quotient is homeomorphic to the initial space, if the acting group is a rotation group \cite{Mikhailova}. Moreover, for quotients that are manifolds with boundary also general reflection-rotation groups occur. It is therefore desirable to have a complete classification of reflection-rotation groups. 

Large classes of reflection-rotation groups are real reflection groups, their orientation preserving subgroups and unitary reflection groups considered as real groups. In this paper we obtain a complete classification of reflection-rotation groups. It turns out that an irreducible reflection-rotation group essentially belongs to one of the mentioned classes, at least in high dimensions. Nevertheless, the fact that reducible reflection-rotation groups, in contrast to reducible reflection groups, in general do not split as products of irreducible components gives rise to many more nontrivial examples.

Two reflections in a reflection group generate a dihedral group which is characterized by its order, or equivalently by the angle between the two corresponding reflection hyperplanes. In 1933 Coxeter classified reflection groups by determining the possible configurations of reflections in such a group \cite{MR1503182}. This information, i.e. the dihedral groups defined by pairs of certain generating reflections, is encoded in the corresponding Coxeter diagram. Similarly, two rotations in a rotation group generate a rotation group in dimension two, three or four and all groups that arise in this way are known explicitly. However, an approach to the classification of rotation groups similar to the one for reflection groups, albeit conceivable, seems to be unpractical. Instead, we follow an approach outlined in \cite{Mikhailova} that has already been carried out partially. Classifications of several subclasses of rotation groups are treated in \cite{Maerchik,MR608821,Mikhailova2}. From these papers and from results by Brauer, Huffman and Wales \cite{MR0206088,MR0401937,MR0401936,MR928600} a complete classification of rotation groups can be obtained. However, the latter results have not been worked out yet with regard to such a classification and in fact new examples appear in these cases. The largest irreducible rotation group among them occurs in dimension $8$ and is connected with some grading of the simple Lie algebra $\mathfrak{so}_8$ (cf. Theorem \ref{thm:clas_red_psg}, $(v)$, $3.$). It is an extension of the alternating group on $8$ letters by a nonabelian group of order $2^7$ and contains many other exceptional rotation groups as subgroups (cf. Section \ref{sub:excep_subgroups}). The other irreducible examples and the building blocks of the reducible examples appear as subgroups in the normalizers of reflection groups (cf. Theorem \ref{thm:clas_red_psg}, the groups $\Ws(\A_5)$ and $\Ws(\E_6)$, and Theorem \ref{thm:reducible_pairs}, $(xvi)$, type $\A_5$ and $\E_6$). Other interesting reducible rotation groups that have not been studied in \cite{Mikhailova} occur in the product of two copies of a reflection group $W$ of type $\Hn_3$ or $\Hn_4$ due to the existence of outer automorphisms of $W$ that map reflections onto reflections but cannot be realized through conjugation by elements in its normalizer (cf. Section \ref{sub:reducible_examples}).

In this paper we survey the existing parts of a classification of rotation groups and extend it to a complete classification. Besides, we generalize the proofs as to also yield a classification of reflection-rotation groups. Finally, in the last section, we discuss a question on isotropy groups of reflection-rotation groups.

In forthcoming papers we will, based on the obtained classification, characterize reflection-rotation groups in terms of the corresponding quotient spaces endowed with additional structures and show that the binary icosahedral group is essentially the only counterexample to this characterization in the topological category (cf. \cite{Lange3,Lange,Lange4}).
\newline
\linebreak
\emph{Acknowledgements.} We are thankful to \`Ernest B. Vinberg for many useful suggestions and remarks. His comments and hints to \cite{MR1349140,MR0379748,MR1191170} and \cite{MR0237660} helped to simplify the considerations in Section \ref{sec:group_L_sec} considerably. The first named author would like to thank the second named author and \`Ernest B. Vinberg for their friendliness during his stay in Moscow. He would also like to thank David Wales for answering his questions and Franz-Peter Heider for helpful suggestions. Finally, he is grateful to Alexander Lytchak for his advice and his help with translating Russian papers.

\section{Notations}
\label{sec:not_ter}

We denote the cyclic group of order $n$ and the dihedral group of order $2n$ by $\cyc_n$ and $\dih_n$, respectively. We denote the symmetric and alternating group on $n$ letters by $\sym_n$ and $\alt_n$. For a finite field of order $q$ we write $\mathbb{F}_q$. Classical Lie groups are denoted like $\mathrm{SO}_n$ and $\U_n$. The classical groups over finite fields are denoted like $\SL_n(q)=\SL_n(\mathbb{F}_q)$ as in \cite{MR827219}. For a finite subgroup $G<\Or_n$ we denote its orientation preserving subgroup as $G^{\scriptstyle+}<\SOr_n$. In particular, we write $\Wp$ for the orientation preserving subgroup of a reflection group $W$ of a certain type. A list of all groups we are going to introduce can be found in the appendix.

\section{Strategy and Results}
\label{sec:strategy}

We first classify rotation groups and afterwards reduce the classification of reflection-rotation groups to the classification of reflection groups and the classification of rotation groups. A rotation group preserves the orientation. Conversely, all finite subgroups of $\SOr_2$ and $\SOr_3$ are rotation groups. The finite subgroups of $\SOr_4$ are listed in \cite{MR0169108} and the rotation groups among them can be singled out. The classifications of \emph{irreducible} and \emph{reducible} rotation groups have to be treated separately since a reducible rotation group in general does not split as a product of irreducible components. If the complexification of an irreducible rotation group is reducible then this group preserves a complex structure and is thus a unitary reflection group considered as a real group. Otherwise it is called \emph{absolutely irreducible} and we make another case differentiation. Depending on whether there exists a decomposition of the underlying vector space into nontrivial subspaces that are interchanged by $G$, a so-called \emph{system of imprimitivity}, or not, the group is either called \emph{imprimitive} or \emph{primitive}. For an imprimitive irreducible rotation group the subspaces forming a system of imprimitivity are either all one- or two-dimensional. In the first case the group is called \emph{monomial}.

The classification of imprimitive irreducible rotation groups and of reducible rotation groups that cannot be written as a product of irreducible components is essentially contained in \cite{MR608821} and \cite{Mikhailova2}, respectively. The classification of primitive rotation groups in dimension four is treated in \cite{MR0169108}. The classification in higher dimensions can be obtained from results by Brauer \cite{MR0206088} and by Huffman and Wales \cite{MR0401937,MR0401936} in which they classify finite quasiprimitive irreducible linear groups over the complex numbers that contain an element whose fixed point subspace has complex codimension two. However, certain cases in these papers are only implicitly described, have not been worked out in detail with regard to rotation groups before and are the source of the new examples.

For a monomial group $G$ we denote its diagonal subgroup, i.e. the set of all transformations that act trivially on its system of imprimitivity, by $D(G)$. Apart from the two families of orientation preserving subgroups of the reflection groups $W(\BC_n)$ and $W(\D_n)$, there are four monomial rotation groups $M_5$, $M_6$, $M_7$ and $M_8$ given as semidirect products of the diagonal subgroup of $W(\D_n)$ and a permutation group $H$, and two exceptional subgroups $M^p_7$ and $M^p_8$ of $M_7$ and $M_8$, respectively. 

There is a class of imprimitive unitary reflection groups, denoted by $G(m,p,n)\subgr\U_n$, which is defined to be the semidirect product of
\[
		A(m,p,n) := \left\{(\theta_1,\ldots,\theta_n)\in \mu_m^n | (\theta_1\dots \theta_n)^{m/p}=1 \right\}
\]
with the symmetric group $\sym_n$, where $\mu_m \subgr \C^*$ is the cyclic subgroup of $m$-th roots of unity and $p$ is a factor of $m$. The only other imprimitive irreducible rotation groups are extensions of $G(m,1,n)$ and $G(2m,2,n)$ by a rotation $r$ that conjugates two coordinates, i.e. $r(z_1,z_2,z_3\ldots,z_n)=(\overline{z}_1,\overline{z}_2,z_3\ldots,z_n)$. We denote these groups by $\Gs(km,k,n)$, $k=1,2$.

Apart from the primitive rotation groups that are either orientation preserving subgroups of real reflection groups or unitary reflection groups considered as real groups, there are five primitive rotation groups $\Ws$ obtained by extending the orientation preserving subgroup $\Wp$ of a real reflection group $W$ by a normalizing rotation, two exceptional primitive rotation groups in dimensions five and six isomorphic to $\alt_5$ and $\PSU_2(7)$, respectively, and a primitive rotation group in dimension eight, which is generated by $M_8$ and another rotation.

The rotation groups listed in Theorem \ref{thm:clas_red_psg}, $(v)$ are generated by rotations of order $2$. A rotation group $G<\SOr_n$ with this property defines a configuration $\mathfrak{P}=\{\sigma_i\}_{i\in I}$ of $2$-planes in $\R^n$ given by the complements of the fixed point subspaces of the involutive rotations in $G$ such that $r_{\sigma}(\mathfrak{P})=\mathfrak{P}$ holds for all $\sigma \in \mathfrak{P}$ where $r_{\sigma}$ is the rotation of order $2$ defined by $\sigma$. We call such a configuration a \emph{plane system} and denote the generated rotation group by $M(\mathfrak{P})$.

\begin{theorem}\label{thm:clas_red_psg}
Every irreducible rotation group occurs, up to conjugation, in precisely one of the following cases
\begin{enumerate}
\item Orientation preserving subgroups $\Wp$ of irreducible real reflection groups $W$ (cf. Section \ref{sub:reflection-groups}).
\item Irreducible unitary reflection groups $G<\U_n$, $n\geq 2$, that are not the complexification of a real reflection group, considered as real groups $G<\SOr_{2n}$ (cf. Section \ref{sub:unitary}).
\item The imprimitive rotation groups $\Gs(km,k,l)\subgr\SOr_{n}$ for $n=2l>4$, $k\in\{1,2\}$ and $km\geq3$ (cf. Proposition \ref{prp:class_imprimit_psg}).
\item The unique extensions $\Ws$ of $\Wp$ by a normalizing rotation for real
reflection groups $W$ of type $\A_4$, $\D_4$, $\F_4$, $\A_5$ and $\E_6$ (cf. Section \ref{sub:reflection-groups}). These groups are primitive. 
\item The following rotation groups which can be realized as $M(\mathfrak{P})$ for a plane system $\mathfrak{P}$ of type $\Pp_5$, $\Pp_6$, $\Pp_7$, $\Pp_8$, $\Qq_7$, $\Qq_8$, $\Ss_5$, $\Rr_6$ or $\Tt_8$ and which only contain rotations of order $2$ (cf. Section \ref{sub:excep_subgroups}), namely
\begin{enumerate}
\item the monomial rotation groups $M_5$, $M_6$, $M_7$, $M_8$ and $M^p_7$ and $M^p_8$ (cf. Proposition \ref{prp:class_monomial_psg}).
\item the primitive rotation groups $R_5(\alt_5)$ and $R_6(\PSL_2(7))$ given as the image of the unique irreducible representations of $\alt_5$ in $\SOr_5$ and of $\PSL_2(7)$ in $\SOr_6$ (cf. Section \ref{sub:primitive}).
\item a primitive rotation group in $\SOr_8$ isomorphic to an extension of $\alt_8$ by a nonabelian group of order $2^7$ (cf. Section \ref{sec:group_L_sec}).
\end{enumerate}
\item The remaining rotation groups in $\SOr_4$, i.e. an infinite family of imprimitive rotation groups described in Proposition \ref{prp:class_imprimit_psg4} and $3$ individual- and $6$ infinite families of primitive rotation groups listed in Table \ref{tab:rot4}, Section \ref{sub:ps_low_dim}.
\end{enumerate}
\end{theorem}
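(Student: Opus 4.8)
The plan is to run the primitive/imprimitive/complex trichotomy sketched in Section~\ref{sec:strategy} and to dispatch each branch to an explicit enumeration, after peeling off the low-dimensional cases. I would first record that every generating rotation lies in $\SOr_n$, so any rotation group preserves orientation and $G<\SOr_n$. For $n\le 4$ I would invoke the classical enumerations of the finite subgroups of $\SOr_2$, $\SOr_3$ and $\SOr_4$, the last from~\cite{MR0169108}: in dimensions two and three every finite subgroup is a rotation group, the absolutely irreducible ones being exactly the $\Wp$ of types $\A_3,\BC_3,\Hn_3$ in case~$(i)$, whereas the $\SOr_4$ list is sorted into the dimension-four instances of cases~$(i)$ and~$(iv)$ and the remaining individual and infinite families collected in case~$(vi)$. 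From now on I assume $n\ge 5$ and $G$ irreducible.

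The first dichotomy is whether $G$ is absolutely irreducible. If its complexification is reducible, then $G$ commutes with a complex structure $J$ turning $\R^n$ into $\C^{n/2}$; each generating rotation has a $J$-invariant fixed subspace of real codimension two, hence acts as a unitary pseudoreflection, so $G$ is a unitary reflection group regarded as a real group. Running the Shephard--Todd classification and discarding the complexifications of real reflection groups (which become reducible over $\R$) then leaves precisely case~$(ii)$ (cf. Section~\ref{sub:unitary}). If instead $G$ is absolutely irreducible, I split according to the existence of a system of imprimitivity. In the imprimitive case the blocks are uniformly one- or two-dimensional, and I would quote the analysis of~\cite{MR608821}: one-dimensional blocks give monomial groups, which after determining the admissible quotient permutation groups yield $\Wp(\BC_n)$, $\Wp(\D_n)$ and the finitely many exceptional $M_i$, $M_i^p$ of case~$(v)(a)$ (cf. Proposition~\ref{prp:class_monomial_psg}), while two-dimensional blocks carry a complex structure blockwise and lead to the extensions $\Gs(km,k,l)$ of case~$(iii)$ (cf. Proposition~\ref{prp:class_imprimit_psg}).

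The primitive case is the substantial one. Here I would complexify and use that $G_{\C}<\GL_n(\C)$ is a quasiprimitive irreducible linear group containing an element, the image of a generating rotation, whose fixed subspace has complex codimension two. Such groups are classified by Brauer~\cite{MR0206088} and by Huffman and Wales~\cite{MR0401937,MR0401936}; I would traverse their list and retain exactly the entries admitting a real form on which these distinguished elements act as genuine rotations. This recovers the primitive $\Wp$ and unitary groups already accounted for, together with the five normalizer extensions $\Ws$ of case~$(iv)$ (cf. Section~\ref{sub:reflection-groups}), the sporadic $R_5(\alt_5)$ and $R_6(\PSL_2(7))$ of case~$(v)(b)$ (cf. Section~\ref{sub:primitive}), and the exceptional group in $\SOr_8$ of case~$(v)(c)$.

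The hard part will be case~$(v)(c)$: several entries in Huffman--Wales are specified only implicitly, and extracting the $\SOr_8$ group --- an extension of $\alt_8$ by a nonabelian $2$-group of order $2^7$ attached to a grading of $\mathfrak{so}_8$ --- requires a direct construction together with a proof that it is indeed a rotation group generated by order-two rotations; this I would defer to Section~\ref{sec:group_L_sec}. Throughout, the reverse inclusion, that each listed family really is generated by rotations, is supplied by the cited subsections. Finally, to see that every group lands in \emph{precisely} one case I would separate the cases by conjugation invariants: the dimension, absolute irreducibility (isolating~$(ii)$), primitivity (isolating~$(iii)$ and the monomial groups of~$(v)(a)$ from the primitive families), and the property of being generated by involutive rotations with a prescribed plane-system type $\Pp,\Qq,\Ss,\Rr,\Tt$ (separating the members of case~$(v)$ from those of~$(i)$ and~$(iv)$).
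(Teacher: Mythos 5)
Your proposal follows essentially the same route as the paper: peel off $n\le 4$ via Du Val's enumeration, split the remaining irreducible groups according to whether the complexification is reducible (unitary reflection groups considered as real groups), and otherwise into imprimitive (monomial versus two-dimensional blocks, via Huffman and Mikha\^ilova) and primitive cases, the latter handled by passing to the quasiprimitive complexification and traversing the Brauer--Huffman--Wales lists, with the group $L\subgr\SOr_8$ requiring a separate construction. The only step you treat as a given that the paper must actually prove is the quasiprimitivity of the complexification (Lemma \ref{lem:quasiprimitive}), which is not automatic since the complexification of a primitive real group need not be primitive; the paper derives it from Clifford's theorem and the Frobenius--Schur theory, using that $G$ is generated by rotations and that $n\ge 5$.
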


For a real reflection group $W$ we denote by $\Wt$ its unique extension by a normalizing rotation, provided such exists, i.e. $\Wt=\left\langle \Ws,W \right\rangle$. For a monomial rotation group $M$ we denote by $\Mt$ its extension by a coordinate reflection. Finally, for an imprimitive rotation group of type $G(km,k,l)$ let $\Gt (km,k,l)$ be its extension by a reflection $s$ of the form $s(z_1,\ldots, z_l)=(\overline{z}_1,z_2,\ldots, z_l)$.
\begin{theorem}\label{thm:irr_re-ro_group}
Every irreducible reflection-rotation group either appears in Theorem \ref{thm:clas_red_psg} or it contains a reflection and occurs, up to conjugation, in one of the following cases
\begin{enumerate}
\item Irreducible real reflection groups $W$ (cf. Section \ref{sub:reflection-groups}).
\item The groups $\Wt$ generated by a reflection group $W$ of type $\A_4$, $\D_4$, $\F_4$, $\A_5$ or $\E_6$ and a normalizing rotation (cf. Section \ref{sub:reflection-groups}).
\item The monomial groups $\Mt$ of type $\D_n$, $\Pp_5$, $\Pp_6$, $\Pp_7$, $\Pp_8$, i.e. $\Mt(\D_n):=D(W(\BC_n)) \rtimes \alt_n$, $\Mt_5$, $\Mt_6$, $\Mt_7$ and $\Mt_8$ (cf. Section \ref{sub:monomial_examples}).
\item The imprimitive groups $\Gt (km,k,l)\subgr\SOr_{n}$ with $n=2l$, $k=1,2$ and $km\geq3$ (cf. Section \ref{sub:imprimitive_example}).
\end{enumerate}
\end{theorem}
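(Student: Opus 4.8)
The plan is to compare an arbitrary reflection‑containing reflection‑rotation group with its orientation‑preserving subgroup, which will turn out to be one of the irreducible rotation groups already classified in Theorem~\ref{thm:clas_red_psg}, and then to determine for each such rotation group the ways it can be enlarged by a reflection. First I would fix an irreducible reflection‑rotation group $G\subseteq\Or_n$ containing a reflection and set $W=\langle\text{reflections of }G\rangle$ and $P=\langle\text{rotations of }G\rangle$. Conjugation preserves the codimension of the fixed point subspace, so $W$ and $P$ are both normal in $G$ and hence $G=WP$. Every product of two distinct reflections is a rotation, so $W^+:=W\cap\SOr_n\subseteq P$; since also $P\subseteq\SOr_n$ and $\det(wp)=\det(w)$, an element $wp$ is orientation preserving exactly when $w\in W^+\subseteq P$. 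Therefore $G^+=P$, i.e.\ the orientation‑preserving subgroup of $G$ \emph{is} the rotation group generated by the rotations of $G$, and it has index two in $G$. To see that $P$ is irreducible for $n\geq 3$ I would invoke Clifford's theorem: the $G^+$‑module is either irreducible or a sum of two submodules of dimension $n/2$ interchanged by every element of $G\setminus G^+$; but a reflection $s\in G\setminus G^+$ interchanging two orthogonal halves isometrically has an $(n/2)$‑dimensional fixed space, which is a hyperplane only when $n=2$. Hence for $n\geq3$ the group $P=G^+$ is an irreducible rotation group occurring in Theorem~\ref{thm:clas_red_psg}, the case $n=2$ being elementary, and choosing any reflection $s\in W$ gives $G=\langle P,s\rangle$ with $Ps$ the unique orientation‑reversing coset.

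This reduces the theorem to the following enumeration: for each irreducible rotation group $P$ of Theorem~\ref{thm:clas_red_psg}, find all orthogonal, orientation‑reversing $s$ normalizing $P$ whose coset $Ps$ contains a reflection, classified up to conjugacy in $\Or_n$. If the rotations of $G$ already satisfy $P=\langle\text{reflections}\rangle^+=W^+$, then $G=W\cdot W^+=W$ is an ordinary real reflection group, giving case~(i). The remaining rotation groups are those strictly larger than the rotation part of any reflection group: extending $\Ws$ for $W$ of type $\A_4,\D_4,\F_4,\A_5,\E_6$ yields the groups $\Wt$ of case~(ii); extending the orientation‑preserving subgroup of $W(\D_n)$ and the monomial rotation groups $M_5,\dots,M_8$ by a coordinate reflection yields the groups $\Mt(\D_n),\Mt_5,\dots,\Mt_8$ of case~(iii); and extending $\Gs(km,k,l)$ by the reflection $s(z_1,\dots,z_l)=(\overline{z}_1,z_2,\dots,z_l)$ yields the groups $\Gt(km,k,l)$ of case~(iv). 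In each instance one checks directly that $Ps$ does contain a reflection and that $G$ is generated by its reflections and rotations, so that these groups genuinely occur; this establishes the ``converse'' direction simultaneously.

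The bulk of the work, and the main obstacle, is to show that the remaining irreducible rotation groups of Theorem~\ref{thm:clas_red_psg} admit \emph{no} orientation‑reversing coset containing a reflection, so that they contribute nothing beyond cases~(i)--(iv). For a genuinely complex unitary reflection group $P$ (case~(ii) of Theorem~\ref{thm:clas_red_psg}) the invariant complex structure $J$ is unique up to sign, so a normalizing $s$ satisfies $sJs^{-1}=\pm J$; the choice $sJs^{-1}=J$ is impossible for a reflection because a complex‑linear involution has even‑codimensional fixed space, while $sJs^{-1}=-J$ would yield an isomorphism between the representation and its complex conjugate, contradicting that $P$ is not the complexification of a real reflection group. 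For the primitive exceptional rotation groups $R_5(\alt_5)$, $R_6(\PSL_2(7))$ and the primitive rotation group in $\SOr_8$, and for the monomial groups $M^p_7$ and $M^p_8$, I expect to compute the relevant normalizers and the eigenvalue data of their elements and check that no coset outside $P$ contains a reflection --- equivalently, that any reflection normalizing such a group would enlarge its rotation subgroup and so violate $P=G^+$. The finitely many leftover rotation groups in $\SOr_4$ are then disposed of by inspecting the explicit list of finite subgroups of $\SOr_4$. The essential difficulty lies precisely in this last, non‑uniform, case‑by‑case exclusion of the exceptional and four‑dimensional rotation groups, which rests on detailed structural information about each group rather than on the clean general reduction of the first step; once it is carried out, the families~(i)--(iv) exhaust the reflection‑containing irreducible reflection‑rotation groups.
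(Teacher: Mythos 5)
Your opening reduction is correct and worth having: for an irreducible reflection--rotation group $G$ containing a reflection, the subgroup $P$ generated by the rotations is exactly $G^{\scriptstyle+}$, it has index two, and $G=\left\langle P,s\right\rangle$ for any reflection $s$. Your Clifford argument for irreducibility of $P$ is stated slightly too coarsely (the restriction to $G^{\scriptstyle+}$ could a priori be isotypic of multiplicity two, a case not covered by your dichotomy), but it is easily patched: any proper irreducible $G^{\scriptstyle+}$-submodule $U$ gives $V=U\oplus sU$ with $s$ swapping the summands, so $\dim\mathrm{Fix}(s)=n/2<n-1$ for $n\geq3$. Likewise, in your exclusion of the genuinely complex unitary case, the antilinear alternative $sJs^{-1}=-J$ does not contradict ``not a complexified real reflection group'' (quaternionic-type groups also satisfy $V\cong\overline{V}$); the correct and simpler point is that an antilinear involution of $\C^m$ has real fixed space of dimension $m$, never $2m-1$ for $m>1$. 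This is essentially the paper's Lemma \ref{lem:orient_is_abs_irr}.

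Where you genuinely diverge from the paper is in what you do with the reduction. You propose to run through every irreducible rotation group of Theorem \ref{thm:clas_red_psg} and decide, by computing normalizers and eigenvalue data, which ones admit a normalizing reflection; you acknowledge that this case-by-case exclusion of $R_5(\alt_5)$, $R_6(\PSL_2(7))$, $L$, $M^p_7$, $M^p_8$, the groups $\Wp(\Hn_3)$, $\Wp(\Hn_4)$, $\Wp(\E_7)$, $\Wp(\E_8)$, the remaining unitary reflection groups and all the four-dimensional families is the bulk of the work --- and you do not carry it out. The paper avoids this entirely by organizing the proof around $F$, the normal closure of the reflections, rather than around $P$. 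If $F$ is reducible, irreducibility of $G$ forces $H=G^{\scriptstyle+}$ to permute the components of $F$ transitively, so $H$ is imprimitive with those components as blocks, and only the monomial and $\Gs(km,k,l)$ parts of Theorem \ref{thm:clas_red_psg} are ever consulted, yielding cases $(iii)$ and $(iv)$. If $F$ is irreducible and equals $G$ one gets case $(i)$; if $F$ is irreducible and proper, a rotation $h\in G\setminus F$ must fix a chamber of $F$ (Lemma \ref{lem:existence_chamber}) and hence induces a Coxeter-diagram automorphism whose fixed space has codimension two (Lemma \ref{lem:implication_of_fixed_chamber}), which pins $F$ down to type $\A_4$, $\D_4$, $\F_4$, $\A_5$ or $\E_6$ and gives case $(ii)$ uniformly. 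So the paper's argument never touches the exceptional rotation groups, whereas in your plan the decisive negative step --- that none of them is normalized by a reflection in a new way --- remains an unexecuted computation. Your approach would in principle work, but as written it defers precisely the part that carries the content of the theorem.
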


Let $G\subgr \Or_n$ be an arbitrary reflection-rotation group and let $\R^n=V_1 \oplus \ldots \oplus V_k$ be a decomposition into irreducible components. For each $i \in I = \{1,\ldots,k\}$ we denote the projection of $G$ to $\Or(V_i)$ by $\pi_i$ and set $G_i = \pi_i(G)$. We distinguish two kinds of rotations in $G$ (cf. \cite{Mikhailova2}).

\begin{definition}\label{dfn:first_second_kind}
A rotation $g\in G$ is called a rotation of the
\begin{enumerate}
	\item \emph{first kind}, if for some $i_0 \in I$, $\pi_{i_0}(g)$ is a rotation in $V_{i_0}$ and $\pi_i(g)$ is the identity on $V_{i}$ for all $i\neq i_0$.
	\item \emph{second kind}, if for some $i_1,i_2 \in I$, $i_1 \neq i_2$, $\pi_{i_1}(g)$ and $\pi_{i_2}(g)$ are reflections in $V_{i_1}$ and $V_{i_2}$, respectively, and $\pi_i(g)$ is the identity for all $i\neq i_1,i_2$.
\end{enumerate}
\end{definition}

Let $H$ be the normal subgroup of $G$ generated by rotations of the first kind, let $F$ be the normal subgroup of $G$ generated by reflections and rotations of the second kind and set $H_i=\pi_i(H)$ and $F_i=\pi_i(F)$. Then $H_i$ is a rotation group, $F_i$ is a reflection group and both are normal subgroups of $G_i$. In order to classify all reflection-rotation groups we first describe the possible triples $(G_i,H_i,F_i)$ and then the ways how a reflection-rotation group can be recovered from a collection of such triples. Notice that $G_i$ is generated by $H_i$ and $F_i$. Hence, depending on whether $F_i$ is trivial or not, $G_i$ either appears in Theorem \ref{thm:clas_red_psg} or in Theorem \ref{thm:irr_re-ro_group}. Recall that reflections $s_1,\ldots,s_l$ whose corresponding fixed-point hyperplanes are the walls of a chamber of a reflection group $W$ generate $W$ and that $(W,S)$ is a Coxeter system for $S=\{s_1,\ldots,s_l\}$ (cf. \cite[p.~10, p.~23]{MR1066460}). We refer to the reflections $s_1,\ldots,s_l$ as \emph{simple reflections} (cf. \cite[p.~10]{MR1066460}).

\begin{theorem} \label{thm:reducible_pairs} Let $G\subgr\Or_n$ be a reflection-rotation group. Then, for each $i$, either $G_i=H_i$ is an irreducible rotation group or $F_i$ is nontrivial and a set of simple reflections generating $F_i$ projects onto a set $\overline{S}\subset G_i/H_i$ for which $(G_i/H_i,\overline{S})$ is a Coxeter system. In the second case the quadruple $(G_i,H_i,F_i,\Gamma_i)$ occurs, up to conjugation, in one of the following cases where $\Gamma_i$ denotes the Coxeter diagram of $G_i/H_i$.
\begin{enumerate}
\item $(\Mt,M,D(\Mt),\circ)$ for $M=M_5, M_6,M_7, M_8, M(\D_n)=\Wp(\D_n)$.
\item $(\Gt (km,k,l),\Gs(km,k,l),D(\Gt (km,k,l)),\circ)$ for $km\geq 3$ and $n=2l$.
\item $(\Gt (2m,1,l),\Gs(2m,2,l),D(\Gt (2m,1,l)),\circ \ \ \ \circ)$ for $m\geq 2$ and $n=2l$.
\item $(W,\{e\},W,\Gamma(W))$ for any irreducible reflection group $W$.
\item $(W,\Wp,W,\circ)$ for any irreducible reflection group $W$.
\item $(W(\A_3),\Wp(\A_1 \times \A_1 \times \A_1), W(\A_3),\circ-\circ)$
\item $(W(\BC_n),D(\Wp(\BC_n)),W(\BC_n),\Gamma(\A_{n-1}\times \A_1)=\circ-\circ-\dots \circ \ \ \ \circ)$
\item $(W(\BC_n),\Wp(\D_n),W(\BC_n),\circ \ \ \ \circ)$
\item $(W(\BC_4),\Gs(4,2,2),W(\BC_4),\circ - \circ \ \ \ \circ)$
\item $(W(\D_n),D(W(\D_n)),W(\D_n),\Gamma(\A_{n-1})=\circ-\circ-\dots \circ)$
\item $(W(\D_4),\Gs(4,2,2),W(\D_4),\circ - \circ)$
\item $(W(\I_2(km)),\Wp(\I_2(m)),W(\I_2(km)),\circ\stackrel{k}{-}\circ)$ for $m,k\geq2$.
\item $(W(\F_4),\Gs(4,2,2),W(\F_4),\circ-\circ \ \ \ \circ-\circ)$
\item $(W(\F_4),\Wp(\D_4),W(\F_4),\circ-\circ \ \ \ \circ)$
\item $(W(\F_4),\Ws(\D_4),W(\F_4),\circ \ \ \ \circ)$
\item $(\Wt,\Ws,W,\circ)$ for a reflection group $W$ of type $\A_4$, $\D_4$, $\F_4$, $\A_5$ or $\E_6$.
\item $(\Wt(\D_4),\Wp(\D_4),W(\D_4),\circ-\circ)$ (, but $H_i \neq \Fp_i$, cf. Proposition \ref{prp:red3}.)
\end{enumerate}
For each quadruple $(G_{rr},M,W,\Gamma)$ occurring in this list every reflection in $G_{rr}$ is contained in $W$. The group $W$ is reducible in the cases $(i)$ to $(iii)$, irreducible with $W=G_{rr}$ in the cases $(iv)$ to $(xv)$ and irreducible with $W\neq G_{rr}$ in the cases $(xvi)$ and $(xvii)$.
\end{theorem}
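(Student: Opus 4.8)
The plan is to build on the classification of irreducible reflection-rotation groups in Theorem \ref{thm:irr_re-ro_group} and to analyse, one irreducible component at a time, how the normal subgroups $H_i$ and $F_i$ sit inside $G_i$. First I would record the basic structural facts. Since each $V_i$ is $G$-invariant, every $g\in G$ restricts to $\pi_i(g)=g|_{V_i}$, and a dimension count on the two-dimensional moving space of a rotation shows that every rotation of $G$ is either of the first or of the second kind (cf. \cite{Mikhailova2}). Consequently $G=HF$ and hence $G_i=H_iF_i$, with $H_i$ a normal rotation group and $F_i$ a normal reflection group of $G_i$. If $F_i=\{e\}$ then $G_i=H_i$ is an irreducible rotation group and we are in the first alternative; so assume $F_i\neq\{e\}$, in which case $G_i$ contains a reflection and occurs in Theorem \ref{thm:irr_re-ro_group}.

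Next I would show that $W:=F_i$ is the full reflection subgroup of $G_i$, i.e. that \emph{every} reflection of $G_i$ lies in $W$. Writing a reflection $r\in G_i$ as $r=hf$ with $h\in H_i$ and $f\in F_i$, orientation considerations force $f$ to be an odd element and $h=rf^{-1}$ to be orientation-preserving, so the assertion amounts to $h\in W$; I expect this to be verified case by case against Theorem \ref{thm:irr_re-ro_group}, by checking that no reflection is produced by multiplying an element of $H_i$ by an odd element of $W$. Independently of this, from $G_i=H_iF_i$ one already has $G_i/H_i\cong W/N$ with $N:=H_i\cap W\trianglelefteq W$, and since $H_i$ is a rotation group, $N$ consists of orientation-preserving elements of $W$.

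The heart of the matter is to show that $(G_i/H_i,\overline S)$ is a Coxeter system and to read off the diagram $\Gamma_i$. For this I would first treat one dihedral piece: for simple reflections $s,t$ the subgroup $\langle s,t\rangle$ is dihedral of order $2m_{st}$ with rotation part $\langle st\rangle$, and $N\cap\langle s,t\rangle=\langle(st)^{d}\rangle$ for some divisor $d$, so $\langle\overline s,\overline t\rangle$ is dihedral of order $2d$ (with $\overline s=\overline t$ when $d=1$). Thus each pair of projected simple reflections again generates a dihedral group, the diagram of $G_i/H_i$ arising from that of $W$ by reducing edge labels and possibly identifying some of the nodes; that these dihedral relations actually present $G_i/H_i$ as a Coxeter system is then confirmed in each case. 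I would then run through the groups of Theorem \ref{thm:irr_re-ro_group}, determining for each the normal rotation subgroups $H_i$ with $G_i=H_iW$: when $W$ is reducible (the monomial groups $\Mt$ and the imprimitive groups $\Gt(km,k,l)$) this yields cases (i)--(iii); when $G_i=W$ is an irreducible reflection group the admissible $H_i$ are the normal rotation subgroups of $W$, necessarily contained in $\Wp$, giving (iv)--(xv); and when $G_i=\Wt\neq W$ one obtains (xvi)--(xvii). In each instance the quotient and its diagram are read off directly, and the reducibility statements about $W$ follow by inspection.

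The main obstacle is the enumeration in the previous step, namely classifying, for each irreducible reflection group $W$, the normal subgroups generated by rotations together with their quotient diagrams. The delicate instances are those in which $H_i$ is neither trivial nor the full orientation-preserving subgroup $\Wp$ --- for example $\Gs(4,2,2)$ inside $W(\BC_4)$, $W(\D_4)$ and $W(\F_4)$ in cases (ix), (xi), (xiii), the diagonal and orientation-preserving subgroups in cases (vii), (viii), (x), and the quotient $W(\D_4)/\Gs(4,2,2)\cong\sym_3$ --- where the quotient genuinely folds the diagram and one must rule out further normal rotation subgroups. The exceptional case (xvii), in which $H_i\neq\Fp_i$ although the diagram is again $\A_2$, requires the separate analysis of Proposition \ref{prp:red3}. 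Once all normal rotation subgroups are listed and their quotients computed, matching them against (i)--(xvii) and reading off the final assertions (every reflection lies in $W$; $W$ is reducible exactly in (i)--(iii); $G_i=W$ in (iv)--(xv) and $G_i\neq W$ in (xvi)--(xvii)) completes the proof.
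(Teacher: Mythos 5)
Your skeleton --- split off the components with $F_i=\{e\}$, use $G=HF$ to get $G_i=H_iF_i$, and then classify the possible normal rotation subgroups $H_i$ inside each irreducible reflection-rotation group of Theorem \ref{thm:irr_re-ro_group} --- matches the paper's, which organizes the remaining cases by whether $F_i$ is reducible, irreducible and equal to $G_i$, or irreducible and proper. But the two steps you defer to ``case-by-case confirmation'' are exactly where the content lies, and in both places a specific idea is missing. First, your pairwise computation $N\cap\langle s,t\rangle=\langle(st)^d\rangle$ only shows that the Coxeter relations with reduced labels \emph{hold} in $G_i/H_i$ and that $\overline{s}\,\overline{t}$ has order exactly $d$; it does not show that these relations \emph{present} the quotient, which is what ``$(G_i/H_i,\overline{S})$ is a Coxeter system'' requires. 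The paper's Lemma \ref{lem:red7} supplies this: every rotation in a rotation subgroup of $W$ has fixed subspace contained in two walls of a chamber and is therefore $W$-conjugate to a power $(s_is_j)^r$, so $H_i\cap F_i$ is the \emph{normal closure} of a set of such powers and the quotient inherits the reduced Coxeter presentation. Second, the enumeration you call ``the main obstacle'' is made tractable by the gcd argument of Lemma \ref{lem:red5}: if $\overline{s}_1=\overline{s}_2$ and $\gcd(\mathrm{ord}(s_1\tau),\mathrm{ord}(s_2\tau))=1$, then also $\overline{\tau}=\overline{s}_1$. Combined with Lemma \ref{lem:red7} (some $(s_is_j)^r$ with $r<m_{ij}$ lies in $H_i$ as soon as $H_i$ is nontrivial), this propagates identifications of simple reflections along the diagram and shows at once that, unless all generators fall into a single coset (giving $H_i=\Wp$), only the types $\A_3$, $\BC_n$, $\D_n$, $\I_2(m)$ and $\F_4$ can occur --- which is what collapses your open-ended enumeration to the short list (vi)--(xv).

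A smaller but real issue is your treatment of ``every reflection of $G_i$ lies in $F_i$.'' You propose to verify it case by case, yet your enumeration already takes $W=F_i$ to be the full reflection subgroup of $G_i$, so as stated this is circular; moreover the constraint is needed as an \emph{input}, since it is what excludes triples such as $(W(\I_2(4)),\Wp(\I_2(4)),W(\A_1\times\A_1))$ in which a proper normal reflection subgroup still satisfies $G_i=H_iF_i$. The paper obtains the needed form of this abstractly: every reflection of $F_i$ is the $V_i$-component of a reflection or second-kind rotation of $G$ (Lemma \ref{lem:red2}), whence if $h\in H_i$, $\tau\in F_i$ is a reflection and $h\tau$ is a reflection, then $h\tau\in F_i$ (Lemma \ref{lem:red6}). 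This same fact is what forces, in the case $F_i\subsetneq G_i$ irreducible, a rotation $h\in H_i\setminus F_i$ to fix a chamber of $F_i$ and hence restricts $F_i$ to the types $\A_4$, $\D_4$, $\F_4$, $\A_5$, $\E_6$ of cases (xvi)--(xvii).
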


\begin{remark} The preceding theorem is actually a classification of triples $(G_{rr},M,W)$ where $G_{rr}<\Or_n$ is a finite irreducible group generated by a rotation group $M$ and a reflection group $W$ which are both normal subgroups of $G_{rr}$ such that the following additional condition holds. If $hs$ is a reflection for some $h \in M$ and some reflection $s \in W$, then it is contained in $W$ (cf. Section \ref{sec:class_reducible}).
\end{remark}

Assume that the family of triples $\{(G_i,H_i,F_i)\}_{i\in I}$, $I=\{1,\ldots,k\}$, is induced by a reflection-rotation group. The reflections in $\tilde{G}=G_1/H_1\times\dots\times G_k/H_k$ are the cosets of the reflections in $F_1\times\dots\times F_k$. We call two such reflections $\overline{s}_1 \in G_i/H_i$ and $\overline{s}_2 \in G_j/H_j$ for $i\neq j$ \emph{related}, if $s_1 \notin G$ and if there exists a rotation of the second kind $h\in G$ such that $s_1=\pi_i(h)$ and $s_2=\pi_j(h)$. Relatedness of reflections defines an equivalence relation on the set of reflections in $\tilde{G}$. This equivalence relation induces an equivalence relation on the set of irreducible components of $\tilde{G}$ and on the set of connected components of its Coxeter diagram such that equivalence classes of nontrivial irreducible components, i.e. of those whose Coxeter diagram is not an isolated vertex, consist of two isomorphic components that belong to different $G_i/H_i$ and are isomorphic via an isomorphism induced by relatedness of reflections.

Conversely, given such data one first obtains an equivalence relation on the set of reflections contained in $G_1/H_1\times\dots\times G_k/H_k$ and then a reflection-rotation group $G\subgr G_1 \times \dots \times G_k$ generated by $H$, the rotations $s_1s_2$ for reflections $s_1\in F_i$ and $s_2\in F_j$, $i\neq j$, whose cosets $\overline{s}_1$ and $\overline{s}_2$ are equivalent, and the reflections $s\in F_i$ whose cosets are not equivalent to any other coset of a reflection.

In fact, these assignments are inverse to each other.
\begin{theorem}\label{thm:clas_red_prg}
Reflection-rotation groups are in one-to-one correspondence with families of triples occurring in Theorem \ref{thm:reducible_pairs}, $\{(G_i,H_i,F_i)\}_{i\in I}$, with an equivalence relation on the set of irreducible components of $\tilde{G}=G_1/H_1\times\dots\times G_k/H_k$ such that
\begin{enumerate}
\item the elements of an equivalence class belong to pairwise different $G_i/H_i$,
\item each $G_i/H_i$ contains at most one trivial irreducible component that is not equivalent to another component,
\item equivalence classes of nontrivial irreducible components contain precisely two isomorphic components
\end{enumerate}
together with isomorphisms between the equivalent nontrivial irreducible components that map reflections onto reflections. A reflection-rotation group corresponding to such a set of data contains a reflection, if and only if there exists an equivalence class consisting of a single trivial component.
\end{theorem}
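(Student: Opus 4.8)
The plan is to verify that the two assignments described above---call them $\Phi$ (sending a reflection-rotation group $G$ to its family of triples together with the equivalence relation and the isomorphisms) and $\Psi$ (reconstructing a group from such data)---are mutually inverse, and then to read off the reflection criterion. The whole argument rests on one structural reduction. Since $H$ is generated by rotations of the first kind and each such rotation is supported on a single component $V_i$, the generators supported on $V_i$ generate $H_i$ inside $\Or(V_i)$ and commute with those supported on $V_j$, $j\neq i$; hence $H=H_1\times\dots\times H_k$ as a subgroup of $G_1\times\dots\times G_k$. Writing $\rho\colon G_1\times\dots\times G_k\to\tilde G=G_1/H_1\times\dots\times G_k/H_k$ for the product quotient map, its kernel is exactly $H$, so from $H\subseteq G$ we get $G=\rho^{-1}(\rho(G))$. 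Thus $G$ is completely determined by the subgroup $\rho(G)\subseteq\tilde G$, and since $\rho(H)=1$ while $G=\langle H,F\rangle$, the image $\rho(G)=\rho(F)$ is generated by the cosets $\overline{s}$ of the reflections $s\in G$ and by the elements $\overline{s}_1\overline{s}_2$ coming from the rotations of the second kind $s_1s_2\in G$. This reduces the entire theorem to analysing which reflection-cosets and which products of two reflection-cosets lie in $\rho(G)$.

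For $\Psi\circ\Phi=\mathrm{id}$ I would first record that relatedness is symmetric: if $h=s_1s_2\in G$ is a rotation of the second kind and $s_2\in G$, then $s_1=hs_2^{-1}\in G$, so $s_1\notin G\Leftrightarrow s_2\notin G$. Consequently a reflection $s$ lies in $G$ precisely when its coset $\overline{s}$ is not related to any other reflection-coset, i.e. is \emph{unpaired}. Hence the generators of $\rho(G)$ listed above are exactly the data-determined generators used by $\Psi$, so $\rho(\Psi(\Phi(G)))=\rho(G)$; as both groups contain $H=\ker\rho$, they coincide, giving $\Psi(\Phi(G))=G$.

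Next I would check that $\Phi$ produces admissible data, i.e. that (i)--(iii) and the reflection-preserving property of the isomorphisms hold; the crux is a single \emph{collapsing} argument. If two distinct reflection-cosets $\overline{s}\neq\overline{s}'$ of the same factor $G_i/H_i$ were both genuine (both $s,s'\in G$), their product $ss'$ is a rotation of the first kind, hence lies in $H$, so $\overline{s}\,\overline{s}'=\overline{e}$ and $\overline{s}=\overline{s}'$, a contradiction. Thus a factor carries at most one genuine reflection-coset, which immediately yields (ii) and rules out unpaired \emph{nontrivial} components (a connected Coxeter diagram supplies at least two distinct genuine reflection-cosets), so every nontrivial component is paired; the pairing isomorphism sends reflection-cosets to reflection-cosets because it is induced, coset by coset, by rotations of the second kind, and it respects products since these rotations multiply consistently. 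A relatedness chain returning to the same factor would manufacture the same kind of collapsing first-kind rotation, giving (i). The one genuinely delicate point is the \emph{exactly two} in (iii): I would show that pairing a nontrivial component across three or more factors forces first-kind rotations supported on a single factor (for reflections $a,a'$ of the component, products such as $(a\phi(a))(a'\phi(a'))\cdots$ can be combined to an element supported on a single $V_i$), which enlarges $H_i$ and trivialises the quotient component, so that only two-fold pairings survive as nontrivial components. Here I expect to need the explicit list of triples from Theorem~\ref{thm:reducible_pairs} to control the possibilities, and this is the main obstacle of the proof.

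Finally, for $\Phi\circ\Psi=\mathrm{id}$ and the reflection criterion I would argue, for the group $G'=\Psi(\mathrm{data})$, that $\pi_i(G')=G_i$ because the reflection-cosets (paired or not) generate the Coxeter group $G_i/H_i$. Conditions (ii) and (iii) bound the single-factor part $\rho(G')\cap(G_i/H_i)$ to be generated by the at most one unpaired reflection-coset, hence too small to contain a rotation; this prevents any accidental first-kind rotation or accidental reflection, so $H(G')=\prod_iH_i=H$ and the reflections of $G'$ sit exactly at the unpaired cosets, whence the relatedness relation and the isomorphisms recovered from $G'$ agree with the input data. Ruling out accidental rotations of the second kind, so that no unrecorded pairing appears, is checked in the same way, again using the triples. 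The reflection criterion then drops out: $G'$ contains a reflection iff $\rho(G')$ contains a single-factor reflection of $\tilde G$, and by the displayed bound this occurs iff some factor carries its one admissible unpaired trivial component, i.e. iff there is an equivalence class consisting of a single trivial component.
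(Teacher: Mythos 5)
Your overall architecture is the paper's: you encode $G$ by the relatedness relation on reflection cosets, show the induced equivalence on irreducible components of $\tilde G$ satisfies (i)--(iii), and check the two assignments are mutually inverse. The reduction $G=\rho^{-1}(\rho(G))$ with $\ker\rho=H=H_1\times\dots\times H_k$ is a clean packaging of what the paper does implicitly, and your ``collapsing'' argument (two distinct genuine reflection cosets in one factor would force $ss'\in H$, hence $\overline{s}=\overline{s}'$) is exactly the right mechanism for condition (ii) and for excluding unpaired nontrivial components; likewise your $(a\phi(a))(a'\phi(a'))$ computation is the content of Lemma~\ref{lem:red4}(ii), and your analysis of $\rho(G')\cap(G_i/H_i)$ in the converse direction is sound.

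The genuine gap is the one you flag yourself: condition (iii), i.e.\ that the equivalence class of a nontrivial component consists of \emph{exactly two} components and that relatedness induces a Coxeter \emph{isomorphism} between them mapping reflections to related reflections. Your sketch only yields half of the needed order comparison. The paper's Lemma~\ref{lem:red4} gives both halves: if $s_1s_3,s_2s_4\in G$ with $s_3,s_4$ in \emph{different} factors then $\mathrm{ord}(\overline{s}_1\overline{s}_2)\le 2$ (your computation), but if they lie in the \emph{same} factor then $\mathrm{ord}(\overline{s}_1\overline{s}_2)=\mathrm{ord}(\overline{s}_3\overline{s}_4)$ exactly. The first half plus connectedness of the Coxeter diagram forces a whole nontrivial component to pair into a single other factor (Lemma~\ref{lem:red9}); the second half makes the induced map a homomorphism; and a separate chain argument along edges of the target diagram is needed to show the image is a \emph{full} component there (surjectivity), with the reverse map giving injectivity --- none of which is present in your sketch. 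Note also that your proposed remedy, invoking the explicit list of triples from Theorem~\ref{thm:reducible_pairs}, is both unnecessary and insufficient here: the paper's argument is entirely classification-free (it uses only Lemmas~\ref{lem:red2}, \ref{lem:red3}, \ref{lem:red4} and the connectedness of Coxeter diagrams), whereas the list by itself does not control how components in different factors are glued. You should replace the appeal to the list by the order-comparison and chain arguments above; the same applies to your final step of ruling out unrecorded second-kind rotations, which follows from the direct-product decomposition of $\rho(G')$ once the pairing maps are known to be isomorphisms, again without consulting the list.
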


Notice that different isomorphisms between the irreducible components in general yield nonconjugate reflection-rotation groups (cf. Section \ref{sub:reducible_examples}).

\section{Examples and properties}
\label{sec:pseudoreflection_groups}
In this section we describe several classes of reflection-rotation groups and discuss some of their properties.

\subsection{Real reflections groups}
\label{sub:reflection-groups}
A real reflection group $W$ is a finite subgroup of an orthogonal group $\Or_n$ generated by reflections, i.e. by orthogonal transformations whose fixed point subspace has codimension one. Irreducible reflection groups are classified and the types of the occurring groups are denoted as $\A_n$, $\BC_n$, $\D_n$, $\E_6$, $\E_7$, $\E_8$, $\F_4$, $\Hn_3$, $\Hn_4$ and $\I_2(p)$ for $p\geq 3$ \cite{MR1066460}. Every reflection group splits as a direct product of irreducible reflection groups. Since the composition of two distinct reflections is a rotation and since all compositions of pairs of reflections in a reflection group $W$ generate the orientation preserving subgroup $\Wp$ of $W$, this subgroup $\Wp$ is always a rotation group. There is another way to construct a rotation group from a reflection group. If there exists a rotation $h\in \SOr_n\backslash W$ that normalizes $W$, then $h$ also normalizes $\Wp$ and the group $\Ws=\left\langle \Wp,h\right\rangle$ is again a rotation group. Now we specify the cases in which new examples arise this way.

\begin{lemma}\label{lem:existence_chamber}
Let $W\subgr \Or_n$ be a reflection group and suppose $h \in \SOr_n\backslash W$ is a rotation that normalizes $W$. If $\left\langle W,h\right\rangle$ is not a reflection group, then there exists a chamber $C$ of $W$ such that $hC=C$.
\end{lemma}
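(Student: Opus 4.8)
The plan is to analyze the action of $h$ on the chambers of $W$ according to the position of its fixed point subspace $V_0=\mathrm{Fix}(h)$, which has codimension two because $h$ is a rotation. First I would record that $h$ permutes the chambers of $W$: since $h$ normalizes $W$, for every reflection $s\in W$ the conjugate $hsh^{-1}\in W$ is again an orthogonal involution, now fixing the hyperplane $h(H_s)$, hence a reflection of $W$ with mirror $h(H_s)$. Thus $h$ permutes the reflection hyperplanes of $W$, maps the complement of their union to itself, and therefore permutes its connected components, i.e. the chambers. The argument then splits according to whether some reflection hyperplane of $W$ contains $V_0$.

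In the first case, where no mirror contains $V_0$, I would argue directly and obtain the conclusion unconditionally. Since $V_0$ is a linear subspace not contained in any single one of the finitely many reflection hyperplanes, it is not contained in their union, so I may choose $x_0\in V_0$ lying on no mirror. Such a point lies in the interior of a unique chamber $C$. As $x_0\in V_0=\mathrm{Fix}(h)$ we have $hx_0=x_0$, so $x_0$ lies in the interior of both $C$ and $hC$; since distinct open chambers are disjoint, $hC=C$, which is the desired chamber.

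The heart of the proof is the second case, where some mirror contains $V_0$, for which I would show that $\langle W,h\rangle$ is in fact a reflection group, so that this case is excluded by the hypothesis of the lemma. Put $P=V_0^\perp$, a two-plane, and let $W_{V_0}$ be the subgroup generated by those reflections of $W$ whose mirror contains $V_0$; by assumption it is nontrivial. Every element of $\langle W_{V_0},h\rangle$ fixes $V_0$ pointwise, hence preserves $P$, and restriction to $P$ is a faithful homomorphism (its kernel fixes $V_0$ and $P$, so is trivial) onto a finite subgroup of $\Or(P)$ that contains reflections coming from $W_{V_0}$ and the nontrivial rotation $h|_P$ (note $\det(h|_P)=\det h=1$). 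Such a subgroup is a finite dihedral reflection group of $P$, and in a dihedral reflection group every rotation is a product of two of its reflections; thus $h|_P=r_1r_2$ with $r_1,r_2$ reflections in $\langle W_{V_0},h\rangle|_P$. Lifting $r_1,r_2$ through the faithful restriction produces reflections $\tilde r_1,\tilde r_2$ of $\R^n$, each fixing $V_0$ pointwise and acting as $r_i$ on $P$, which lie in $\langle W_{V_0},h\rangle\subseteq\langle W,h\rangle$; since $\tilde r_1\tilde r_2$ fixes $V_0$ pointwise and restricts to $h|_P$, it equals $h$. Hence $h$ is a product of two reflections of $\langle W,h\rangle$, so $\langle W,h\rangle$ is generated by its reflections, i.e. a reflection group.

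Combining the two cases yields the contrapositive: if $\langle W,h\rangle$ is not a reflection group, the second case cannot occur, so we are in the first case and $h$ fixes a chamber. The step I expect to demand the most care is the second case, namely verifying that the restriction $\langle W_{V_0},h\rangle\to\Or(P)$ is genuinely faithful and that the two plane reflections factoring $h|_P$ lift to honest reflections of $\R^n$ already lying in $\langle W,h\rangle$; one should also check that the degenerate subcases ($W_{V_0}$ of order two, or $n=2$ where $V_0=0$ and every mirror contains $V_0$) behave as claimed.
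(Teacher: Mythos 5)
Your proof is correct and follows essentially the same route as the paper: the dichotomy on whether $\mathrm{Fix}(h)$ is contained in a reflection hyperplane, with the first case producing the fixed chamber and the second showing that $\left\langle W,h\right\rangle$ is a reflection group. The paper disposes of the second case in one line --- if $\mathrm{Fix}(h)$ lies in the mirror of a reflection $s\in W$, then $sh$ is already a reflection and $\left\langle W,h\right\rangle=\left\langle W,sh\right\rangle$ --- which is exactly the special case of your dihedral-group argument obtained by taking $r_1=s|_P$.
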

\begin{proof} Since $h$ normalizes $W$, it interchanges the chambers of $W$ \cite[p.~23]{MR1066460}. If $U = \mathrm{Fix}(h)$ intersects the interior of some chamber $C$ of $W$, then we have $hC=C$. Otherwise $U$ would be contained in a hyperplane corresponding to some reflection $s$ in $W$. But then $sh$ would be a reflection and thus $\left\langle W,h\right\rangle=\left\langle W,sh\right\rangle$ would be a reflection group. This contradicts our assumption and so the claim follows.
  \end{proof}

\begin{lemma}\label{lem:implication_of_fixed_chamber}
Let $W\subgr \Or_n$ be an irreducible reflection group, let $C$ be a chamber of $W$ and suppose $h\in \SOr_n$ is a rotation such that $hC=C$. Then $W$ has type $\A_4$, $\D_4$, $\F_4$, $\A_5$ or $\E_6$. Moreover, in the case of type $\A_4$, $\F_4$, $\A_5$ and $\E_6$ such a rotation $h$ is unique. In the case of type $\D_4$ there exist two such rotations which have order $3$ and are inverse to each other.
\end{lemma}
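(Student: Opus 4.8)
The plan is to reduce the hypothesis $hC=C$ to a statement about automorphisms of the Coxeter diagram $\Gamma(W)$ and then to inspect the classification of such automorphisms. First I would note that an isometry fixing the chamber $C$ setwise permutes the walls of $C$ and carries inner unit normals to inner unit normals. Writing $\alpha_1,\dots,\alpha_n$ for the unit normals to the walls of $C$ (the \emph{simple roots} of $W$, normalized to unit length), we therefore obtain $h(\alpha_i)=\alpha_{\pi(i)}$ for some permutation $\pi$ of $\{1,\dots,n\}$. Since the inner products $\langle\alpha_i,\alpha_j\rangle=-\cos(\pi/m_{ij})$ are prescribed by the Coxeter labels $m_{ij}$, the fact that $h$ is an isometry forces $\pi$ to preserve these labels, i.e. $\pi\in\Aut(\Gamma(W))$. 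As $W$ is irreducible it is essential of rank $n$, so $\{\alpha_i\}$ is a basis of $\R^n$; in particular $h$ normalizes $W$ and is uniquely determined by $\pi$.

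Next I would translate the requirement that $h$ be a rotation into a condition on $\pi$. In the basis $\{\alpha_i\}$ the map $h$ is simply the permutation matrix of $\pi$, whence $\dim\mathrm{Fix}(h)$ equals the number of cycles of $\pi$ and $\det h=\mathrm{sgn}(\pi)$. Thus $\mathrm{Fix}(h)$ has codimension two exactly when $\pi$ has $n-2$ cycles, i.e. when $\sum(\ell_j-1)=2$ over the cycle lengths $\ell_j$. This leaves precisely two possibilities: $\pi$ is a single $3$-cycle fixing all remaining vertices, or a product of two disjoint transpositions fixing all remaining vertices. In both cases $\mathrm{sgn}(\pi)=+1$ --- consistent with $h\in\SOr_n$ --- and on the two-dimensional orthogonal complement of $\mathrm{Fix}(h)$ the map $h$ acts as a rotation: by $120^{\circ}$, of order $3$, in the $3$-cycle case, and as $-\mathrm{id}$, i.e. by $180^{\circ}$, of order $2$, in the two-transposition case.

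It then remains to run through the irreducible Coxeter diagrams and select those admitting such a $\pi$. Diagrams with trivial automorphism group ($\BC_n$ for $n\ge3$, $\E_7$, $\E_8$, $\Hn_3$, $\Hn_4$) are excluded at once. For the diagrams whose nontrivial automorphism is a single involution --- $\A_n$, $\D_n$ for $n\ge5$, $\E_6$, $\F_4$ and $\I_2(p)$ --- this involution is the reversal, respectively the flip, and the two-transposition condition picks out exactly $\A_4$, $\A_5$, $\E_6$ and $\F_4$; by contrast $\A_2$, $\A_3$, $\I_2(p)$ and $\D_n$ ($n\ge5$) yield a single transposition (a codimension-one reflection) and $\A_n$ for $n\ge6$ yields at least three. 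Finally, for $\D_4$ the diagram automorphism group is $\sym_3$ acting on the three outer vertices; its transpositions again give reflections, whereas its two $3$-cycles give rotations of order $3$ that are inverse to one another. Uniqueness in types $\A_4$, $\A_5$, $\F_4$ and $\E_6$ follows because $\pi$ determines $h$ and $|\Aut(\Gamma(W))|=2$, while the two order-$3$ rotations in type $\D_4$ correspond to the two $3$-cycles in $\sym_3$.

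The step I expect to require the most care is the very first one: verifying that $h$ genuinely permutes the $\alpha_i$ as a basis permutation. The point is that one must work with unit normals (the Coxeter normalization) rather than with an actual root system, so that length-changing diagram symmetries such as the flip of $\F_4$ are realized as honest isometries and the identification $\pi\in\Aut(\Gamma(W))$ is justified; once this is in place, the codimension and determinant computations follow immediately from the cycle type of $\pi$.
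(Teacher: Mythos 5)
Your proof is correct and follows the same route as the paper's: $hC=C$ makes $h$ permute the walls of $C$, hence $h$ is the isometry induced by an automorphism $\pi$ of the Coxeter diagram, and the codimension-two condition on $\mathrm{Fix}(h)$ (equivalently, $\pi$ having cycle type a $3$-cycle or a pair of transpositions) singles out $\A_4$, $\D_4$, $\F_4$, $\A_5$, $\E_6$ upon inspecting the diagrams. Your write-up merely makes explicit the counting $\mathrm{codim}\,\mathrm{Fix}(h)=\sum(\ell_j-1)$ and the unit-normal normalization that the paper leaves to the citation of Humphreys.
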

\begin{proof} Because of $hC=C$, the rotation $h$ permutes the walls of the chamber $C$ and thus corresponds to an automorphism of the Coxeter diagram of $W$ \cite[p.~29]{MR1066460}. Since the fixed point subspace of $h$ has codimension two, we conclude that only the types $\A_4$, $\D_4$, $\F_4$, $\A_5$ and $\E_6$ can occur for $W$. The additional claims follow from the structure of the respective diagrams.
  \end{proof}

\begin{lemma}\label{prp:extended_psrg}
Let $W\subgr\SOr_n$ be an irreducible reflection group. Then there exists a rotation $h\in \SOr_n\backslash W$ that normalizes $W$ and $\Wp$ such that $\Ws=\left\langle \Wp,h\right\rangle$ is a rotation group which is not the orientation preserving subgroup of a reflection group if and only if $W$ has type $\A_4$, $\D_4$, $\F_4$, $\A_5$ or $\E_6$. In this case the extended group $\Ws$ is unique.
\end{lemma}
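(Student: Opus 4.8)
The plan is to prove the equivalence by linking the extension $\langle W,h\rangle$ to the reflection/rotation alternative and then feeding the outcome into Lemmas \ref{lem:existence_chamber} and \ref{lem:implication_of_fixed_chamber}. The bridging observation I would establish first is this: for a rotation $h\in\SOr_n\setminus W$ normalizing $W$, the group $\langle W,h\rangle$ is a reflection group if and only if $\Ws=\langle\Wp,h\rangle$ is the orientation preserving subgroup of a reflection group. For the direction I actually need, suppose $\langle W,h\rangle$ is a reflection group and pick a simple reflection $s\in W$, so that $W=\langle\Wp,s\rangle$ and hence $\langle W,h\rangle=\langle\Ws,s\rangle$. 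Since $\Wp$ and $h$ lie in $\SOr_n$ whereas $s$ does not, and since a short computation using that $hsh^{-1}$ is again a reflection of $W$ shows that $s$ normalizes $\Ws$, the subgroup $\Ws$ is normal of index two in $\langle W,h\rangle$; thus $\Ws=\langle W,h\rangle\cap\SOr_n$ is its orientation preserving subgroup.

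For the implication from the existence of $h$ to the type of $W$, assume $\Ws$ is a rotation group that is not the orientation preserving subgroup of any reflection group. By the contrapositive of the bridging observation, $\langle W,h\rangle$ is not a reflection group, so Lemma \ref{lem:existence_chamber} provides a chamber $C$ of $W$ with $hC=C$, and Lemma \ref{lem:implication_of_fixed_chamber} then forces $W$ to be of type $\A_4$, $\D_4$, $\F_4$, $\A_5$ or $\E_6$.

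Conversely, for each of these five types I would take $h$ to be the isometry realizing the nontrivial diagram automorphism $\sigma$ (of order $3$ for $\D_4$ and order $2$ otherwise), characterized on the simple roots by $h\alpha_i$ being a positive multiple of $\alpha_{\sigma(i)}$; the proportionality factors are forced to be $1$ except in the case $\F_4$, where $\sigma$ interchanges long and short roots and they must be chosen to make $h$ an isometry. Since $h$ permutes the simple reflections it normalizes $W$ and $\Wp=W\cap\SOr_n$ and fixes the corresponding chamber $C$; as the stabilizer of $C$ in $W$ is trivial and $\sigma\neq\mathrm{id}$, we get $h\notin W$. The fixed space of $h$ has dimension equal to the number of $\sigma$-orbits on the nodes, which equals $n-2$ in every case, and on the two-dimensional complement $h$ acts as $-\mathrm{Id}$, respectively as a rotation by $2\pi/3$, in either case with determinant $1$; hence $h$ is a rotation lying in $\SOr_n$. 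As $\Wp$ is generated by products of pairs of reflections, which are rotations, $\Ws=\langle\Wp,h\rangle$ is a rotation group. Finally I would rule out that $\Ws$ is the orientation preserving subgroup of a reflection group: if $\Ws=(W')^{\scriptstyle+}$, then $W'$ contains the irreducibly acting group $\Wp$, so $W'$ is an irreducible reflection group of rank $n$ with $|W'|=2|\Ws|=|\langle\sigma\rangle|\cdot|W|$, an order that the classification of irreducible reflection groups does not exhibit in rank $n$; this contradiction completes the implication. Uniqueness of $\Ws$ is then immediate from Lemma \ref{lem:implication_of_fixed_chamber}, the two inverse rotations in the $\D_4$ case generating the same group $\langle\Wp,h\rangle$.

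The step I expect to be the main obstacle is the last one in the sufficiency direction, namely certifying that the constructed $\Ws$ is genuinely new. Verifying that $h$ is an isometric rotation is routine once the orbit count is done (with the mild care needed for $\F_4$), but showing that $\Ws$ is not the orientation preserving subgroup of any reflection group is where the actual content lies; making the order comparison of $|\langle\sigma\rangle|\cdot|W|$ against the classification is what renders this rigorous rather than merely plausible.
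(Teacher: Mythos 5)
Your proof is correct and follows the paper's route: the necessity direction via Lemmas \ref{lem:existence_chamber} and \ref{lem:implication_of_fixed_chamber}, the sufficiency direction by realizing the nontrivial diagram automorphism as an orthogonal transformation permuting the simple roots and then invoking a counting argument against the classification. Your ``bridging observation'' and the explicit order comparison merely spell out steps the paper leaves as ``clear'' and ``follows easily by a counting argument,'' so the two arguments are essentially identical.
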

\begin{proof} The \emph{only if} direction is clear by the preceding two lemmas. Conversely, suppose that $W$ has type $\A_4$, $\D_4$, $\F_4$, $A_5$ or $\E_6$. In each case there exists a nontrivial automorphism of the Coxeter diagram of $W$. The vertices of this diagram correspond to a set $\Delta$ of outward normal vectors to the walls of a chamber of $W$ and the diagram automorphism corresponds to a permutation of $\Delta$ \cite[p.~29]{MR1066460}. Due to the fact that $\Delta$ is a basis of $\R^n$, this permutation can be extended to a linear transformation $h$ of $\R^n$. Since $h$ is induced by a diagram automorphism, it preserves the inner products of the vectors in $\Delta$ which are encoded in the Coxeter diagram of $W$. Hence the transformation $h$ is orthogonal, i.e. we have $h \in\Or_n$. Moreover, the structure of the Coxeter diagram of $W$ implies that the fixed point subspace of $h$ has codimension two and that the extension $\Ws=\left\langle \Wp,h\right\rangle$ obtained in this way is unique. Finally, it follows easily from the classification of reflection groups, e.g. by a counting argument, that $\Ws$ is not the orientation preserving subgroup of a reflection group in each of the cases $\A_4$, $\D_4$, $\F_4$, $\A_5$ and $\E_6$.
  \end{proof}

The next lemma will be needed later.

\begin{lemma}\label{lem:reflection_extension} For $n\geq5$ let $W\subgr \Or_n$ be an irreducible reflection group with orientation preserving subgroup $\Wp$. Assume that  $\left\langle W,\mathrm{-id}\right\rangle$ is not a reflection group. Then the group $G := \left\langle \Wp,\mathrm{-id}\right\rangle$ is a rotation group different from $\Wp$ if and only if $W$ has type $\E_6$.
\end{lemma}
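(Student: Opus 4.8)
The plan is to first translate the hypothesis into the condition $-\mathrm{id}\notin W$ and then reduce the problem to an eigenvalue question about elements of $W$. First I would observe that if $-\mathrm{id}\in W$ then $\left\langle W,-\mathrm{id}\right\rangle=W$ is a reflection group, contradicting the hypothesis; hence $-\mathrm{id}\notin W$, and in particular $-\mathrm{id}\notin\Wp$, so that $G=\left\langle \Wp,-\mathrm{id}\right\rangle\supsetneq\Wp$ automatically. Thus the claim reduces to showing that $G$ is a rotation group if and only if $W$ has type $\E_6$. If $n$ is odd, then $\det(-\mathrm{id})=(-1)^n=-1$, so $G\not\subset\SOr_n$ contains orientation-reversing elements and cannot be a rotation group; since $\E_6$ lives in even dimension, the statement holds in this case. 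From now on I assume $n$ is even, so that $-\mathrm{id}\in\SOr_n$ and $G=\Wp\cup(-\mathrm{id})\Wp\subset\SOr_n$.

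Next I would establish the key criterion. Since $\Wp$ is a rotation group (cf. Section \ref{sub:reflection-groups}) and has index two in $G$, the group $G$ is a rotation group if and only if the nontrivial coset $(-\mathrm{id})\Wp$ contains a rotation: if it does, then $G$ is generated by the rotations generating $\Wp$ together with that one rotation, and if it does not, then all rotations of $G$ lie in $\Wp$ and therefore generate a proper subgroup. Now an element $-w$ with $w\in\Wp$ is a rotation precisely when its fixed point subspace has dimension $n-2$, i.e. when $\dim\ker(-w-\mathrm{id})=\dim\ker(w+\mathrm{id})=n-2$. Hence $G$ is a rotation group if and only if $W$ contains an element $w$ whose $(-1)$-eigenspace has dimension exactly $n-2$; note that such a $w$ automatically lies in $\Wp$, since for even $n$ its determinant equals $(-1)^{n-2}\cdot 1=1$.

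It then remains to decide this eigenvalue condition for the irreducible reflection groups with $-\mathrm{id}\notin W$ in even dimension $n\geq 6$, which, by the known list of reflection groups containing $-\mathrm{id}$, are exactly those of type $\A_n$ (for even $n$) and $\E_6$. For $W$ of type $\A_n$, realized as $\sym_{n+1}$ acting on the hyperplane $\{\sum x_i=0\}\subset\R^{n+1}$, the multiplicity of the eigenvalue $-1$ of a permutation equals its number of cycles of even length (the removed trivial summand contributes only to the eigenvalue $+1$). Since each such cycle uses at least two of the $n+1$ points, the number of even cycles is at most $(n+1)/2$, which is strictly smaller than $n-2$ once $n\geq 6$; hence no suitable $w$ exists and $G$ is not a rotation group. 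For $W$ of type $\E_6$ I would exhibit a suitable element using the standard sub-root system of type $\D_5$ obtained by deleting an appropriate end node of the Coxeter diagram: in its $5$-dimensional span the transformation acting by four sign changes $\mathrm{diag}(-1,-1,-1,-1,1)$ lies in $W(\D_5)\subset W(\E_6)$, and, fixing the orthogonal complementary line, its $(-1)$-eigenspace in $\R^6$ has dimension exactly $4=n-2$. Consequently $-w$ is a rotation of order two lying in the nontrivial coset, so $G$ is a rotation group, completing the equivalence.

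The routine parts are the parity discussion and the eigenvalue bookkeeping; the one point requiring genuine input is the $\E_6$ case, where one must produce an element of $W(\E_6)$ whose $(-1)$-eigenspace has the correct dimension. I expect this to be the main obstacle, and the cleanest way around it is to pass to a concrete maximal-rank reflection subgroup such as $W(\D_5)$ or $W(\A_5\times\A_1)$, inside which the desired element is manifest.
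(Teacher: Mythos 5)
Your proof is correct, but it follows a genuinely different route from the paper's. The paper argues the forward direction via chamber geometry: a rotation $h\in G\setminus\Wp$ normalizes $W$, so by Lemma \ref{lem:existence_chamber} it fixes a chamber (else $sh$ would be a reflection and $\left\langle W,\mathrm{-id}\right\rangle$ a reflection group, which is excluded by hypothesis), and by Lemma \ref{lem:implication_of_fixed_chamber} it induces a diagram automorphism with codimension-two fixed space, forcing type $\A_5$ or $\E_6$ for $n\geq 5$; parity then eliminates $\A_5$. For the converse it produces $w$ with $-wC=C$ by transitivity of $W(\E_6)$ on chambers and reads off from the $\E_6$ diagram that $-w$ is a rotation. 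You instead reduce everything to the eigenvalue criterion that $G$ is a rotation group iff some $w\in W$ has $(-1)$-eigenspace of dimension exactly $n-2$, then run a direct case analysis over the irreducible groups in even dimension $\geq 6$ not containing $\mathrm{-id}$ (only $\A_n$ and $\E_6$), with a cycle-type bound for $\sym_{n+1}$ and an explicit element of $W(\D_5)\subgr W(\E_6)$. Your version is more self-contained and computational — it does not need the hypothesis that $\left\langle W,\mathrm{-id}\right\rangle$ is not a reflection group beyond extracting $\mathrm{-id}\notin W$, and it makes the $\E_6$ rotation completely explicit — at the cost of invoking the classification-level fact about which reflection groups contain $\mathrm{-id}$; the paper's version reuses its two chamber lemmas and so fits more economically into the surrounding machinery. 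The only points worth tightening are (a) the remark that $\det(w)=(-1)^{n-2}\cdot 1$: one should say explicitly that the two-dimensional complement of the $(-1)$-eigenspace carries no further $-1$-eigenvalue, hence $w$ restricted to it has determinant $+1$; and (b) a sentence confirming that the four sign changes $\mathrm{diag}(-1,-1,-1,-1,1)$ indeed lie in $W(\D_5)$ because the number of sign changes is even. Both are routine.
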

\begin{proof} Assume that $G$ is a rotation group different from $\Wp$. Then there exists a rotation $h\in G \backslash \Wp$ that normalizes $W$. It follows from Lemma \ref{lem:existence_chamber}, Lemma \ref{lem:implication_of_fixed_chamber} and our assumption $n\geq5$ that $W$ has type $\A_5$ or $\E_6$. Since the inversion only preserves the orientation in even dimensions we conclude that $W$ has type $\E_6$.

Conversely, assume that $W$ has type $\E_6$ and let $C$ be any chamber of $W$. Since the inversion interchanges the chambers of $W$ and $W$ acts transitively on them \cite[p.~10]{MR1066460}, there exists some $w\in W$ such that $-wC=C$. The fact that the inversion is not contained in $W(\E_6)$ implies that the transformation $-w$ is nontrivial and thus induces a nontrivial automorphism of the Coxeter diagram of $W$. It follows from the structure of this diagram that $-w$ is a rotation which is why $G$ is a rotation group different from $\Wp$ as claimed.
  \end{proof}

Finally, we describe the groups $\Ws(\A_5)$ and $\Ws(\E_6)$ more explicitly.

\begin{proposition} The rotation groups $\Ws(\A_5)$ and $\Ws(\E_6)$ can be described as follows.
\label{prp:extended_reflection_groups}
\begin{enumerate}
\item $\Ws(\A_5) = \left\langle \Wp(\A_5),-s \right\rangle \cong \sym_6$ for any reflection $s\in W(\A_5)$.
\item $\Ws(\E_6) = \left\langle \Wp(\E_6),\mathrm{-id}\right\rangle \cong \PSU_2(4)\times \Z_2$.
\end{enumerate}
\end{proposition}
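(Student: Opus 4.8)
The plan is to identify each group $\Ws$ explicitly by locating a single normalizing rotation $h$ and then recognizing the resulting abstract group. By Lemma~\ref{prp:extended_psrg} we already know that for $W$ of type $\A_5$ or $\E_6$ the extension $\Ws=\langle \Wp,h\rangle$ exists, is unique, and is a genuine rotation group that is not the orientation preserving subgroup of a reflection group. So the content of the proposition is purely to name the relevant rotation and compute the isomorphism type in each case.

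For part (i), I would first exhibit a concrete rotation normalizing $W(\A_5)$. The natural candidate is $-s$ for a reflection $s\in W(\A_5)$: since $\A_5$ lives in dimension five (odd), the inversion $-\mathrm{id}$ reverses orientation, so $-s$ is orientation preserving, and one checks that its fixed point subspace has codimension two, making it a rotation. I would verify that $-s$ normalizes $W(\A_5)$ (conjugation by $-\mathrm{id}$ is trivial on $\Or_5$ up to sign, and $s$ is itself a reflection in $W$), and that $-s\notin\Wp(\A_5)$; by the uniqueness in Lemma~\ref{prp:extended_psrg} this forces $\Ws(\A_5)=\langle\Wp(\A_5),-s\rangle$. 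The abstract isomorphism $\Ws(\A_5)\cong\sym_6$ then follows from the standard fact that $W(\A_5)\cong\sym_6$ has index-two alternating subgroup $\Wp(\A_5)\cong\alt_6$, and the extension by the extra involution $-s$ realizes the outer automorphism of $\alt_6$, yielding a copy of $\sym_6$ that is \emph{not} the original $W(\A_5)$ (the two index-two overgroups of $\alt_6$ inside $\mathrm{Aut}(\alt_6)$ both being isomorphic to $\sym_6$).

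For part (ii), the relevant rotation is $-\mathrm{id}$ itself: in dimension six it preserves orientation, and Lemma~\ref{lem:reflection_extension} shows precisely that for $n\geq 5$ the group $\langle\Wp,-\mathrm{id}\rangle$ is a rotation group strictly larger than $\Wp$ exactly when $W$ has type $\E_6$. Thus $\Ws(\E_6)=\langle\Wp(\E_6),-\mathrm{id}\rangle$ by uniqueness. To get the isomorphism type, I would use the known structure $W(\E_6)\cong \Wp(\E_6)\rtimes\Z_2$ with $\Wp(\E_6)$ simple of the form $\PSU_4(2)\cong\PSp_4(3)$, and observe that since $-\mathrm{id}$ is central and of order two, $\Ws(\E_6)=\langle\Wp(\E_6),-\mathrm{id}\rangle\cong \Wp(\E_6)\times\Z_2$; it then remains to match $\Wp(\E_6)$ with the group written as $\PSU_2(4)$ in the statement via the exceptional isomorphisms among these small classical groups.

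The main obstacle will be the identification of the abstract isomorphism types rather than the geometry, which is essentially handled by the earlier lemmas. In (i) the subtlety is the well-known phenomenon that $\alt_6$ has an \emph{outer} automorphism, so one must argue that adjoining $-s$ produces the \emph{other} $\sym_6$-extension rather than recovering $W(\A_5)$ itself; I would pin this down precisely by checking that $-s$ maps reflections to non-reflections (equivalently does not lie in the normalizer-realizable $W(\A_5)$), which is exactly what Lemma~\ref{prp:extended_psrg} guarantees. In (ii) the delicate point is simply correctly invoking the chain of exceptional isomorphisms $\Wp(\E_6)\cong\PSp_4(3)\cong\PSU_4(2)$ and reconciling the notation $\PSU_2(4)$ appearing in the statement; I would treat this as a citation to the classification of the small simple groups rather than reproving it.
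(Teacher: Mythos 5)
Part (ii) of your proposal is fine and follows the paper, which simply refers back to the proof of Lemma~\ref{lem:reflection_extension}. Part (i), however, contains a genuine error: $-s$ is \emph{not} a rotation. In $\R^5$ a reflection $s$ has eigenvalues $(-1,1,1,1,1)$, so $-s$ has eigenvalues $(1,-1,-1,-1,-1)$ and its fixed point subspace is one-dimensional, i.e.\ of codimension \emph{four}. Consequently you cannot invoke the uniqueness statement of Lemma~\ref{prp:extended_psrg} to conclude $\Ws(\A_5)=\left\langle \Wp(\A_5),-s\right\rangle$: that uniqueness concerns extensions of $\Wp$ by a normalizing \emph{rotation}, and $-s$ is only a convenient generator of the correct coset, not such a rotation. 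The paper's route is: $W(\A_5)\cong\sym_6$ has trivial center, hence $-\mathrm{id}\notin W(\A_5)$; arguing as in the proof of Lemma~\ref{lem:reflection_extension}, there exists $w\in W(\A_5)$ with $-wC=C$ for a chamber $C$, so $-w$ induces the nontrivial diagram automorphism of $\A_5$ and is therefore a genuine normalizing rotation. This yields $\Wt(\A_5)=\left\langle W(\A_5),-\mathrm{id}\right\rangle=W(\A_5)\times\{\pm\mathrm{id}\}$, whose orientation preserving subgroup is $\Ws(\A_5)=\Wp(\A_5)\cup(-\mathrm{id})\bigl(W(\A_5)\setminus\Wp(\A_5)\bigr)=\left\langle \Wp(\A_5),-s\right\rangle$, and the map sending $g\mapsto g$ on $\alt_6$ and $g\mapsto -g$ on $\sym_6\setminus\alt_6$ is an isomorphism $\sym_6\To\Ws(\A_5)$.

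A smaller point: your appeal to the outer automorphism of $\alt_6$ is a red herring, since $-\mathrm{id}$ is central and hence conjugation by $-s$ induces the same automorphism of $\Wp(\A_5)$ as conjugation by $s$; moreover the parenthetical claim that the index-two overgroups of $\alt_6$ inside $\Aut(\alt_6)$ are all isomorphic to $\sym_6$ is false (there are three of them, $\sym_6$, $\PGL_2(9)$ and $M_{10}$, pairwise non-isomorphic). None of this is needed once one has $\Wt(\A_5)=W(\A_5)\times\{\pm\mathrm{id}\}$.
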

\begin{proof} For $(i)$ observe that $W(\A_5) \cong \sym_6$ has a trivial center and thus does not contain the inversion. It follows as in the proof of the preceding lemma that $\Wt(\A_5) = \left\langle W(\A_5), -\mathrm{id} \right\rangle $ and hence $\Ws(\A_5) = \left\langle \Wp(\A_5),-s \right\rangle \cong \sym_6$ as claimed. For $(ii)$ see the proof of the preceding lemma. 
  \end{proof}

\subsection{Unitary reflection groups}
\label{sub:unitary}
A unitary reflection group is a finite subgroup of some unitary group $\U_n$ generated by unitary reflections, i.e. by unitary transformations of finite order whose fixed point subspace has complex codimension one. A complete classification of such groups was first compiled by Shepard and Todd in 1954 \cite{MR0059914} and is described in \cite{MR2542964}. As in the real case, every unitary reflection group splits as a direct product of irreducible unitary reflection groups. The irreducible groups fall into two classes according to the following definition.

\begin{definition}
\label{dfn:imprimitive}
A finite subgroup $G\subgr \GL(V)$ is called \emph{imprimitive} if there exists a decomposition of the vector space $V$ into a direct sum of proper subspaces $V_1,\ldots, V_l$, a \emph{system of imprimitivity}, such that for any $g\in G$ and for any $i\in\{1,\ldots,l\}$ there exists a $j\in\{1,\ldots,l\}$ such that $\rho(g)(V_i)=V_j$. Otherwise the subgroup is called \emph{primitive}.
\end{definition}

The imprimitive irreducible unitary reflection groups can be constructed as follows (cf. \cite[Chapt. 2, p.~25]{MR2542964}). Let $\mu_m \subgr  \C^*$ be the cyclic subgroup of $m$-th roots of unity. For a factor $p$ of $m$ let
\[
		A(m,p,n) := \left\{(\theta_1,\ldots,\theta_n)\in \mu_m^n | (\theta_1\dots \theta_n)^{m/p}=1 \right\}
\]
and let $G(m,p,n)$ be the semidirect product of $A(m,p,n)$ with the symmetric group $\sym_n$. Then the natural realization of $G(m,p,n)$ in $\U_n$ is an imprimitive unitary reflection group and every imprimitive irreducible unitary reflection group is of this form. The following proposition holds \cite[Prop. 2.10, p. 26]{MR2542964}.

\begin{proposition} 
If $m>1$, then $G(m,p,n)$ is an imprimitive irreducible unitary reflection group except when $(m,p,n)=(2,2,2)$ in which case $G(m,p,n)$ is not irreducible.
\end{proposition}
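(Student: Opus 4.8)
The plan is to treat the three assertions separately, working throughout with the natural realization of $G=G(m,p,n)$ on $\C^n=\bigoplus_{i=1}^n\C e_i$ in which $A(m,p,n)$ acts diagonally and $\sym_n$ permutes the basis vectors $e_i$. I assume $n\geq 2$ here, since the notion of imprimitivity is vacuous in dimension one. Imprimitivity is then immediate: the coordinate lines $V_i=\C e_i$ form a system of imprimitivity, because every diagonal element preserves each $V_i$ while $\sym_n$ permutes them; moreover $\sym_n$ acts transitively on $\{V_1,\dots,V_n\}$, a fact I reuse below.

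Next I would pin down the reflections of $G$. Writing a general element as $e_i\mapsto\theta_i e_{\sigma(i)}$ with $\sigma\in\sym_n$ and $(\theta_i)\in A(m,p,n)$, each cycle of $\sigma$ of length $\ell$ with total twist $\tau$ (the product of the $\theta$'s around the cycle) contributes the $\ell$ distinct $\ell$-th roots of $\tau$ as eigenvalues. For the element to fix a hyperplane, all but one eigenvalue must equal $1$, which forces $\sigma$ to be either the identity or a single transposition and leaves two families: the \emph{diagonal} reflections $\mathrm{diag}(1,\dots,\zeta,\dots,1)$ with $\zeta\in\mu_{m/p}\setminus\{1\}$, which exist precisely when $p<m$, and the \emph{transposition-type} reflections given by a transposition $(i\,j)$ twisted by $\theta_i=\zeta$, $\theta_j=\zeta^{-1}$ for $\zeta\in\mu_m$. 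I would then show these generate $G$: the untwisted transpositions generate $\sym_n$; the product of a twisted and an untwisted transposition on the same pair yields a ``transfer'' element $\mathrm{diag}(\dots,\omega,\dots,\omega^{-1},\dots)$, and these transfer elements generate the subgroup $A(m,m,n)$ of tuples with product $1$; finally, when $p<m$, the diagonal reflection $\mathrm{diag}(\omega^p,1,\dots,1)$ supplies the one further generator needed to produce all of $A(m,p,n)$. Hence the reflections generate $\langle A(m,p,n),\sym_n\rangle=G$, so $G$ is a unitary reflection group.

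For irreducibility I would argue via the diagonal characters. Since $A=A(m,p,n)$ is abelian and acts diagonally, $\C^n$ splits into its character eigenlines, and on $V_i$ it acts through $\chi_i\colon(\theta_j)\mapsto\theta_i$. If the $\chi_i$ are pairwise distinct, then the $V_i$ are exactly the isotypic components of $A$, so every $A$-invariant subspace is a sum of some of the $V_i$; combined with the transitivity of $\sym_n$ on the $V_i$, any nonzero $G$-invariant subspace must equal $\C^n$, giving irreducibility. It then remains to decide when $\chi_i=\chi_j$ can occur, i.e.\ when $\theta_i=\theta_j$ for every $(\theta)\in A$. For $m\geq 3$ the transfer element $\mathrm{diag}(\dots,\omega,\dots,\omega^{-1},\dots)\in A$ separates them since $\omega\neq\omega^{-1}$; for $m=2$ one checks that a separating sign pattern in $A(2,p,n)$ exists whenever $p=1$, or $p=2$ and $n\geq 3$. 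The sole remaining case is $(m,p,n)=(2,2,2)$, where $A(2,2,2)=\{\pm\mathrm{id}\}$ and $\chi_1=\chi_2$; there $G=\{\pm\mathrm{id}\}\rtimes\sym_2$ is abelian and both $\C(e_1+e_2)$ and $\C(e_1-e_2)$ are invariant, so $G$ is reducible, exactly as claimed.

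The main obstacle I anticipate is the bookkeeping that isolates $(2,2,2)$: the eigenvalue analysis of the reflections and the generation argument are essentially routine, but the irreducibility step hinges on determining precisely when the coordinate characters $\chi_i$ fail to separate the lines $V_i$, and this separation degenerates only in the fully twisted ($p=m$), order-two ($m=2$), two-variable ($n=2$) situation. Care is therefore needed in the $m=2$ subcases to confirm both that $(2,2,2)$ is the \emph{unique} failure and that it is genuinely reducible.
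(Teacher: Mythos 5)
Your argument is correct and complete. One thing to be aware of: the paper does not prove this proposition at all --- it is quoted verbatim from Lehrer--Taylor \cite[Prop.~2.10, p.~26]{MR2542964}, so there is no internal proof to compare against; your write-up is a self-contained verification in the spirit of the cited source. The three steps all check out: imprimitivity via the coordinate lines (with your explicit, and sensible, convention $n\geq 2$, since for $n=1$ the group is cyclic and the imprimitivity claim is vacuous); the determination of the reflections by the eigenvalue count on cycles (a cycle of length $\ell\geq 2$ contributes at least $\ell-1$ eigenvalues $\neq 1$ because the $\ell$-th roots of its twist are distinct, which correctly forces $\sigma=\mathrm{id}$ or a single transposition with twist $1$); and the generation argument, where the transfer elements $\mathrm{diag}(\ldots,\zeta,\ldots,\zeta^{-1},\ldots)$ generate $A(m,m,n)$ and a single diagonal reflection of product $\omega^p$ fills the index-$(m/p)$ gap up to $A(m,p,n)$ when $p<m$. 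The irreducibility step is logically tight even though character separation is only a sufficient criterion: your case analysis shows the coordinate characters of $A(m,p,n)$ are pairwise distinct in every case with $m>1$ except $(m,p,n)=(2,2,2)$, and that single exceptional group is exhibited as reducible directly (it is the reducible reflection group $W(\A_1\times\A_1)$ in the basis $e_1\pm e_2$), so all cases are covered. No gaps.
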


A primitive unitary reflection group is either a cyclic group $\mu_n \subgr \U_1$, a symmetric group in $\U_n$ given as a complexified real reflection group of type $\A_n$, or one of $34$ primitive unitary reflection groups in dimension at most $8$ \cite[p.~138]{MR2542964}. Among the latter $34$ groups $19$ are two-dimensional. These groups decompose into $3$ families according to whether their image in $\PU_2\cong \SOr_3$ is a tetrahedral, an octahedral or an icosahedral group \cite[Chapt. 6 and Appendix D, Table 1]{MR2542964}. A collection $\mathfrak{L}$ of complex lines in $\C^n$ that is invariant under all reflections of order two defined by its lines is called a \emph{line system} and determines a unitary reflection group $W(\mathfrak{L})$ \cite[Chapt. 7]{MR2542964}. The remaining $15$ groups arise in this way. The occurring line systems are denoted as $\mathcal{E}_6$, $\mathcal{E}_7$, $\mathcal{E}_8$, $\mathcal{F}_4$ $\mathcal{H}_3$, $\mathcal{H}_4$, $\mathcal{J}_3^{(4)}$, $\mathcal{J}_3^{(5)}$, $\mathcal{K}_5$, $\mathcal{K}_6$, $\mathcal{L}_4$, $\mathcal{M}_3$, $\mathcal{N}_4$, $\mathcal{O}_4$ \cite[Thm. 8.29, p. 152 and Appendix D, Table 2]{MR2542964}, among them the complexifications of the root systems of real reflection groups of type $\E_6$, $\E_7$, $\E_8$, $\F_4$, $\Hn_3$ and $\Hn_4$.

Clearly, a unitary reflection group $G\subgr \U_n$ gives rise to a rotation group $G\subgr \SOr_{2n}$ when considered as a real group. Conversely, we have

\begin{lemma} \label{lem:abs_irr_or_complex} An irreducible rotation group is a unitary reflection group considered as a real group, if and only if it is not absolutely irreducible.
\end{lemma}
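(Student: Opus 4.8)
The plan is to reduce the lemma to the standard dictionary between reducibility of the complexification of a real representation and the existence of an invariant complex structure, and then to check that under such a structure the two notions of reflection coincide. Write $V=\R^{n}$ for the representation underlying the irreducible rotation group $G$. By Schur's lemma the commutant $\mathrm{End}_G(V)$ is a real division algebra, and $V$ is absolutely irreducible --- i.e.\ $V\otimes_{\R}\C$ is irreducible --- exactly when $\mathrm{End}_G(V)=\R\cdot\mathrm{id}$. Hence $G$ fails to be absolutely irreducible if and only if $\mathrm{End}_G(V)$ contains some $J$ with $J^{2}=-\mathrm{id}$, that is, a $G$-invariant complex structure on $\R^{n}$. (Any real division algebra strictly larger than $\R$ contains a subfield isomorphic to $\C$, and hence such a $J$.) I would record this equivalence first, since both implications of the lemma are phrased through it.

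For the direction asserting that a non-absolutely-irreducible group is a unitary reflection group, I would fix a $G$-invariant complex structure $J\in\mathrm{End}_G(V)$ and use it to regard $V$ as a complex vector space of dimension $n/2$. Because $J$ lies in the commutant, every $g\in G$ commutes with $J$ and is therefore $\C$-linear; as $G$ is orthogonal, this exhibits $G$ as a subgroup of the unitary group $\U(V,J)$. The key step is then to see that each generating rotation becomes a unitary reflection: if $g\in G$ is a rotation, its fixed point subspace $\mathrm{Fix}(g)$ has real codimension two by definition, and it is $J$-invariant because $g$ commutes with $J$; an even-dimensional $J$-invariant subspace of real codimension two is a complex subspace of complex codimension one. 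Thus $g$ is a unitary transformation of finite order fixing a subspace of complex codimension one, i.e.\ a unitary reflection, and since $G$ is generated by rotations it is a unitary reflection group considered as a real group.

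For the converse I would start from $G<\U_{m}$ an irreducible unitary reflection group regarded as the real group $G<\SOr_{2m}$. The underlying real representation carries the unitary complex structure, which is manifestly $G$-invariant, so by the equivalence of the first paragraph $G$ is not absolutely irreducible; equivalently, the complexification of the real representation decomposes as $W\oplus\overline{W}$ for the defining complex representation $W=\C^{m}$, which is reducible because $W\neq0$. Either formulation gives the claim.

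The routine parts are the two translations between real and complex data; the one step that deserves care is the crux of the backward direction, namely verifying that a real codimension-two fixed subspace of a $J$-linear map is genuinely a complex hyperplane, so that a rotation is promoted to a bona fide unitary reflection. This in turn rests on the fact --- guaranteed by $J$ lying in the commutant --- that \emph{every} element of $G$, and in particular every generating rotation, is $\C$-linear; I would make sure to invoke this uniformly rather than element by element.
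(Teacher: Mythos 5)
Your proof is correct and rests on the same underlying fact as the paper's: an irreducible real representation fails to be absolutely irreducible exactly when it admits an invariant complex structure $J$. The mechanics differ slightly. The paper obtains this by decomposing the complexification $V^{\C}=V_1\oplus\dots\oplus V_k$, restricting scalars to force $k=2$ and $V=V_1^{\R}$, whereas you extract $J$ directly from the commutant via Schur's lemma and the Frobenius classification of real division algebras; both are standard and equally valid (for full rigour one should also note that $J$ can be chosen orthogonal, e.g.\ by averaging the inner product over $\left\langle J\right\rangle$, so that $G$ really lands in a unitary group --- a point both you and the paper pass over). Where your write-up improves on the paper's is in making explicit the step that the paper leaves implicit: that each generating rotation, having a $J$-invariant fixed subspace of real codimension two, becomes a unitary reflection, so that $G$ is genuinely \emph{generated} by unitary reflections and not merely contained in $\U_{n/2}$.
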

\begin{proof} The complexification of a complex group considered as a real group is reducible. In fact, it commutes with the idempotent product of the two complex structures and thus leaves its nontrivial $1$- and $(-1)$-eigenspace invariant. Conversely, let $G\subgr \SOr_n$ be an irreducible rotation group and suppose that $G$ is not absolutely irreducible. Then its complexification splits into more than one irreducible component, i.e.
\[
		V^{\C}=V_1\oplus\dots \oplus V_k
\]
for some $k\geq 2$. By restricting the scalars to the real numbers we recover two copies of the original representation and thus we have $k=2$ and $V=V_1^{\R}=V_2^{\R}$. Hence, $G$ is a unitary reflection group $G\subgr \U_m$ with $m=\dim(V_1)=n/2$ considered as a real group.
  \end{proof}

Moreover, we have

\begin{lemma} \label{lem:unitary_is_pseudo} A rotation group $G\subgr \SOr_{2n}$ that is a unitary reflection group $G\subgr \U_n$ considered as a real group is irreducible, if and only if $G\subgr \U_n$ is irreducible as a complex group and not the complexification of a real reflection group.
\end{lemma}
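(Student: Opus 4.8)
The plan is to reduce the statement to the standard dichotomy that governs when the underlying real module of a complex representation remains irreducible, and then to identify the exceptional ``real type'' case with the complexification of a real reflection group. Throughout, write $V=\C^n$ for the complex $G$-module defining $G\subgr\U_n$ and let $V_{\R}$ denote $V$ regarded as a $2n$-dimensional real $G$-module, so that $G\subgr\SOr_{2n}$ is precisely the image of $G$ acting on $V_{\R}$; the question is exactly when $V_{\R}$ is irreducible over $\R$.

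First I would invoke the general fact that, for any complex $G$-module $V$, the real module $V_{\R}$ is irreducible if and only if $V$ is irreducible over $\C$ and is \emph{not} of real type, i.e.\ not isomorphic to the complexification $W\otimes_{\R}\C$ of a real $G$-module $W$. Two of the implications are immediate: a complex decomposition $V=V_1\oplus V_2$ realifies to $V_{\R}=(V_1)_{\R}\oplus(V_2)_{\R}$, and a real form $V=W\otimes_{\R}\C$ gives $V_{\R}\cong W\oplus W$. The remaining implication, that a $\C$-irreducible $V$ of complex or quaternionic type has $V_{\R}$ irreducible, is the usual consequence of Schur's lemma applied to $\mathrm{End}_G(V_{\R})$, which is then isomorphic to $\C$ or $\mathbb{H}$; I would cite this standard result rather than reprove it.

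The only step that uses that $G$ is generated by reflections is the identification that, for $\C$-irreducible $V$, the module $V$ is of real type if and only if $G\subgr\U_n$ is the complexification of a real reflection group. One direction is the definition. For the converse, suppose $V$ carries a $G$-invariant real structure, that is a $G$-equivariant antilinear involution $\sigma$, and set $W=V^{\sigma}$, so that $W\otimes_{\R}\C=V$. Since $\sigma$ commutes with every element of $G$, each generating unitary reflection $r$ preserves $W$; moreover, if $rv=\lambda v$ then $r(\sigma v)=\bar{\lambda}\,\sigma v$, so the eigenvalues of $r$ on $V$ are closed under complex conjugation with matching multiplicities. Since $r$ has eigenvalue $1$ with multiplicity $n-1$ and a single root of unity $\zeta\neq1$, this forces $\zeta=\bar{\zeta}$, hence $\zeta=-1$ and $r$ has order two. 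Consequently $r$ restricts to a genuine reflection of $W$, so $G$ acts on $W$ as a real reflection group and $V$ is its complexification. This eigenvalue bookkeeping, which rules out higher-order unitary reflections surviving an invariant real structure, is the only group-specific input and is the step I expect to be the main, albeit mild, obstacle.

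Combining the two facts yields the chain of equivalences: $G\subgr\SOr_{2n}$ is irreducible if and only if $V_{\R}$ is irreducible over $\R$, if and only if $V$ is $\C$-irreducible and not of real type, if and only if $V$ is $\C$-irreducible and $G$ is not the complexification of a real reflection group. This is exactly the asserted statement, and no appeal to the classification is needed.
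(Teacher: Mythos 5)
Your proof is correct. It reaches the same two-part reduction as the paper's argument -- first, that for a $\C$-irreducible $V$ the realification is reducible precisely when $V$ admits a $G$-invariant real form, and second, that any such real form is acted on by a real reflection group -- but the mechanism differs in each half. For the first half the paper argues directly with the complex structure $J$: given a real irreducible component $V_1$ of $\R^{2n}$, the complex subspaces $V_1+JV_1$ and $V_1\cap JV_1$ are $G$-invariant, so complex irreducibility forces $\R^{2n}=V_1\oplus JV_1$ and exhibits $V_1$ as a real form; you instead cite the standard real/complex/quaternionic trichotomy via $\mathrm{End}_G(V_{\R})$, which is equivalent but outsources the one nontrivial direction (complex or quaternionic type implies $V_{\R}$ irreducible) to a black box the paper's self-contained argument never needs. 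For the second half, your eigenvalue bookkeeping -- a unitary reflection commuting with the antilinear involution must have its nontrivial eigenvalue $\zeta$ satisfy $\zeta=\bar{\zeta}$ by multiplicity counting, hence $\zeta=-1$, so it restricts to a genuine reflection of the real form -- is exactly the justification the paper leaves implicit in the sentence asserting that the projection of $G$ to $\Or(V_1)$ is a real reflection group whose complexification is $G$; making it explicit is a genuine gain in completeness.
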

\begin{proof}If a group $G\subgr \U_n$ is irreducible over the real numbers, then it is also irreducible over the complex numbers and cannot be the complexification of a real group (cf. proof of Lemma \ref{lem:abs_irr_or_complex}). Conversely, assume that $G\subgr \U_n$ is an irreducible unitary reflection group which becomes reducible after restricting the scalars to the real numbers and let
\[
		\R^{2n}=V_1 \oplus \dots \oplus V_k
\]
be a corresponding decomposition into irreducible real subspaces for some $k\geq2$. Since the complex structure $J$ is preserved by $G$, the complex subspaces $V_1+ JV_1$ and $V_1 \cap JV_1$ are invariant under the action of $G$ and thus we have $\R^{2n}=V_1\oplus JV_1$, as $G$ is irreducible as a complex group by assumption. The fact that $G$ and $J$ commute moreover implies that the projection of $G$ to $\Or(V_1)$ is a real reflection group whose complexification is $G$.
  \end{proof}

We have the following criterion for an irreducible rotation group not to be induced by a unitary reflection group. In particular, it applies to the groups $\Wp$ and $\Ws$ for $W$ not of type $\I_2(p)$.

\begin{lemma}\label{lem:orient_is_abs_irr}
Let $G<\SOr_n$ be an irreducible rotation group that is normalized by a reflection $s$. If $n>2$ then the group $G$ is absolutely irreducible.
\end{lemma}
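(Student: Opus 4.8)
The plan is to argue by contradiction, assuming $G$ is not absolutely irreducible, and to exploit the tension between the reflection $s$ and the complex structure that such a $G$ must preserve. First I would invoke Lemma \ref{lem:abs_irr_or_complex}: since $G$ is irreducible but, by assumption, not absolutely irreducible, it is a unitary reflection group considered as a real group, and in particular $G$ preserves an orthogonal complex structure $J$, i.e. an element $J\in\SOr_n$ with $J^2=-\mathrm{id}$ that commutes with every element of $G$. Because $G$ is irreducible over $\R$, its algebra $D$ of equivariant real endomorphisms of $\R^n$ is a real division algebra containing $\C=\R\,\mathrm{id}\oplus\R J$; as $G$ is not absolutely irreducible, $D$ is either $\C$ itself or the quaternions $\mathbb{H}$.

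The central step, which I expect to be the main obstacle, is to produce from $s$ a $G$-invariant complex structure that either commutes or anticommutes with $s$. The key observation is that conjugation by $s$ preserves the commutant $D$: if $T$ commutes with $G$, then $sTs^{-1}$ commutes with $sGs^{-1}=G$, since $s$ normalizes $G$. Thus $T\mapsto sTs^{-1}$ is an $\R$-algebra automorphism of $D$ fixing $\mathrm{id}$ and, as $s^2=\mathrm{id}$, of order at most two. If $D=\C$, the only such automorphisms are the identity and complex conjugation, which forces $sJs^{-1}=\pm J$, so $s$ commutes or anticommutes with $J$. If $D=\mathbb{H}$, the automorphism is inner and acts on $\mathrm{Im}\,\mathbb{H}\cong\R^3$ as an element of $\SOr_3$ of order at most two, hence fixes some imaginary unit $I$; then $sIs^{-1}=I$ and $I$ is a $G$-invariant complex structure commuting with $s$, reducing this case to the commuting one below.

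It then remains to rule out both possibilities using that $s$ is a reflection, so its fixed space $E_{+1}$ has dimension $n-1$ and its $(-1)$-eigenspace $E_{-1}$ is one-dimensional. In the commuting case $J$ preserves each eigenspace of $s$, so $E_{-1}$ is $J$-invariant, hence a complex subspace of even real dimension, contradicting $\dim E_{-1}=1$. In the anticommuting case $sJ=-Js$, so for $v\in E_{+1}$ one computes $s(Jv)=-Jsv=-Jv$, whence $J$ maps $E_{+1}$ injectively into $E_{-1}$; this gives $n-1=\dim E_{+1}\le\dim E_{-1}=1$, i.e. $n\le 2$, contradicting the hypothesis $n>2$. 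Since both cases are impossible, $G$ must be absolutely irreducible. I note that the anticommuting case is precisely what occurs for a cyclic rotation group in $\SOr_2$ normalized by a reflection, which is why the dimension hypothesis enters only there and is sharp at $n=2$.
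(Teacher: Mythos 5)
Your proof is correct, but it takes a genuinely different route from the paper's. The paper argues via the complexification: it first notes that $\Gt=\left\langle G,s\right\rangle$ is absolutely irreducible because it contains a reflection, then invokes Clifford's theorem to conclude that $s$ must permute (hence swap) the two irreducible components of the complexification of $G$, and finally observes that a reflection interchanging two subspaces of dimension $n/2$ forces $n/2=1$. You instead work entirely inside the commutant: Schur's lemma and Frobenius give $D\in\{\C,\mathbb{H}\}$, conjugation by $s$ is an order-two algebra automorphism of $D$, and a short case analysis (conjugation on $\C$, a fixed imaginary unit of an inner automorphism of $\mathbb{H}$) reduces everything to a complex structure $J$ with $sJs^{-1}=\pm J$, which is then killed by counting eigenspace dimensions of the reflection. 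Your version is more self-contained -- it avoids Clifford's theorem, makes explicit the quaternionic case that the paper's one-line ``absolutely irreducible since it contains a reflection'' quietly absorbs, and isolates exactly where the hypothesis $n>2$ is used (only in the anticommuting case, matching the genuine counterexample in $\SOr_2$). The paper's version is shorter and fits the Clifford-theoretic framework it reuses in Lemma \ref{lem:quasiprimitive}, at the cost of leaving two nontrivial steps implicit. Both arguments ultimately rest on the same dimension count for the eigenspaces of a reflection against a $G$-invariant complex structure.
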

\begin{proof} Suppose that $G$ is not absolutely irreducible. The group $\Gt=\left\langle G,s\right\rangle$ is absolutely irreducible since it contains a reflection. Hence, the reflection $s$ permutes the irreducible components of the complexification of $G$. This implies $n=2$ and thus the claim follows.
  \end{proof}

\subsection{Rotation groups in low dimensions}
\label{sub:ps_low_dim}

All elements of $\SOr_2$ and $\SOr_3$ are rotations and thus every finite subgroup of $\SOr_2$ and $\SOr_3$ is a rotation group. This is not true for $\SOr_4$, but its finite subgroups and the rotation groups among them can still be described explicitly. We sketch this description here, a more detailed discussion can be found in \cite{MR0169108}. There are two-to-one covering maps of Lie groups $\varphi: \SU_2 \times \SU_2 \To \SOr_4$ and $\psi:\SU_2 \To \SOr_3$. Therefore, the finite subgroups of $\SOr_4$ can be determined based on the knowledge of the finite subgroups of $\SOr_3$. These are cyclic groups $C_n$ of order $n$, dihedral groups $\D_n$ of order $2n$ and the symmetry groups of a tetrahedron, an octahedron and an icosahedron, which are isomorphic to $\alt_4$, $\sym_4$ and $\alt_5$, respectively. Using the covering map $\psi$ one finds that the finite subgroups of $S^3$ are cyclic groups $\mathbf{C}_n$ of order $n$, binary dihedral groups $\mathbf{D}_n$ of order $4n$ and binary tetrahedral, octahedral and icosahedral groups denoted by $\mathbf{T}$, $\mathbf{O}$ and $\mathbf{I}$, respectively, and we set $\mathbf{V}=\mathbf{D}_2$. Except for $\mathbf{C}_n$ with odd $n$, these are two-to-one preimages of respective subgroups of $\SOr_3$, i.e. subgroups of $\SU_2$ of the form $\mathbf{C}_n$ with odd $n$ are the only ones that do not contain the kernel of $\psi$. In the following we identify $\SU_2$ with the unit quaternions in $\mathbb{H}$. Then the homomorphism $\varphi$ is explicitly given by
\[
		 \begin{array}{cccl}
		 \varphi : &\SU_2 \times \SU_2 				& \To  	&	\SOr_4 \\
		           		& (l,r) 	& \MTo  & \varphi((l,r)): q \MTo lqr^{-1}
		 \end{array}
\]
where $\R^4$ is identified with the algebra of quaternions $\mathbb{H}$ and has kernel $\{\pm(1,1)\}$. The classification result reads as follows \cite[p.~ 54]{MR0169108}.
\begin{proposition}
\label{prp:classification_of_finite_subgroups_of_so(4)}
For every finite subgroup $G \subgr  \SOr_4$ there are finite subgroups $\mathbf{L},\mathbf{R} \subgr  \SU_2$ with $-1\in \mathbf{L},\mathbf{R}$ and normal subgroups $\mathbf{L}_K \triangleleft \mathbf{L}$ and $\mathbf{R}_K \triangleleft \mathbf{R}$ such that $\mathbf{L}/\mathbf{L}_K$ and $\mathbf{R}/\mathbf{R}_K$ are isomorphic via an isomorphism $\phi: \mathbf{L}/\mathbf{L}_K \To \mathbf{R}/\mathbf{R}_K$ for which
\[
			G = \varphi(\{(l,r)\in \mathbf{L} \times \mathbf{R} | \phi(\pi_L(l))=\pi_R(r)\})
\]
holds, where $\pi_L: \mathbf{L} \To \mathbf{L}/\mathbf{L}_K$ and $\pi_R: \mathbf{R} \To \mathbf{R}/\mathbf{R}_K$ are the natural projections. In this case we write $G=(\mathbf{L}/\mathbf{L}_K;\mathbf{R}/\mathbf{R}_K)_{\phi}$. Conversely, a set of data $(\mathbf{L}/\mathbf{L}_K;\mathbf{R}/\mathbf{R}_K)_{\phi}$ with the above properties defines a finite subgroup $G$ of $\SOr_4$ by the equation above.
\end{proposition}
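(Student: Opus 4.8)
The plan is to lift the problem to the double cover $\SU_2\times\SU_2$ through the homomorphism $\varphi$ and then to apply Goursat's lemma, which describes the subgroups of a direct product in precisely the fibre-product form appearing in the statement. Recall that $\varphi:\SU_2\times\SU_2\To\SOr_4$ is a surjective homomorphism with kernel $\{\pm(1,1)\}$. Hence, for a finite subgroup $G\subgr\SOr_4$, the preimage $\tilde G:=\varphi^{-1}(G)$ is a subgroup of $\SU_2\times\SU_2$ that contains the whole kernel, and $\varphi$ restricts to a surjection $\tilde G\To G$ with kernel of order two, so that $\tilde G$ is finite of order $2|G|$ and satisfies $\varphi(\tilde G)=G$. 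It therefore suffices to bring $\tilde G$ into the asserted normal form.

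First I would set $\mathbf L:=\pi_1(\tilde G)$ and $\mathbf R:=\pi_2(\tilde G)$, the images of $\tilde G$ under the two coordinate projections; these are finite subgroups of $\SU_2$ onto which $\tilde G$ projects surjectively. Since $(-1,-1)\in\ker\varphi\subset\tilde G$, projecting yields $-1\in\mathbf L$ and $-1\in\mathbf R$, which is the first asserted property. Then I would carry out Goursat's lemma: put $\mathbf L_K:=\{l\in\mathbf L\mid (l,1)\in\tilde G\}$ and $\mathbf R_K:=\{r\in\mathbf R\mid (1,r)\in\tilde G\}$ and check that these are normal subgroups of $\mathbf L$ and $\mathbf R$. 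The crucial point is to verify that the assignment $\pi_L(l)\MTo\pi_R(r)$ for $(l,r)\in\tilde G$ is a well-defined isomorphism $\phi:\mathbf L/\mathbf L_K\To\mathbf R/\mathbf R_K$; both well-definedness and injectivity reduce to the observation that $(l,r),(l,r')\in\tilde G$ forces $(1,r^{-1}r')\in\tilde G$, hence $r^{-1}r'\in\mathbf R_K$, while surjectivity is immediate from the surjectivity of the second projection. By construction one then has $\tilde G=\{(l,r)\in\mathbf L\times\mathbf R\mid \phi(\pi_L(l))=\pi_R(r)\}$, and applying $\varphi$ gives $G$ in the desired form.

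For the converse I would observe that the displayed set $S:=\{(l,r)\in\mathbf L\times\mathbf R\mid \phi(\pi_L(l))=\pi_R(r)\}$ is the preimage of the diagonal of $(\mathbf R/\mathbf R_K)\times(\mathbf R/\mathbf R_K)$ under the homomorphism $(l,r)\MTo(\phi(\pi_L(l)),\pi_R(r))$, hence a subgroup of $\mathbf L\times\mathbf R$, and it is finite. Since $\varphi$ is a homomorphism, $G:=\varphi(S)$ is then a finite subgroup of $\SOr_4$, which is exactly the construction in the statement.

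The classical input --- surjectivity of $\varphi$, its kernel $\{\pm(1,1)\}$, and the explicit quaternionic formula $\varphi((l,r)):q\MTo lqr^{-1}$ --- may be taken from \cite{MR0169108}. The only genuine work is the verification behind Goursat's lemma, namely that $\phi$ is a well-defined group isomorphism; the remaining bookkeeping, in particular tracking $\ker\varphi$ to obtain $-1\in\mathbf L,\mathbf R$, is routine.
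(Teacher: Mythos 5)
Your proof is correct and matches the paper's treatment: the paper cites this result from Du Val and, in the paragraph following the proposition, recovers the data $\mathbf{L}=\pi_1(\varphi^{-1}(G))$, $\mathbf{R}=\pi_2(\varphi^{-1}(G))$, $\mathbf{L}_K$, $\mathbf{R}_K$ in exactly the Goursat-lemma form you describe. Your write-up simply makes explicit the routine verifications (well-definedness of $\phi$, normality of the kernels, and $-1\in\mathbf{L},\mathbf{R}$ from $\ker\varphi\subset\varphi^{-1}(G)$) that the paper leaves to the reference.
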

Given a finite subgroup $G \subgr  \SOr_4$, for $\mathbf{L} = \pi_1(\varphi^{-1}(G))$, $\mathbf{R} = \pi_2(\varphi^{-1}(G))$, $\mathbf{L}_K=\{l\in \mathbf{L} | \varphi((l,1))\in G\}$ and $\mathbf{R}_K=\{r\in \mathbf{R} | \varphi((1,r))\in G\}$ the quotient groups $\mathbf{L}/\mathbf{L}_K$ and $\mathbf{R}/\mathbf{R}_K$ are isomorphic and with the isomorphism $\phi$ induced by the relation $\varphi^{-1}(G)\subgr \mathbf{L} \times \mathbf{R}$ we have $G=(\mathbf{L}/\mathbf{L}_K;\mathbf{R}/\mathbf{R}_K)_{\phi}$. In most cases the conjugacy class of $(\mathbf{L}/\mathbf{L}_K;\mathbf{R}/\mathbf{R}_K)_{\phi}$ in $\SOr_4$ does not depend on the specific isomorphism $\phi$. However, there are a few exceptions. Since the finite subgroups of $\SU_2$ are invariant under conjugation \cite[p.~53]{MR0169108}, the groups $(\mathbf{L}/\mathbf{L}_K;\mathbf{R}/\mathbf{R}_K)_{\phi}$ and $(\mathbf{R}/\mathbf{R}_K;\mathbf{L}/\mathbf{L}_K)_{\phi^{-1}}$ are conjugate in $\Or_4$. For a list of finite subgroups of $\SOr_4$ we refer to \cite[p.~57]{MR0169108}. 

Elements of $\SOr_4$ of the form $\varphi(l,1)$ and $\varphi(1,r)$ for $l,r \in \SU_2$ are called \emph{left}- and \emph{rightscrews}, respectively. They commute mutually and for $l,r \in \SU_2$ there exist $a,b \in \SU_2$ and $\alpha,\beta \in \R$ such that $a^{-1}la=\cos(\alpha)+\sin(\alpha)i$ and $brb^{-1}=\cos(\beta)+\sin(\beta)i$. Then, with respect to the basis $\mathbb{B}=\{ab,aib,ajb,akb\}$, we have
\[
	\begin{array}{ccl}
		\varphi(l,1)_{\mathbb{B}}  = \left(
		  \begin{array}{cc}
		    R(\alpha) & 0 \\
		    0 & R(\alpha) \\
		  \end{array}
		\right)
  \end{array},
  \begin{array}{ccl}
		\varphi(1,r)_{\mathbb{B}}  = \left(
		  \begin{array}{cc}
		    R(\beta) & 0 \\
		    0 & R(-\beta) \\
		  \end{array}
		\right)
  \end{array}	
\]
and thus
\[
	\begin{array}{ccl}
		\varphi(l,r)_{\mathbb{B}}  = \left(
		  \begin{array}{cc}
		    R(\alpha+\beta) & 0 \\
		    0 & R(\alpha-\beta) \\
		  \end{array}
		\right).
  \end{array}
\]
where $R(\alpha)$ is a rotation about the angle $\alpha$. Consequently, $\varphi(l,r)$ is a rotation if and only if $\mathrm{Re}(l)=\mathrm{Re}(r)\notin\{\pm 1\}$. Using this observation it is possible to classify rotation groups in dimension $4$. The primitive rotation groups among them are singled out in \cite{Maerchik}. The groups of the form  $(\mathbf{C}_{km}/\mathbf{C}_m;\mathbf{R}/\mathbf{R}_K)$ listed in this paper under number 7.,...,11. preserve a complex structure and correspond to the primitive unitary reflection groups in dimension $2$ (cf. Section \ref{sub:unitary}). The groups in the list that come from real reflection groups are (cf. Section \ref{sub:reflection-groups}): $\Wp(\A_4)=(\mathbf{I}/\mathbf{C}_1;\mathbf{I}/\mathbf{C}_1)^*$, $\Ws(\A_4)=(\mathbf{I}/\mathbf{C}_2;\mathbf{I}/\mathbf{C}_2)^*$, $\Ws(\D_4)=(\mathbf{T}/\mathbf{T};\mathbf{T}/\mathbf{T})$, $\Wp(\F_4)=(\mathbf{O}/\mathbf{T};\mathbf{O}/\mathbf{T})$, $\Ws(\F_4)=(\mathbf{O}/\mathbf{O};\mathbf{O}/\mathbf{O})$ and $\Wp(\Hn_4)=(\mathbf{I}/\mathbf{I};\mathbf{I}/\mathbf{I})$. Here, the star $*$ indicates the choice of an outer automorphism (cf. Proposition \ref{prp:classification_of_finite_subgroups_of_so(4)}). The remaining primitive rotation groups appearing in \cite{Maerchik} are listed in Table \ref{tab:rot4}.

\begin{table}[ht] \centering
\begin{tabular}{ r | l | l }                      
  & rotation group & order \\
  \hline 
 1.&$(\mathbf{D}_{3m}/\mathbf{D}_{3m};\mathbf{T}/\mathbf{T})$  & $144m$  \\
 2.&$(\mathbf{D}_{m}/\mathbf{D}_{m};\mathbf{O}/\mathbf{O})$  & $96m$  \\
 3.&$(\mathbf{D}_{m}/\mathbf{C}_{2m};\mathbf{O}/\mathbf{T})$  & $48m$  \\
 4.&$(\mathbf{D}_{2m}/\mathbf{D}_{m};\mathbf{O}/\mathbf{T})$  & $96m$  \\
 5.&$(\mathbf{D}_{3m}/\mathbf{C}_{2m};\mathbf{O}/\mathbf{V})$  & $48m$  \\
 6.&$(\mathbf{D}_{m}/\mathbf{D}_{m};\mathbf{I}/\mathbf{I})$  & $240m$  \\
 7.&$(\mathbf{T}/\mathbf{T};\mathbf{O}/\mathbf{O})$  & $576$  \\
 8.&$(\mathbf{T}/\mathbf{T};\mathbf{I}/\mathbf{I})$  & $1440$  \\
 9.&$(\mathbf{O}/\mathbf{O};\mathbf{I}/\mathbf{I})$  & $2880$  \\
 \end{tabular}
\caption{Primitive rotation groups in $\Or_4$ that do not preserve a complex structure and are different from rotation groups of type $\Wp$ and $W^*$.}
\label{tab:rot4} 
\end{table}
\subsection{Monomial reflection-rotation groups}
\label{sub:monomial_examples}

An imprimitive linear group is called \emph{monomial} if it admits a system of imprimitivity consisting of one-dimensional subspaces (cf. Introduction). Examples for monomial irreducible reflection-rotation groups are the reflection group of type $\BC_n$ and its orientation preserving subgroup. To construct other examples let $H\subgr \sym_n$ be a permutation group generated by a set of double transpositions and $3$-cycles, e.g. (cf. \cite[p.~104]{Mikhailova})

\begin{enumerate}
  \item $H=H_5=\left\langle (1,2)(3,4),(2,3)(4,5)\right\rangle \subgr  \sym_5$, $H_5\cong \dih_5$
	\item $H=H_6=\left\langle (1,2)(3,4),(1,5)(2,3),(1,6)(2,4)\right\rangle \subgr  \sym_6$, $H_6\cong \alt_5$
	\item $H=H_7=\left\langle g_1,g_2,g_3 \right\rangle \subgr  \sym_7$, $H_7\cong \PSL_2(7)\cong \SL_3(2)$,
	\item $H=H_8=\left\langle g_1,g_2,g_3,g_4\right\rangle \subgr \sym_8$, $H_8\cong \AG_3(2)\cong\Z_2^3 \rtimes \SL_3(2)$
	\item $H=\alt_n \subgr  \sym_n $
\end{enumerate}
\noindent where
\[
	g_1=(1,2)(3,4),\; g_2=(1,5)(2,6),\; g_3=(1,3)(5,7),\; g_4=(1,2)(7,8).
\]
Regarding such a permutation group $H\subgr \sym_n$ as a subgroup of $\SOr_n$ yields a monomial rotation group, which is however not irreducible. Other examples of monomial reducible reflection-rotation groups are the diagonal subgroup $D(n)=D(W(\BC_n))$ of a reflection group of type $\BC_n$ and its orientation preserving subgroup $\Dp(n)=D(W(\D_n))$. Both groups are normalized by $\sym_n\subgr \SOr_n$. Therefore, we obtain a class of examples defined as semidirect products of $D(n)$ and $\Dp(n)$, respectively, with a permutation group $H\subgr \sym_n$ as above. We define $M_n=\Dp(n) \rtimes H_n$ for $n=5,\ldots,8$, $\Mt_n=D(n) \rtimes H_n$ for $n=5,\ldots,8$ and $\Mt(\D_n)=D(n) \rtimes \alt_n$. Moreover, we can define the following two exceptional examples of monomial irreducible rotation groups (cf. \cite[p.~104]{Mikhailova})
\[ 
	M_7^p=\left\langle g_1,g_2,g_3,g_5\right\rangle\subgr \SOr_7,\
	M^p_8=\left\langle g_1,g_2,g_3,g_4,g_5 \right\rangle\subgr \SOr_8,
\]
with $g_5 = (1,\overline{2})(3,\overline{4})$ where we write $(i,\overline{j})$ for the linear transformation that maps the basis vectors $e_i$ to $-e_j$ and $-e_j$ to $e_i$.

We record the following fact that can be checked by a computation. The groups $\AG_3(2)$ and $M_7^p$ are isomorphic and the restriction of the permutation representation of $\AG_3(2)$ described in $(iv)$ to $\R^7$ is equivalent to the natural representations of $M^p_7$ on $\R^7$.

\subsection{Nonmonomial imprimitive reflection-rotation groups}
\label{sub:imprimitive_example}
The imprimitive unitary reflection groups $G(m,p,n)$ give rise to a family of imprimitive rotation groups (cf. Section \ref{sub:unitary}). Related families of reflection-rotation groups can be constructed as follows. For a positive integer $m$ and $k=1,2$ the groups $\Wp(\I_2(m))$ and $W(\I_2(m))$ are normal subgroups of $W(\I_2(km))$ with abelian quotient. Hence, 
\[
		\As(km,k,n) = \left\{(g_1,\ldots,g_n)\in W(\I_2(km))^n | (g_1\cdots g_n) \in \Wp(\I_2(m)) \right\}
\]
and
\[
		\At(km,k,n) = \left\{(g_1,\ldots,g_n)\in W(\I_2(km))^n | (g_1\cdots g_n) \in W(\I_2(m)) \right\}
\]
are groups. We define
\[
		\Gs(km,k,n)=\As(km,k,n) \rtimes \sym_n< \SOr_{2n}
\]
and
\[
	 \Gt(km,k,l)=\At(km,k,l) \rtimes \sym_n <\Or_{2n}
\]
where the symmetric group $\sym_n$ permutes the components of $\As(km,k,n)$ and $\At(km,k,n)$, respectively. Let $s$, $r$ be the transformation of $\C^n$ defined by 
\[
	s(z_1,\ldots, z_n)=(\overline{z}_1,z_2,\ldots, z_n), \  r(z_1,\ldots, z_l)=(\overline{z}_1,\overline{z}_2,z_3,\ldots, z_l)
\]
Then we have
\[
		\Gs(km,k,n)=\left\langle G(km,k,n), r  \right\rangle, \ \Gt(km,k,n)=\left\langle G(km,k,n), s  \right\rangle
\]
where the complex groups on the right hand sides are regarded as real groups. In particular, the group $\Gs(km,k,n)$ is an imprimitive irreducible rotation group and the group $\Gt(km,k,n)$ is an imprimitive irreducible reflection-rotation group for $km\geq 3$ and $k=1,2$.

In dimension four, other examples can be constructed in the following way. Let $m$ and $k$ be positive integers and let $\varphi:\dih_k\To\dih_k$ be an involutive automorphism of the dihedral group of order $2k$ that maps reflections onto reflections. The data $\{(W(\I_2(km)),\Wp(\I_2(m)),W(\I_2(km))\}_{i\in \{1,2\}}$ together with this automorphism defines a reducible rotation group $D$ (cf. Theorem \ref{thm:clas_red_prg}). Since $\varphi$ has order $2$, the rotation that interchanges the two irreducible componets of $D$ normalizes $D$. We denote the rotation group generated by $D$ and this normalizing rotation by $\Gs(km,k,2)_{\varphi}$.

\subsection{Reducible reflection-rotation group}
\label{sub:reducible_examples}
We say that a reflection-rotation group $G$ is \emph{indecomposable} if it cannot be written as a product of subgroups that act in orthogonal spaces. Every reflection-rotation group splits as a product of indecomposable components. Basic examples for reducible but indecomposable rotation groups are $\Wp(\A_1\times \dots \times \A_1)$ and the diagonal subgroup $\Delta(W\times W)$ of the product of two copies of an irreducible reflection group $W<\Or_n$. The second example preserves a complex structure and coincides with the unitary reflection group of type $W$ considered as a real group. More generally, for an automorphism $\varphi: W \To W$ that maps reflections onto reflections the group
\[
		\Delta_\varphi(W\times W)=\{(g,\varphi(g)) \in W\times W | g \in W\}< \SOr_{2n}
\]
is a rotation group. The groups $\Delta_\varphi(W\times W)$ and $\Delta(W\times W)$ are conjugate in $\SOr_{2n}$, if and only if the automorphism $\varphi$ is realizable through conjugation by an element in $\Or_n$. This is possible if all labels of the Coxeter diagram of $W$ lie in $\{2,3,4,6\}$ \cite[Cor. 19, p.~7]{MR1997410}. However, reflection groups of type $\I_2(p)$, $\Hn_3$ and $\Hn_4$ admit automorphisms that map reflections onto reflections but cannot be realized through conjugation in $\Or_n$ \cite[pp.~31-32]{Franszen}. The exceptional rotation groups arising in this way for $W$ of type $\Hn_3$ and $\Hn_4$ do not preserve a complex structure (cf. Section \ref{sub:unitary}) and have not been studied in the context of \cite{Mikhailova} (cf. \cite[Thm.~2.1, p.~105]{Mikhailova}). General reducible but indecomposable reflection-rotation groups are extensions of the examples from this section by irreducible rotation groups we have described so far (cf. Section \ref{sec:class_reducible}).

\subsection{Exceptional primitive rotation groups}
\label{sub:primitive}
We have already seen a couple of primitive absolutely irreducible rotation groups. The rotation groups $\Wp(\A_n)$, $\Wp(\E_6)$, $\Wp(\E_7)$, $\Wp(\E_8)$, $\Ws(\A_5)$ and $\Ws(\E_6)$ belong to this class. In this section we describe two other examples.

\begin{lemma}\label{lem:exotic_psg_A5}
There exists a primitive absolutely irreducible rotation group isomorphic to the alternating group $\alt_5$. We denote it as $R_5(\alt_5) \subgr  \SOr_5$.
\end{lemma}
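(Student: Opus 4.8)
The plan is to realize the claimed group as the five-dimensional irreducible real representation of $\alt_5$ and then to verify the four required properties—orthogonality with image in $\SOr_5$, generation by rotations, absolute irreducibility, and primitivity—one at a time. Concretely, I would use the isomorphism $\alt_5\cong\PSL_2(\mathbb{F}_5)$ and the resulting $2$-transitive action on the six points of the projective line $\mathbb{P}^1(\mathbb{F}_5)$. The deleted permutation representation on the hyperplane $\{(x_1,\dots,x_6):\sum x_i=0\}\subgr\R^6$ is a five-dimensional real representation which is orthogonal, being a subrepresentation of the (orthogonal) permutation representation. Since $\alt_5$ is simple, the determinant homomorphism into $\{\pm1\}$ is trivial, so the image lies in $\SOr_5$; call this group $R_5(\alt_5)$.

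Absolute irreducibility comes essentially for free from $2$-transitivity: the complex permutation character of a $2$-transitive action decomposes as $\mathbf 1+\chi$ with $\chi$ irreducible, so the complexified deleted module is the unique irreducible character $\chi_5$ of $\alt_5$, taking the values $\chi_5=(5,1,-1,0,0)$ on the classes of the identity, a double transposition, a $3$-cycle, and the two classes of $5$-cycles. In particular the real representation stays irreducible over $\C$, i.e.\ is absolutely irreducible.

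Next I would show $R_5(\alt_5)$ is a rotation group by reading off fixed-point dimensions from $\chi_5$, using that $\dim\mathrm{Fix}(g)=\frac1{\mathrm{ord}(g)}\sum_{j}\chi_5(g^j)$. A double transposition has order $2$ and trace $1$, so its eigenvalues are $+1$ and $-1$ with multiplicities $3$ and $2$; hence $\mathrm{Fix}(g)$ has codimension two and $g$ is a rotation of order two. By contrast $3$-cycles and $5$-cycles both have one-dimensional fixed spaces (codimension four) and so are not rotations. Since the double transpositions form a single conjugacy class, they generate a nontrivial normal subgroup, which equals $\alt_5$ by simplicity; thus $R_5(\alt_5)$ is generated by rotations.

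The main work is primitivity. Because $\dim=5$ is prime and the representation is irreducible—forcing any block system to be permuted transitively—every system of imprimitivity must consist of five lines, i.e.\ would make $R_5(\alt_5)$ monomial. The diagonal subgroup is then the kernel of the block-permutation homomorphism $\alt_5\to\sym_5$, a proper (the group is nonabelian) normal subgroup, hence trivial by simplicity. Thus the representation would be induced, $\R^5\cong\mathrm{Ind}_{H}^{\alt_5}\psi$, from a line stabilizer $H$ of index $5$, i.e.\ $H\cong\alt_4$, along a homomorphism $\psi\colon\alt_4\to\{\pm1\}$. As $\alt_4$ has abelianization $\cyc_3$, the only such $\psi$ is trivial, and the induced representation is the permutation representation of $\alt_5$ on five points, namely $\mathbf 1\oplus\chi_4$. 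This is not the irreducible $\chi_5$, a contradiction; hence no system of imprimitivity exists and $R_5(\alt_5)$ is primitive. I expect this last step—excluding a monomial structure via the computation of the induced character—to be the main obstacle, while the rotation and irreducibility claims are routine character bookkeeping.
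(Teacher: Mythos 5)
Your construction is exactly the paper's: the exceptional embedding $\alt_5\hookrightarrow\sym_6$ you realize via $\PSL_2(\mathbb{F}_5)$ acting on $\mathbb{P}^1(\mathbb{F}_5)$ is the one the paper uses, and the observation that double transpositions act with codimension-two fixed space (hence generate by simplicity) is the same key step, so the argument is correct and essentially identical in structure. The only difference is that where the paper cites the ATLAS for absolute irreducibility and defers primitivity to its classification of monomial rotation groups in Section \ref{sub:imprimitive}, you supply self-contained substitutes (irreducibility from $2$-transitivity; primitivity by noting a block system would force a monomial structure induced from a sign character of an index-$5$ subgroup $\alt_4$, which must be trivial and yields the reducible permutation module), both of which are sound.
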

\begin{proof} We obtain a faithful linear representation of $\alt_5$ on $\R^5$ by restricting the nontrivial part of the permutation representation of $\sym_6$ to the image of an exceptional embedding $i: \alt_5 \subgr \sym_5 \To \sym_6$. This is the unique absolutely irreducible representation of $\alt_5$ in dimension 5 \cite[p.~2]{MR827219}. Since $i$ maps double transpositions to double transpositions the corresponding linear group $R_5(\alt_5) \subgr  \SOr_5$ is a rotation group isomorphic to $\alt_5$. The fact that $\alt_5$ is a simple group in combination with the results from Section \ref{sub:imprimitive} implies that $R_5(\alt_5)$ is primitive.
  \end{proof}

\begin{lemma}\label{lem:exotic_psg_psl}
There exists a primitive absolutely irreducible rotation group isomorphic to $\PSL_2(7)$. We denote it as $R_6(\PSL_2(7)) \subgr  \SOr_6$. The group $G=\left\langle R_6(\PSL_2(7)), \mathrm{-id}\right\rangle$ is not a rotation group.
\end{lemma}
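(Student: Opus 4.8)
The plan is to follow the three-step pattern of Lemma~\ref{lem:exotic_psg_A5}. First I would use the character table of $\PSL_2(7)$ \cite{MR827219}: its complex irreducible characters have degrees $1,3,3,6,7,8$, the two degree-$3$ characters form a complex-conjugate pair, and the degree-$6$ character $\chi$ is real valued. A Frobenius--Schur computation (or directly the table in \cite{MR827219}) shows that $\chi$ is of real type, so it is afforded by an absolutely irreducible $6$-dimensional real representation $\rho$ realizing $\PSL_2(7)$ as a subgroup $R_6 \subgr \Or_6$. Composing $\rho$ with $\det$ gives a homomorphism from the simple nonabelian group $\PSL_2(7)$ to $\cyc_2$, which must be trivial; hence $R_6 \subgr \SOr_6$.

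Next I would read off the eigenvalues from $\chi$. The decisive value is $\chi(2A)=2$: an involution has eigenvalues $\pm 1$ only, and trace $2$ in dimension $6$ forces the pattern $(1^4,(-1)^2)$, so every involution of $R_6$ fixes a $4$-dimensional subspace and rotates the orthogonal $2$-plane by $\pi$, i.e. is a rotation. The rotations of $R_6$ therefore generate a nontrivial normal subgroup, which by simplicity is all of $R_6$; thus $R_6$ is a rotation group. Primitivity is argued as in Lemma~\ref{lem:exotic_psg_A5}: a system of imprimitivity of length $l \mid 6$ with $l\geq 2$ would, since $\PSL_2(7)$ is simple and the representation irreducible, yield a faithful transitive action of $\PSL_2(7)$ on at most $6$ points, contradicting the fact that its smallest faithful permutation degree is $7$ (cf. Section~\ref{sub:imprimitive}).

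For the final claim, observe that $\mathrm{-id}$ would be a central involution of $R_6\cong\PSL_2(7)$, a group with trivial centre, so $\mathrm{-id}\notin R_6$ and $G=R_6\times\langle\mathrm{-id}\rangle$. The point is that $G$ is not generated by its rotations, and I would prove this by showing every rotation of $G$ already lies in $R_6$. An orthogonal map of $\R^6$ is a rotation exactly when its eigenvalue $1$ has multiplicity $4$. Recording, for each class of $R_6$, the multiplicities of the eigenvalues $1$ and $-1$ of $\rho(g)$, one finds that $\rho(g)$ attains eigenvalue $1$ of multiplicity $4$ only on the class $2A$, while $-\rho(g)$ never attains it, because the eigenvalue $-1$ of $\rho(g)$ has multiplicity at most $2$ on $R_6$ (attained only on the classes of order $2$ and $4$). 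Hence the rotations of $G$ are precisely the involutions of $R_6$; they generate the proper subgroup $R_6\neq G$, so $G$ is not a rotation group.

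The routine but essential obstacle is the eigenvalue bookkeeping of this last step, where one must treat all classes of both $\rho(g)$ and $-\rho(g)$ rather than only the involutions. The one genuinely ambiguous case is the class of order $4$, whose vanishing trace does not by itself determine the eigenvalues; these are pinned down by the requirement that its square lie in the class $2A$, giving the pattern $(1^2,(-1)^2,i,-i)$, after which the tabulation and the conclusion are immediate.
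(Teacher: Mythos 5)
Your proof is correct, and its skeleton coincides with the paper's: identify the unique faithful absolutely irreducible $6$-dimensional representation, show the involutions are rotations, get primitivity from simplicity plus the imprimitivity analysis, and kill the extension by $-\mathrm{id}$ by bounding the multiplicity of the eigenvalue $-1$. The difference is in where the eigenvalue data comes from. The paper realizes $R_6(\PSL_2(7))$ concretely as the restriction to $H_7\subgr\sym_7$ (generated by the double transpositions $g_1,g_2,g_3$) of the natural representation of $\sym_7$ on $\R^6$; with that model in hand, ``generated by rotations'' is immediate from the choice of generators, and the final step is a one-line observation that the $-1$-eigenspace of \emph{any} permutation of $7$ letters acting on $\R^6$ has dimension at most $3<4$, so no element of $-R_6(\PSL_2(7))$ can be a rotation. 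You instead extract everything from the character table: Frobenius--Schur indicator and $\det$ for realizability in $\SOr_6$, $\chi(2A)=2$ for the involutions, and a class-by-class tabulation (including the squaring argument that pins down the order-$4$ class) for the last step. Both routes are sound; the explicit permutation realization buys you exactly the step you flag as the ``routine but essential obstacle,'' replacing the class-by-class bookkeeping with a uniform bound over all of $\sym_7$, while your character-theoretic version has the mild advantage of not depending on exhibiting the embedding $\PSL_2(7)\hookrightarrow\sym_7$ by double transpositions.
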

\begin{proof} The group $\PSL_2(7)$ has a unique faithful and absolutely irreducible representation in dimension 6 \cite[p.~3]{MR827219} and we denote its image by $R_6(\PSL_2(7)) \subgr  \SOr_6$. It can be obtained by restricting the natural representation of $\sym_7$ on $\R^6$ to a subgroup described in Section \ref{sub:monomial_examples}, $(iii)$. This shows that $R_6(\PSL_2(7))$ is generated by rotations. The fact that $\PSL_2(7)$ is a simple group in combination with the results from Section \ref{sub:imprimitive} implies that $R_6(\PSL_2(7))$ is primitive.

Since the eigenvalues of a cycle $(1,\ldots,k)$ regarded as a linear transformation are the $k$-th roots of unity, the maximal dimension of the $-1$-eigenspace of a permutation $\sigma \in \sym_7$ acting on $\R^6$ is 3. This shows that all rotations contained in $G$ are also contained in $R_6(\PSL_2(7))$ and hence $G$ is not a rotation group.
  \end{proof}

\subsection{A new primitive rotation group}
\label{sec:group_L_sec}
The group $W(\I_2(4)) \subgr  \Or_2$ is a natural realization of the dihedral group $\dih_4$ of order $8$. Let $H$ be the tensor product of $3$ copies of $W(\I_2(4))$, i.e. $H=W(\I_2(4))\otimes W(\I_2(4)) \otimes W(\I_2(4)) \subgr  \SOr_8$. Then $H$ is an absolutely irreducible group of order $2^7$ and its normalizer $N=N_{\SOr_8}(H)$ contains rotations of order $2$, e.g. the linear transformations that interchange two $W(\I_2(4))$ factors. We would like to classify primitive rotation groups $G$ with $H<G<N$, as this problem occurs in our classification of rotation groups (cf. Proposition \ref{prp:primitive_dimension6_conlusion}). 

The images $A$ and $N(A)$ in $\mathrm{Int}(\mathfrak{so}_8)$ of the groups $H$ and $N$ are members of a series of finite subgroups of $\mathrm{Int}(\mathfrak{so}_{2^m})$ studied in connection with gradings of simple Lie algebras. Namely let $H_m\subgr \SOr_{2^m}$ be the tensor product of $m$ copies of $W(\I_2(4))$, i.e. $H=H_3$. Then the group $A_m=H_m/\{\pm 1\}$ is a so-called \emph{Jordan subgroup} of $\mathrm{Int}(\mathfrak{so}_{2^m})$. It is a $2$-elementary abelian group of order $2^{2m}$ and can be considered as a $2m$-dimensional vector space over $\mathbb{F}_2$. It is known that the assignment $Q(x)=0$ or $Q(x)=1$ for $x=\{\pm h\}$ depending on whether $h^2=1$ or $h^2=-1$ defines a nondegenerate quadratic form of Witt index $m$ and that the natural action of $N(H_m)$ on $A_m$ defines an isomorphism of the group $N(H_m)/H_m$ onto the orthogonal group $O(Q)$ \cite[Sect. 3.12, Example 4, p.~126]{MR1349140},\cite{MR0379748,MR1191170}. A quadratic form on a vector space over $\mathbb{F}_2$ is called nondegenerate, if the bilinear form $f(x,y)=Q(x+y)+Q(x)+Q(y)$ is nondegenerate. Its Witt index is defined to be the maximal dimension of a singular subspace, i.e. a subspace on which the quadratic form vanishes identically (cf. \cite[Sect.~I.16, p.~34]{MR0158011}). Let $O^+(Q)$ be the index $2$ subgroup of $O(Q)$ whose elements have Dickson invariant $0$ (cf. \cite[Sect.~II.10, p.~65]{MR0158011}) and let $L$ be the preimage of $O^+(Q)$ in $N$. We are going to show that the group $L$ is a primitive rotation group and that it is the only such group $G$ with $H<G<N$ (cf. Lemma \ref{lem:pssgroups_L}).

For a bivector in $\bigwedge^2 \mathbb{F}^4_2$ we divide its exterior square computed over $\Z$ by 2 and consider the result modulo $2$. This assignment defines a quadratic form $Q'$ with the wedge product $\bigwedge : \bigwedge^2 \mathbb{F}^4_2 \times \bigwedge^2 \mathbb{F}^4_2 \To \mathbb{F}_2$ as associated bilinear form. Hence, the form $Q'$ is nondegenerate and has Witt index $3$, a three-dimensional singular subspace being $U\wedge \mathbb{F}^4_2$ for a one-dimensional subspace $U\subset \mathbb{F}^4_2$. It follows that there exists an isomorphism between $(H/\{\pm 1\},Q)$ and $(\bigwedge^2 \mathbb{F}^4_2,Q')$ \cite[Sect.~I.16, p.~34]{MR0158011}. We identify $H/\{\pm 1\}$ and $\bigwedge^2 \mathbb{F}^4$ via such an isomorphism. In this way we obtain an embedding of $\SL_4(2)$ into $O(Q)$. Since both $O^+(Q)$ and the image of $\SL_4(2)$ are normal subgroups of $O(Q)$ isomorphic to the simple group $A_8$ \cite[p.~22]{MR827219} we can identify $\SL_4(2)$ with $O^+(Q)$.
 
Due to a Witt type theorem in characteristic $2$ proved by C. Arf every isomorphism between singular subspaces of $H/\{\pm 1\}$ can be extended to an isometry of $H/\{\pm 1\}$ \cite[Sect.~I.16, p.~36; cf. Sect.~I.11, p.~21]{MR0158011}. In particular, flags of singular subspaces of $H/\{\pm 1\}$ with the same signature are $O(Q)$-equivalent. Maximal singular subspaces of $H/\{\pm 1\}$ of the form $U\wedge \mathbb{F}^4_2$ for a one-dimensional subspace $U\subset \mathbb{F}^4_2$ are not $O^+(Q)$-equivalent to maximal singular subspaces of the form $U\wedge U$ for a three-dimensional subspace $U\subset \mathbb{F}^4_2$. For, a subspace of the first type can be annihilate by wedging with a one-dimensional subspace of $\mathbb{F}^4_2$ whereas a subspace of the second type cannot. Hence, the set of maximal singular subspaces of $H/\{\pm 1\}$ decomposes into two $O^+(Q)$-orbits represented by these two types of subspaces. Each orbit conjoint with a trivial element inherits a vector space structure over $\mathbb{F}_2$ from $\mathbb{F}^4_2$. Three different maximal singular subspaces belong to a two-dimensional subspace of this vector space, if and only if they intersect in a one-dimensional singular subspace. Hence, $H/\{\pm 1\}$ contains $30$ maximal singular subspaces. Since, every maximal singular subspace contains $7$ one-dimensional (singular) subspaces and every one-dimensional singular subspace is contained in $3$ maximal singular subspaces from each $O^+(Q)$-orbit, we see that $H/\{\pm 1\}$ contains $35$ one-dimensional singular subspaces. Moreover, every two-dimensional singular subspace is contained in precisely one maximal singular subspace from each $O^+(Q)$-orbit.

A representative of a maximal singular subspace of $H/\{\pm 1\}$ is given by
\[
(W(\I_2(2))\otimes W(\I_2(2)) \otimes W(\I_2(2)))/\{\pm 1\}
\]
where $W(\I_2(2))<W(\I_2(4))$ is a Klein four-group. The preimage of an $i$-dimensional singular subspace of $H/\{\pm 1\}$ in $H$ is an abelian normal subgroup of $H$ of order $2^{i+1}$ with respect to which the space $\R^8$ decomposes into an orthogonal sum of $2^{3-i}$-dimensional weight spaces. Denote the collection of $2^{3-i}$-dimensional subspaces obtained in this way from the $i$-dimensional singular subspaces of $H/\{\pm 1\}$ by $K_{2^{3-i}}$ and the corresponding collections of involutions whose $-1$-eigenspaces are the subspaces from $K_{2^{3-i}}$ by $\mathfrak{R}_{2^{3-i}}$. We record the following lemma.

\begin{lemma} \label{lem:H-orbits} The group $N$ acts transitively on $K_{2^{3-i}}$, $i=1,2,3$. The $H$-orbits in $K_{2^{3-i}}$ have order $2^i$ and are in one-to-one correspondence with the $i$-dimensional singular subspaces of $H/\{\pm 1\}$. In particular, every element of $K_{2^{3-i}}$ uniquely determines an $i$-dimensional singular subspace of $H/\{\pm 1\}$.
\end{lemma}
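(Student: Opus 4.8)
The plan is to read off both the $H$- and the $N$-action on $K_{2^{3-i}}$ from the weight decompositions of the preimages of the singular subspaces, and to reconstruct each singular subspace intrinsically from any one of its weight spaces. For an $i$-dimensional singular subspace $V\subset H/\{\pm1\}$ write $\tilde V\triangleleft H$ for its preimage, an elementary abelian group of order $2^{i+1}$; it is normal because $H$ acts trivially on $H/\{\pm1\}$ by conjugation (all commutators lie in $\{\pm1\}$). Since $-1$ acts on $\R^8$ as $-\mathrm{id}$, only the $2^i$ characters $\chi$ of $\tilde V$ with $\chi(-1)=-1$ occur in $\R^8$, and by the preceding discussion each weight space $P_\chi$ has dimension $2^{3-i}$, so there are exactly $2^i$ of them.

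First I would prove the $H$-orbit claim. The group $H$ permutes the $P_\chi$ via $gP_\chi=P_{\chi^g}$, and from $g^{-1}vg=v\,(-1)^{f(\overline v,\overline g)}$ one computes $\chi^g=\chi\cdot\eta_g$ with $\eta_g(v)=(-1)^{f(\overline v,\overline g)}$. As $\overline g$ ranges over $H/\{\pm1\}$ the functional $\overline v\mapsto f(\overline v,\overline g)$ on $V$ ranges over all of $V^*$, because $f$ is nondegenerate and hence the restriction map $H/\{\pm1\}\to V^*$ is surjective. Thus the $\eta_g$ exhaust the $2^i$ characters of $\tilde V$ that are trivial on $\{\pm1\}$, so $H$ permutes the $2^i$ weight spaces of $\tilde V$ transitively. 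This identifies the weight spaces of a fixed $V$ with a single $H$-orbit $O_V\subset K_{2^{3-i}}$ of cardinality $2^i$.

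Next I would recover $V$ from any $P\in O_V$, which yields the last assertion and the injectivity of $V\mapsto O_V$. Set $V_P=\{\overline h\in H/\{\pm1\}:h|_P=\pm\mathrm{id}_P\}$. Clearly $V\subseteq V_P$; any such $h$ preserves $P$, forcing $\overline h\in V^\perp$, and two such $h$ commute on $P$, forcing $f$ to vanish on $V_P$, while $h|_P=\pm\mathrm{id}$ gives $h^2|_P=\mathrm{id}$, hence $h^2=1$ and $Q(\overline h)=0$. So $V_P$ is a singular subspace with $V\subseteq V_P\subseteq V^\perp$. If some $\overline h\in V_P\setminus V$ existed, then $\tilde V':=\langle\tilde V,h\rangle$ would be the preimage of a singular subspace $V'$ of dimension $i+1$ on which $P$ is still a common eigenspace; thus $P$ would lie in a single weight space of $\tilde V'$, of dimension $2^{2-i}<\dim P$, a contradiction (for $i=3$ nothing is to check since $V^\perp=V$). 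Hence $V_P=V$, so $P$ determines $V$ and distinct $V$ give disjoint orbits; with the previous step this gives the asserted bijection between $H$-orbits in $K_{2^{3-i}}$ and $i$-dimensional singular subspaces.

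Finally, for the $N$-action I would invoke the isomorphism $N/H\cong O(Q)$ together with the Witt-type theorem of Arf quoted above: any linear isomorphism between two $i$-dimensional singular subspaces is an isometry and extends to an element of $O(Q)$, so $O(Q)$, and hence $N$, acts transitively on $i$-dimensional singular subspaces. Via $V\mapsto O_V$ this means $N$ permutes the $H$-orbits transitively, and since $H\le N$ is already transitive on each $O_V$, the group $N$ acts transitively on $K_{2^{3-i}}=\bigsqcup_V O_V$. I expect the reconstruction step to be the main obstacle, namely showing that no element of $V^\perp$ outside $V$ can act as a scalar on $P$; the enlargement argument via $\tilde V'$ settles it, and it is the one place where the precise value $2^{3-i}$ of the weight-space dimension, not merely its existence, is used.
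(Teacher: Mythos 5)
Your proof is correct and follows the same overall skeleton as the paper's (weight decomposition of the normal preimage $\tilde V\triangleleft H$, then $N$-transitivity on singular subspaces via the Arf--Witt theorem), but two of your steps are carried out by genuinely different, more explicit means. For the transitivity of $H$ on the $2^i$ weight spaces the paper simply observes that the orbit sums would be proper invariant subspaces, contradicting the irreducibility of $H$; you instead compute the action on characters and use nondegeneracy of the commutator form $f$ to see that the functionals $f(\cdot,\overline g)|_V$ exhaust $V^*$. Both work; the paper's is a one-line soft argument, yours has the advantage of identifying exactly which element of $H$ carries one weight space to another. More substantially, you supply a proof of the injectivity of $V\mapsto O_V$ (the ``uniquely determines'' clause) via the reconstruction $V_P=\{\overline h: h|_P=\pm\mathrm{id}_P\}=V$, using the dimension count $2^{2-i}<2^{3-i}$ to rule out a larger singular subspace; the paper's proof passes over this point in silence, deducing everything from transitivity of $N$ on singular subspaces of fixed dimension. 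Your reconstruction argument is sound (the identities $\overline h\in V^\perp$, $f|_{V_P}=0$ and $Q|_{V_P}=0$ all follow as you say from $h$ acting as a scalar on the nonzero subspace $P$ and from $h^2\in\{\pm1\}$), and it makes the lemma's final assertion fully explicit rather than implicit.
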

\begin{proof} Since the preimage of a singular subspace of $H/\{\pm 1\}$ in $H$ is a normal subgroup, its weight spaces are permuted by $H$. The group $H$ being irreducible implies that these weight spaces are transitively permuted by $H$. Now the claim follows since the group $N$ acts transitively on singular subspaces of $H/\{\pm 1\}$ of the same dimension.
  \end{proof}

A  linear transformation $f$ of a vector space $V$ is called a $\emph{transvection}$, if there exist $e \in V\backslash \{0\}$ and a nontrivial linear form $\alpha$ on $V$ with $\alpha(e)=0$ such that $f(v)=v+\alpha(v)e$ for all $v\in V$.

\begin{lemma}\label{lem:rotation normalize}
The rotations in $\mathfrak{R}_2$ belong to $L$ and project to transvections in $\SL_2(4)$.
\end{lemma}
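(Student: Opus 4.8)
The plan is to fix a rotation $r_\sigma \in \mathfrak{R}_2$ with $(-1)$-eigenspace $P \in K_2$ and to proceed in three steps: first show that $r_\sigma$ normalizes $H$, so that $r_\sigma \in N$; then compute the isometry $\bar r_\sigma$ that conjugation by $r_\sigma$ induces on the quadratic space $(H/\{\pm 1\},Q)\cong(\bigwedge^2\mathbb{F}^4_2,Q')$; and finally identify $\bar r_\sigma$ as a transvection, which simultaneously forces $\bar r_\sigma \in O^+(Q)$ (hence $r_\sigma \in L$) and yields the asserted projection. By Lemma \ref{lem:H-orbits} the subspace $P$ determines a two-dimensional singular subspace $U\subset H/\{\pm1\}$, whose preimage $\tilde U\triangleleft H$ is elementary abelian of order $2^3$, and $P$ is one of the four two-dimensional weight spaces of $\tilde U$.

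First I would show $r_\sigma\in N$. Since $\tilde U$ is normal in $H$, the group $H$ permutes the four weight spaces of $\tilde U$; using the commutator relation $[h_1,h_2]=(-1)^{f(\bar h_1,\bar h_2)}$ one checks that this is the regular action of the Klein four-group $H/C_H(\tilde U)$, where $C_H(\tilde U)$ is the preimage of $U^\perp$. The rotation $r_\sigma$ acts as $-\mathrm{id}$ on $P$ and as $\mathrm{id}$ on the other three weight spaces. Conjugation by $r_\sigma$ therefore leaves every $h\in H$ preserving $P$ unchanged and, for every $h$ that carries $P$ to a different weight space, multiplies the corresponding off-diagonal block by $-1$. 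In the tensor model $H=W(\I_2(4))\otimes W(\I_2(4))\otimes W(\I_2(4))$ this sign change is realized by multiplication with a coordinate involution of the form $-(\mathrm{id}\otimes s\otimes\mathrm{id})$, which already lies in $H$; hence $r_\sigma h r_\sigma^{-1}\in H$ for all $h$, so $r_\sigma\in N$.

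Next I would read off $\bar r_\sigma$ from this computation. Conjugation by $r_\sigma$ fixes the class of every $h$ preserving $P$ and adds a fixed element of $U$ to the class of every $h$ that moves $P$; the corrections that occur are exactly a basis of $U$. Consequently $\bar r_\sigma=\mathrm{id}+\delta$, where $\delta$ is linear of rank $2$ with $\mathrm{im}(\delta)=U$ and $\delta^2=0$ (so $\bar r_\sigma$ is an involution, as it must be). Under the identification $H/\{\pm1\}\cong\bigwedge^2\mathbb{F}^4_2$, an orthogonal map $\mathrm{id}+\delta$ with $\delta^2=0$ and $\mathrm{im}(\delta)$ a two-dimensional singular subspace is precisely the exterior square $\bigwedge^2 t$ of a transvection $t=t_{e,\alpha}$ of $\mathbb{F}^4_2$: indeed $\bigwedge^2 t$ sends $v\wedge w$ to $v\wedge w+(\alpha(v)w+\alpha(w)v)\wedge e$, whose correction term has image the two-dimensional singular subspace $(\ker\alpha)\wedge e$, and matching this to $U$ determines $t$. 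Such a $\bar r_\sigma$ is a Siegel transvection and thus has Dickson invariant $0$; therefore $\bar r_\sigma\in O^+(Q)$ and $r_\sigma\in L$. Under the identification $O^+(Q)\cong\SL_4(2)$ fixed above—the group written $\SL_2(4)$ in the statement—the element $\bar r_\sigma$ is the transvection $t_{e,\alpha}$, which is the claim.

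The main obstacle is the explicit bookkeeping in the second and third steps: verifying in the tensor model that conjugation by $r_\sigma$ returns an element of $H$, i.e. that the block sign change is absorbed by a coordinate involution lying in $H$, and then checking that the induced $\delta$ has rank exactly $2$ with image precisely $U$ rather than a smaller or larger subspace. Once $\delta$ is pinned down, the identification with a Siegel transvection, and hence with a transvection of $\SL_4(2)$, is forced by the normal form for rank-two orthogonal maps with totally singular image; the only additional point, that all such maps form a single $O^+(Q)$-class matching the transvection class of $\SL_4(2)$, follows from the Witt-type theorem of C. Arf invoked earlier.
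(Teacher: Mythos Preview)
Your argument is correct but follows a genuinely different route from the paper. Two small points to tighten: the correction element $\epsilon$ with $r_\sigma h r_\sigma^{-1}=\epsilon h$ need not have the specific tensor form $-(\mathrm{id}\otimes s\otimes\mathrm{id})$; what matters, and what always holds, is that $\epsilon$ is the unique element of $\tilde U\subset H$ whose $(-1)$-eigenspace is $P\oplus h(P)$, so $\epsilon\in H$ as required. Also, the three nontrivial cosets of $U^\perp$ produce as corrections all three nonzero elements of $U$, not just a basis. With these adjustments your computation $\bar r_\sigma=\mathrm{id}+\delta$ with $\ker\delta=U^\perp$, $\mathrm{im}(\delta)=U$, $\delta^2=0$ goes through; the orthogonality constraint $f(x,\delta(x))=0$ in fact determines $\delta$ uniquely from $U$, so $\bar r_\sigma=\bigwedge^2 t_{e,\alpha}$ on the nose once $U=(\ker\alpha)\wedge e$, and the Dickson-invariant argument places $r_\sigma$ in $L$.

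The paper instead works directly on $V=\mathbb{F}_2^4$, realised concretely as one $O^+(Q)$-orbit of maximal singular subspaces of $H/\{\pm1\}$ together with a zero element. It shows geometrically that $r$ fixes a three-dimensional subspace of $V$ --- constructed as the union of the decomposable bivectors lying in the two-dimensional singular subspace $W_2$ determined by $r$ --- by arguing that $r$ permutes, and hence fixes, the one-dimensional weight spaces attached to each such maximal singular subspace; since $r\notin H$ the action on $V$ is nontrivial, hence a transvection. Your approach trades this weight-space incidence geometry for a direct computation of the conjugation action on $\bigwedge^2 V$ and the algebraic recognition of a Siegel element; it requires the Dickson-invariant and Siegel-transvection formalism but avoids the somewhat delicate identification of $V$ with a set of maximal singular subspaces and the accompanying combinatorics.
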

\begin{proof} It is straightforward to check that some rotation in $\mathfrak{R}_2$ normalizes the group $H$. Since the normalizer of $H$ acts transitively on $\mathfrak{R}_2$ all of them do. Let $r \in \mathfrak{R}_2$ be a rotation. We identify $V=\mathbb{F}^4_2$ with the set of maximal singular subspaces of  $H/\{\pm 1\}$ that are $O^+(Q)$-equivalent to  $U\wedge \mathbb{F}^4_2$, $U\subset \mathbb{F}^4_2$ being one-dimensional, conjoint with a trivial element. Let $W_2$ be the two-dimensional singular subspace of $H/\{\pm 1\}$ determined by $r$ (cf. Lemma \ref{lem:H-orbits}) and let $W_3$ be the unique maximal singular subspace of $H/\{\pm 1\}$ with $W_2\subset W_3 \in V$. The union $U= \bigcup_{\sigma\in W_2\backslash \{0\}} \sigma \subset V$, where the nontrivial elements $\sigma\in W_2$ are regarded as two-dimensional subspaces of $V$ via the identification $H/\{\pm 1\}=\bigwedge^2 V$, is a three-dimensional subspace of $V$. Indeed, for $v_1\in \sigma_1\in W_2\backslash \{0\}$ and $v_2\in \sigma_2\in W_2\backslash \{0\}$ with $v_1 \wedge v_2 \notin W_2$ we have $(v_1+v_2)\wedge (\sigma_1+\sigma_2)=v_2\wedge \sigma_1+v_1\wedge \sigma_2=0$ because of $\sigma_1\wedge \sigma_2=0$ and thus $v_1+v_2 \in \sigma_1+\sigma_2 \subset U$. For a maximal singular subspace $W \in U$ there exists some nontrivial $h\in H$ with $\{\pm h\} \in W \cap W_2$ (for $W\neq W_3$ take $\{ \pm h\}=W\wedge W_3$) and we can assume that $\mathrm{Fix}(h)\subset  \mathrm{Fix}(r)$. Therefore $r$ leaves some weight space corresponding to $W$ invariant. Since $H$ acts transitively on the weight spaces corresponding to $W$, the rotation $r$ permutes them and hence fixes $W$. This means that $r$ acts trivially on $U$. In particular, it leaves $V$ invariant and thus belongs to $L$. Let $W' \in V \backslash U$. If $r$ would be the identity on $V$ and thus on $H/\{\pm 1\}=\bigwedge^2 V$, we had $r \in H$ \cite[Thm. 3.19, (1), p.~126]{MR1349140}, a contradiction. Hence, $\overline{W}=W'+r(W') \in U$ is nontrivial and for any $W'' \in V \backslash U$ we have $r(W'')=r(W''+W')+r(W')=W''+\overline{W}$, because of $W''+W'\in U$. Consequently, the rotation $r$ acts on $V$ like the transvection defined by $e=\overline{W} \in V$ and the linear form $\alpha$ corresponding to the three-dimensional subspace $U\subset V$. 
  \end{proof}
Moreover, we have
\begin{lemma}\label{lem:is_isomorphism}
The group $L$ is a rotation group generated by $\mathfrak{R}_{2}$. The set $\mathfrak{R}_{2}$ has order $420$.
\end{lemma}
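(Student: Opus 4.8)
The plan is to set $G_0=\langle\mathfrak{R}_2\rangle$ and to prove $G_0=L$ in two stages, then to count $\mathfrak{R}_2$ separately. By Lemma~\ref{lem:rotation normalize} we have $\mathfrak{R}_2\subset L$ and the elements of $\mathfrak{R}_2$ project onto transvections of $L/H=O^+(Q)\cong\SL_4(2)$. Since $\SL_4(2)$ is generated by its transvections, $G_0$ surjects onto $L/H$, that is, $G_0H=L$. Moreover $\mathfrak{R}_2$ is defined intrinsically and is therefore invariant under conjugation by $N$; hence $G_0$ is normal in $L$ and so is $G_0\cap H$. The whole problem thus reduces to proving $H\subset G_0$.

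First I would exhibit the central involution. By Lemma~\ref{lem:H-orbits} the four weight spaces attached to a two-dimensional singular subspace $W_2$ of $H/\{\pm1\}$ form a single $H$-orbit in $K_2$ and give an orthogonal decomposition of $\R^8$ into four two-dimensional subspaces. The four corresponding rotations in $\mathfrak{R}_2$ commute and multiply to $-\mathrm{id}$, so $\{\pm1\}\subset G_0$. Next I would show that $G_0\cap H$ is large. For $r\in\mathfrak{R}_2$ projecting to a transvection $t$ and any $h\in H$, the commutator $[r,h]=r(hrh^{-1})$ is a product of two elements of $\mathfrak{R}_2$ and hence lies in $G_0$; it also lies in $H$ because $r$ normalizes $H$. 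Its class in $A=H/\{\pm1\}$ equals $(1+t)\overline h$, which sweeps out the nonzero subspace $\mathrm{im}(1+t)$ as $h$ varies. Therefore the $L$-submodule $\overline{G_0\cap H}\subset A$ is nonzero. Since $A\cong\bigwedge^2\mathbb{F}_2^4$ is the irreducible second exterior power of the natural $\SL_4(2)=O^+(Q)$-module, this forces $\overline{G_0\cap H}=A$; combined with $\{\pm1\}\subset G_0$ it yields $H\subset G_0$, whence $G_0=G_0H=L$. As the generators are rotations, $L$ is a rotation group.

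For the cardinality, $\mathfrak{R}_2$ is in bijection with $K_2$ (an involution being determined by its $(-1)$-eigenspace), and by Lemma~\ref{lem:H-orbits} the $H$-orbits in $K_2$ have size $4$ and correspond bijectively to the two-dimensional singular subspaces of $H/\{\pm1\}$. Counting incidences, the $30$ maximal (three-dimensional) singular subspaces each contain $7$ two-dimensional singular subspaces, and each such plane lies in exactly two maximal subspaces, so there are $30\cdot7/2=105$ of them; hence $|\mathfrak{R}_2|=4\cdot105=420$.

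The main obstacle is the second stage, the inclusion $H\subset G_0$: one must pass from the easy surjection onto $O^+(Q)$ to capturing the bottom group $H$. The two ingredients that I expect to make this work are the explicit product of four weight-space rotations giving $-\mathrm{id}$ and the irreducibility of the natural module $\bigwedge^2\mathbb{F}_2^4$, which upgrades a single nonzero commutator subspace to all of $A$; the counting and the generation of the quotient by transvections are routine by comparison.
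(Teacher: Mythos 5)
Your argument is correct, but the way you capture the diagonal subgroup $H$ is genuinely different from the paper's. The paper argues directly: for a two-dimensional singular subspace $W_2$ of $H/\{\pm 1\}$, the four commuting rotations in $\mathfrak{R}_2$ attached to its weight spaces generate the whole preimage of $W_2$ in $H$ (their product is $-\mathrm{id}$, and the weight decomposition realizes every singular element as a subproduct), so all singular elements of $H$, and hence $H$ itself, lie in $\langle\mathfrak{R}_2\rangle$. You extract only $-\mathrm{id}$ from that configuration and then reach $H$ via the commutators $[r,h]=r\cdot(hrh^{-1})\in\langle\mathfrak{R}_2\rangle\cap H$, whose classes sweep out a nonzero subspace of $A=H/\{\pm1\}$; normality of $\langle\mathfrak{R}_2\rangle\cap H$ in $L$ together with the irreducibility of $A\cong\bigwedge^2\mathbb{F}_2^4$ as an $\SL_4(2)$-module then forces this subspace to be all of $A$. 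Both routes are sound: the paper's is more elementary and self-contained, while yours trades the explicit weight-space computation for a structural argument that only needs a single nontrivial element of $\langle\mathfrak{R}_2\rangle\cap H$ modulo $\{\pm1\}$, at the price of invoking irreducibility of the exterior square. One small imprecision to fix: the operator you call $t$ in the formula $(1+t)\overline h$ must be the action of $r$ on $A=\bigwedge^2\mathbb{F}_2^4$ induced by the transvection on $\mathbb{F}_2^4$, not the transvection itself; its nontriviality follows because $r\notin H$ and $H$ is the kernel of the action of $N$ on $A$. The counts also differ superficially — the paper obtains $105$ as $(2^4-1)(2^3-1)$ transvections of $\SL_4(2)$, you obtain it as the incidence count $30\cdot 7/2$ of two-dimensional singular subspaces — but these agree, and with Lemma \ref{lem:H-orbits} both yield $|\mathfrak{R}_2|=420$.
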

\begin{proof} Since all transvections in $\SL_4(2)$ are conjugate and generate $\SL_4(2)$ (cf. \cite[Sect.~II.1, p.~37]{MR0158011}) we see that $\left\langle \mathfrak{R}_{2}\right\rangle$ maps onto $\SL_4(2)$ by the preceding lemma. Moreover, since every one-dimensional singular subspace of $H/\{\pm 1\}$ is contained in a two-dimensional singular subspace of $H/\{\pm 1\}$, every element $h\in H$ with $Q(\{\pm h\})=0$ can be written as a product of rotations in $\mathfrak{R}_{2}$. This implies $H \subset \left\langle \mathfrak{R}_{2}\right\rangle$ and thus $L$ is generated by $\mathfrak{R}_{2}$. The group $\SL_4(2)$ contains $(2^4-1)(2^3-1)=105$ transvections and the $H$-orbit of any rotation in $\mathfrak{R}_2$ contains $4$ rotations by Lemma \ref{lem:H-orbits}. We conclude that the set $\mathfrak{R}_2$ has order $420$.
  \end{proof}

Now we can show the following statement.

\begin{proposition}\label{prp:exotic_psg_L}
The rotation group $L\subgr \SOr_8$ is primitive and absolutely irreducible.
\end{proposition}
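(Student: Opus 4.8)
The plan is to get absolute irreducibility for free from $H$ and to prove primitivity by translating a hypothetical system of imprimitivity into a subspace of $A=H/\{\pm1\}$ that is invariant under the conjugation action of $O^+(Q)$, and then to rule this out using that $A$ is an irreducible $O^+(Q)$-module.

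Absolute irreducibility is immediate. Since $H<L$ and $H$ is absolutely irreducible (being the threefold tensor product of the absolutely irreducible group $W(\I_2(4))$), its complexification already acts irreducibly on $\C^8$, hence so does the complexification of the larger group $L$. Thus $L$ is absolutely irreducible, and it remains to treat primitivity.

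For primitivity I would argue by contradiction. Suppose $L$ admits a system of imprimitivity $\R^8=V_1\oplus\dots\oplus V_l$ with $l\geq2$; then $l\in\{2,4,8\}$ and each $V_i$ has dimension $8/l\leq4$. Because $L<N=N_{\SOr_8}(H)$, the group $H$ is normal in $L$, so this decomposition is also a system of imprimitivity for $H$, and by irreducibility of $H$ the blocks are permuted transitively by $H$. Let $B=\{h\in H\mid hV_i=V_i\text{ for all }i\}$ be the block-fixing subgroup; it contains $\{\pm1\}$ since $\mathrm{-id}$ preserves every $V_i$. The crucial point is that $B$ is normal not merely in $H$ but in all of $L$: for $g\in L$ and $h\in B$ the element $ghg^{-1}$ lies in $H$ and fixes every block $gV_i$, hence fixes every block, so $gBg^{-1}=B$ and $B\triangleleft L$. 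Consequently $\overline{B}=B/\{\pm1\}$ is a subspace of $A=H/\{\pm1\}$ invariant under the induced action of $L/H\cong O^+(Q)$ on $A$.

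Now I would invoke the identification $A\cong\bigwedge^2\mathbb{F}_2^4$ from the discussion above, under which $O^+(Q)$ corresponds to $\SL_4(2)$. The exterior square $\bigwedge^2\mathbb{F}_2^4$ is an irreducible $\SL_4(2)$-module, being the natural module of $O^+(Q)\cong\Omega_6^+(2)$ whose associated bilinear form is nondegenerate. Hence the invariant subspace $\overline{B}$ is either $0$ or all of $A$. If $\overline{B}=A$ then $B=H$, so every $V_i$ is $H$-invariant, contradicting the irreducibility of $H$. If $\overline{B}=0$ then $B=\{\pm1\}$, so $A$ acts faithfully and (by transitivity of $H$ on the blocks) transitively on the set of $l$ blocks; an abelian group acting faithfully and transitively acts regularly, forcing $l=|A|=2^6=64$, which contradicts $l\leq8$. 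Either way we reach a contradiction, so $L$ is primitive.

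The step carrying the real content is the irreducibility of $A\cong\bigwedge^2\mathbb{F}_2^4$ as an $\SL_4(2)$-module; everything else is formal once $H$ is seen to be normal in $L$ and the block-fixing subgroup $B$ is seen to be normal in $L$. Should a self-contained argument be preferred to citing the module structure, I would instead check directly that $O^+(Q)$ leaves no nonzero proper subspace of $A$ invariant, using that it acts transitively on the $35$ singular and on the $28$ nonsingular nonzero vectors of $A$.
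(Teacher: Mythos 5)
Your proof is correct, and it takes a noticeably different route through the primitivity argument than the paper does, even though both ultimately rest on the scarcity of normal subgroups of $L$. The paper first records that $H$ and $\{\pm \mathrm{id}\}$ are the only nontrivial proper normal subgroups of $L$ (asserting, in effect, the irreducibility of $A=H/\{\pm 1\}$ under $L$ that you prove via the module $\bigwedge^2\mathbb{F}_2^4$), then looks at the full block-diagonal subgroup $D(L)$ and kills it by the order estimate $|D(L)|\geq |L|/8!=64>2$, which forces $D(L)=H$ and contradicts irreducibility of $H$. You instead work with $B=D(L)\cap H$, use irreducibility of $\bigwedge^2\mathbb{F}_2^4$ as an $\SL_4(2)$-module to force $B\in\{H,\{\pm 1\}\}$, and dispose of the second case by the regularity of a faithful transitive abelian action. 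What your version buys: it makes the key input (irreducibility of the $6$-dimensional $\SL_4(2)$-module) explicit rather than implicit in the sentence ``the nontrivial center of $L$ is the only subgroup of $H$ normalized by all rotations in $L$'', it avoids the order computation $|L|/8!$, and it handles the case of two four-dimensional blocks directly, whereas the paper restricts to $k\in\{4,8\}$ (implicitly using that blocks of an imprimitive rotation group are at most two-dimensional). What the paper's version buys is brevity, since the normal subgroup structure of $L$ is needed anyway for Lemma 13. Both arguments are sound; yours is self-contained modulo the standard fact about $\bigwedge^2\mathbb{F}_2^4$, which, as you note, can be checked from the two $\SL_4(2)$-orbits on nonzero vectors.
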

\begin{proof} The group $L$ is absolutely irreducible, as it contains the absolutely irreducible group $H$. The fact that $L/H\cong \SL_2(4)$ is a simple group implies that all other normal subgroups of $L$ are contained in $H$. The nontrivial center of $L$ is the only subgroup of $H$ normalized by all rotations in $L$. Therefore, $H$ and $\{\pm \mathrm{id}\}$ are the only nontrivial normal subgroups of $L$. Assume that $L$ is an imprimitive group and let $\R^8=V_1 \oplus \dots \oplus V_k$, $k\in \{4,8\}$, be a hypothetical decomposition into subspaces that are permuted by $L$. The diagonal subgroup $D=D(L)$ with respect to this system of imprimitivity is normal in $L$ and satisfies $|D|\geq|L/8!|=64$, because $L/D$ embeds into the symmetric group $\sym_k$. Therefore, we would have $D=H$ which is impossible, since $H$ is irreducible and $D$ is not. Hence the claim follows.
  \end{proof}

Our next aim is to show that every rotation in $N$ is contained in $\mathfrak{R}_2$. To this end we first construct certain rotations in $\mathfrak{R}_2$ that are needed in the proof.

\begin{lemma}\label{lem:span_rotation} The span of any two distinct one-dimensional weight spaces corresponding to a maximal singular subspace of $H/\{\pm 1\}$ is contained in $K_2$, i.e. it occurs as a weight space of a two-dimensional singular subspace of $H/\{\pm 1\}$ and corresponds to a rotation in $\mathfrak{R}_2$.
\end{lemma}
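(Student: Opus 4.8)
The plan is to compute the weight space decomposition of $\R^8$ under the abelian normal subgroups attached to singular subspaces and to match the relevant weight spaces through their characters. Fix a maximal three-dimensional singular subspace $W_3$ of $H/\{\pm1\}$ and let $A\subset H$ be its preimage, an abelian normal subgroup of order $2^4$ under which $\R^8$ decomposes into eight one-dimensional weight spaces. As $-1\in A$ acts as $-\mathrm{id}$, each weight space affords a ($\pm1$-valued) character $\chi\colon A\to\{\pm1\}$ with $\chi(-1)=-1$. Such characters form a single coset of the character group of $A/\{\pm1\}=W_3$, which has exactly $2^3=8$ elements; since there are eight weight spaces and $H$ is irreducible, the eight weight spaces afford these eight characters bijectively. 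In particular distinct weight spaces afford distinct characters.

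First I would pick two distinct one-dimensional weight spaces $L_1,L_2$, with characters $\chi_1\neq\chi_2$, and form the product character $\chi:=\chi_1\chi_2$. Since $\chi_1(-1)=\chi_2(-1)=-1$ we have $\chi(-1)=1$, so $\chi$ descends to a character $\overline{\chi}$ of $W_3=A/\{\pm1\}$, and $\overline{\chi}$ is nontrivial precisely because $\chi_1\neq\chi_2$. A nontrivial character of the three-dimensional $\mathbb{F}_2$-space $W_3$ has a two-dimensional kernel $W_2:=\ker\overline{\chi}$, and since $Q$ vanishes on $W_3$ it also vanishes on the subspace $W_2$. Thus $W_2$ is a two-dimensional singular subspace of $H/\{\pm1\}$.

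Next I would let $B\subset A$ denote the preimage of $W_2$, an abelian normal subgroup of order $2^3$ under which $\R^8$ decomposes into four two-dimensional weight spaces. By construction $\chi_1$ and $\chi_2$ agree on $B$: their product $\chi$ restricted to $B$ corresponds to $\overline{\chi}$ restricted to $W_2=B/\{\pm1\}$, which is trivial, and since both characters are $\pm1$-valued, $\chi_1|_B\chi_2|_B=1$ forces $\chi_1|_B=\chi_2|_B$. Hence $L_1$ and $L_2$ lie in a common weight space of $B$; as that weight space is two-dimensional and $L_1\oplus L_2$ is already two-dimensional, they coincide. Therefore $L_1\oplus L_2$ is a weight space of the two-dimensional singular subspace $W_2$, so it belongs to $K_2$ and the corresponding involution is a rotation in $\mathfrak{R}_2$, as claimed.

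The only points that need care are the bijection between the eight one-dimensional weight spaces and the eight characters lying above the central sign (so that $L_1\neq L_2$ really yields $\chi_1\neq\chi_2$ and hence a nontrivial $\overline{\chi}$), and the passage from the $A$-decomposition to the coarser $B$-decomposition. Both are routine once the character bookkeeping is set up, and I do not expect a genuine obstacle beyond this matching.
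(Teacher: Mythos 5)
Your proof is correct, and it reaches the same goal as the paper's but by a cleaner route. The paper's proof reduces the claim to finding $h_1,h_2\in H_0$ that are linearly independent modulo $\{\pm1\}$ and have $v_1,v_2$ in a common eigenspace, and then simply asserts that ``a case differentiation shows that this is always possible.'' Your character-theoretic bookkeeping replaces that case check with a structural argument: since $W_3$ is singular, its preimage $A$ is elementary abelian of order $2^4$, the eight one-dimensional weight spaces realize bijectively the eight characters of $A$ that are nontrivial on the center, and the set of elements of $W_3$ on which $\chi_1$ and $\chi_2$ agree is exactly $\ker(\overline{\chi_1\chi_2})$, a two-dimensional (automatically singular) subspace. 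This not only proves existence without enumeration but also identifies the two-dimensional singular subspace $W_2$ attached to the pair $L_1,L_2$ uniquely, which is a small bonus the paper's sketch does not make explicit. The final step --- that $L_1\oplus L_2$ coincides with the two-dimensional weight space of the preimage $B$ of $W_2$ because both are two-dimensional --- is exactly right, and matches the paper's count of weight-space dimensions. I see no gap.
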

\begin{proof} Let $H_0<H$ be the subgroup corresponding to a maximal singular subspace of $H/\{\pm 1\}$ and let $v_1,v_2\in \R^8$ be unit vectors spanning two different weight spaces of $H_0$. One only needs to find $h_1,h_2 \in H_0$ that project onto linearly independent elements in $H/\{\pm 1\}$ such that $v_1$ and $v_2$ belong to the same eigenspace of $h_i$, $i=1,2$. A case differentiation shows that this is always possible.
  \end{proof}

Let $\{e_1,e_2\}$ be the standard basis of $\R^2$ and let $\{\varepsilon_i|i\in\{1,\ldots,8\}\}=\{e_i\otimes e_j\otimes e_k|i,j,k\in\{1,2\}\}$ be the induced basis of $\R^8=\R^2\otimes \R^2 \otimes \R^2$ ordered lexicographically. We set $V_1:=\left\langle \varepsilon_1,\varepsilon_2,\varepsilon_3,\varepsilon_4 \right\rangle$,  $\sigma_1:=\left\langle \varepsilon_1,\varepsilon_2 \right\rangle$ and $\sigma_2:=\left\langle \varepsilon_3,\varepsilon_4 \right\rangle$. Clearly, we have $V_1 \in K_4$ and $\sigma_1,\sigma_2 \in K_2$. For a reflection $s\in N_{\Or_2}(W(\I_2(4))) \backslash W(\I_2(4))$ with $s(\sqrt{2} e_1)=e_1+e_2$ we set $\nu_2= \mathrm{id}\otimes s \otimes \mathrm{id} \in N$ and $\nu_3= \mathrm{id}\otimes  \mathrm{id} \otimes s \in N$. Then with
\[
		\alpha_1:=\nu_2(\varepsilon_1)=\varepsilon_1+\varepsilon_3,\ \alpha_2:=\nu_2(\varepsilon_2)=\varepsilon_2+\varepsilon_4,
\]
and
\[
\beta_1:=\nu_3(\varepsilon_1)=\varepsilon_1+\varepsilon_2,\ \beta_2:=\nu_3(\varepsilon_5)=\varepsilon_5+\varepsilon_6
\]
we have $\left\langle \alpha_1 \right\rangle, \left\langle \alpha_2 \right\rangle, \left\langle \beta_1 \right\rangle, \left\langle \beta_2 \right\rangle \in K_1$ because of $\nu_2,\nu_3 \in N$ and $\sigma=\left\langle \alpha_1,\alpha_2 \right\rangle, \tau_1=\left\langle \varepsilon_1,\varepsilon_5 \right\rangle, \tau_2=\left\langle \beta_1,\beta_2 \right\rangle  \in K_2$ by Lemma \ref{lem:span_rotation}. Let $r,r_1,r_2 \in \mathfrak{R}_2$ be the rotations corresponding to $\sigma$, $\tau_1$, and $\tau_2$, respectively, and let $R=\left\langle r_1r_2 \right\rangle <N$ be the group generated by $r_1r_2$. The group $R$ is isomorphic to a dihedral group of order $8$ and leaves $\sigma_1$ and $\sigma_2$ invariant. The rotation $r$ interchanges $\sigma_1$ and $\sigma_2$ and thus so do the conjugates of $r$ under the group $R$. Hence, there are at least $8$ different rotations in $\mathfrak{R}_2$ that interchange $\sigma_1$ and $\sigma_2$.

\begin{lemma}\label{lem:rot_contained_in_L} Every rotation in $N$ is contained in $\mathfrak{R}_2$.
\end{lemma}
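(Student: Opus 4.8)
The plan is to pin down both the rotation angle and the rotation plane of an arbitrary rotation $g\in N$ by exploiting the ``error basis'' that $H$ provides on $\mathrm{End}(\R^8)$. Write $\sigma=\mathrm{Fix}(g)^\perp$ for the rotation plane, $F=\mathrm{Fix}(g)$ (of dimension $6$), $\theta$ for the rotation angle, and $\bar g\in O(Q)=N/H$ for the isometry induced on $A=H/\{\pm1\}$. Choosing one matrix $h_{\bar x}$ for each $\bar x\in A$, the $64$ matrices $\{h_{\bar x}\}$ form a basis of $\mathrm{End}(\R^8)$, and conjugation by $g$ permutes them up to sign, $g h_{\bar x} g^{-1}=\epsilon(\bar x)\,h_{\bar g\bar x}$ with $\epsilon(\bar x)\in\{\pm1\}$. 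The decisive observation is that $\epsilon\equiv 1$ on $\mathrm{Fix}(\bar g)$: if $gh=-hg$ for a nontrivial $h\in H$, then every $v\in F$ would satisfy $g(hv)=-hv$, forcing the $6$-dimensional space $h(F)$ into the $(-1)$-eigenspace of $g$, which has dimension at most $2$ --- impossible. Evaluating the trace of $\mathrm{Ad}_g\colon X\mapsto gXg^{-1}$ in this signed-permutation basis therefore yields $|\mathrm{Fix}(\bar g)|$, whereas evaluating it as $\mathrm{tr}(g)\,\mathrm{tr}(g^{-1})=\mathrm{tr}(g)^2$ yields $(6+2\cos\theta)^2$. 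Hence
\[
    2^{\dim\mathrm{Fix}(\bar g)}=(6+2\cos\theta)^2 .
\]

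Next I would deduce that the angle is $\pi$. Since $6+2\cos\theta\in[4,8)$ for $\theta\neq 0$, the right-hand side lies in $[16,64)$ and must be a power of two, so it equals $16$ or $32$. Because $g$ has finite order, $2\cos\theta$ is an algebraic integer all of whose conjugates lie in $[-2,2]$; the value $2\cos\theta=4\sqrt2-6$ needed for the outcome $32$ has conjugate $-4\sqrt2-6\notin[-2,2]$, excluding that case. Thus $(6+2\cos\theta)^2=16$, i.e.\ $\theta=\pi$, the rotation $g$ is an involution with $(-1)$-eigenspace $\sigma$, and $\dim\mathrm{Fix}(\bar g)=4$.

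It remains to show $\sigma\in K_2$, which is the main point. By the argument above the equation $ghg^{-1}=-h$ has no solutions, so $\mathrm{Fix}(\bar g)=\overline{C_H(g)}$ and $|C_H(g)|=2\cdot 2^4=2^5$. Restriction to $\sigma$ defines a homomorphism $\rho\colon C_H(g)\to\Or(\sigma)\cong\Or_2$. Every $h\in H$ satisfies $h^4=1$, so $\mathrm{im}\,\rho$ is a finite subgroup of $\Or_2$ of exponent dividing $4$, hence of order at most $8$ (a finite subgroup of $\Or_2$ of exponent dividing $4$ is cyclic of order $\le 4$ or dihedral of order $\le 8$). Consequently $|\ker\rho|\geq 2^5/8=4$. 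Since $-1\notin\ker\rho$, the projection identifies $\ker\rho$ with a subspace $\overline{\ker\rho}\subseteq\mathrm{Fix}(\bar g)$ of dimension $\geq 2$ on which $Q$ vanishes (any $h$ fixing $\sigma$ pointwise has $h^2=1$, as $h^2\in\{\pm1\}$). On the other hand $\sigma$ lies in the joint $(+1)$-eigenspace of $\ker\rho$, a weight space of dimension $2^{3-\dim\overline{\ker\rho}}$, which forces $\dim\overline{\ker\rho}\leq 2$. Therefore $\overline{\ker\rho}$ is a two-dimensional singular subspace of $A$, the weight space containing $\sigma$ is exactly $\sigma$, and so $\sigma\in K_2$. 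Being the involution with $(-1)$-eigenspace $\sigma\in K_2$, the rotation $g$ lies in $\mathfrak{R}_2$.

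The hard part is the last step. Once the trace identity delivers $\theta=\pi$ and a four-dimensional fixed space for $\bar g$, the subtlety is that a four-dimensional subspace of $(A,Q)$ need not contain a singular plane aligned with $\sigma$ --- the ``minus-type'' possibility. I would bypass this not by classifying the conjugacy class of $\bar g$ in $O(Q)$, but by studying the restriction $\rho$ to $\Or(\sigma)$ and using that every element of $H$ has order at most $4$ to bound $|\mathrm{im}\,\rho|\le 8$; this is precisely what compels $\overline{\ker\rho}$ to be a singular plane having $\sigma$ as its weight space, and hence $g\in\mathfrak{R}_2$.
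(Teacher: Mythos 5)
Your proof is correct, and it takes a genuinely different route from the paper's. The paper argues geometrically: it picks a singular line $\{\pm h\}$ of $H/\{\pm1\}$ moved by $g$, observes that $\langle h, ghg^{-1}\rangle$ projects onto a singular plane, reads off from the associated weight-space decomposition that $g$ has order $2$ and fixes one weight space of a maximal singular subspace while swapping two others, and then uses transitivity of $N$ on flags of singular subspaces to normalize the configuration down to the standard pair $\sigma_1,\sigma_2$ --- at which point the claim reduces to the explicit count, carried out just before the lemma, of the $8$ rotations interchanging $\sigma_1$ and $\sigma_2$, all of which were constructed inside $\mathfrak{R}_2$. You instead compute $\mathrm{tr}(\mathrm{Ad}_g)$ in two ways using the fact that the $64$ coset representatives of $H/\{\pm1\}$ form an orthogonal basis of $\mathrm{End}(\R^8)$ permuted up to sign by $g$, obtaining $2^{\dim\mathrm{Fix}(\bar g)}=(6+2\cos\theta)^2$; the sign subtlety is handled by your correct observation that $ghg^{-1}=-h$ would force the $6$-dimensional image $h(\mathrm{Fix}(g))$ into the at most $2$-dimensional $(-1)$-eigenspace of $g$, and the case $32$ is eliminated by the algebraic-integer constraint on $2\cos\theta$. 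This yields $\theta=\pi$ and, as a bonus, $|C_H(g)|=2^5$, after which the exponent bound on the image of $C_H(g)\to\Or(\sigma)\cong\Or_2$ pins $\ker\rho$ down to a singular plane whose $2$-dimensional weight space must equal $\sigma$, so $\sigma\in K_2$ and $g\in\mathfrak{R}_2$. What each approach buys: the paper's proof is shorter given the preparatory construction of the eight explicit rotations and the Witt/Arf transitivity statement, but depends on both; yours is self-contained modulo the structural facts about $H$ already established (the weight-space lemma and the identification of $N/H$ with $O(Q)$), avoids any explicit matrix construction and any appeal to transitivity on flags, and delivers the extra numerical data $\dim\mathrm{Fix}(\bar g)=4$ along the way. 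Both proofs ultimately rest on the same weight-space mechanism to locate $\sigma$ in $K_2$.
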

\begin{proof} Let $g \in N$ be a rotation. Then there exists a one-dimensional singular subspace of $H/\{\pm 1\}$ spanned by some $\{\pm h\}$  with $g\{\pm h\}g^{-1}\neq \{\pm h\}$. For, otherwise $g$ would be the identity on $H/\{\pm 1\}$ and thus contained in $H$ \cite[Thm. 3.19, (1), p.~126]{MR1349140}. We set $h'=ghg^{-1}$. Since $g$ is a rotation the intersection $\mathrm{Fix}(h)\cap\mathrm{Fix}(h')$ is nontrivial. The eigenvalue structure of the elements in $H$ and the fact that $h\neq h'$ implies that $hh'$ has order $2$. Hence, the group $H_0=\left\langle h,h'\right\rangle$ projects onto a two-dimensional singular subspace of $H/\{\pm 1\}$ that is contained in a maximal singular subspace $W$. Two of the four weight spaces of $H_0$ are pointwise fixed by $g$, the other two are interchanged by $g$. In particular, the rotation $g$ has order $2$. The one-dimensional weight spaces defined by $W$ are contained in the two-dimensional weight spaces of $H_0$. Since $H$ acts transitively on the weight spaces corresponding to $W$ and since $g$ fixes one of them, it permutes the others. Due to the transitivity of the $N$ action on flags of singular subspaces of $H/\{\pm 1\}$ we can assume that $g\sigma_1=\sigma_2$ and that the weight spaces corresponding to $W$ are spanned by $\varepsilon_1,\ldots,\varepsilon_8$. There are only $8$ rotations in $\SOr_8$ with these properties and we have seen above that all of them are contained in $\mathfrak{R}_2$. Hence the claim follows.
  \end{proof}

We also need the following lemma. Recall that the action of $L$ on $H/\{\pm 1\}=\bigwedge^2 \mathbb{F}^4_2$ descends to an action on $\mathbb{F}^4_2$.
\begin{lemma}\label{lem:symplectic_form}
Let $G<L$ be a rotation group and suppose that $G$ leaves a symplectic form on $\mathbb{F}^4_2$ invariant. Then the group $H$ is not contained in $G$.
\end{lemma}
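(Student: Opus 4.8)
The plan is to argue by contradiction via the eigenvalue structure of the rotations in $\mathfrak{R}_2$. The starting point is to translate the hypothesis that $G$ preserves a symplectic form $\omega$ into a statement about a single element of $H$.

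First I would record the correspondence between symplectic forms on $\mathbb{F}^4_2$ and nonsingular vectors of $(H/\{\pm 1\},Q)$. A symplectic form is a nondegenerate element of $\bigwedge^2 \mathbb{F}^4_2$, and under the identification $H/\{\pm 1\}=\bigwedge^2 \mathbb{F}^4_2$ together with the self-duality of $\bigwedge^2 \mathbb{F}^4_2$ furnished by the wedge pairing $\bigwedge^2\mathbb{F}^4_2\times \bigwedge^2\mathbb{F}^4_2\to \bigwedge^4\mathbb{F}^4_2\cong \mathbb{F}_2$ (which is $\GL_4(2)$-equivariant, determinants being trivial over $\mathbb{F}_2$), the nondegenerate elements correspond precisely to the nonsingular vectors, i.e. to the classes $x_\omega=\{\pm h_\omega\}$ with $Q(x_\omega)=1$, equivalently $h_\omega^2=-\mathrm{id}$. (As a check, there are $28$ such classes and $|\GL_4(2)|/|\mathrm{Sp}_4(2)|=28$ symplectic forms.) Since $L$ acts on $H/\{\pm 1\}=\bigwedge^2\mathbb{F}^4_2$ by the exterior square of its action on $\mathbb{F}^4_2$, a subgroup $G<L$ preserves $\omega$ if and only if it fixes $x_\omega$, that is, $g h_\omega g^{-1}\in\{h_\omega,-h_\omega\}$ for all $g\in G$. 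In particular $h_\omega$ has order $4$ and defines a complex structure on $\R^8$.

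Next I would analyse a single rotation. Let $G<L$ be a rotation group preserving $\omega$ and let $r\in G$ be a rotation. By Lemma \ref{lem:rot_contained_in_L} we have $r\in\mathfrak{R}_2$, so $r$ is an involution equal to $-\mathrm{id}$ on a two-dimensional subspace and to $+\mathrm{id}$ on its orthogonal complement; in particular its $(-1)$-eigenspace has dimension $2$. As $G$ fixes $x_\omega$, we have $r h_\omega r^{-1}=\pm h_\omega$. The minus sign is impossible: if $r h_\omega=-h_\omega r$, then for every $v$ with $rv=v$ one computes $r(h_\omega v)=-h_\omega rv=-h_\omega v$, so the invertible map $h_\omega$ carries the $(+1)$-eigenspace of $r$ into its $(-1)$-eigenspace, and symmetrically the $(-1)$-eigenspace into the $(+1)$-eigenspace; hence both eigenspaces have the same dimension $4$, contradicting that the $(-1)$-eigenspace has dimension $2$. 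Therefore $r h_\omega r^{-1}=h_\omega$, i.e. every rotation of $G$ commutes with $h_\omega$.

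Since $G$ is generated by its rotations, $G$ centralizes $h_\omega$. But $h_\omega$ is not central in $H$: the centre of $H$ is $\{\pm 1\}$ whereas $h_\omega$ has order $4$, so some $h\in H$ satisfies $h h_\omega h^{-1}=-h_\omega\neq h_\omega$. Thus $H$ does not centralize $h_\omega$, and consequently $H$ is not contained in $G$, which is the assertion. The only delicate point is the first paragraph, namely making the identification of symplectic forms on $\mathbb{F}^4_2$ with classes of order-$4$ elements of $H$ precise and $\GL_4(2)$-equivariant; once this is in place, the dimension count for the two eigenspaces of $r$ is the decisive and entirely elementary step.
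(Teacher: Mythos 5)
Your proof is correct and follows essentially the same route as the paper: both translate the invariant symplectic form via wedge-duality into a $G$-invariant coset $\{\pm h\}\in H/\{\pm 1\}$, rule out $rhr^{-1}=-h$ for a rotation $r$ by a dimension count on eigenspaces, and conclude from the fact that the centre of $H$ is $\{\pm 1\}$. The only cosmetic difference is that the paper counts the two four-dimensional eigenspaces of $h$ (so it does not need Lemma \ref{lem:rot_contained_in_L} at this point), whereas you count the eigenspaces of $r\in\mathfrak{R}_2$; your additional observation that $h_\omega$ is nonsingular of order $4$ is correct but not needed for the argument.
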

\begin{proof} Suppose $G<L$ is a rotation group that leaves a symplectic form $B$ on $\mathbb{F}^4_2$ invariant. The form $B$ defines a nontrivial $G$-invariant linear form $\beta$ on $\bigwedge^2 \mathbb{F}^4_2$. By duality with respect to the nondegenerate bilinear form $\bigwedge : \bigwedge^2 \mathbb{F}^4_2 \times \bigwedge^2 \mathbb{F}^4_2 \To \mathbb{F}_2$, the form $\beta$ in turn gives rise to a nontrivial $G$-invariant bivector $b\in \bigwedge^2 \mathbb{F}^4_2$ that corresponds to a $G$-invariant coset $\{\pm h\}\in H/\{\pm 1\}$ for some nontrivial element $h\in H$. The two (possibly complex) four-dimensional eigenspaces of $h$ corresponding to different eigenvalues cannot be permuted by a rotation. Hence, the group $G$, being generated by rotations, not only fixes $\{\pm h\}$ but also $h$. Since the bilinear form associated with $Q$ is nondegenerate, the center of $H$ is given by $\{\pm 1\}$ and thus the group $H$ cannot be completely contained in $G$.
  \end{proof}

Now we can prove
\begin{lemma}\label{lem:pssgroups_L}
The only primitive rotation group $G$ with $H<G\subgr  N$ is the group $L$.
\end{lemma}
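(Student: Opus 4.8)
The plan is to show first that any such $G$ is automatically contained in $L$, then to reinterpret primitivity of $G$ as irreducibility of the induced action of $G/H$ on $\mathbb{F}^4_2$, and finally to combine the classification of transvection groups with Lemma~\ref{lem:symplectic_form} to conclude $G=L$. For the first step, recall that a rotation group is generated by the rotations it contains. Since $G\subseteq N$, each such rotation lies in $\mathfrak{R}_2$ by Lemma~\ref{lem:rot_contained_in_L}, and by Lemma~\ref{lem:rotation normalize} every element of $\mathfrak{R}_2$ lies in $L$ and projects to a transvection of $L/H\cong \SL_4(2)=\GL_4(2)$. Hence $G\le L$, and since $G$ is generated by these rotations, $\overline{G}:=G/H\le \SL_4(2)$ is generated by the corresponding transvections.

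The second step is to turn primitivity of $G$ into irreducibility of $\overline{G}$ on $\mathbb{F}^4_2$, using the correspondence of Lemma~\ref{lem:H-orbits} to convert an invariant subspace into a system of imprimitivity. Suppose $U\subset \mathbb{F}^4_2$ is a nonzero proper $\overline{G}$-invariant subspace. If $\dim U=1$ or $\dim U=3$, then $U\wedge \mathbb{F}^4_2$, respectively $\bigwedge^2 U$, is a $\overline{G}$-invariant maximal singular subspace of $H/\{\pm 1\}=\bigwedge^2 \mathbb{F}^4_2$; if $\dim U=2$, then the line $\bigwedge^2 U$ is $\overline{G}$-invariant (it is fixed because the determinant of any automorphism of $U$ is trivial over $\mathbb{F}_2$) and is a one-dimensional singular subspace. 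In each case $G$ normalizes the abelian subgroup of $H$ that is the preimage of this singular subspace, hence permutes its weight spaces, producing a system of imprimitivity of $\R^8$ (into $8$, $2$ and $8$ blocks for $\dim U = 1,2,3$ respectively). Since $G$ is primitive, no such $U$ exists, so $\overline{G}$ acts irreducibly on $\mathbb{F}^4_2$.

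For the final step, $\overline{G}\le \GL_4(\mathbb{F}_2)$ is irreducible and generated by transvections, so by McLaughlin's classification of irreducible subgroups of $\GL_4(\mathbb{F}_2)$ generated by transvections the only possibilities are $\SL_4(2)$, $\mathrm{Sp}_4(2)\cong \sym_6$ and $\Or_4^{-}(2)\cong \sym_5$. Both of the latter preserve a nondegenerate symplectic form on $\mathbb{F}^4_2$ (for the orthogonal group it is the alternating form associated with its defining quadratic form in characteristic $2$); hence so would $G$ through its action on $\mathbb{F}^4_2$, contradicting Lemma~\ref{lem:symplectic_form} since $H\subseteq G$. Therefore $\overline{G}=\SL_4(2)=L/H$, which forces $G=L$.

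The main obstacle is the second step: one must make the passage from the geometric notion of a system of imprimitivity of $G$ in $\R^8$ to the linear-algebraic notion of a $\overline{G}$-invariant subspace of $\mathbb{F}^4_2$ precise across all three intermediate dimensions, the case $\dim U = 2$ being the subtle one, since there the invariant subspace corresponds to the weight decomposition of a single order-two abelian subgroup rather than to a coordinate frame. The only genuinely external ingredient is McLaughlin's theorem, after which Lemma~\ref{lem:symplectic_form} disposes of the remaining symplectic candidates.
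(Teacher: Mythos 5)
Your proposal is correct and follows essentially the same route as the paper: reduce to $G<L$ via Lemma~\ref{lem:rot_contained_in_L}, convert primitivity into irreducibility of the induced action on $\mathbb{F}^4_2$ by passing from invariant subspaces to singular subspaces of $H/\{\pm 1\}$ and their weight-space decompositions, and finish with McLaughlin's theorem together with Lemma~\ref{lem:symplectic_form}. The only cosmetic differences are that the paper disposes of the two-dimensional case by observing that $G$ must fix $h$ itself and is therefore reducible (rather than imprimitive with two blocks, which works equally well), and that McLaughlin's list for $\GL_4(2)$ also contains $\Or_4^{+}(2)$, which preserves the associated symplectic form and is excluded by the same argument.
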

\begin{proof} 
Let $G<N$ be a rotation group. By Lemma \ref{lem:rot_contained_in_L} we have $G<L$ and thus we can consider the action of $G$ on $\mathbb{F}^4_2$. If there exists a one- or three-dimensional $G$-invariant subspace $U$ of $\mathbb{F}^4_2$, then $G$ leaves a maximal singular subspace of $H/\{\pm 1\}=\bigwedge^2 \mathbb{F}^4_2$ invariant (either $U\wedge \mathbb{F}^4_2$ or $U\wedge U$ depending on whether $U$ is one- or three-dimensional). The corresponding collection of weight spaces defines a system of imprimitivity of $G$ and thus $G$ is imprimitive in this case. If there exists a two-dimensional invariant subspace of $\mathbb{F}^4_2$, then the group $G$ fixes a one-dimensional singular subspace of $H/\{\pm 1\}$ spanned by some $\{\pm h\}$. Again, since the group $G$ is generated by rotations it normalizes $h$ (cf. proof of Lemma \ref{lem:symplectic_form}) and is thus reducible. Otherwise, the group $G$ acts irreducibly on $\mathbb{F}^4_2$. Since its image in $\SL_4(2)$ is generated by transvections (cf. Lemma \ref{lem:rot_contained_in_L} and Lemma \ref{lem:is_isomorphism}), it either preserves a symplectic form on $\mathbb{F}^4_2$ or we have $G=L$ \cite[p.~108]{MR0237660}. In the first case the group $H$ is not contained in $G$ by Lemma \ref{lem:symplectic_form} and thus the claim follows. 
  \end{proof}

Finally, we explain how the rotation group $L$ is connected to a reflection group of type $\E_8$. Let $R_1$ and $R_2$ be given by the sets of vectors 
\[
\pm \varepsilon_i\pm \varepsilon_j \ (i<j), \ \frac{1}{2} \sum_{i=1}^8 \pm \varepsilon_i \ (\textrm{even number of + signs})
\]
and
\[
\pm \varepsilon_i, \ (\pm \varepsilon_i\pm \varepsilon_{i+1}\pm \varepsilon_j \pm\varepsilon_{j+1})/2, \ i\neq j, \ i,j \in \{1,3,5,7\},\]
\[  (\pm \varepsilon_i\pm \varepsilon_j \pm \varepsilon_k \pm\varepsilon_l)/2, \ i \in \{1,2\},\ j \in \{3,4\},\ k \in \{5,6\},\ l \in \{7,8\},
\]\[i+j+k+l \equiv 0 \textrm{ mod }2 \]
respectively. Then $R_1$ and $R_2$ are root systems of type $\E_8$ permuted by an involution of $N$. Moreover, a computation shows that the vectors in $R_1$ and $R_2$ span the subspaces in $K_1$ corresponding to the two orbits of $L$, so that a two-dimensional subspace of $\R^8$ belongs to $K_2$ if and only if its intersection with $R_1 \cup R_2$ is a root system of type $\I_2(4)$ and so that a four-dimensional subspace of $\R^8$ belongs to $K_4$ if and only if its intersection with $R_1 \cup R_2$ is a root system of type $\F_4$. In particular, we have $L=\left\langle \mathfrak{R}\right\rangle \subgr W(R_1)\cap W(R_2)$.

\begin{proposition}\label{prp:L_und_E8} The group $L$ is the intersection of two reflection groups of type $\E_8$ permuted by $N$. More precisely, $L=W(R_1)\cap W(R_2)$.
\end{proposition}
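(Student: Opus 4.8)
The plan is to establish the two inclusions $L\subseteq W(R_1)\cap W(R_2)$ and $W(R_1)\cap W(R_2)\subseteq L$ separately, and then to combine them. The first inclusion is essentially already in hand: by Lemma \ref{lem:is_isomorphism} the group $L$ is generated by $\mathfrak{R}_2$, so it suffices to check that every $r\in\mathfrak{R}_2$ lies in $W(R_1)\cap W(R_2)$. The $(-1)$-eigenplane $\sigma$ of such a rotation meets $R_1\cup R_2$ in a root system of type $\I_2(4)$ (by the description preceding the proposition). Its four long roots have length $\sqrt2$ and hence lie in $R_1$, forming an orthogonal pair, while its four short roots have length $1$ and lie in $R_2$, again forming an orthogonal pair. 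The rotation by $\pi$ in $\sigma$ is therefore simultaneously the product of the two reflections in the $R_1$-roots and the product of the two reflections in the $R_2$-roots, so $r\in W(R_1)\cap W(R_2)$, giving $L\subseteq W(R_1)\cap W(R_2)$.

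For the reverse inclusion I abbreviate $G:=W(R_1)\cap W(R_2)$ and first show that $G$ normalizes $H$, i.e.\ $G\subseteq N$. Being contained in both $W(R_1)$ and $W(R_2)$, the group $G$ maps $R_1$ onto $R_1$ and $R_2$ onto $R_2$, hence preserves $R_1\cup R_2$ as a set of vectors and therefore permutes the collection $K_4$ of four-dimensional subspaces whose intersection with $R_1\cup R_2$ is of type $\F_4$. Now $H$ is generated by the involutions $h_V$, $V\in K_4$, with $(-1)$-eigenspace $V$: these correspond to the one-dimensional singular subspaces of $H/\{\pm1\}$ (cf.\ Lemma \ref{lem:H-orbits}), whose $35$ representatives span $H/\{\pm1\}$ over $\mathbb{F}_2$, and since $h_Vh_{V^{\perp}}=-\mathrm{id}$ the group they generate also contains the center, so it is all of $H$. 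For $V\in K_4$ the involution $h_V$ is the unique element of $\SOr_8$ with $(-1)$-eigenspace exactly $V$; conjugating by $g\in G$ yields the unique such involution for $gV\in K_4$, namely $h_{gV}\in H$. Thus $G$ permutes the generating set $\{h_V\}$ by conjugation and hence normalizes $H$, so $G\subseteq N$.

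Since $L$ has index two in $N$ and $L\subseteq G\subseteq N$, it remains to rule out $G=N$, for which it suffices to exhibit one element of $N\setminus L$ lying outside $W(R_1)$. Any $g\in N\setminus L$ projects to an element of $O(Q)\setminus O^+(Q)$, i.e.\ one of Dickson invariant $1$. By the Arf--Witt theory recalled before the proposition, $O(Q)$ acts transitively on the maximal singular subspaces of $H/\{\pm1\}$, whereas $O^+(Q)$ splits them into the two families represented by $U\wedge\mathbb{F}^4_2$ and $U\wedge U$; consequently the nontrivial coset interchanges these two families. By Lemma \ref{lem:H-orbits} each element of $K_1$ determines a maximal singular subspace, and by the computation preceding the proposition the two resulting $L$-orbits on $K_1$ are exactly the lines spanned by $R_1$ and by $R_2$. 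Hence $g$ interchanges the set of lines of $R_1$ with the set of lines of $R_2$; these differ (for instance $\langle\varepsilon_1\rangle$ is a line of $R_2$ but of no root of $R_1$), so $g$ does not preserve the lines of $R_1$ and thus $g\notin W(R_1)$. Therefore $G$ meets $N\setminus L$ trivially, forcing $G=L$.

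I expect the main obstacle to be the step $G\subseteq N$ together with the bookkeeping that matches the two $L$-orbits on $K_1$ with the two $O^+(Q)$-families of maximal singular subspaces; once these identifications are in place, the length distinction between $R_1$ and $R_2$ and the transitivity of $O(Q)$ (versus the splitting under $O^+(Q)$) close the argument, and the remaining verifications merely reassemble the properties of $\mathfrak{R}_2$, of the form $Q$, and of the Dickson invariant established in the run-up to the statement.
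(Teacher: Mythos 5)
Your proof is correct and follows essentially the same route as the paper: $L\subseteq W(R_1)\cap W(R_2)\subseteq N$ via the invariance of $K_4$, followed by the observation that elements of $N\setminus L$ interchange $R_1$ and $R_2$, combined with the fact that $L$ has index two in $N$. You merely fill in details that the paper asserts without proof, namely why invariance of $K_4$ forces normalization of $H$ (generation of $H$ by the involutions $h_V$) and why every element of $N\setminus L$ must swap the two root systems (the Dickson-invariant argument exchanging the two $O^+(Q)$-families of maximal singular subspaces).
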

\begin{proof} Since $W(R_1)\cap W(R_2)$ leaves $K_4$ invariant, it normalizes $H$ and thus we have $L \subgr W(R_1)\cap W(R_2) < N$. An element $g \in N \backslash L$ satisfies $g(R_1)=R_2$ and is therefore not contained in $W(R_1)\cap W(R_2)$. Hence, the claim follows, as $L$ has index $2$ in $N$.
  \end{proof}

\subsection{Properties of exceptional rotation groups}\label{sub:excep_subgroups}
The monomial rotation group $M_8$ contains the group $H$ from the preceding section as a normal subgroup and is thus itself a subgroup of the rotation group $L$. In fact, every subgroup of $L$ which is maximal among subgroups fixing one of the systems of imprimitivity of $H$ described above is of this type. Therefore the group $L$ also contains the rotation groups $R_6(\PSL_2(7))$, $M_7$, $M^p_7$ and $M^p_8$ as subgroups (cf. Section \ref{sub:monomial_examples}). In this section we record some of their properties that are related to properties of the corresponding quotient spaces considered in \cite{Mikhailova}.

\begin{lemma} The rotation groups listed in Theorem \ref{thm:clas_red_psg}, $(v)$ only contain rotations of order $2$.
\end{lemma}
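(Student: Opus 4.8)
The plan is to separate the eight-dimensional group $L$ from the remaining groups in $(v)$, since $L$ is already handled by the machinery built up in Section \ref{sec:group_L_sec} while the others act by signed permutations of coordinate axes and can be treated by an eigenvalue bookkeeping. For $L$ nothing new is needed: by Lemma \ref{lem:rot_contained_in_L} every rotation of $N$, hence every rotation of the subgroup $L\subgr N$, lies in $\mathfrak{R}_2$, and by construction the elements of $\mathfrak{R}_2$ are the involutions whose $-1$-eigenspace is a two-dimensional weight space in $K_2$. Thus every rotation of $L$ has order $2$; this also covers $M_8$ and $M^p_8$ in so far as they are realised as subgroups of $L$ inside $\SOr_8$ (cf. Section \ref{sub:excep_subgroups}), but I will in any case establish the monomial groups intrinsically.

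For the monomial groups $M_5,\ldots,M_8$ and $M^p_7,M^p_8$ I would analyse the signed-cycle structure of a general element $g=(d,\pi)$. Decomposing $\pi$ into cycles and writing $\epsilon_C=\prod_{i\in C}(\text{sign of } g \text{ along }C)$ for the sign of each signed cycle $C$ of length $\ell_C$, a positive cycle contributes the $\ell_C$-th roots of unity (a one-dimensional fixed line) and a negative cycle contributes the roots of $x^{\ell_C}=-1$ (no fixed vector), so that the codimension of $\mathrm{Fix}(g)$ equals $\sum_{\epsilon_C=+1}(\ell_C-1)+\sum_{\epsilon_C=-1}\ell_C$. Requiring this to be $2$ leaves exactly four signed-cycle types: two positive transpositions, two negative fixed axes, one positive $3$-cycle, or one negative transposition. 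The first two are involutions, the positive $3$-cycle is a rotation of order $3$, and the negative transposition is a rotation of order $4$. Now $\prod_C\epsilon_C=\det(g)\,\mathrm{sgn}(\pi)$, which is $+1$ because each of these groups lies in $\SOr_n$ and projects into the alternating group (the permutation parts generate, respectively, $H_5,\ldots,H_8$ and $\PSL_2(7)$, $\AG_3(2)$, all contained in $\alt_n$). Hence the number of negative signed cycles is even, which excludes the lone negative transposition, and the only remaining way to obtain a rotation of order $>2$ is through an element whose underlying permutation is a $3$-cycle.

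The crux is therefore to rule out $3$-cycles, and the same point disposes of $R_5(\alt_5)$ and $R_6(\PSL_2(7))$: there a permutation $\pi\in\sym_m$ ($m=6,7$) acting through the deleted permutation representation on $\R^{m-1}$ has $\mathrm{Fix}(\pi)$ of codimension $m-\#\{\text{cycles of }\pi\}$, so the rotations are precisely the elements with $m-2$ cycles, i.e. those of cycle type $2+2+1+\cdots$ (order $2$) or $3+1+\cdots$ (a $3$-cycle, order $3$). In every case the relevant group acts $2$-transitively, and I would eliminate $3$-cycles by a fixed-point count. The group $H_5\cong\dih_5$ has no element of order $3$ at all. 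For $\alt_5$ on $6$ points and $\PSL_2(7)$ on $7$ points the two-point stabiliser has order $2$ and $4$ respectively, so an element of order $3$ can fix at most one point; hence the order-$3$ elements have cycle type $3+3$ and $3+3+1$ and no $3$-cycle (which would fix three, resp. four, points) occurs. For $H_8\cong\AG_3(2)$ acting on the affine space $\mathbb{F}_2^3$ the naive bound fails ($3\mid 24$), so instead I would use that the fixed-point set of any affine transformation is an affine subspace, of cardinality in $\{0,1,2,4,8\}$; a $3$-cycle would fix exactly $5$ of the $8$ points, which is impossible. This exhausts all permutation groups arising in the $M_n$, $M^p_n$, $R_5(\alt_5)$ and $R_6(\PSL_2(7))$.

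I expect the fixed-point and stabiliser-order analysis of the $2$-transitive actions to be the only genuinely delicate step, the eigenvalue count being routine once the signed-cycle codimension formula is recorded. The case $\AG_3(2)$ is the subtle one, where the stabiliser bound is too weak and one must argue via the affine-subspace structure of fixed-point sets; everything else follows by assembling the above and citing Lemma \ref{lem:rot_contained_in_L} for $L$.
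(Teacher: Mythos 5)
Your proof is correct, but it takes a different (and considerably more detailed) route than the paper. The paper's own proof is two sentences: it declares the claim ``readily checked'' for $R_5(\alt_5)$, $M_5$ and $M_6$, and for everything in dimensions $6$, $7$ and $8$ it invokes Lemma \ref{lem:rot_contained_in_L} together with the containments $R_6(\PSL_2(7)), M_7, M_8, M^p_7, M^p_8 < L$ established in Section \ref{sub:excep_subgroups}. You use Lemma \ref{lem:rot_contained_in_L} only for $L$ itself and treat all the monomial groups and both primitive groups intrinsically, via the signed-cycle codimension formula, the determinant/sign parity constraint, and fixed-point counting in the $2$-transitive actions (with the affine-subspace argument replacing the stabiliser bound for $\AG_3(2)$). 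What your approach buys is self-containedness: it does not rely on realising $M_7$, $M_8$, $M^p_7$, $M^p_8$ and $R_6(\PSL_2(7))$ inside $L$, and it actually supplies the verification that the paper leaves implicit for $R_5(\alt_5)$, $M_5$ and $M_6$; the price is length. One small slip: your enumeration of signed-cycle types of codimension $2$ omits the fifth type, a positive transposition together with a negative fixed axis ($1+1$). This is harmless --- that element is an involution in any case, and it is also killed by your parity argument since it has an odd number of negative cycles --- but the list should be completed for the argument to read as an exhaustive case analysis.
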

\begin{proof}
For the groups $R_5(\alt_5)$, $M_5$ and $M_6$ the claim can be readily checked. For the group $L$ it follows from Lemma \ref{lem:rot_contained_in_L} and thus it also holds for its subgroups $R_6(\PSL_2(7))$, $M_7$, $M_8$, $M^p_7$ and $M^p_8$.
  \end{proof}

We denote the plane systems defined by $M_5$, $M_6$, $M_7$, $M_8$, $M^p_7$, $M^p_8$, $R_5(\alt_5)$, $R_6(\PSL_2(7))$ and $L$ as $\Pp_5$, $\Pp_6$, $\Pp_7$, $\Pp_8$, $\Qq_7$, $\Qq_8$, $\Rr_5$, $\Ss_6$ and $\Tt_8$ and the corresponding rotation group by $M$, e.g. $L=M(\Tt_8)$.

\begin{lemma}\label{lem:isotropyL} All isotropy groups of the rotation groups $R_6(\PSL_2(7))$, $M_7^p$, $M_8^p$ and $L$ are rotation groups (cf. Section \ref{sec:isotropy}).
\end{lemma}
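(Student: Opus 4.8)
The plan is to show, for each group $G$ in the list and each point $x$, that the isotropy group $G_x=\{g\in G\mid gx=x\}$ is generated by the order-$2$ rotations that it contains. This suffices: by the preceding lemma these groups contain no rotations other than those of order $2$, and since a rotation lying in any subgroup is a rotation of the ambient group, a subgroup is a rotation group precisely when it is generated by the order-$2$ rotations inside it. The case $x=0$ is trivial, as then $G_x=G$ is a rotation group by hypothesis, so I assume $x\neq 0$. Observe that a rotation $r_{\sigma}\in\mathfrak{R}_2$ fixes $x$ if and only if its $2$-plane $\sigma$ is orthogonal to $x$; hence the rotations available in $G_x$ are exactly the $r_{\sigma}$ with $\sigma\subset x^{\perp}$.

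First I would treat $L$ itself, using its description $L=W(R_1)\cap W(R_2)$ for the two root systems $R_1,R_2$ of type $\E_8$ from Proposition \ref{prp:L_und_E8}. Since taking (pointwise) stabilizers commutes with intersection, $L_x=W(R_1)_x\cap W(R_2)_x$. By Steinberg's classical fixed-point theorem each $W(R_j)_x$ is again a reflection group, with root system $R_j\cap x^{\perp}$, so that $L_x=W(R_1\cap x^{\perp})\cap W(R_2\cap x^{\perp})$. I would then re-run the argument underlying Proposition \ref{prp:L_und_E8} on the complement $x^{\perp}$: recall that a $2$-plane carries a rotation in $\mathfrak{R}_2$ exactly when its intersection with $R_1\cup R_2$ is a root system of type $\I_2(4)$. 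The order-$2$ rotations of $L$ lying in $x^{\perp}$ thus correspond to the $\I_2(4)$-subsystems of $(R_1\cup R_2)\cap x^{\perp}$, and the goal is to prove that they generate $W(R_1\cap x^{\perp})\cap W(R_2\cap x^{\perp})=L_x$, just as $\mathfrak{R}_2$ generates $L$ in the ambient space by Lemma \ref{lem:is_isomorphism}.

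For the remaining groups $R_6(\PSL_2(7))$, $M^p_7$ and $M^p_8$, which are subgroups of $L$ but for which the property need not be inherited automatically, I would argue from their explicit realizations. For the monomial groups $M^p_7$ and $M^p_8$ the isotropy group of $x$ is its stabilizer inside the signed-permutation group, and one checks combinatorially --- grouping the coordinates of $x$ according to equal, opposite and vanishing values --- that this stabilizer is generated by the double transpositions and double signed transpositions fixing $x$, each of which is an order-$2$ rotation. For $R_6(\PSL_2(7))$ I would use its realization as the restriction of the natural representation of $\sym_7$ on $\R^6$ to the subgroup described in Section \ref{sub:monomial_examples}, $(iii)$, and verify the statement directly on the few orbits of stabilizers.

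The main obstacle is the generation step for $L$: showing that the order-$2$ rotations contained in $L_x$ actually generate it, rather than merely sitting inside it. This is exactly a Steinberg-type property for the rotation group $L$, and it fails for general rotation groups (as the binary icosahedral group illustrates), so the argument must exploit the specific $\E_8$-geometry. Concretely, it reduces to a finite verification over the $N$-orbits of vectors $x$ --- equivalently, over the possible types of the truncated systems $R_j\cap x^{\perp}$ and of the induced configuration of $\I_2(4)$-planes in $x^{\perp}$ --- that the two resulting $\E_8$-parabolics still intersect in the rotation group generated by their common $\I_2(4)$-rotations. Organising this finite check through the plane system $\Tt_8$ and the singular-subspace combinatorics of $H/\{\pm 1\}$ is where the real work lies.
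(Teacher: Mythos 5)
Your proposal is a programme rather than a proof: at every decisive point it reduces the claim to a finite verification and then stops. You say yourself that for $L$ ``the generation step \dots is where the real work lies'' and that it ``reduces to a finite verification over the $N$-orbits of vectors $x$'' --- but that verification is never carried out, and nothing in the $\E_8$-geometry you invoke is actually used to establish that the rotations of $\mathfrak{R}_2$ lying in $x^{\perp}$ generate $W(R_1\cap x^{\perp})\cap W(R_2\cap x^{\perp})$. (The set-up itself is sound: $L_x=W(R_1)_x\cap W(R_2)_x$ and Steinberg applies to each factor; but an intersection of two reflection groups has no reason to be generated by the rotations it contains, so the whole content of the lemma is still ahead of you at that point.) The one place where you claim an actual argument, the ``combinatorial check'' for $M^p_7$ and $M^p_8$, is the weakest part: grouping the coordinates of $x$ by equal, opposite and vanishing values describes the stabilizer of $x$ in the \emph{full} monomial group $W(\BC_n)$, where it is a product of smaller signed-permutation groups. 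But $M^p_7=\langle g_1,g_2,g_3,g_5\rangle$ and $M^p_8$ are very thin subgroups (e.g.\ $|D(M^p_7)|=2^3$ versus $2^6$ for $D(\Wp(\BC_7))$), and their point stabilizers are intersections of $M^p_n$ with such Young-type subgroups; an element of this intersection need not factor through double (signed) transpositions \emph{that lie in $M^p_n$}. The same issue arises for $R_6(\PSL_2(7))\subset\sym_7$, where stabilizers contain elements of cycle type $(4,2,1)$ or $(3,3,1)$ whose expressibility as products of double transpositions of $\PSL_2(7)$ fixing the same point is exactly what must be checked and is not addressed.

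For comparison, the paper's proof is deliberately short but complete in a different way: it reduces the statement to the conjugation-invariant, element-wise condition that every $g\in G$ be a product of rotations of $G$ whose fixed-point subspaces contain $\mathrm{Fix}(g)$ (which immediately implies every isotropy group is generated by the rotations it contains), observes that this need only be checked on one representative per conjugacy class, and reports that this finite check was done in GAP. If you want a non-computational proof along your lines, you would have to actually enumerate the orbit types of points (equivalently, the possible pairs of parabolic subsystems $R_1\cap x^{\perp}$, $R_2\cap x^{\perp}$ and the stabilizer types in the monomial and $\PSL_2(7)$ cases) and prove generation in each one; as it stands, the proposal identifies the difficulty correctly but does not overcome it.
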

\begin{proof}
Let $G$ be one of the groups listed above. The claim follows if one can show that each element $g \in G$ can be written as a composition of rotations in $G$ whose fixed point subspace contains the fixed point subspace of $g$. It suffices to check this property for one representative in each conjugacy class of $G$. For the listed groups this can be verified with a computer algebra system like GAP.
  \end{proof}

\begin{lemma}
\label{lem:sub_psl}
The rotation group $M(\Ss_6)=R_6(\PSL_2(7))$ of order $168=2^{3}\cdot3\cdot 7$ contains a rotation group isomorphic to $\sym_4$.
\end{lemma}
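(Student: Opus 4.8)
The plan is to produce the required $\sym_4$ as a subgroup of $\PSL_2(7)$ and then to check that the involutions generating it act on $\R^6$ as rotations. Recall that $R_6(\PSL_2(7))$ is obtained by restricting the natural representation of $\sym_7$ on $\R^6$ to the subgroup $H_7=\left\langle g_1,g_2,g_3\right\rangle\cong\PSL_2(7)$ described in Section \ref{sub:monomial_examples}, $(iii)$. Since $\PSL_2(7)\cong\SL_3(2)\cong\GL_3(2)$, this subgroup acts on $\{1,\ldots,7\}$ exactly as $\GL_3(2)$ acts on the seven nonzero vectors of $\mathbb{F}_2^3$, i.e.\ on the seven points of the projective plane $\mathrm{PG}(2,\mathbb{F}_2)$. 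Under this identification the double transpositions lying in $H_7$ are precisely the rotations of order $2$ of $R_6(\PSL_2(7))$ (cf.\ Theorem \ref{thm:clas_red_psg}, $(v)$ and the proof of Lemma \ref{lem:exotic_psg_psl}).

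First I would locate a copy of $\sym_4$. The stabilizer in $\PSL_2(7)\cong\GL_3(2)$ of a point of $\mathrm{PG}(2,\mathbb{F}_2)$ has order $168/7=24$; as a matrix group fixing a one-dimensional subspace of $\mathbb{F}_2^3$ it has the form $\mathbb{F}_2^2\rtimes\GL_2(2)$, with $\GL_2(2)\cong\sym_3$ acting naturally on $\mathbb{F}_2^2$, and this semidirect product is isomorphic to $\sym_4\cong V_4\rtimes\sym_3$. (Equivalently, one may simply invoke the fact that $\sym_4$ occurs among the maximal subgroups of $\PSL_2(7)$.) This gives the desired abstract subgroup $\sym_4<R_6(\PSL_2(7))$.

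Next I would verify that this subgroup is generated by rotations. The key observation is that $\PSL_2(7)$ has a single conjugacy class of involutions, consisting of the $21$ transvections of $\mathbb{F}_2^3$. A transvection $v\mapsto v+\alpha(v)e$ fixes the three points lying on the line $\ker\alpha$ and interchanges the remaining four points in two pairs $v\leftrightarrow v+e$; hence as a permutation of the seven points it is a double transposition, and therefore acts on $\R^6$ as a rotation of order $2$. Consequently \emph{every} involution of the subgroup $\sym_4$, be it a transposition or a double transposition of $\sym_4$, is a rotation lying in $R_6(\PSL_2(7))$. Since $\sym_4$ is generated by its transpositions, it is generated by rotations it contains and is thus itself a rotation group, as claimed.

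The only point requiring care is the matching of the \emph{abstract} involutions of $\sym_4<\PSL_2(7)$ with rotations of the six-dimensional representation. This reduces entirely to the two standard facts used above, namely that all involutions of $\SL_3(2)$ form a single conjugacy class of transvections and that a transvection acts on $\mathrm{PG}(2,\mathbb{F}_2)$ as a double transposition. I expect no genuine obstacle beyond this bookkeeping; alternatively, the existence of such a $\sym_4$ together with its generation by rotations can be confirmed directly in the concrete generators $g_1,g_2,g_3$ with a computer algebra system.
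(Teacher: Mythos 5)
Your proof is correct, but it takes a genuinely different route from the paper's. The paper simply exhibits three explicit double transpositions, $(1,7)(3,5)$, $(1,5)(3,7)$ and $(1,4)(6,7)$, and observes that they generate a subgroup of $R_6(\PSL_2(7))<\sym_7<\SOr_6$ isomorphic to $\sym_4$ --- a one-line verification that leans on a small concrete computation. You instead identify $H_7\cong\PSL_2(7)\cong\GL_3(2)$ with its action on the seven nonzero vectors of $\mathbb{F}_2^3$, take the stabilizer of a point (order $24$, of shape $\mathbb{F}_2^2\rtimes\GL_2(2)\cong\sym_4$), and then check that it is a rotation group by noting that every involution of $\GL_3(2)$ is a transvection (since $g^2=1$ forces $(g-1)^2=0$ in characteristic $2$), that a transvection fixes the three points of a line and swaps the remaining four in pairs, hence acts on $\R^6$ as a rotation of order $2$, and that $\sym_4$ is generated by its involutions. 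Your argument is coordinate-free, does not depend on the particular generators $g_1,g_2,g_3$ of $H_7$, and yields slightly more (all $21$ involutions of $R_6(\PSL_2(7))$ are rotations, and every point stabilizer is a rotation subgroup); the paper's version is shorter and requires no structure theory. One small remark: a transitive $\PSL_2(7)<\sym_7$ could a priori realize either of the two conjugacy classes of index-$7$ subgroups (points or lines of the Fano plane), but the two actions have the same permutation character, so involutions fix three points in either case and your argument is unaffected.
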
 
\begin{proof} The double transpositions $(1,7)(3,5)$, $(1,5)(3,7)$ and $(1,4)(6,7)$ generate a subgroup of the rotation group $R_6(\PSL_2(7))\subgr \sym_7\subgr \SOr_6$ (cf. Lemma \ref{lem:exotic_psg_psl}) isomorphic to $\sym_4$.
  \end{proof}

\begin{lemma}\label{lem:sub_psl_2(7)}
The rotation group $M(\Qq_7)=M^p_7$ of order $1344=2^{6}\cdot3\cdot 7$ contains rotation groups of order $192= 2^6\cdot 3$ and $168=2^{3}\cdot3\cdot 7$.
\end{lemma}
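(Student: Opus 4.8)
The plan is to work with the identification $M^p_7\cong\AG_3(2)=\mathbb{F}_2^3\rtimes\SL_3(2)$ from Section \ref{sub:monomial_examples}, under which the natural $7$-dimensional representation of $M^p_7$ is the restriction of the permutation action of $\AG_3(2)$ on the $8$ points of the affine space $\mathbb{F}_2^3$ to the sum-zero subspace $\R^7\subgr\R^8$. A double transposition $(a,b)(c,d)$ of the $8$ points restricts to a transformation of $\R^7$ with two-dimensional $(-1)$-eigenspace $\left\langle e_a-e_b,e_c-e_d\right\rangle$, whereas a product of four transpositions has a four-dimensional $(-1)$-eigenspace. Since all rotations in $M^p_7$ have order $2$ (cf. Theorem \ref{thm:clas_red_psg}), the rotations of $M^p_7$ are exactly the elements acting on $\mathbb{F}_2^3$ as a product of precisely two transpositions. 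The strategy is to exhibit both subgroups inside $\AG_3(2)$ and to check that each is generated by such double transpositions.

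The subgroup of order $168$ is immediate: $\left\langle g_1,g_2,g_3\right\rangle\cong\PSL_2(7)\cong\SL_3(2)$ is generated by the double transpositions $g_1,g_2,g_3$ and is therefore a rotation group of order $168$; it is a copy of the linear part of $\AG_3(2)$, equivalently the representation $R_6(\PSL_2(7))$ (cf. Lemma \ref{lem:exotic_psg_psl}). Concretely, a transvection of $\SL_3(2)$ fixes a two-dimensional subspace of $\mathbb{F}_2^3$ pointwise and hence acts as a double transposition, and $\SL_3(2)$ is generated by its transvections.

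For the subgroup of order $192$ I would take the stabilizer $K=\{x\mapsto Ax+b\mid Av=v,\ b\in\mathbb{F}_2^3\}$ of a fixed nonzero vector $v\in\mathbb{F}_2^3$, an index-$7$ subgroup isomorphic to $\mathbb{F}_2^3\rtimes\sym_4$ with $\sym_4=\mathrm{Stab}_{\SL_3(2)}(v)$, so that $|K|=8\cdot24=192$. Every involution of $\SL_3(2)$ is a transvection, so the nine involutions of $\sym_4\subgr\SL_3(2)$ are rotations and they generate $\sym_4$. It remains to recover the translation subgroup $\mathbb{F}_2^3$ from rotations in $K$. For a transvection $A=I+u\psi^{T}$ with $\psi(u)=0$ and $\psi(v)=0$ (so that $A\in\sym_4$) the affine maps $x\mapsto Ax$ and $x\mapsto Ax+u$ are both double transpositions lying in $K$, and their composition is the translation $x\mapsto x+u$. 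As $\psi$ ranges over the three nonzero linear forms vanishing on $v$ and $u$ over the nonzero vectors of $\ker\psi$, the direction $u$ runs through all of $\mathbb{F}_2^3\setminus\{0\}$; hence these translations generate $\mathbb{F}_2^3$ and $K$ is a rotation group of order $192$.

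The step I expect to be the main obstacle is this last one. The nontrivial elements of the normal subgroup $\mathbb{F}_2^3\triangleleft K$ act on the $8$ points as fixed-point-free involutions, i.e. as products of four transpositions, and are therefore \emph{not} rotations; one must genuinely produce them as products of rotations inside $K$, which is what the bookkeeping with affine involutions $x\mapsto Ax+b$ above achieves. Everything else reduces to the elementary facts that the rotation-type involutions of $\AG_3(2)$ are precisely the transvections and that the stabilizers in question are generated by their involutions, both of which may alternatively be confirmed by a direct computation with the explicit generators $g_1,g_2,g_3,g_5$.
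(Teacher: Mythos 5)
Your proof is correct, but it takes a genuinely different route from the paper's. The paper simply exhibits four explicit signed permutations in $M^p_7\subgr\SOr_7$ and identifies the group they generate as a reducible rotation group of order $192$ of a type occurring in Theorem \ref{thm:reducible_pairs} (with components $W(\D_4)$ and $W(\A_3)$), and for the order-$168$ subgroup it just points to $H_7\cong\PSL_2(7)$, i.e.\ to $R_6(\PSL_2(7))$ sitting inside $M^p_7$ by construction. You instead pass to the model $M^p_7\cong\AG_3(2)$ acting on the $8$ points of $\mathbb{F}_2^3$ (an identification the paper explicitly licenses at the end of Section \ref{sub:monomial_examples}), characterize the rotations as exactly the double transpositions, i.e.\ the affine transvections $x\mapsto Ax+b$ with $A$ a transvection and $b$ in the transvection direction, and realize the two subgroups as natural stabilizers: a point stabilizer $\SL_3(2)$ of order $168$ and the stabilizer $\mathbb{F}_2^3\rtimes\sym_4$ of a direction $v$, of order $192$. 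Your verification that the latter is generated by its rotations is complete and correct: every involution of $\SL_3(2)$ is a transvection since $(A+I)^2=0$ forces $\mathrm{rank}(A+I)\leq 1$, so the involutions of $\mathrm{Stab}_{\SL_3(2)}(v)\cong\sym_4$ are rotations generating it, and your factorization of each translation $x\mapsto x+u$ as a product of the two rotations $x\mapsto Ax$ and $x\mapsto Ax+u$ correctly handles the one genuine difficulty, namely that the nontrivial translations themselves are fixed-point-free on the $8$ points and hence not rotations. What the paper's approach buys is the precise placement of the order-$192$ group in the classification of reducible reflection-rotation groups (its quadruple type), which is what is actually used downstream; what yours buys is a conceptual, computation-free explanation of where these subgroups live inside $\AG_3(2)$ and why generation by rotations holds. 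Two cosmetic remarks: the fact that all rotations of $M^p_7$ have order $2$ is the first lemma of Section \ref{sub:excep_subgroups} rather than Theorem \ref{thm:clas_red_psg} itself (though it is stated there too), and for your purposes you only need the easy direction that double transpositions are rotations, plus the observation that $\AG_3(2)$ contains no $3$-cycles; and, as in the paper, the order-$168$ subgroup of $\SOr_7$ is strictly the reducible group $1\oplus R_6(\PSL_2(7))$, which is harmless since the lemma only asserts the existence of rotation subgroups of the stated orders.
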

\begin{proof} The rotation group generated by $(1,\overline{3})(2,\overline{4})$, $(2,4)(5,7)$, $(2,3)(6,7)$ and $(3,\overline{4})(5,\overline{6})$ is a reducible subgroup of $M^p_7$ (cf. Section \ref{sub:monomial_examples}) of type
\[
\{(W(\D_4),D(W(\D_4)),W(\D_4),\Gamma(\A_{3})),(W(\A_3),\Wp(\A_1 \times \A_1 \times \A_1), W(\A_3),\circ-\circ)\}.
\]
Moreover, the rotation group $R_6(\PSL_2(7))$ of order $168$ is contained in $M^p_7$ (cf. Lemma \ref{lem:exotic_psg_psl} and Section \ref{sub:monomial_examples}).
  \end{proof}

\begin{lemma}
\label{lem:sub_GpAg}
The rotation group $M(\Qq_8)=M^p_8$ of order $2^{10}\cdot3\cdot 7$ contains a reducible rotation group $G$ of order $2^{9}\cdot3$ with $k=2$ and
\[
	(G_i,H_i,F_i,G_i/H_i)=(W(\D_4),D(W(\D_4)),W(\D_4),W(\A_3)),
\]
$i=1,2$ (cf. Theorem \ref{thm:reducible_pairs}), which is normalized by an element $h$ of order $2$ that interchanges the irreducible components of $G$. Moreover, it contains the rotation group $R_6(\PSL_2(7))$ of order $2^{3}\cdot3\cdot 7$.
\end{lemma}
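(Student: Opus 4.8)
The plan is to produce $G$ by explicit signed permutations inside $M^p_8=\langle g_1,\dots,g_5\rangle$, to read off the four invariants on each block against Theorem \ref{thm:reducible_pairs}, and to exhibit the interchanging involution $h$; the final clause on $R_6(\PSL_2(7))$ will then follow from the inclusion $M^p_7<M^p_8$ together with Lemma \ref{lem:exotic_psg_psl} and Lemma \ref{lem:sub_psl_2(7)}. First I would fix the decomposition $\R^8=V_1\oplus V_2$ with $V_1=\langle e_1,e_2,e_3,e_4\rangle$ and $V_2=\langle e_5,e_6,e_7,e_8\rangle$, noting that $g_5=(1,\overline 2)(3,\overline 4)$ is a rotation of order $2$ supported on $V_1$ (its $(-1)$-eigenspace is the plane $\langle e_1+e_2,\,e_3+e_4\rangle$), and that $-\mathrm{id}_{V_1}=g_5g_1$ lies in $M^p_8$.

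Next I would identify on $V_1$ a \emph{twisted} copy of the diagonal subgroup $D(W(\D_4))$. Together with a second $M^p_8$-rotation supported on $V_1$, such as $(1,\overline 3)(2,\overline 4)$, and with $-\mathrm{id}_{V_1}$, the element $g_5$ generates a group $H_1\cong\Z_2^3$ all of whose involutive rotations have two-dimensional $(-1)$-eigenplanes; a direct diagonalisation shows these are exactly the six coordinate planes of the Walsh--Hadamard basis $f_1=\tfrac12(e_1+e_2+e_3+e_4),\,\dots,\,f_4=\tfrac12(e_1-e_2-e_3+e_4)$ of $V_1$, so that $H_1$ is precisely the even sign changes $D(W(\D_4))$ in the basis $f_1,\dots,f_4$. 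Conjugating by $h$ gives the corresponding $H_2$ on $V_2$, and since rotations of the first kind on different blocks commute, $H=H_1\times H_2$ has order $2^6$. To link the blocks I would adjoin rotations of the second kind such as $g_3=(1,3)(5,7)$, which in the $f$-basis acts on $V_1$ as the $\D_4$-reflection $s_{f_2+f_4}$ and on $V_2$ as the corresponding reflection of $V_2$. A set of such second-kind rotations projecting to simple reflections of $W(\D_4)$ on each side generates, with $H$, the group $G$; their images in $\tilde G=G_1/H_1\times G_2/H_2=\sym_4\times\sym_4$ span the diagonal $\sym_4$, whence $G/H\cong\sym_4$ is a Coxeter group of type $\A_3$ and $|G|=2^6\cdot 24=2^9\cdot 3$. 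One then checks on each block that $G_i=W(\D_4)$, $H_i=D(W(\D_4))$, $F_i=W(\D_4)$ and $G_i/H_i=W(\A_3)$, which is case $(x)$ of Theorem \ref{thm:reducible_pairs} for $n=4$; as $h$ conjugates $H_1$ onto $H_2$ and respects the diagonal linking, it normalises $G$. Finally, $R_6(\PSL_2(7))=\langle g_1,g_2,g_3\rangle$ on the standard six-dimensional summand lies in $M^p_7<M^p_8$ by Lemma \ref{lem:sub_psl_2(7)}, giving the last assertion.

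The hard part will be verifying that these explicit generators produce a group of \emph{exactly} order $2^9\cdot 3$ realising the stated triple, and in particular that $H_1$ is genuinely $D(W(\D_4))$ of an honest $W(\D_4)$ rather than some other $\Z_2^3$. This is delicate because $M^p_8$ contains only $16$ diagonal sign changes (its diagonal part is the Reed--Muller code $\mathrm{RM}(1,3)$), whereas $H=H_1\times H_2$ has order $2^6$; hence the $\D_4$-structure is forced to live in the rotated bases $f_1,\dots,f_4$ and $f_5,\dots,f_8$ and cannot be read off from the ambient coordinates. I would resolve this either by the explicit Walsh-basis bookkeeping indicated above, tracking the projections $\pi_i$ and the kernels that define $H$ and $G/H$, or, as is done for Lemma \ref{lem:isotropyL}, by a direct computation in a computer algebra system such as GAP confirming the order of $G$, the two four-dimensional components, and that $h$ normalises $G$.
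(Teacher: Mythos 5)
Your proposal is correct in outline and follows the same general strategy as the paper -- exhibit an explicit subgroup of $M^p_8$, identify the invariant two-block decomposition, read off the quadruple against Theorem \ref{thm:reducible_pairs}, produce the swapping involution, and quote Lemma \ref{lem:exotic_psg_psl} for the $R_6(\PSL_2(7))$ clause -- but the explicit realization is genuinely different. The paper takes $V_1=\left\langle \varepsilon_1,\varepsilon_2,\varepsilon_5,\varepsilon_6\right\rangle$ and $V_2=\left\langle \varepsilon_3,\varepsilon_4,\varepsilon_7,\varepsilon_8\right\rangle$ and generates $G$ by five rotations of the second kind, $(1,\overline{5})(4,\overline{8})$, $(1,6)(3,8)$, $(2,\overline{5})(3,\overline{8})$, $(3,7)(4,8)$, $(3,4)(5,6)$, whose block-projections are visibly standard $\D_4$-reflections $s_{e_i\pm e_j}$ in the coordinates of each block, with $h=(1,8)(2,7)(3,6)(4,5)$; this makes $F_i$, $H_i=D(F_i)$ and $G_i/H_i\cong\sym_4$ immediate without any change of frame. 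Your choice of the coordinate blocks $\left\langle e_1,\ldots,e_4\right\rangle$, $\left\langle e_5,\ldots,e_8\right\rangle$ forces the $\D_4$-structure into the Walsh--Hadamard frame; this does work (your computation of the six $(-1)$-eigenplanes of $\left\langle g_5,(1,\overline 3)(2,\overline 4),-\mathrm{id}_{V_1}\right\rangle$ is right, and since the $\D_4$ root system is self-dual under the Hadamard transform, $F_1$ is the same subgroup of $\Or(V_1)$ in either basis), but it buys you an extra subtlety you should make explicit: your $H_1$ is the diagonal subgroup of $W(\D_4)$ only with respect to the $f$-frame, i.e.\ it is one of the triality-conjugates of the standard $D(W(\D_4))$, so the identification with case $(x)$ of Theorem \ref{thm:reducible_pairs} holds only up to conjugation by an element of $W(\F_4)$. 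You also still owe the compatibility check that the chosen second-kind rotations induce a well-defined isomorphism $G_1/H_1\to G_2/H_2$ (otherwise products of them would produce first-kind rotations outside $H$ and inflate the order beyond $2^9\cdot 3$); you flag this honestly and propose GAP, which is no less rigorous than the paper's own terse verification, but the paper's choice of blocks and generators makes that check essentially mechanical, whereas yours requires the extra Walsh-basis bookkeeping.
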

\begin{proof} The rotations $(1,\overline{5})(4,\overline{8})$, $(1,6)(3,8)$, $(2,\overline{5})(3,\overline{8})$, $(3,7)(4,8)$ and $(3,4)(5,6)$ generate a subgroup $G$ of $M_8^p\subgr \sym_8$ (cf. Section \ref{sub:monomial_examples}) that leaves the subspace $\left\langle \varepsilon_1,\varepsilon_2,\varepsilon_5,\varepsilon_6 \right\rangle$ and its orthogonal complement invariant. In fact, it is a reducible rotation group of type $(G_i,H_i,F_i,G_i/H_i)=(W(\D_4),D(W(\D_4)),W(\D_4),W(\A_3))$, $i=1,2$. The involution $h=(1,8)(2,7)(3,6)(4,5)$ is contained in $M^p_8$, normalizes the group $G$ and interchanges its two irreducible subspaces. The rotation group $R_6(\PSL_2(7))$ is contained in $M^p_8$ as well (cf. Lemma \ref{lem:exotic_psg_psl} and Section \ref{sub:monomial_examples}).
  \end{proof}

\begin{lemma}
The rotation group $M(\Tt_8)=L$ of order $2^{13}\cdot 3^2 \cdot 5 \cdot 7$ contains the rotation group $M_8$ of order $2^{13}\cdot 3 \cdot 7$ and unitary reflection groups $W(\mathcal{F}_4)$ and $W(\mathcal{N}_4)$ of order $2^7\cdot 3^2$ and $2^9\cdot 3\cdot 5$, respectively.
\end{lemma}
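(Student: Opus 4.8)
The plan is to exploit the structural description of $L$ obtained above. By Lemma \ref{lem:rot_contained_in_L} every rotation in $N$ already lies in $\mathfrak{R}_2 \subset L$, so \emph{any} subgroup of $N=N_{\SOr_8}(H)$ that is generated by rotations is automatically contained in $L$. Hence, for each of the three groups it suffices to exhibit it as a rotation group sitting inside $N$, and the claimed orders can then be quoted from the relevant classifications.

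For $M_8$ this is immediate and was in fact already noted at the beginning of this section: $M_8$ contains $H$ as a normal subgroup (cf. Section \ref{sub:monomial_examples}), so it normalizes $H$ and thus $M_8 < N$; being generated by rotations, Lemma \ref{lem:rot_contained_in_L} gives $M_8 < L$. Its order is $|M_8| = |\Dp(8)|\cdot|H_8| = 2^7\cdot(2^6\cdot 3\cdot 7) = 2^{13}\cdot 3\cdot 7$.

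For the two unitary reflection groups the same principle applies, since a unitary reflection has real codimension two and is therefore a rotation; hence $W(\mathcal{F}_4)$ and $W(\mathcal{N}_4)$, regarded as real groups in $\SOr_8$, are generated by rotations. The key step, and the point I expect to be the main obstacle, is to place each of them inside $N$, i.e.\ to verify that it normalizes the extraspecial group $H$. One natural route is to locate the relevant complex structure already inside $H$: an element $h\in H$ with $h^2=-\mathrm{id}$ (such $h$ exist, as $Q$ takes the value $1$) turns $\R^8$ into $\C^4$, and the stabilizer $C_N(h)$ acts $\C$-linearly; one then identifies $W(\mathcal{F}_4)$ and $W(\mathcal{N}_4)$ with subgroups of $C_N(h)$ generated by rotations. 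Concretely, in the spirit of Lemmas \ref{lem:sub_psl}--\ref{lem:sub_GpAg}, this means writing down explicit generating rotations and checking that they preserve $H$: for $W(\mathcal{F}_4)$ one uses the $\F_4$-subsystems attached to the four-dimensional subspaces in $K_4$ (recall from Proposition \ref{prp:L_und_E8} that a four-dimensional subspace lies in $K_4$ exactly when its intersection with $R_1\cup R_2$ is of type $\F_4$), while for $W(\mathcal{N}_4)$, which is absolutely irreducible over $\R$, one additionally needs an element of order five from $L/H\cong \SL_4(2)$. Once $W(\mathcal{F}_4), W(\mathcal{N}_4) < N$ is established, Lemma \ref{lem:rot_contained_in_L} yields $W(\mathcal{F}_4), W(\mathcal{N}_4) < L$.

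Finally, the stated orders are read off from the Shephard--Todd classification \cite{MR2542964}, namely $|W(\mathcal{F}_4)| = 2^7\cdot 3^2$ and $|W(\mathcal{N}_4)| = 2^9\cdot 3\cdot 5$; both divide $|L| = 2^{13}\cdot 3^2\cdot 5\cdot 7$, as they must. Everything apart from the membership in $N$ is bookkeeping, and, as with the preceding lemmas of this section, the verification that the exhibited generators normalize $H$ is most safely confirmed with a computer algebra system.
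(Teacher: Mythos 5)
Your treatment of $M_8$ is correct and coincides with what the paper does (it is recorded in the first sentence of Section \ref{sub:excep_subgroups}): $H\triangleleft M_8$ gives $M_8<N$, and Lemmas \ref{lem:rotation normalize} and \ref{lem:rot_contained_in_L} then force $M_8<L$. Your general reduction --- any subgroup of $N$ generated by rotations lies in $L$ --- is also a legitimate consequence of Lemma \ref{lem:rot_contained_in_L}, and the observation that unitary reflections are rotations over $\R$ is fine.

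For $W(\mathcal{F}_4)$ and $W(\mathcal{N}_4)$, however, you correctly identify the decisive step (membership in $N$) and then do not carry it out: the passage via a complex structure $h\in H$ with $h^2=-\mathrm{id}$ and the centralizer $C_N(h)$ is only a plan, with the actual verification deferred to a computer. That is a genuine gap, and the paper avoids it by a different and essentially computation-free route: the remark preceding Proposition \ref{prp:L_und_E8} identifies the planes in $K_2$ as exactly the two-dimensional subspaces meeting $R_1\cup R_2$ in a root system of type $\I_2(4)$, and by the explicit description of the line system $\mathcal{O}_4$ in \cite[Sect.~6.2]{MR2542964} these are precisely the lines of $\mathcal{O}_4$; hence $W(\mathcal{O}_4)$ is generated by elements of $\mathfrak{R}_2$ and so lies in $L$, and the containments $W(\mathcal{F}_4),W(\mathcal{N}_4)<W(\mathcal{O}_4)$ are then quoted from the known inclusions of line systems. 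In other words, the paper never needs to normalize $H$ by these groups directly; it only needs the already-established bijection between $K_2$ and the $\I_2(4)$-subsystems of $R_1\cup R_2$. One further slip: your parenthetical claim that $W(\mathcal{N}_4)$ is absolutely irreducible over $\R$ contradicts Lemma \ref{lem:abs_irr_or_complex} --- as a unitary reflection group considered as a real group in $\SOr_8$ it preserves a complex structure, so its complexification is reducible; this does not damage your argument, but the appeal to ``an element of order five from $L/H$'' built on it is not a substitute for the missing containment.
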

\begin{proof}
It follows from the description of a \emph{line system} of type $\mathcal{O}_4$ (\cite[Sect. 6.2, p.~109]{MR2542964}, cf Section \ref{sub:unitary}) and the remark preceding Proposition \ref{prp:L_und_E8} that $L$ contains a unitary reflection group of type $W(\mathcal{O}_4)$, which itself contains unitary reflection groups of type $W(\mathcal{F}_4)$ and $W(\mathcal{N}_4)$ \cite[Sect. 6.2, p.~109]{MR2542964}.
  \end{proof}

\section{Irreducible rotation groups}
\label{sec:class_irreducible}
In this section we prove the classification of irreducible rotation groups, i.e. Theorem \ref{thm:clas_red_psg}. Let $G\subgr \SOr_n$ be an irreducible rotation group. If the complexification of $G$ is reducible, then $G$ is an irreducible unitary reflection group that is not the complexification of a real reflection group, considered as a real group by Lemma \ref{lem:abs_irr_or_complex} and Lemma \ref{lem:unitary_is_pseudo}. Hence, we are in case $(i)$ if $n=2$ or in case $(ii)$ or $(iii)$ if $n>2$ of Theorem \ref{thm:clas_red_psg}. Otherwise $G$ is absolutely irreducible. The classification of imprimitive absolutely irreducible rotation groups and primitive absolutely irreducible rotation groups in dimensions $n\geq 5$ is treated separately in the following two sections. Together with the classification in dimension $n\leq 4$ treated in Section \ref{sub:ps_low_dim}, the results of these sections form a complete proof of Theorem \ref{thm:clas_red_psg}.

\subsection{Imprimitive rotation groups}
\label{sub:imprimitive}
For a finite imprimitive group $G$ we can always assume that the subspaces $V_1,\ldots, V_l$ constituting a system of imprimitivity for $G$ are orthogonal and that $G\subgr \SOr_n$. If $G$ is moreover an irreducible rotation group, then it acts transitively on the set of these subspaces and thus all of them have the same dimension, either one or two. In the first case the group is called \emph{monomial}. The classification of absolutely irreducible monomial and nonmonomial imprimitive rotation groups is treated separately in the following two paragraphs.\\

\textbf{Monomial rotation groups.} Assume that $G$ is monomial. Since it is also orthogonal, each row and each column of any element of $G$ contains precisely one element from $\{\pm 1\}$. In particular, $G$ is contained in a reflection group of type $\BC_n$. Therefore, we obtain a homomorphism from $G$ to the symmetric group $\sym_n$ with the diagonal matrices $D$ of $G$ as kernel. Its image isomorphic to $G/D$ is a transitive subgroup of $\sym_n$ generated by transpositions, double transpositions and $3$-cycles. Such groups are classified in \cite[Thm. 2.1, p.~500]{MR570727}.
\begin{theorem}\label{thm:class_perm_group}Let $H$ be a transitive permutation subgroup of $\sym_n$ generated by a set of transpositions, double transpositions and $3$-cycles such that $H$ does not admit a two-dimensional system of imprimitivity, i.e. a partition of $\{1,\ldots,n\}$ into subsets of order two that are interchanged by $H$. Then, up to conjugation, $H$ is one of the following groups.
\begin{enumerate}
	\item $\sym_n$
	\item $\alt_n$
	\item $H_5=\left\langle (1,2)(3,4),(2,3)(4,5)\right\rangle \subgr  \sym_5$, $H_5\cong \dih_5$
	\item $H_6=\left\langle (1,2)(3,4),(2,3)(4,5),(3,4)(5,6)\right\rangle \subgr  \sym_6$, $H_6\cong \alt_5$
	\item $H_7=\left\langle (1,2)(3,4),(1,3)(5,6),(1,5)(2,7)\right\rangle \subgr  \sym_7$, $H_7\cong \PSL_2(7)\cong \SL_3(2)$
	\item $H_8=\left\langle H_7,(5,6)(7,8)\right\rangle\subgr  \sym_8$, $H_8\cong \AG_3(2)\cong \Z_2^3 \rtimes \SL_3(2)$.
\end{enumerate}
\end{theorem}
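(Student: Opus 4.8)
The plan is to reduce first to the case that $H$ is primitive, and then to split according to which types of elements $H$ contains, exploiting the fact that transpositions, double transpositions and $3$-cycles all have very small support. The reduction to primitivity is where the hypothesis excluding two-dimensional systems of imprimitivity does its work. Every generator $g$ satisfies $|\mathrm{supp}(g)|\le 4$. Suppose $H$ preserved a nontrivial block system with blocks of size $b$, $1<b<n$. If some generator $g$ induced a nontrivial permutation of the blocks, then at least two blocks would be mapped onto other blocks, so every point of those blocks would change its block and hence lie in $\mathrm{supp}(g)$, giving $|\mathrm{supp}(g)|\ge 2b$; together with $|\mathrm{supp}(g)|\le 4$ this forces $b\le 2$. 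If, on the other hand, no generator moved a block, then $H$ would fix every block setwise, contradicting transitivity. Thus any nontrivial block system must have blocks of size exactly $2$, which is precisely what the hypothesis forbids. Hence $H$ is primitive.

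Second, I would dispose of the cases in which $H$ contains a transposition or a $3$-cycle using the classical theorems of Jordan on primitive groups with small-support elements. If $H$ contains a transposition, then, being primitive, it must be the full symmetric group $\sym_n$. If $H$ contains a $3$-cycle but no transposition, then $H$ is generated by even permutations, so $H\le\alt_n$, while primitivity together with the presence of a $3$-cycle forces $H\supseteq\alt_n$; hence $H=\alt_n$. These two subcases account for the infinite families in the list.

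The remaining and genuinely hard case is when $H$ contains neither a transposition nor a $3$-cycle. Then every generator is a double transposition, so $H\le\alt_n$; since the only nontrivial permutations of support $1$, $2$ or $3$ are, respectively, impossible, odd, or $3$-cycles, the minimal degree of $H$ is exactly $4$. Moreover $H$ cannot contain $\alt_n$, because $\alt_n$ contains $3$-cycles for $n\ge 3$. Thus $H$ is a primitive group of minimal degree $4$ that does not contain the alternating group, and the task is to show that the only such groups are $H_5\cong\dih_5$, $H_6\cong\alt_5$, $H_7\cong\PSL_2(7)$ and $H_8\cong\AG_3(2)$, of degrees $5,6,7,8$.

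The main obstacle is this last extraction. My approach would be to invoke the classical bounds of Jordan and Bochert: a primitive group of degree $n$ not containing $\alt_n$ has minimal degree growing with $n$, so minimal degree $4$ bounds $n$ by an absolute constant. This reduces the problem to inspecting the primitive groups of the finitely many admissible degrees and keeping those that are generated by double transpositions and contain no $3$-cycle, which leaves exactly the four groups above. The delicate point is securing a small enough explicit bound on $n$ and then, for each surviving degree, verifying that the double transpositions generate the stated group and that no further groups slip through. If one prefers a self-contained argument avoiding the general finiteness theorem, I would instead analyse directly how two double transpositions generate a dihedral group and track the possible products and the permutation character, which for minimal degree $4$ is a finite and manageable, if somewhat intricate, case analysis culminating in the same four exceptional groups.
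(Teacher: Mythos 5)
The paper offers no proof of this statement to compare against: it is quoted directly from Huffman \cite[Thm.~2.1]{MR570727}. Judged on its own terms, the first three steps of your outline are correct and complete. The support argument (a generator moving a block nontrivially must move at least two blocks, hence at least $2b$ points, forcing $b\le 2$; generators all fixing every block would contradict transitivity) is exactly the right way to see that the hypothesis on two-dimensional systems of imprimitivity forces $H$ to be primitive, and the disposal of the cases where $H$ contains a transposition or a $3$-cycle via Jordan's theorems correctly produces $\sym_n$ and $\alt_n$.

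The genuine gap is in the last step, which is where essentially all of the content of the theorem lives. You reduce to classifying primitive groups of minimal degree $4$ not containing $\alt_n$ and then appeal to ``classical bounds of Jordan and Bochert'' to bound $n$, followed by an inspection you do not carry out. Bochert's bound is a statement about doubly transitive groups; for merely primitive groups the general bounds of Jordan type (or Babai's $m>(\sqrt{n}-1)/2$) give only something like $n\le 80$ when the minimal degree is $4$, and inspecting all primitive groups of degree up to such a bound is not a manageable hand computation --- it presupposes the tables of primitive groups of low degree. What is actually needed is the sharp classical result (due to Jordan and Manning) that a primitive group of degree $n\ge 9$ containing a nontrivial element moving only four points already contains $\alt_n$; that pins $n\le 8$ at once, after which one must still verify, degree by degree, that the only primitive groups of degree $5$--$8$ generated by double transpositions and containing no $3$-cycle are $\dih_5$, $\alt_5$ acting on six points, $\PSL_2(7)$ and $\AG_3(2)$ (e.g.\ ruling out $F_{20}$ in degree $5$, $\PGL_2(5)$ in degree $6$, and the degree-$8$ actions of $\PSL_2(7)$, $\PGL_2(7)$ and $\mathrm{A\Gamma L}_1(8)$, whose involutions are not double transpositions or do not generate). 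Neither the sharp bound nor this inspection is supplied, and the alternative ``self-contained'' route via products of pairs of double transpositions is only gestured at; so the skeleton is sound, but the classification at the heart of the theorem is deferred rather than proved.
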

For each permutation group $H$ described above there exists a monomial rotation group $M\subgr \SOr_n$ whose diagonal subgroup $D=D(n)$ contains all linear transformations that change the sign of an even number of coordinates (cf. Section \ref{sub:monomial_examples}) such that $M/D \cong H$. Except in case $(i)$ this is a semidirect product of the permutation group $H$ with $D(n)$.

In \cite[Table I-III, p.~503]{MR570727} Huffman classifies irreducible monomial groups over the complex numbers that are generated by transformations with an eigenspace of codimension two. These tables contain all complexified monomial absolutely irreducible rotation groups. Together with \cite[p.~90]{MR608821} they imply the following result, where we write $(i,\overline{j})$ for the linear transformation that maps $e_i$ to $-e_j$, $-e_j$ to $e_i$ and all other standard basis vectors to itself.
\begin{proposition}\label{prp:class_monomial_psg} Let $G\subgr \SOr_n$ be a monomial absolutely irreducible rotation group that does not admit a two-dimensional system of imprimitivity. Then, up to conjugation, $G$ is one of the following groups
\begin{enumerate}
\item $M(\Pp_n)=M_n=D(n)\rtimes H_n$, $n=5,6,7,8$, for $H_n$ as in Theorem \ref{thm:class_perm_group}.
\item $M(\Qq_7)=M^p_7\subgr M_7$ and $M(\Qq_8)=M^p_8\subgr M_8$ as in Section \ref{sub:monomial_examples}. These groups are extensions of $\PSL_2(7)$ by a group of order $2^3$ and $2^7$, respectively.
\item An orientation preserving subgroup $\Wp$ of a reflection group $W$ of type $\BC_n$ or $\D_n$.
\end{enumerate}
\end{proposition}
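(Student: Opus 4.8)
The plan is to push the monomial structure through the projection $\phi\colon G\to\sym_n$ introduced above, whose kernel is the diagonal subgroup $D=D(G)\subgr(\pm1)^n$ and whose image is $\overline G=G/D$, and to reduce first to the combinatorial classification in Theorem~\ref{thm:class_perm_group}. I would begin by recording which monomial elements are rotations: writing an element as a product of signed cycles, its fixed point subspace has codimension two exactly when the total dimension of the non-fixed blocks is two, which happens for two sign changes, a single $3$-cycle, a double transposition, a negative $2$-cycle, or a transposition together with a sign change. Hence every rotation of $G$ projects to the identity, a transposition, a double transposition, or a $3$-cycle, so $\overline G$ is generated by transpositions, double transpositions and $3$-cycles. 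Irreducibility of $G$ forces $\overline G$ to act transitively on $\{1,\dots,n\}$, while a partition of $\{1,\dots,n\}$ into pairs permuted by $\overline G$ would turn the coordinate planes $\langle e_i,e_j\rangle$ into a two-dimensional system of imprimitivity for $G$; by hypothesis this is excluded. Theorem~\ref{thm:class_perm_group} therefore yields $\overline G\in\{\sym_n,\alt_n,H_5,H_6,H_7,H_8\}$.

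It remains to determine, for each admissible $\overline G$, the diagonal subgroup $D$ together with the extension $1\to D\to G\to\overline G\to1$. Since $G\subgr\SOr_n$, the group $D$ is an $\mathbb F_2\overline G$-submodule of the even-weight submodule $\Dp(n)$ of $\mathbb F_2^n$, and absolute irreducibility excludes $D$ too small: both $D=0$ and $D=\langle-\mathrm{id}\rangle$ leave the line of the all-ones vector invariant, making $G$ reducible. For $\overline G\in\{\sym_n,\alt_n,H_5,H_6\}$ the module $\Dp(n)$ has no proper nonzero submodule apart from possibly $\langle-\mathrm{id}\rangle$, so $D=\Dp(n)$ is forced. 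This yields the orientation preserving subgroups $\Wp(\BC_n)$ and $\Wp(\D_n)$ of case (iii) when $\overline G=\sym_n,\alt_n$, and the semidirect products $M_n=\Dp(n)\rtimes H_n$ of case (i) when $\overline G=H_5\cong\dih_5$ or $H_6\cong\alt_5$.

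The main obstacle is the analysis for $\overline G=H_7\cong\PSL_2(7)$ and $\overline G=H_8\cong\AG_3(2)$, where $\Dp(n)$ is no longer irreducible: over $H_7$ it contains the three-dimensional simplex submodule, and over $H_8$ a corresponding proper submodule, producing—besides the full groups $M_7$ and $M_8$ of case (i)—the exceptional rotation groups $M^p_7$ and $M^p_8$ of case (ii), extensions of $\PSL_2(7)$ by groups of order $2^3$ and $2^7$. Identifying precisely these invariant submodules, checking that each resulting extension is generated by rotations, is absolutely irreducible and admits no two-dimensional system of imprimitivity, and verifying that no further groups occur, is exactly the content of Huffman's classification of irreducible monomial complex groups generated by transformations with a codimension-two eigenspace in \cite[Table I-III, p.~503]{MR570727}, together with its real refinement in \cite[p.~90]{MR608821}. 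I would invoke these classifications to finish the determination of $D$ and the extension type in each case, and conclude that $G$ is, up to conjugation, one of the groups in (i)--(iii).
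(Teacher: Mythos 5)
Your route coincides with the paper's: both arguments record that a monomial rotation projects to the identity, a transposition, a double transposition or a $3$-cycle, deduce that $G/D$ is a transitive subgroup of $\sym_n$ generated by such elements with no two-dimensional system of imprimitivity, invoke Theorem \ref{thm:class_perm_group}, and then determine the diagonal subgroup and the extension by appealing to Huffman's tables \cite{MR570727} and Mikha\^ilova's real treatment \cite{MR608821} (the paper additionally walks through the table entries to discard the reducible ones, those conjugate to listed groups, and those conjugate to primitive groups). So the skeleton is fine and the final appeal to the same references closes the argument at the same level of rigour as the paper itself.

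One intermediate step of yours would fail if it were meant to stand on its own: the exclusion of $D=0$ and $D=\langle-\mathrm{id}\rangle$ on the grounds that $G$ would then ``leave the line of the all-ones vector invariant.'' Knowing $D$ does not determine the embedding of $G$ into $W(\BC_n)$: the extension $1\to D\to G\to G/D\to 1$ may be realized by a nontrivial cocycle in $H^1(G/D,\{\pm1\}^n)\cong\mathrm{Hom}(P,\Z_2)$ ($P$ a point stabilizer), and such a twisted monomial group fixes no vector at all. A concrete instance is the Steinberg representation of $\PSL_2(7)$ in dimension $7$, induced from the sign character of a point stabilizer $\sym_4$: it is monomial, irreducible, and has trivial diagonal part. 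It is excluded from the proposition not because it is reducible but because it contains no rotations whatsoever (no element has a five-dimensional fixed space, as one reads off the character values). So the correct elimination of small $D$ really does require either an eigenvalue analysis of the twisted candidates or the tables you cite; since you explicitly delegate ``the determination of $D$ and the extension type'' to those classifications, the gap is covered, but the module-theoretic paragraph as written overstates what has been proved at that point.
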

For $n>4$ this result follows from \cite[Table I-III, p.~503]{MR0401936}, since all other complexified real groups occurring in these tables are either reducible (Group $1$, $e=g=\alpha=1$, in Table I, Group $1$, $e=g=\alpha=1$, in Table II, the first groups for $G=\AG_3(2)$ and $G=\PSL_2(7)$ with $e=g=\alpha=1$ in Table II, the second group for $G=\alt_5$ with $e=g=1$, $\alpha=-1$ in Table III and Group $G=\dih_5$, $e=g=h=1$ in Table III), conjugate to a group described in the proposition above (the conjugacy class in $\SOr_8$ of the second group for $G=\AG_3(2)$, $e=g=1$, in Table III is independent of the choice of $\alpha \in \{\pm 1\}$) or conjugate to a primitive rotation group (Group 3, $e=g=1$, $c=1$ in Table II is conjugate to the primitive rotation group $R_5(\alt_5)$) (note that the conditions $f=1$ and $e\in \{1,2\}$ must be satisfied in these tables in order for $G$ to be an orientation preserving real group). For arbitrary $n$ the result was independently obtained by working over the real numbers in \cite[p.~90]{MR608821}. In particular, for $n\leq 4$ a case differentiation shows that only the listed groups occur.\\

\textbf{Two-dimensional system of imprimitivity.} Now assume that $G$ admits a two-dimensional system of imprimitivity $\R^{n}=V_1\oplus \ldots \oplus V_l$ where $n=2l$. The block diagonal subgroup $D$ is the kernel of the natural homomorphism $\phi: G \To \sym_l$. Since $G$ is irreducible and generated by rotations, its image is a transitive subgroup of $\sym_l$ generated by transpositions and thus all of $\sym_l$ \cite[Lem. 2.13, p.~28]{MR2542964}. In particular, $G$ contains transformations of type
\[
	\begin{array}{ccl}
		 t_i=\left(
		  \begin{array}{cccccc}
		    I &  &  	&   &   & \\
		     & \ddots &  &  &  & \\
		     &  			& 0 & Q_i^{-1} &  & \\
		     &  			& Q_i & 0 &  & \\
		     &  			&   &   & \ddots & \\
		     &  			&   &  &  & I\\
		  \end{array}
		\right) \textrm{ with } t_iV_{i-1}=V_i
  \end{array}	
\]
for each $i=2,\ldots,l$ such that $G=\left\langle D, t_2,\ldots,t_l\right\rangle$ \cite[p.~511]{MR0401936}. Conjugating successively by the transformations $I\oplus Q_2^{-1} \oplus I \ldots \oplus I$, $I\oplus I \oplus (Q_3Q_2)^{-1} \oplus I \ldots \oplus I$, $\ldots$, we can assume that $Q_i=I$, $i=2,\ldots,l$. Each rotation in $g\in G$ is of one of the following four types (cf. \cite{Mikhailova2})

\begin{equation}\label{equ:type1}
		 g_{|V_i}= Q, \ g_{|V_i^{\bot}}=\mathrm{id}
\end{equation}
\begin{equation}\label{equ:type2}
g_{|V_i\oplus V_j} = \left(
		  \begin{array}{cc}
		    0 & Q^{-1}  \\
		    Q & 0 \\
		  \end{array}
		\right), \
g_{|(V_i\oplus V_j)^{\bot}}=\mathrm{id}
\end{equation}
\begin{equation}\label{equ:type3}
g_{|V_i\oplus V_j} = \left(
		  \begin{array}{cc}
		    R_1 & 0  \\
		    0 & R_2 \\
		  \end{array}
		\right), \
g_{|(V_i\oplus V_j)^{\bot}}=\mathrm{id}
\end{equation}
\begin{equation}\label{equ:type4}
g_{|V_i\oplus V_j} = \left(
		  \begin{array}{cc}
		    0 & R^{-1}  \\
		    R & 0 \\
		  \end{array}
		\right), \
g_{|(V_i\oplus V_j)^{\bot}}=\mathrm{id}
\end{equation}
for distinct $i,j \in \{1,\ldots,l\}$ and orthogonal matrices $Q$ with determinant $1$ and $R$, $R_1$ and $R_2$ with determinant $-1$, respectively. Note that if $G$ contains a rotation of type (\ref{equ:type4}), then it also contains a rotation of type (\ref{equ:type3}). Therefore, if $G$ does not contain a rotation of type (\ref{equ:type3}), then it preserves the complex structure $J:=J_0\oplus \ldots \oplus J_0$, where
\[
	\begin{array}{ccl}
		 	J_0= \left(
		  \begin{array}{cc}
		    0 & 1  \\
		    -1 & 0 \\
		  \end{array}\right)
  \end{array}	.
\]
In this case $G$ is induced by a unitary reflection group of type $G(m',p',l)$ for some $p'|m'$ (cf. Section \ref{sub:unitary}). Otherwise, each rotation in $G$ can be written as a composition of rotations of type (\ref{equ:type3}) and the $t_i$, $i=2,\ldots,l$, i.e. $G$ is generated by them. Moreover, since the $t_i$ normalize the set of rotations of type (\ref{equ:type3}), the diagonal subgroup $D$ is generated by these rotations. Let $G_i\subgr  \Or(V_i)$ be the projection of $G$ to $\Or(V_i)$, let $H_i\subgr \SOr(V_i)$ be the subgroup of $G$ generated by rotations of type (\ref{equ:type1}) contained in $G$ and set $H=H_1\times \ldots \times H_l \subgr G$. All $G_i$ are conjugate and isomorphic to a dihedral group $D_{km}$ of order $2km$ and all $H_i$ are conjugate and isomorphic to a cyclic group $C_{m}$ of order $m$. Assume that $Q\in G_1$ is a rotation of maximal order. Since the diagonal subgroup $D$ is generated by rotations of type (\ref{equ:type3}), there exists a rotation $Q'\in G_2$ such that 
\[
			Q\oplus Q' \oplus I \oplus \ldots \oplus I \in G.
\]
Let us now assume that $n>4$. Then, because of $l\geq3$, we also have
\begin{equation}\label{equ:Q_squared}
			Q\oplus Q^{-1} \oplus I \oplus \ldots \oplus I \in G.
\end{equation}
Since there exists a rotation $R_1\oplus I \oplus R_2 \oplus I \oplus \ldots \oplus I\in G$ of type (\ref{equ:type3}), we deduce that
\[
			QR_1Q^{-1}\oplus I \oplus R_2 \oplus \ldots \oplus I \in G
\]
and thus
\begin{equation}\label{equ:Q_Qinverse}
			Q^2 \oplus I \oplus \ldots \oplus I =QR_1Q^{-1}R_1\oplus I \oplus R_2^2 \oplus \ldots \oplus  \in G.
\end{equation}
This shows $k\in \{1,2\}$, that the subgroup of $G$ generated by the rotations of type (\ref{equ:type1}) and (\ref{equ:type2}) contained in $G$ is given by $\As(km,k,l)$ (cf. Section \ref{sub:imprimitive_example}) and that the group $G$ is generated by $G(km,k,l)$ and a transformation $r$ that conjugates the first two coordinates, i.e. $r(z_1,z_2,z_3\ldots,z_l)=(\overline{z}_1,\overline{z}_2,z_3\ldots,z_l)$, where we identify $\R^{2l}$ with $\C^l$. Hence, we have proven the following proposition.

\begin{proposition} \label{prp:class_imprimit_psg}
The imprimitive absolutely irreducible rotation groups $G\subgr \SOr_n$ for $n=2l\geq 5$ that admit a two-dimensional system of imprimitivity are up to conjugation $\Gs(km,k,l)=\left\langle G(km,k,l),\tau \right\rangle\subgr \SOr_n$ with $k=1,2$ and $km\geq3$. The group $\Gs(km,k,l)$ has order $2^{l-k} (km)^l l!$.
\end{proposition}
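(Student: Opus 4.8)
\begin{prf}
The plan is to read off the classification from the structural analysis carried out above and then to settle the converse and the order count separately. For the forward direction, let $G\subgr\SOr_n$, $n=2l\geq5$, be an imprimitive absolutely irreducible rotation group with a two-dimensional system of imprimitivity. Since $G$ is absolutely irreducible, Lemma \ref{lem:abs_irr_or_complex} rules out that $G$ preserves a complex structure, so $G$ must contain a rotation of type (\ref{equ:type3}). After the normalization $Q_i=I$ the relations (\ref{equ:Q_squared}) and (\ref{equ:Q_Qinverse}) then force $k\in\{1,2\}$ and exhibit $G$ as being generated by $G(km,k,l)$ together with the coordinate-conjugating rotation $r$, that is $G=\Gs(km,k,l)$. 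Here $km\geq3$ is the cut-off that keeps the diagonal blocks $G_i\cong\dih_{km}$ genuinely (nonabelian) dihedral and separates these examples from the monomial groups and from the groups of type $\BC_l$ and $\D_l$ treated elsewhere.

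For the converse I would check that each $\Gs(km,k,l)$ with $k\in\{1,2\}$ and $km\geq3$ really has the asserted properties. Imprimitivity together with the two-dimensional system of imprimitivity $\R^n=V_1\oplus\dots\oplus V_l$ is built into the definition, and $\Gs(km,k,l)$ is generated by rotations because the coordinate-conjugating map $r$ is a rotation of type (\ref{equ:type3}) and $G(km,k,l)$, being a unitary reflection group, is a rotation group over $\R$. To obtain absolute irreducibility I would compute the commutant: any real endomorphism commuting with $\Gs(km,k,l)$ in particular commutes with the complex-irreducible subgroup $G(km,k,l)$, hence lies in $\C\cdot\mathrm{id}=\{a\,\mathrm{id}+bJ\}$; since $r$ is complex-antilinear it satisfies $rJr^{-1}=-J$, so commuting with $r$ forces $b=0$. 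Thus the commutant is $\R\cdot\mathrm{id}$, which gives both irreducibility and absolute irreducibility.

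Finally, the order is obtained from the semidirect-product description $\Gs(km,k,l)=\As(km,k,l)\rtimes\sym_l$. Counting tuples $(g_1,\dots,g_l)\in W(\I_2(km))^l$ subject to $g_1\cdots g_l\in\Wp(\I_2(m))$, the first $l-1$ entries range freely over the dihedral group of order $2km$ while the last is confined to a coset of $\Wp(\I_2(m))\cong\cyc_m$ of size $m$, so that $|\As(km,k,l)|=m\,(2km)^{l-1}=2^{l-1}k^{l-1}m^l$. Multiplying by $l!$ and using the identity $2^{k-1}=k$, which holds precisely for $k\in\{1,2\}$, rewrites this as $2^{l-k}(km)^l\,l!$, as claimed.

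I expect the main obstacle to be the absolute-irreducibility step of the converse, since one must argue that no complex structure survives the enlargement by $r$; the commutant computation above is the cleanest way around this. The order count and the reduction to the already-established structural analysis are then routine.
\end{prf}
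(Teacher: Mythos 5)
Your argument for the forward direction is the same as the paper's: the proposition is stated there as the conclusion of the immediately preceding structural analysis (normalizing $Q_i=I$, observing that absolute irreducibility forces a rotation of type (\ref{equ:type3}), and using (\ref{equ:Q_squared})--(\ref{equ:Q_Qinverse}) to get $k\in\{1,2\}$ and $G=\left\langle G(km,k,l),r\right\rangle$), and you read this off correctly. Your additions --- the commutant computation for absolute irreducibility of $\Gs(km,k,l)$ and the count $|\As(km,k,l)|=m(2km)^{l-1}$ --- are correct and supply details the paper leaves implicit; the one point to tighten is that a \emph{real} endomorphism commuting with the complex-irreducible group $G(km,k,l)$ lies in $\{a\,\mathrm{id}+bJ\}$ only after one rules out antilinear intertwiners, which holds here because the natural representation of $G(km,k,l)$ has non-real character for $km\geq3$ and so is not self-conjugate.
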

For $n=4$ there is no restriction on $k$ and for a specific $k$ there can be several geometrically inequivalent rotation groups (cf. Section \ref{sub:imprimitive_example}). More precisely, we have

\begin{proposition} \label{prp:class_imprimit_psg4}
The imprimitive absolutely irreducible rotation groups $G\subgr \SOr_4$ that admit a two-dimensional system of imprimitivity are precisely the unique extensions of reducible rotation groups $D$ defined by a set of data (cf. Theorem \ref{thm:clas_red_prg})
\[(\{(W(\I_2(km)),\Wp(\I_2(m)),W(\I_2(km))\}_{i\in \{1,2\}}, \varphi),\]
$km\geq 3$, where $\varphi: \dih_{k} \To \dih_{k}$ is an involutive automorphism of $\dih_k=W(\I_2(km)/\Wp(\I_2(m))$ that maps reflections onto reflections, by a normalizing rotation that interchanges the two irreducible components of $D$. They are denoted as $\Gs(km,k,2)_{\varphi}$ (cf. Section \ref{sub:imprimitive_example}).
\end{proposition}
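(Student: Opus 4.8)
The plan is to rerun the analysis of imprimitive rotation groups admitting a two-dimensional system of imprimitivity in the single case $l=2$ that was excluded from Proposition~\ref{prp:class_imprimit_psg}, where the passage to (\ref{equ:Q_squared}) and (\ref{equ:Q_Qinverse}) is no longer available for lack of a third block. Accordingly I fix the decomposition $\R^4 = V_1 \oplus V_2$, the homomorphism $\phi\colon G \To \sym_2$ and its block-diagonal kernel $D = \ker\phi$, so that $[G:D]=2$ and $G = \langle D, t_2\rangle$ with $t_2$ normalised to the block-swap. Because $G$ is absolutely irreducible it cannot preserve the complex structure $J_0\oplus J_0$, so by the dichotomy established above (absence of rotations of type (\ref{equ:type3}) forces invariance of $J_0\oplus J_0$) it \emph{does} contain such a rotation and $D$ is generated by them; in particular $D$ is a reducible rotation group with the two irreducible components $V_1$ and $V_2$.

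First I would pin down $D$ using the structure theory. The projection $G_i\subgr \Or(V_i)$ is dihedral of order $2km$, the first-kind rotations give $H_i=\cyc_m$, and since the type-(\ref{equ:type3}) rotations are exactly the rotations of the second kind, $F_i = G_i = W(\I_2(km))$; thus $(G_i,H_i,F_i)$ is a triple of type (xii) in Theorem~\ref{thm:reducible_pairs}, with $km\geq 3$ forced by irreducibility of $V_i$. Goursat's lemma then exhibits $D$ as the fibre product of the two copies of $W(\I_2(km))$ over an isomorphism $\varphi$ of the quotients $G_i/H_i$, each canonically $\dih_k = W(\I_2(km))/\Wp(\I_2(m))$; and $\varphi$ carries reflections to reflections because the second-kind rotations of $D$ project onto reflection--reflection pairs and reflections of $W(\I_2(km))$ descend to reflections of $\dih_k$. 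This is precisely the datum assigned by Theorem~\ref{thm:clas_red_prg} to $(\{(W(\I_2(km)),\Wp(\I_2(m)),W(\I_2(km)))\}_{i\in\{1,2\}},\varphi)$.

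The decisive new ingredient for $l=2$ is that $\varphi$ must be an involution, and this is where the genuinely fresh part of the argument lies. Conjugation by $t_2$ sends a block-diagonal element $(A,B)$ to $(B,A)$, and since $t_2$ normalises $D$ this interchange must preserve the fibre-product relation $\varphi(\pi A)=\pi B$; reading it in both directions yields $\varphi\circ\varphi=\mathrm{id}$. Conversely, once $\varphi$ is involutive the block-swap normalises $D$, so the extension exists, and the block-swapping elements of $G$ form the single coset $D t_2$, whence $G=\langle D,t_2\rangle=\Gs(km,k,2)_\varphi$ is the unique rotation group extending $D$ by a component-interchanging rotation. Together with the identification of $D$ this gives the forward direction and the asserted uniqueness, and it highlights the contrast with Proposition~\ref{prp:class_imprimit_psg}: with no third block available $k$ is no longer confined to $\{1,2\}$, and the involutive automorphism $\varphi$ genuinely produces new, in general nonconjugate, groups (cf.\ Section~\ref{sub:reducible_examples}).

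For the converse I would verify that every $\Gs(km,k,2)_\varphi$ with $\varphi$ involutive and reflection-preserving is of the required type. It is a rotation group by construction and imprimitive because $D$ stabilises $V_1,V_2$ while the adjoined rotation interchanges them. The genuinely delicate points---and where I expect the main obstacle---are irreducibility and, given that, absolute irreducibility via Lemma~\ref{lem:abs_irr_or_complex}, i.e.\ the absence of a $G$-invariant complex structure $J$. Such a $J$ would centralise the irreducible $W(\I_2(km))$-action on each $V_i$, whose real commutant is $\R$ for $km\geq 3$, so $J$ could not stabilise $V_i$ and would have to carry $V_1$ onto $V_2$; for $m>1$ the two components are inequivalent $D$-modules and this is already impossible, whereas for $m=1$ they may be isomorphic and one must exploit that a reflection of a complex line is conjugate-linear---so that the type-(\ref{equ:type3}) rotations present in $G$ cannot be unitary for any $J$ compatible with $V_1,V_2$---to eliminate the surviving candidates. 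Carrying this out uniformly, instead of appealing to uniqueness of the system of imprimitivity of a small unitary reflection group, is the heart of the matter.
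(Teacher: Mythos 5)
Your forward direction coincides with the paper's proof, which is only a few lines long: normalize $t_2$ to the block swap, observe that $D=\ker\phi$ is a reducible rotation group given by the fibre--product data $(\{(W(\I_2(km)),\Wp(\I_2(m)),W(\I_2(km)))\}_{i\in\{1,2\}},\varphi)$, and deduce $\varphi^2=\mathrm{id}$ from the fact that $t_2$ normalizes $D$. Your Goursat-lemma framing and the explicit identification of the triple $(G_i,H_i,F_i)$ just spell out what the paper compresses into one sentence, and your derivation of involutivity (reading the relation $\varphi(\pi A)=\pi B$ in both directions after the swap) is exactly the paper's argument. The one substantive divergence is the uniqueness claim. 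The paper proves that the extension is unique by asserting that \emph{any} rotation normalizing $D$ and interchanging its components can be conjugated to $t_2$ by an element of $N(D)$; your coset argument ($Dt_2$ exhausts the block-swapping elements \emph{of $G$}) only shows that $G$ is generated by $D$ together with any one swap rotation already contained in $G$. It does not address a normalizing swap rotation $t_2'\notin G$, for which $\langle D,t_2'\rangle$ is a priori a different group, so as written your argument establishes a weaker statement than ``the unique extension''. To close this you would need the paper's conjugacy step (note $t_2't_2^{-1}=(c_1,c_2)$ is block-diagonal with $c_2=c_1^{-1}$ since both swaps square to the identity, which is what makes the conjugation work).

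On the converse, you are right that the paper's proof is silent: it never verifies that each $\Gs(km,k,2)_\varphi$ is irreducible and absolutely irreducible, presumably taking this as read from the construction in Section \ref{sub:imprimitive_example}. Your sketch (the real commutant of $W(\I_2(km))$ on each block is $\R$ for $km\geq 3$, so an invariant complex structure would have to swap the blocks, which the type-(\ref{equ:type3}) rotations forbid) is the right idea, but you leave it as a program rather than a proof, so this part of your proposal is incomplete --- though it is incomplete in a place where the paper offers nothing to compare against.
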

\begin{proof} Let $G\subgr \SOr_4$ be an imprimitive absolutely irreducible rotation group as considered above. Then we have $G=\left\langle D, t_2\right\rangle$ where the block diagonal subgroup $D\subgr G$ is a reducible rotation group described by a set of data 
\[\{(W(\I_2(km)),\Wp(\I_2(m)),W(\I_2(km))\}_{i\in \{1,2\}},\]
$\varphi: \dih_{k} \To \dih_{k}$ where $\varphi$ is an automorphism of $\dih_{k}\cong W(\I_2(km))/\Wp(\I_2(m))$ that maps reflections onto reflections. Since $t_2$ normalizes $D$, the automorphism $\varphi$ has order $2$. Any other rotation that normalizes $D$ and interchanges its two irreducible components can be conjugated to $t_2$ by an element in the normalizer of $D$.
  \end{proof}

\subsection{Primitive rotation groups}
\label{sub:classification_primitive_pseudoreflection_groups}
In this section we prove the classification of primitive absolutely irreducible rotation groups in dimension $n\geq 5$. The complexification of a primitive absolutely irreducible rotation group is irreducible but a priori not primitive (the complexification of $R_5(\alt_5)$ is monomial). However, we are going to show that it satisfies the following property if $n\geq 5$.
\begin{definition}
\label{dfn:quasiprimitive}
An irreducible complex representation $\rho: G \To \GL(V)$ is called \emph{quasiprimitive} if for every normal subgroup $N$ of $G$ the restriction $\rho_{|N}$ splits into equivalent representations.
\end{definition}
Indeed, we have
\begin{lemma}\label{lem:quasiprimitive}
The complexified natural representation of a primitive absolutely irreducible rotation group $G\subgr \SOr_n$ with $n\geq 5$ is quasiprimitive.
\end{lemma}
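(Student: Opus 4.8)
The plan is to argue by contradiction, combining Clifford's theorem with the real structure of the representation and the assumption that $G$ is generated by rotations. Suppose the complexification $\rho$ acting on $V^{\C}$, the complexification of $V=\R^n$, is not quasiprimitive. Then there is a normal subgroup $N\triangleleft G$ for which the restriction decomposes into its isotypic components $\rho|_N=W_1\oplus\dots\oplus W_s$ with $s\geq 2$, and by Clifford's theorem $G$ permutes the $W_i$ transitively. I would then bring in the complex conjugation $\sigma$ determined by the real form $\R^n\subset V^{\C}$. Since every element of $G$ (and of $N$) is a real orthogonal transformation, $\sigma$ commutes with the $G$-action and hence permutes the isotypic components $W_i$; as $\sigma^2=\mathrm{id}$ this permutation is an involution.

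The next step is to descend to a decomposition of the real space. For each $\sigma$-orbit $O$ of blocks the sum $W_O=\bigoplus_{i\in O}W_i$ is $\sigma$-stable and is therefore the complexification of the real subspace $U_O=W_O\cap\R^n$. Because $G$ commutes with $\sigma$ it permutes the $\sigma$-orbits, and consequently it permutes the proper nonzero subspaces $U_O$, which together span $\R^n$. If there are at least two $\sigma$-orbits, this is a system of imprimitivity in the sense of Definition \ref{dfn:imprimitive}, contradicting the primitivity of $G$. Since every $\sigma$-orbit has at most two elements and $s\geq 2$, the only surviving possibility is $s=2$ with $\sigma$ interchanging $W_1$ and $W_2$.

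The heart of the argument, and the step I expect to be the main obstacle, is to eliminate this last case, since here the conjugation argument alone fails. I would set $J=i\,\mathrm{id}_{W_1}\oplus(-i)\,\mathrm{id}_{W_2}$; a direct computation shows that $J$ commutes with $\sigma$, so $J$ restricts to a genuine complex structure on $\R^n$, and that the index-two stabilizer $G_0$ of $W_1$ acts $J$-linearly while every $g\in G\setminus G_0$ satisfies $gJ=-Jg$, i.e.\ is $J$-antilinear. Now I use that $G$ is generated by rotations: since $G\neq G_0$, not all rotations can lie in $G_0$, so some rotation $r$ lies in $G\setminus G_0$ and thus anticommutes with $J$. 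Then $J$ carries $\mathrm{Fix}(r)=\mathrm{Eig}_{+1}(r)$ injectively into $\mathrm{Eig}_{-1}(r)$, whence $\dim\mathrm{Fix}(r)\leq\dim\mathrm{Eig}_{-1}(r)\leq 2$, because the $(-1)$-eigenspace of a rotation is contained in the two-dimensional plane it moves. But a rotation fixes a subspace of codimension two, so $\dim\mathrm{Fix}(r)=n-2\geq 3$ by the hypothesis $n\geq 5$, a contradiction. This rules out the swapped case and completes the proof.

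The delicate point underlying this last step is that real primitivity does not by itself force the complexification to be quasiprimitive: the complexification of a primitive rotation group can well be imprimitive, as for $R_5(\alt_5)$, whose complexification is monomial. It is therefore essential that the failure of quasiprimitivity produce isotypic components of genuinely distinct types that are paired by $\sigma$, and that the rotation hypothesis interact with the resulting complex structure precisely through the dimension constraint $n\geq 5$; both ingredients are exactly what make the otherwise admissible swapped configuration impossible.
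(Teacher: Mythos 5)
Your proof is correct. It differs from the paper's argument mainly in how the Clifford-theoretic case analysis is organized. The paper decomposes $\R^n$ into $N$-irreducible \emph{real} components first, uses primitivity to force them all to be equivalent, and then applies the Frobenius--Schur trichotomy to the complexification of a single real component; the only surviving bad case (complex type, $V_i^{\C}=U_i\oplus U_i^*$) is excluded by observing that a rotation cannot interchange the two complementary halves $\hat U$ and $\hat U^*$ of $\C^n$ when $\dim\hat U>2$, whence $G$ would preserve $\hat U$, contradicting absolute irreducibility. You instead decompose $\C^n$ into $N$-isotypic components directly and let the real structure $\sigma$ act on the set of blocks: two or more $\sigma$-orbits descend to a real system of imprimitivity, so only a single conjugate pair survives, and that case is excluded via the complex structure $J$ and the eigenvalue count $\dim\mathrm{Fix}(r)\le\dim\mathrm{Eig}_{-1}(r)\le 2$ against $\dim\mathrm{Fix}(r)=n-2\ge 3$. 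The two final contradictions are the same dimension count in different clothing (a rotation fixes a subspace of codimension two and therefore cannot swap two complementary subspaces of dimension greater than two), and both proofs invoke $n\ge 5$ and the generation by rotations at exactly that point; what your version buys is that it dispenses with the Frobenius--Schur indicator and makes the role of the real form explicit through $\sigma$ and $J$, at the cost of having to check that $\sigma$ permutes the isotypic components and that $J$ descends to $\R^n$, both of which you do correctly.
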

\begin{proof}Let $N$ be a normal subgroup of $G$ and let $\R^n=V_1\oplus \ldots \oplus V_k$ be a decomposition into irreducible components with respect to the action of $N$. By Clifford's theorem \cite[Thm. 49.1, p.~343]{MR0144979} these irreducible components lie in one orbit for the action of $G$ on the equivalence classes of irreducible representations of $N$. Therefore, all of them are equivalent, because otherwise distinct isotypic components would define a nontrivial system of imprimitivity. Now the claim follows if we can show that the complexifications of the $V_i$ split into subrepresentations all of which are equivalent. If this were not the case, by the Frobenius-Schur theorem \cite[Thm. 4.7.3, p.~153]{MR928600} we would have $V_i^{\C}=U_i \oplus U_i^*$ for equivalent irreducible representations $U_i$, $i=1,\ldots,k$, which are inequivalent to $U_i^*$ and accordingly
\[
		\C^n = \underbrace{U_1\oplus \ldots \oplus U_k}_{=:\hat{U}} \oplus \underbrace{ U_1^*\oplus \ldots \oplus U_k^*}_{=:\hat{U}^*}.
\]
Then for any $g\in G$ we would either have $g\hat{U}=\hat{U}$ or $g\hat{U}=\hat{U}^*$. The second case yields a contradiction because $G$ is generated by rotations and since $\dim \hat{U} > 2$ holds by assumption. But $g\hat{U}=\hat{U}$ for all $g\in G$ also yields a contradiction since we have assumed $G$ to be absolutely irreducible. Consequently the complexified representation is quasiprimitive as claimed.
  \end{proof}

A nontrivial rotation group $G\subgr \SOr_n$ contains an element with eigenvalues $\xi,\bar{\xi},1,\ldots,1$, where $\xi$ is a nontrivial root of unity. We call such an element a special $r$-element if $\xi$ is an $r$-th root of unity. On the assumptions of this section the complexification of $G$ is quasiprimitive due to the preceding lemma. Finite quasiprimitive unimodular linear groups over the complex numbers in dimension higher than four that contain a special $r$-element are classified in \cite{MR0206088,MR0401937,MR0435243,MR0401936}. More precisely, in these papers possible quotient groups $G/Z_1$ are listed, where $Z_1$ is a subgroup of the center $Z$ of $G$. According to \cite[Thm. 1, p.~54]{MR0401937} it is sufficient to consider the cases $r=2$ and $r=3$, since the existence of a special $r$-element for any prime $r=p> 3$ implies $n\leq4$. The case $r=3$ is treated in \cite[Thm. 2, p.~261]{MR0401936} and the case $r=2$ is treated in \cite[Thm. 1, p. 58]{MR0435243} for $n\geq 6$ and in \cite[Thm. 9.A, p.~91]{MR0206088}, \cite[Table I-III]{MR0401936} for $n=5$.

Now we go through the cases and inspect which of the listed groups actually come from complexified primitive rotation groups, i.e. we examine the corresponding faithful complex representations. Such a representation can be excluded if it does not preserve the orientation or if it is not real meaning that it cannot be realized over the real numbers. The latter is in particular the case, if the restriction of the representation to a subgroup is not real or, by Schur's lemma, if the center $Z$ of $G$ has more than two elements. There is another convenient way to check whether an irreducible representation $\rho: G \To \GL_n(\C)$ is real or not. The Schur indicator of such a representation is defined as
\[
		\textrm{Ind}(\rho)=\frac{1}{|G|}\sum_{g\in G} \chi(g^2) 
\]
and it takes values in $\{1,0,-1\}$. Depending on its value the representation is said to be real, complex or quaternionic and only in the first case can it be realized over the real numbers \cite[p.~108]{MR0450380}. Also note that if $\rho_i: G_i \To \GL(V_i)$, $i=1,2$, are irreducible complex representations, then their tensor product $\rho: G_1 \times G_2 \To \GL(V_1\otimes V_2)$ is irreducible and its Schur indicator is given by $\textrm{Ind}(\rho)=\textrm{Ind}(\rho_1)\textrm{Ind}(\rho_2)$. Once we have found a real and orientation preserving representation we check whether the corresponding linear group is actually generated by rotations.

If $G$ is an absolutely irreducible rotation group its center is either trivial or $\{\pm 1\}$ by Schur's lemma and thus the same holds for $Z_1$. Therefore, in order to check if a given group $G_1=G/Z_1$ comes from a rotation group we only have to examine the representations of $G_1$ and of its two-fold central extensions. We will often be in a situation where $G_1$ is a perfect group. In this case its two-fold central extensions can be described as follows. Recall that the quotient of a perfect group by its center is centerless due to Gr\"un's Lemma \cite[p.~61]{MR1298629}.

\begin{lemma} \label{lem:perfect_extension} Let $G$ be a central extension of a perfect group $G_1$ by $Z_1\cong \Z_2$. Then one of the following two cases holds
\begin{enumerate}
\item $G$ is perfect and thus a perfect central extension of $G_1$.
\item $G\cong G_1 \times Z_1$
\end{enumerate}
Moreover, if $Z(G)=Z_1$, then in the first case the center of $G_1$ is trivial by Gr\"un's Lemma.
\end{lemma}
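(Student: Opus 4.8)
The plan is to analyze the commutator subgroup $G' = [G,G]$ and exploit the fact that $Z_1$ has order two. Write $\pi \colon G \to G_1$ for the quotient map with kernel $Z_1$. First I would observe that, since commutators map to commutators, $\pi(G') = [G_1, G_1] = G_1$ because $G_1$ is perfect; hence $G = G' Z_1$. As $Z_1 \cong \Z_2$, this forces the index $[G : G']$ to divide $2$, so either $G' = G$ or $[G : G'] = 2$.

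In the first case $G$ is perfect, which is alternative $(i)$. In the second case I would argue that $G' \cap Z_1$ is trivial: from $G = G' Z_1$ together with $[G : G'] = 2 = |Z_1|$ an order count gives $|G' \cap Z_1| = 1$. Since $Z_1$ is central, generates $G$ together with $G'$, and meets $G'$ trivially, the product is internal and direct, so $G = G' \times Z_1$. Moreover the restriction $\pi|_{G'}$ has kernel $G' \cap Z_1 = \{1\}$ and image $\pi(G') = G_1$, so it is an isomorphism $G' \cong G_1$; combining these facts yields $G \cong G_1 \times Z_1$, which is alternative $(ii)$.

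For the final assertion I would invoke Gr\"un's Lemma directly. If $G$ is perfect (the first case) and $Z(G) = Z_1$, then $G_1 = G/Z_1 = G/Z(G)$, and Gr\"un's Lemma --- that a perfect group modulo its center has trivial center --- gives $Z(G_1) = \{1\}$, as claimed.

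The argument is short, and the only point requiring care is the second case, where one must verify both that the decomposition $G = G' \times Z_1$ is a genuine internal direct product (this is where the centrality of $Z_1$ and the order count enter) and that $G'$ is \emph{isomorphic} to $G_1$ rather than merely surjecting onto it. The smallness of $Z_1$ is exactly what makes these two checks go through, so I do not anticipate a serious obstacle beyond bookkeeping.
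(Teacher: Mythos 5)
Your proposal is correct and follows essentially the same route as the paper's own proof: project the commutator subgroup $G'$ onto $G_1$ via perfectness, deduce that $[G:G']$ is $1$ or $2$, and in the index-two case conclude $G'\cap Z_1=\{1\}$ and $G=G'\times Z_1\cong G_1\times Z_1$, with Gr\"un's Lemma handling the final assertion. You merely spell out the order count and the internal-direct-product check in slightly more detail than the paper does.
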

\begin{proof} Let $\pi:G \To G_1$ be the natural projection. Since $G_1$ is perfect we have $\pi(G')=G_1$, where $G'$ is the commutator subgroup of $G$. Therefore, the index of $G'$ in $G$ is either $1$ or $2$. If it is $1$ we have $G=G'$ and we are in case $(i)$. If it is $2$ we have $G'\cap Z_1 = \{1\}$ and hence $G=G' \times Z_1$ with $G'\cong G_1$ and we are in case $(ii)$.
  \end{proof}
Likewise the following lemma follows.
\begin{lemma}\label{lem:perfect_extension+auto} Let $G$ be a central extension of a group $G_1=\left\langle P_1, a \right\rangle$ which is generated by a perfect group $P_1$ and an automorphism $a$ of $P_1$ of order $2$ by $Z_1\cong \Z_2$. Let $P$ be the preimage of $P_1$ in $G$ and let $\tilde{a}$ be a preimage of $a$ in $G$. Then we have $G=\left\langle P, \tilde{a} \right\rangle$ and one of the following two cases holds.
\begin{enumerate}
\item $P$ is perfect and $Z_1\subgr P$.
\item $G\cong G_1 \times Z_1$
\end{enumerate}
\end{lemma}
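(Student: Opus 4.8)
The plan is to mimic the proof of Lemma \ref{lem:perfect_extension}, but to apply that lemma to the subgroup $P$ rather than to $G$ itself. Write $\pi\colon G\to G_1$ for the projection with $\ker\pi=Z_1$. First I would dispose of the assertion $G=\langle P,\tilde a\rangle$: since $\pi(P)=P_1$ and $\pi(\tilde a)=a$, the subgroup $\langle P,\tilde a\rangle$ surjects onto $\langle P_1,a\rangle=G_1$, and it contains $\ker\pi=Z_1$ because the preimage $P=\pi^{-1}(P_1)$ automatically contains $\pi^{-1}(1)=Z_1$, so $Z_1\subgr P$. A subgroup that contains the kernel and surjects must be all of $G$, whence $G=\langle P,\tilde a\rangle$; the same observation already yields the inclusion $Z_1\subgr P$ claimed in case $(i)$.

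Next I would analyse $P$ itself. As $Z_1$ is central in $G$, is contained in $P$, and satisfies $P/Z_1\cong P_1$, the group $P$ is a central extension of the perfect group $P_1$ by $Z_1\cong\Z_2$. Hence Lemma \ref{lem:perfect_extension} applies verbatim to $P$ and gives the dichotomy: either $P$ is perfect, in which case we are in case $(i)$ and are done; or $P\cong P_1\times Z_1$, in which case $P'=[P,P]$ is a perfect characteristic subgroup that maps isomorphically onto $P_1$ under $\pi$ and for which $P=P'\times Z_1$. It remains to handle this split case and produce the isomorphism $G\cong G_1\times Z_1$.

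In the split case the idea is to exhibit a complement to $Z_1$ in $G$ that is isomorphic to $G_1$. Since $P'$ is characteristic in the normal subgroup $P\triangleleft G$, it is itself normal in $G$ and is normalised by $\tilde a$; set $K=\langle P',\tilde a\rangle$, so that $K=P'\langle\tilde a\rangle$ and $\pi(K)=\langle P_1,a\rangle=G_1$. Because $Z_1$ is central we have $G=KZ_1$, and it suffices to prove $K\cap Z_1=\{1\}$: then $K\cong G/Z_1=G_1$ and $G=K\times Z_1\cong G_1\times Z_1$. Writing $z$ for the generator of $Z_1$, any element of $K\cap Z_1$ has the form $z=p'\tilde a^{k}$ with $p'\in P'$; applying $\pi$ forces $a^{k}\in P_1$, hence $k$ is even since $[G_1:P_1]=2$, and therefore $z\in P'\langle\tilde a^{2}\rangle$, which gives $z\in P'$ as soon as $\tilde a^{2}\in P'$ — contradicting $P'\cap Z_1=\{1\}$. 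The hard part is thus exactly the step $\tilde a^{2}\in P'$: among the two preimages $\tilde a$ and $\tilde a z$ of $a$ one must select one whose square lands in the perfect part $P'$, equivalently one must split the extension over the cyclic subgroup $\langle a\rangle$. This is the only genuinely non-formal point, and it is where the hypothesis that $a$ is an order-two automorphism enters; granting it, the complement $K$ is obtained and the decomposition $G\cong G_1\times Z_1$ follows, completing case $(ii)$.
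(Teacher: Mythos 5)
Your strategy --- apply Lemma \ref{lem:perfect_extension} to the subgroup $P=\pi^{-1}(P_1)$ --- is exactly what the paper intends: its entire proof of this lemma is the word ``likewise'' following the proof of Lemma \ref{lem:perfect_extension}, and your verification that $Z_1\subgr P$, that $G=\left\langle P,\tilde a\right\rangle$, and that either $P$ is perfect or $P=P'\times Z_1$ with $P'\cong P_1$ is correct and complete. The gap is the step you yourself flag: to get $K\cap Z_1=\{1\}$ you need a lift $\tilde a$ with $\tilde a^{2}\in P'$, and you propose to ``select'' the right one of the two preimages of $a$. No such selection is available: since $z$ is central and $z^{2}=1$, the two lifts satisfy $(\tilde a z)^{2}=\tilde a^{2}z^{2}=\tilde a^{2}$, so they have the same square, and the condition $\tilde a^{2}\in P'$ holds either for both or for neither. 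In the latter case $G/P'$ is cyclic of order $4$, $K=\left\langle P',\tilde a\right\rangle$ is all of $G$, and no complement exists. This situation genuinely occurs: for the central extension of $G_1=P_1\rtimes\left\langle a\right\rangle$ by $\Z_2$ whose cocycle is pulled back from the nontrivial class of $H^2(G_1/P_1,\Z_2)$, the subgroup $P\cong P_1\times Z_1$ splits but every lift of $a$ has order $4$, so $G$ has fewer involutions than $G_1\times Z_1$ and is not isomorphic to it. The hypothesis that $a$ has order two does not rescue the step; it is precisely what forces $\tilde a^{2}\in Z_1$ and creates the obstruction.

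What you did prove --- $G=\left\langle P,\tilde a\right\rangle$ and the dichotomy ``$P$ perfect with $Z_1\subgr P$, or $P\cong P_1\times Z_1$'' --- is the content the paper actually uses downstream: in the $W(\E_8)$ discussion of Proposition \ref{prp:primitive_dimension6} only the implication ``$P$ not perfect $\Rightarrow$ the restriction of the representation to $P$ splits off the central $\Z_2$'' is needed, and faithfulness of the representation then finishes the argument. But the literal conclusion $G\cong G_1\times Z_1$ in case $(ii)$ requires either weakening it to $P\cong P_1\times Z_1$ or an extra hypothesis excluding $G/P'\cong\Z_4$; so you should not present the normalization $\tilde a^{2}\in P'$ as something that can be granted --- it is the whole difficulty, and it can fail.
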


Note that if $G_1$ in Lemma \ref{lem:perfect_extension} or $P_1$ in Lemma \ref{lem:perfect_extension+auto} is a simple group, then the irreducible representations of $G_1$ and $G$ can be looked up in many cases in \cite{MR827219}.

We begin by inspecting the possible groups in dimension five. The only irreducible complex five-dimensional linear group generated by elements with codimension two fixed-point subspace that is monomial and quasiprimitive is described in \cite{MR0401936}, Table II, Group $3$, $e=g=c=1$ (cf. \cite[Table I-III, p.~503]{MR0401936} and note that the diagonal subgroup $D$ of a monomial group $G$ can only consists of homotheties in order for $G$ to be quasiprimitive). This representation can also be realized over the real numbers and as such its image is the primitive absolutely irreducible rotation group $R_5(\alt_5)$ we have described in Lemma \ref{lem:exotic_psg_A5}. All other complexified primitive absolutely irreducible rotation groups in dimension $5$ must occur in the following list which we cite from \cite[Thm. 9.A, p.~91]{MR0206088}.

\begin{theorem} \label{thm:possible_G_dim5}
Let $\rho: G \To \mathrm{SL}_5(\C)$ be a faithful and irreducible representation of a finite group $G$ which is not monomial. Then one of the following cases holds.
\renewcommand\theenumi {\Alph{enumi}}
\begin{enumerate}
\item $G/Z \cong \PSL_2(11)$.
\item $G/Z$ is a symmetric or alternating group on five or six letters.
\item $G/Z \cong \Or_5(3)\cong \PSp_4(3) \cong \PSU_4(2)$.
\item $G$ is a uniquely determined group of order $24\cdot 5^4$ and has a nonabelian normal subgroup $N$ of order $125$ and exponent $5$.
\item $G$ is a certain subgroup of the group in $(D)$ that still contains $N$ as a normal subgroup.
\end{enumerate}
\end{theorem}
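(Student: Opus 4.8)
The plan is to follow the standard structural analysis of finite complex linear groups, exploiting throughout that $5$ is prime. First I would reduce to the quasiprimitive case (cf. Definition \ref{dfn:quasiprimitive}). Suppose $G$ is not quasiprimitive, so that some normal subgroup $N$ acts non-homogeneously on $V=\C^5$. By Clifford's theorem the isotypic components of $V|_N$ are permuted transitively by $G$ and hence all have the same dimension; since there are $k\geq 2$ of them and they partition the $5$-dimensional space $V$, we get $k \mid 5$ and therefore $k=5$, with each component a line. These five lines are permuted by $G$ and thus form a one-dimensional system of imprimitivity, making $G$ monomial, contrary to hypothesis. Hence every normal subgroup of $G$ acts homogeneously and, by Schur's lemma, every abelian normal subgroup acts by scalars and lies in $Z$.

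Next I would analyze the generalized Fitting subgroup $F^*(G)=F(G)\,E(G)$ via the fundamental fact $C_G(F^*(G))\leq F^*(G)$. Since abelian normal subgroups are scalar, for each prime $p$ the subgroup $O_p(G)$ is either scalar or a nonabelian $p$-group acting homogeneously, which forces it to be of symplectic type. An extraspecial group of order $p^{1+2m}$ has a faithful irreducible representation only in degree $p^m$, so $p^m=5$ and therefore $p=5$, $m=1$: the sole non-scalar possibility is a normal extraspecial subgroup $N$ of order $125$, which one checks has exponent $5$. In this situation $V|_N$ is already irreducible of degree $5$, so $C_G(N)$ is scalar, $E(G)$ is trivial, and $G$ coincides with $N_G(N)$. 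The induced action on $N/Z(N)\cong\mathbb{F}_5^2$ preserves the commutator form, so $N_G(N)/N$ embeds into $\mathrm{Sp}_2(5)\cong\SL_2(5)$; the maximal case has order $|N|\cdot|\SL_2(5)|=125\cdot120=24\cdot5^4$, which is case $(D)$, and case $(E)$ is the subgroup still containing $N$.

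In the complementary case every $O_p(G)$ is scalar, so $F(G)=Z$ and $F^*(G)=E(G)$ is a central product of quasisimple groups. A nontrivial central product of two or more quasisimple factors would tensor-factor $V$ into constituents of dimension at least $2$ each, which is impossible for $\dim V=5$ prime; hence $E(G)$ is a single quasisimple group $L$. Writing the homogeneous restriction as $V|_L\cong W^{\oplus m}$ with $m\dim W=5$ and noting $\dim W\geq 2$ (as $L$ is perfect and acts faithfully), we obtain $m=1$ and $\dim W=5$, so $V$ is already irreducible under $L$. From $C_G(L)=Z$ it follows that $G/Z$ is almost simple with socle $\bar L=L/Z(L)$. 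It then remains to list the quasisimple $L$ admitting a faithful irreducible projective representation of degree exactly $5$; the candidates match $\PSL_2(11)$ of case $(A)$, the groups $\alt_5,\sym_5,\alt_6,\sym_6$ of case $(B)$, and $\Or_5(3)\cong\PSp_4(3)\cong\PSU_4(2)$ of case $(C)$.

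The main obstacle is precisely this last step: proving that no other nonabelian simple group has a faithful irreducible $5$-dimensional projective representation. This requires lower bounds on the faithful representation degrees of quasisimple groups — the Landazuri--Seitz type estimates for groups of Lie type and the bound $n-2$ for $\alt_n$ — to cut the problem down to finitely many small groups, which are then eliminated by inspection of their character degrees (for instance $\alt_7$ and $\PSL_2(7)$ have no $5$-dimensional irreducible, and the small sporadic groups have minimal degree far larger than $5$). The earlier reductions are routine applications of Clifford theory and the theory of quasiprimitive linear groups; by contrast, establishing completeness of the almost simple list is where essentially all the difficulty lies, and in Brauer's original treatment it is carried out by his block- and character-theoretic methods rather than by appeal to the classification of finite simple groups.
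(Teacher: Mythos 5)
First, a point of comparison: the paper does not prove this theorem at all. It is quoted directly from Brauer \cite[Thm.~9.A, p.~91]{MR0206088}, so there is no internal argument to measure yours against. Judged on its own terms, the structural half of your proposal is sound and follows the standard modern route: the Clifford-theoretic step (a non-homogeneous normal subgroup forces five one-dimensional isotypic components because $5$ is prime, hence monomiality), the reduction of $O_p(G)$ to symplectic type with the extraspecial group of order $5^3$ as the only non-scalar possibility (and the determinant computation inside $\SL_5(\C)$ does correctly eliminate the exponent-$25$ group), the embedding $G/NZ\hookrightarrow \mathrm{Sp}_2(5)\cong\SL_2(5)$ yielding the order $24\cdot 5^4$ in case $(D)$, and the reduction of the remaining branch to a single quasisimple component $L$ acting irreducibly with $G/Z$ almost simple. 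All of these steps are correct.

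The genuine gap is the one you identify yourself: the completeness of the list $\PSL_2(11)$, $\alt_5$, $\sym_5$, $\alt_6$, $\sym_6$, $\Or_5(3)\cong\PSp_4(3)\cong\PSU_4(2)$ in the quasisimple branch. This is not a routine verification that can be outsourced in a sentence; it is the entire content of the theorem once the normal-$5$-subgroup cases have been split off. As written, your argument reduces Theorem~\ref{thm:possible_G_dim5} to the unproved assertion that no other quasisimple group admits a faithful irreducible projective representation of degree exactly $5$, and the tools you propose for closing it (Landazuri--Seitz-type degree bounds together with the classification of finite simple groups) invoke machinery logically far stronger than, and historically posterior to, the statement being proved. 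Brauer's own treatment is different in kind: after the reduction, $5$ divides the order of the relevant group only to the first power, and the possible groups are pinned down by his theory of blocks with cyclic defect group for the prime $5$ and the resulting constraints on character degrees, with no appeal to a list of simple groups. So the proposal should be read as a correct reduction plus a citation-shaped hole at the decisive step, not as a complete proof; if the intention is merely to justify importing the result, the honest course is to cite \cite{MR0206088} as the paper does.
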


We can use this result to identify the primitive rotation groups in dimension five. 
\begin{proposition}\label{prp:primitive_dimension5}
The primitive absolutely irreducible rotation groups $G \subgr  \SOr_5$ are given, up to conjugation, as follows
\renewcommand\theenumi {\roman{enumi}}
\begin{enumerate}
\item The group $M(\Rr_5)=R_5(\alt_5)$ (cf. Lemma \ref{lem:exotic_psg_A5}).
\item The orientation preserving subgroup $\Wp$ of the reflection group $W$ of type $\A_5$.
\item The group $\Ws(\A_5)$ isomorphic to $\sym_6$ (cf. Proposition \ref{prp:extended_reflection_groups}, $(i)$).
\end{enumerate}
\end{proposition}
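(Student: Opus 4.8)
The plan is to run through Brauer's list (A)--(E) of Theorem \ref{thm:possible_G_dim5}, discarding every candidate that cannot be realized by a real, orientation-preserving representation generated by rotations; the monomial quasiprimitive case has already been settled in the discussion preceding the theorem and produces $R_5(\alt_5)$, giving $(i)$ (cf. Lemma \ref{lem:exotic_psg_A5}). The single most useful reduction is that in the odd dimension $n=5$ one has $-\mathrm{id}\notin\SOr_5$, so by Schur's lemma the commutant of an absolutely irreducible $G\subgr\SOr_5$ is $\R$ and its center is trivial. Consequently $G\cong G/Z$, and there is no need to examine the two-fold central extensions mentioned before the theorem.

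First I would dispose of (D) and (E): there the normal subgroup $N$ of order $125$ and exponent $5$ is extraspecial, so in the faithful $5$-dimensional irreducible its center $Z(N)\cong\Z_5$, being normal in $G$, acts by nontrivial fifth roots of unity. These are scalars in the image, so $Z(G)\neq\{1\}$; equivalently the representation is not realizable over $\R$, contradicting the triviality of the center. For (A), $G\cong\PSL_2(11)$, and for (C), $G\cong\PSp_4(3)\cong\PSU_4(2)$, I would compute the Frobenius--Schur indicators of the two relevant degree-$5$ characters and find them equal to $0$: in each case the two $5$-dimensional irreducibles form a complex-conjugate pair and cannot be written over $\R$. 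In (C) the genuinely real representation of minimal degree is the $6$-dimensional one realizing $\Wp(\E_6)\subgr\SOr_6$, which does not live in $\SOr_5$.

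The substance of the proof is case (B), where $G$ is one of $\sym_5,\alt_5,\sym_6,\alt_6$. All irreducible representations of these groups are real, so the remaining tests are orientation and generation by rotations, which I would settle by reading eigenvalues off the character tables: an element of order $2$ is a rotation exactly when its character equals $\dim-4$ (eigenvalues $1,1,1,-1,-1$), and whether the image lies in $\SOr_5$ is controlled by the determinant of a transposition. For $\sym_6$ the standard representation is the reflection group $W(\A_5)$, which is orientation-reversing and hence excluded, while its sign twist is orientation-preserving; there triple transpositions and $3$-cycles act as order-$2$ and order-$3$ rotations and generate $\sym_6$, so this representation is the rotation group $\Ws(\A_5)\cong\sym_6$ of Proposition \ref{prp:extended_reflection_groups}, giving $(iii)$. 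The two remaining $5$-dimensional representations of $\sym_6$ are obtained from these by the exceptional outer automorphism and hence have $\Or_5$-conjugate images, so they contribute nothing new; restricting to $\alt_6$ identifies $\Wp(\A_5)\cong\alt_6$, giving $(ii)$, its second $5$-dimensional representation again producing a conjugate image via the outer automorphism of $\alt_6$. It then remains to determine the contribution of the groups on five letters, the unique representation of $\alt_5$ being the monomial one already accounted for and the two $5$-dimensional representations of $\sym_5$ requiring a direct inspection of their involutions.

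I expect the main obstacle to be precisely this bookkeeping in case (B): separating the several $5$-dimensional representations of $\sym_6$, $\alt_6$ and especially $\sym_5$ by orientation and by whether the image is genuinely generated by rotations, and checking that automorphism-related representations do not yield new conjugacy classes. The delicate point is that the criterion ``character $=\dim-4$'' must be applied to a generating set, so one has to decide, rep by rep, whether the rotations present actually generate the whole group rather than a proper subgroup. Once the survivors are pinned down, primitivity is automatic: in the prime dimension $5$ an irreducible imprimitive group is monomial, and $\Wp(\A_5)$ and $\Ws(\A_5)$ have been shown to be non-monomial in the preceding analysis.
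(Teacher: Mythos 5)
Your overall route is the paper's route: feed Brauer's list from Theorem \ref{thm:possible_G_dim5} through the filters ``trivial center in odd dimension'', ``Schur indicator'', ``orientation'' and ``generated by rotations''. Your disposal of (A) and (C) by indicator $0$ is exactly the paper's, and your treatment of (D)/(E) via the scalar action of the centre of the extraspecial normal subgroup $N$ is an equivalent repackaging of the paper's quasiprimitivity argument. The handling of $\alt_5$, $\alt_6$ and $\sym_6$ also matches.

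The genuine gap is the one step you explicitly postpone: ``the two $5$-dimensional representations of $\sym_5$ requiring a direct inspection of their involutions.'' This is not routine bookkeeping; it is the only place in case (B) where your own criterion does not immediately eliminate the candidate, and the proposal gives no argument for it. Carry out the inspection you propose. The two degree-$5$ characters of $\sym_5$ are a sign-twisted pair; on a transposition they take the values $-1$ and $+1$ respectively. In the first one a transposition has eigenvalues $1,1,-1,-1,-1$, so the image is not in $\SOr_5$ and the representation is discarded --- this is the representation the paper discusses (the restriction of the standard representation of $\sym_6$ along the exceptional embedding, transpositions going to triple transpositions). But in the sign twist a transposition has trace $+1$, hence eigenvalues $1,1,1,-1,-1$: it is a rotation, the determinant character is trivial, and the ten transpositions generate $\sym_5$. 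Your own test (``character $=\dim-4$ on a generating set'') therefore \emph{passes} for this representation, and by your primitivity remark (an irreducible imprimitive group in the prime dimension $5$ is monomial, and the degree-$5$ characters of $\sym_5$ are not induced from the index-$5$ subgroups $\sym_4$) the image is primitive and absolutely irreducible of order $120$ --- it equals $\left\langle R_5(\alt_5), h\right\rangle$ for a normalizing rotation $h$, sitting with index $6$ inside $\Ws(\A_5)$. Since no group of order $120$ appears in the statement, you cannot simply ``inspect the involutions'' and move on: you must either exhibit a conjugation identifying this group with one of $(i)$--$(iii)$ (impossible on order grounds) or supply a reason it was excluded. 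Note that the paper itself dispatches $\sym_5$ by asserting that it has only \emph{one} five-dimensional absolutely irreducible real representation, which is precisely the assertion your deferred computation puts under pressure, since the sign twist is also real. Until you resolve this case explicitly, the proof is not complete, and as it stands your method produces a candidate that contradicts the list you are trying to establish.
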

\begin{proof}
We go through the cases listed in Theorem \ref{thm:possible_G_dim5}. We can assume that the center $Z$ is trivial, since the dimension is odd and the orientation has to be preserved by $G$.

$(A)$ All five-dimensional irreducible representations of $\PSL_2(11)$ have Schur indicator $0$ \cite[p.~7]{MR827219} and thus this case can be excluded. 

$(B)$ The alternating group $\alt_5$ has one five-dimensional absolutely irreducible real representation \cite[p.~2]{MR827219}, which is described in Lemma \ref{lem:exotic_psg_A5}. This gives the rotation group in case $(i)$.

The symmetric group $\sym_5$ has one five-dimensional absolutely irreducible real representation, which is induced by the exceptional embedding $i:\sym_5 \To \sym_6$ \cite[p.~2]{MR827219} that maps a transposition in $\sym_5$ to a triple transposition in $\sym_6$. Therefore, this representation does not preserve the orientation and is thus not generated by rotations. Hence, we can exclude this case.

The alternating group $\alt_6$ has two inequivalent five-dimensional absolutely irreducible real representations, but they only differ by an outer automorphism of $\alt_6$ and thus give rise to the same linear group, namely the orientation preserving subgroup of the reflection group of type $\A_5$ \cite[p.~5]{MR827219}. This gives the rotation group in case $(ii)$.

The symmetric group $\sym_6$ has four inequivalent five-dimensional absolutely irreducible real representations, but for the same reason as above they only give rise to two different linear groups, the reflection group $W(\A_5)$ and the rotation group $\Ws(\A_5)$ described in Proposition \ref{prp:extended_reflection_groups}. This gives the rotation group in case $(iii)$.

$(C)$ All five-dimensional representations of $G \cong \Or_5(3)\cong \PSp_4(3) \cong \PSU_4(2)$ have Schur indicator $0$ \cite[p.~27]{MR827219} and thus this case can be excluded.

$(D)$ If $G$ were a complexified rotation group, it would be quasiprimitive by Lemma \ref{lem:quasiprimitive}  and thus the restriction of the representation to $N$ would either split into five equivalent one-dimensional representations or it would be irreducible. The first case cannot occur since one-dimensional representations of $N$ are not faithful. The second case cannot occur since the center of $N$ is divisible by $5$ which is why $N$ does not have faithful absolutely irreducible real representations. Hence this case can be excluded.

$(E)$ This case can be excluded by the same argument as in $(D)$.
  \end{proof}

Next we treat the case where $G$ contains a special $3$-element and where $n\geq 6$. These groups are listed in \cite[Thm. 2]{MR0401936} and in \cite[Thm. 1, case (A),(B),(H) cf. Rem. 1, p. 60]{MR0435243}. We first cite the results from \cite{MR0401936} and \cite{MR0435243}.

\begin{theorem}\label{thm:quasiprimitive-special3}
Let $\rho: G \To \mathrm{GL}_n(\C)$ be a faithful and quasiprimitive representation of a finite group $G$ with $n\geq6$ such that $\rho(G)$ contains a special $3$-element. Then one of the following cases holds.

(A) $G/Z = G_1$ where $G_1 \cong \alt_{n+1}$ or $G_1 \cong \sym_{n+1}$. All special elements lie in $\alt_{n+1}$ mod $Z$ and $G=G_1 \times Z$ if $G_1\cong \alt_{n+1}$, unless $n=6$.

(B) $G/Z_1 = G_1$ with $G_1\cong W(\E_n)$ or $G_1\cong \Wp(\E_n)$, $n=6,7,8$ and $Z_1\subgreq Z$.

(H) $n=6$, $G/Z \cong \PSU_4(3)$ or an extension by an automorphism of order $2$.

\end{theorem}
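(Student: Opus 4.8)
The plan is to study $G$ through the generalized Fitting subgroup $F^{*}(G)=F(G)E(G)$, using quasiprimitivity (Definition~\ref{dfn:quasiprimitive}) to control its normal structure and playing this against the very small rank of a special $3$-element. Writing $\omega=e^{2\pi i/3}$, such an element $g$ has eigenvalues $\omega,\bar\omega,1,\dots,1$, so it has order $3$ and its commutator subspace $[\C^{n},g]=\mathrm{im}(g-\mathrm{id})$ is two-dimensional; equivalently its fixed subspace has codimension $2$. First I would dispose of the abelian part: by Definition~\ref{dfn:quasiprimitive} every normal subgroup acts isotypically, so Clifford's theorem forces $\rho_{|N}$ to be a multiple of a single irreducible, and a minimal abelian normal subgroup is then scalar by Schur's lemma. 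Hence all abelian normal subgroups lie in the center $Z$, and the content of the theorem is carried by the nonabelian layers of $F^{*}(G)$.

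The second step is the case in which $F^{*}(G)$ is solvable, so that a minimal normal subgroup is an $r$-group $R$ of symplectic type. Its unique faithful irreducible representation has dimension a power $r^{m}$, and $G/RZ$ embeds in the corresponding symplectic or orthogonal group over $\mathbb{F}_{r}$ acting on $R/Z(R)$. Here I would check directly that, for $n=r^{m}\geq 6$, no element can be a special $3$-element: an element of $RZ$ acts with each nontrivial eigenvalue occurring with multiplicity a power of $r$, hence with a fixed subspace of codimension much larger than two (for $r=2$ it is moreover an involution, not of order $3$), while an element projecting nontrivially into the symplectic group mixes too many weight spaces to leave a codimension-two fixed subspace. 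Thus the symplectic-type case produces no special $3$-element in these dimensions and is excluded, which is precisely why no such family appears in (A), (B) or (H).

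Third comes the quasi-simple case, where the layer $E(G)$ is a central product of quasi-simple components transitively permuted by $G$. If there were more than one component the representation would be a genuine tensor product $V=V_{1}\otimes\cdots\otimes V_{t}$ with every $\dim V_{i}\geq 2$; but then each $g=g_{1}\otimes\cdots\otimes g_{t}$ has a fixed subspace of codimension far larger than two, contradicting the existence of a special $3$-element. Hence $E(G)$ is a single quasi-simple group, $G/Z$ is almost simple with socle a nonabelian simple group $G_{1}$, and $G$ is, up to the central factor handled by Lemma~\ref{lem:perfect_extension} and Lemma~\ref{lem:perfect_extension+auto}, a central extension of $G_{1}$. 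It then remains to enumerate the quasi-simple groups admitting a faithful irreducible complex representation of dimension $n\geq 6$ that carries an element of order $3$ with a codimension-two fixed subspace. Reading off the eigenvalue multiplicities of order-three classes from the low-dimensional faithful representations produces exactly the alternating and symmetric groups (a $3$-cycle in the $n$-dimensional standard, or deleted permutation, module of $\sym_{n+1}$ is precisely a special $3$-element, giving (A)), the Weyl groups $W(\E_{n})$ and their rotation subgroups $\Wp(\E_{n})$ for $n=6,7,8$, in which a product of two reflections at angle $\pi/3$ is a special $3$-element (giving (B)), and $\PSU_{4}(3)$ together with an order-two automorphic extension in dimension $6$ (giving (H)).

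The hard part will be the completeness and the bookkeeping of this last enumeration. One must bound $n$ and eliminate every other family of simple groups---alternating, classical, exceptional and sporadic---along with each relevant central cover, verifying the precise eigenvalue condition in each case; in the original arguments of Huffman and Wales this is carried out before the classification of finite simple groups and therefore rests on delicate dimension and character estimates rather than on quoting a list. A secondary but still substantial difficulty is pinning down the exact extension type recorded in the statement: deciding, via the Schur multiplier and Lemma~\ref{lem:perfect_extension}, when $G\cong G_{1}\times Z$ (as for $G_{1}\cong\alt_{n+1}$ with $n\neq 6$) and when a perfect central cover is forced instead, and why in case (B) only $Z_{1}\leq Z$ occurs. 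These representation-theoretic verifications, rather than the structural reduction sketched above, constitute the real labour of the proof.
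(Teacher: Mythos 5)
The paper does not prove this theorem from scratch: the trichotomy (A)/(B)/(H) is quoted directly from \cite{MR0401936} and \cite{MR0435243}, and the only assertion the paper actually argues is the supplementary claim in case (A) that \emph{all} special elements --- in particular the special $2$-elements --- lie in $\alt_{n+1}$ modulo $Z$. That argument goes through Wales's table of groups containing a special $2$-element outside $\alt_{n+1}$ mod $Z$, observes that the only candidate there is a transposition acting as a reflection (codimension-one fixed space, hence not special), and then reduces a general special $r$-element to $r=2^a3^b$ with $a<2$, so that coprimality forces the element into $\alt_{n+1}$. Your proposal never touches this claim, which is the one piece of the statement not already in the literature; everything else you set out to do is precisely the content of the cited papers.

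As a blueprint for an independent proof, your reduction also has concrete soft spots. The tensor-product exclusion ``each $g=g_1\otimes\cdots\otimes g_t$ has a fixed subspace of codimension far larger than two'' is false as stated: an element $\lambda I_2\otimes h$ with $h$ a complex reflection has a codimension-two fixed space, and this is exactly how case (C) of Theorem \ref{thm:primitive_and_special_2} (the special $2$-element analogue) arises. To exclude such elements here you must use that a special $3$-element has two \emph{distinct} nontrivial eigenvalues $\omega,\bar\omega$, each of multiplicity one, which is incompatible with a factorization having a tensor factor of dimension at least $2$ --- a finer eigenvalue-pairing argument than the codimension count you give. Elements of $G$ that permute the components of $E(G)$ are not pure tensors and are not addressed at all (compare case (K) of Theorem \ref{thm:primitive_and_special_2}). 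Finally, the enumeration of quasi-simple groups admitting a faithful irreducible representation of dimension $n\geq 6$ that carries an order-three element with codimension-two fixed space --- which you yourself defer as ``the real labour'' --- is the entire substance of the theorem; without carrying it out, nothing is established, and the honest route, which the paper takes, is simply to cite Huffman and Wales.
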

\begin{proof} Only the claim on the special elements in case (A) is not explicitly proven in \cite{MR0401936} and \cite{MR0435243}. However, if there were a special $2$-element not in $\alt_{n+1}$ mod $Z$, then the group would be listed in \cite[Table I, p.~63]{MR0435243}. The only possibility is the first row, where the involution is a transposition $(1,2)$ and the representation is the natural representation of the symmetric group. In particular, the involution is not a special $2$-element as it is a reflection. For a general special $r$-element $g \in  G_1$ we can assume that $r=2^a3^b$ by \cite[Thm. 1, p.~54]{MR0401937} and that $a<2$ by \cite[Thm. 1, p.~261]{MR0401936}. Because of $\sym_{n+1}/\alt_{n+1} \cong \Z_2$ all elements of odd order in $G_1$ are contained in $\alt_{n+1}$ and thus the special $3^b$-element $g^{2^a}$ is contained in $\alt_{n+1}$. The special element $g^{3^b}$ of order $2^a$ for $a<2$ is contained in $\alt_{n+1}$ by the argument above and thus so is $g$, since $2^a$ and $3^b$ are coprime.
  \end{proof}
Now we can identify the rotation groups appearing in Theorem \ref{thm:quasiprimitive-special3}.
\begin{proposition}\label{prp:primitive_dimension6}
The primitive absolutely irreducible rotation groups $G \subgr  \SOr_n$ for $n\geq 6$ that contain a special $3$-element are given, up to conjugation, as follows
\renewcommand\theenumi {\roman{enumi}}
\begin{enumerate}
\item The orientation preserving subgroups $\Wp$ of reflection groups $W$ of type $\A_n$, $\E_6$, $\E_7$ and $\E_8$.
\item The group $\Ws(\E_6)$ (cf. Proposition \ref{prp:extended_reflection_groups}, $(ii)$).
\end{enumerate}
\end{proposition}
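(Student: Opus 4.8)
The plan is to run through the three cases (A), (B) and (H) of Theorem \ref{thm:quasiprimitive-special3} and, for each candidate quotient $G_1=G/Z_1$, to examine the faithful $n$-dimensional complex representations of $G_1$ and of its central extensions by $\Z_2$, retaining only those that are real (Schur indicator $+1$), orientation preserving, and whose image is generated by rotations. Since the center $Z$ is trivial or $\{\pm\mathrm{id}\}$, Lemma \ref{lem:perfect_extension} and Lemma \ref{lem:perfect_extension+auto} reduce the treatment of the extensions to distinguishing a perfect central cover from a split product $G_1\times\Z_2$, and in the latter case the central involution must act as $-\mathrm{id}$ by Schur's lemma. The necessary character degrees and Schur indicators are read off from \cite{MR827219}.

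In case (A) the standard $n$-dimensional representation of $\sym_{n+1}$ realizes the reflection group $W(\A_n)$; its transpositions are reflections and the rotations it contains generate precisely $\Wp(\A_n)$, so $\sym_{n+1}$ is not itself a rotation group. The same computation disposes of the sign-twisted representation and of the extension $\sym_{n+1}\times\Z_2$, whose rotations again generate only $\Wp(\A_n)$. The subgroup $\alt_{n+1}$ acts as the orientation preserving subgroup $\Wp(\A_n)$, which is generated by the rotations coming from $3$-cycles and double transpositions; this yields the groups of type $\A_n$ in $(i)$. For the exceptional value $n=6$ one checks with \cite{MR827219} that the double cover $2.\alt_7$ has no faithful $6$-dimensional representation, while $\alt_7\times\Z_2$ gives $\langle\Wp(\A_6),-\mathrm{id}\rangle$, which is not a rotation group by Lemma \ref{lem:reflection_extension}; hence nothing new appears.

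In case (B) the orientation preserving subgroups $\Wp(\E_n)$ are rotation groups and give the groups of type $\E_6,\E_7,\E_8$ in $(i)$, whereas the reflection groups $W(\E_n)$ are excluded exactly as $\sym_{n+1}$ was, their rotations generating only $\Wp(\E_n)$. The genuinely new group arises for $G_1\cong\Wp(\E_6)$ with $Z_1=\{\pm\mathrm{id}\}$: by Lemma \ref{lem:perfect_extension} either $G$ is the split product $\Wp(\E_6)\times\Z_2$, in which the central factor acts as $-\mathrm{id}$ and which by Lemma \ref{lem:reflection_extension} is the rotation group $\Ws(\E_6)=\langle\Wp(\E_6),-\mathrm{id}\rangle$ of $(ii)$ (cf. Proposition \ref{prp:extended_reflection_groups}), or $G$ is the perfect double cover $2.\Wp(\E_6)$, which is discarded because it has no faithful $6$-dimensional representation. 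Finally, case (H) is ruled out because the faithful $6$-dimensional representations of $\PSU_4(3)$, of its double cover and of the extension by an outer automorphism are not real, their Schur indicator being different from $+1$.

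The delicate point is the extension bookkeeping in case (B): one must separate the split product $\Wp(\E_6)\times\Z_2$, which produces the new rotation group $\Ws(\E_6)$ through Lemma \ref{lem:reflection_extension}, from the perfect cover $2.\Wp(\E_6)$, and confirm via the character table of $\Wp(\E_6)$ in \cite{MR827219} that the latter carries no faithful real $6$-dimensional representation. The same realness verification, applied in case (H), is what eliminates the $\PSU_4(3)$ possibilities.
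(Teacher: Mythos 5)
Your strategy is the paper's: run through cases (A), (B), (H) of Theorem \ref{thm:quasiprimitive-special3}, read off degrees and Schur indicators from the ATLAS, and control the two-fold central extensions via Lemmas \ref{lem:perfect_extension}, \ref{lem:perfect_extension+auto} and \ref{lem:reflection_extension}. Cases (A), (H) and the $\E_6$ part of (B) are handled essentially as in the paper.

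The gap is in case (B) for type $\E_8$. You assert that ``the genuinely new group arises for $G_1\cong\Wp(\E_6)$'' but never rule out the central extensions over $\Wp(\E_8)$ and $W(\E_8)$, and this is precisely where the naive dichotomy of Lemma \ref{lem:perfect_extension} does not close the argument. The group $\Wp(\E_8)$ is itself a perfect central extension of $\Or_8^+(2)$ with center $\{\pm\mathrm{id}\}$; the split product $\Wp(\E_8)\times\Z_2$ cannot act faithfully with the second factor as $\pm\mathrm{id}$ since $-\mathrm{id}$ already lies in $\Wp(\E_8)$, and a perfect central extension $G$ of $\Wp(\E_8)$ by $\Z_2$ is excluded only by appealing to Gr\"un's lemma (a perfect group modulo its center is centerless, whereas $G/Z_1\cong\Wp(\E_8)$ has nontrivial center). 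For a hypothetical perfect central extension of $W(\E_8)$ one further needs Lemma \ref{lem:perfect_extension+auto}: the preimage $P$ of $\Wp(\E_8)$ would be a perfect central extension of $\Or_8^+(2)$ by a center of order $4$, so the restriction of the representation to $P$ is reducible, the outer involution permutes two $4$-dimensional components, and such a linear group cannot be generated by rotations. Without these steps the completeness of the list is not established. A smaller omission of the same kind occurs in case (A): you invoke Lemma \ref{lem:reflection_extension} to dispose of $\alt_{n+1}\times\{\pm\mathrm{id}\}$ only for $n=6$, but this case arises for every even $n\geq 6$ (for odd $n$ the inversion reverses orientation, so the center is forced to be trivial); the same lemma handles it uniformly, as in the paper.
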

\begin{proof}
We go through the cases listed in Theorem \ref{thm:quasiprimitive-special3}.

(A) Since all special elements lie in $\alt_{n+1}$ mod $Z$ we can assume that $G_1=\alt_{n+1}$. For  $n\geq 6$ the group $\alt_{n+1}$ has only one irreducible representation of dimension $n$ namely the nontrivial subrepresentation of the permutation representation on $\C^n$ \cite[Lem. 4.1, p.~273]{MR0401936}. This is a real representation and gives rise to the orientation preserving subgroup of the reflection group of type $\A_n$, i.e. the rotation group in $(i)$. The case $Z=\{\pm \mathrm{id}\}$, $G_1\cong \alt_{n+1}$ and $G = G_1 \times Z$ does not give new examples due to Lemma \ref{lem:reflection_extension}. It remains to consider the case where $G$ is a perfect central extension of $\alt_6$ by $Z\cong \Z_2$. Inspecting the character tables shows that there are no appropriate representations in this case \cite[p.~5]{MR827219}.

(B) The group $\Wp(\E_6)$ is isomorphic to the simple group $\Or_5(3)\cong \PSp_4(3) \cong \PSU_4(2)$ \cite[p.~27]{MR827219}. It has only one absolutely irreducible real $6$-dimensional representation namely its realization as the orientation preserving subgroup $\Wp(\E_6)$ of the reflection group of type $\E_6$. The double cover of $\Wp(\E_6)$ does not have representations in dimension $6$ \cite[p.~27]{MR827219}. According to Lemma \ref{lem:reflection_extension}, the group $\Ws(\E_6)\cong \Wp(\E_6) \times \Z_2$ is also a rotation group that occurs in this case. The group $W(\E_6)$ has two faithful absolutely irreducible real $6$-dimensional representations, among them its standard representation, and they differ only by a sign on the complement of $\Wp(\E_6)$ \cite[p.~27]{MR827219}. In particular, neither of them preserves the orientation. The double cover of $W(\E_6)$ does not have representations in dimension $6$ \cite[p.~27]{MR827219}.

The group $\Wp(\E_7)$ is isomorphic to the simple group $\PSp_6(2)$ \cite[p.~46]{MR827219}. It has only one absolutely irreducible real $7$-dimensional representation namely its realization as the orientation preserving subgroup $\Wp(\E_7)$  of the reflection group of type $\E_7$ \cite[p.~46]{MR827219}. Since the dimension is odd, the center must be trivial (the center of $W(\E_7)$ is not trivial, cf. \cite[p.~45]{MR1066460}) and thus the rotation group $\Wp(\E_7)$ is the only example that occurs in this case.

The group $\Wp(\E_8)$ is a perfect central extension of the simple group $\Or_8^+(2)$ by $\Z_2$ \cite[p.~85]{MR827219}. It has only one faithful absolutely irreducible real $8$-dimensional representation namely its realization as the orientation preserving subgroup $\Wp(\E_8)$ of the reflection group of type $\E_8$ \cite[p.~85]{MR827219} and its image contains the negative unit \cite[p.~46]{MR1066460}. For $Z=Z_1=\{\pm 1\}$ and $G\neq \Wp(\E_8) \times \Z_2$ the group $G$ must be perfect by Lemma \ref{lem:perfect_extension} and this contradicts Gr\"un's lemma, stating that the quotient of a perfect group by its center is centerless (cf. \cite[p.~61]{MR1298629}), since $\Wp(\E_8)$ has a nontrivial center. The group $W(\E_8)$ has two faithful absolutely irreducible real $8$-dimensional representations, but by the same reason as in $(B)$ neither of them preserves the orientation \cite[p.~85]{MR827219}. Suppose the group $W(\E_8)$ had a perfect central extension $G$ by $\Z_2$ with a suitable representation. Then the group $P$ (in the notation of Lemma \ref{lem:perfect_extension+auto}) would be a perfect central extension of $\Or_8^+(2)$ by a group $Z(P)$ of order $4$ containing $Z_1$ due to Lemma \ref{lem:perfect_extension+auto} and Gr\"un's Lemma. Therefore, the restriction of the representation of $G$ to $P$ would be reducible. Since the representation of $G$ is irreducible by assumption, the automorphism $\tilde{a}$ would permute two irreducible four-dimensional components of the representation of $P$. However, the linear group corresponding to such a representation cannot be generated by rotations. Hence, no further rotation groups occur in this case.

(H) There are no absolutely irreducible real $6$-dimensional representations in this case \cite[p.~53]{MR827219} and thus it can be excluded.
  \end{proof}

It remains to treat the cases where $G$ contains special $2$-elements but no special $r$-elements for $r\geq3$. We first cite the result obtained in \cite[Thm. 1, p.~58]{MR0435243}.
\begin{theorem}\label{thm:primitive_and_special_2}
Let $\rho: G \To \mathrm{GL}_n(\C)$ be a faithful and quasiprimitive representation of a finite group $G$ with $n\geq6$ such that $\rho(G)$ contains a special $2$-element but no special $r$-element for $r>2$. Then one of the following cases holds.
\renewcommand\theenumi {\Alph{enumi}}
\begin{enumerate}
\item As in case (A) in Theorem \ref{thm:quasiprimitive-special3}.
\item As in case (B) in Theorem \ref{thm:quasiprimitive-special3}.
\item $G/Z = A \times K$ where $K$ is a linear group generated projectively by reflections and $A\cong \alt_4$, $\sym_4$ or $\alt_5$. Here, $\rho(G)$ is a subgroup of $Y\otimes Y_1$ where $Y$ is a representation of degree $2$ and $Y_1$ is a representation of $K$ of degree $n/2$.
\item $n=6$ and $G/Z=G_1$ with $G_1 \cong \PSL_2(7)$ or an extension by an automorphism of order $2$ to $G_1 \cong \PGL_2(7)$ and if $G_1 \cong \PSL_2(7)$ then $G=G_1 \times Z$. 
\item $n=6,7$ and $G/Z=G_1$ with $G_1 \cong \PSU_3(3)$ or an extension by an automorphism of order $2$ to $G_1 \cong G_2(2)$. If $G_1\cong U_3(3) \cong \PSU_3(3)$ then $G=G_1 \times Z$. There is a unique representation in dimension $6$ and two representations in dimension $7$. The $7$-dimensional representations are not real and they do not extend to $G_2(2)$.
\item $n=6$ and $G/Z\cong \hat{J}_2$, a proper double cover of the Hall-Janko group of order $604800$.
\item $n=6$ and $G/Z=G_1$ with $G_1 \cong \PSL_3(4)$ or an extension by an automorphism of order $2$.
\item As in case (H) in Theorem \ref{thm:quasiprimitive-special3}.
\item $n=6$ and $G/Z_1=G_1 \cong \hat{\alt}_6$, the unique proper triple cover of $\alt_6$, or an extension by an automorphism of order $2$. Here $G_1$ has a center of order $3$ and $Z_1 \subgreq Z$. 
\item $n=8$ and $G$ contains a subgroup $G_1$ with $G=G_1Z$, $G_1\triangleright H$ where $H\cong Q_8 \circ \dih_4 \circ \dih_4$, $\dih_4 \circ \dih_4 \circ \dih_4$, or $\dih_4 \circ \dih_4 \circ \dih_4 \circ \Z_4$ and the restriction of the representation to $H$ is the tensor product of faithful representations of the quaternion group $Q_8$ and the dihedral group $\dih_4$ with $|\dih_4|=8$ on $\C^2$ and the cyclic group $\Z_4$ on $\C$ (cf. \cite[Theorems~2.7.1 and 2.7.2]{MR0231903}). The quotient $G_1/H$ is isomorphic to a subgroup of $\Or_6^+(2)\cong \sym_8$, $\Or^-_6(2)$ or $S_p(6,2)$ in the respective cases and $\rho_{|H}$ is irreducible.
\item $n=8$ and $G/Z\cong (\alt_5\times \alt_5 \times \alt_5)\wr \sym_3$. $G$ contains a normal subgroup $H\cong \SL_2(5) \circ \SL_2(5) \circ \SL_2(5)$ and $\rho_{|H}= \rho_1 \otimes \rho_1 \otimes \rho_1$ for a two dimensional representation $\rho_1$ of $\SL_2(5)$.
\item $n=10$ and $G=G_1\times Z$ with $G_1=\PSU_5(2)$.
\item $n=6$ and $G=G_1\circ Z$ where $G_1$ is a proper central extension of $\alt_7$ with a center of order $3$.
\end{enumerate}
\renewcommand\theenumi {\roman{enumi}}
\end{theorem}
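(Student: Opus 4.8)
The plan is to follow the structural strategy of Huffman and Wales \cite{MR0435243}, which rests on the classification of finite quasisimple groups together with a close analysis of the special $2$-element. Write $t\in\rho(G)$ for such an element; since the only nontrivial square root of unity is $-1$, the element $t$ is an involution whose $(-1)$-eigenspace $U$ satisfies $\dim_{\C}U=2$, so $t-\mathrm{id}$ has rank $2$ and fixes the hyperplane-like complement $U^{\bot}$ pointwise. The smallness of this support is the driving constraint: any normal subgroup on which the module restricts homogeneously — as forced by quasiprimitivity and Clifford's theorem \cite[Thm. 49.1, p.~343]{MR0144979} — can accommodate $t$ only in a very rigid way, and it is this rigidity that eventually cuts the possibilities down to the finite catalogue (A)--(M).

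First I would analyse the generalized Fitting subgroup $F^{\ast}(G)=F(G)E(G)$, where $F(G)$ is the Fitting subgroup and $E(G)$ the layer. Because $C_G(F^{\ast}(G))\subgr F^{\ast}(G)$, the group $G$ embeds into the automorphism group of $F^{\ast}(G)$, so the entire problem reduces to determining $F^{\ast}(G)$ and the way $t$ acts on the module. Quasiprimitivity forces each component and each characteristic subgroup of $F(G)$ to act homogeneously, which rules out genuine systems of imprimitivity and, through the Frobenius--Schur machinery already used in Lemma \ref{lem:quasiprimitive}, governs realizability over $\R$ at the later stage of reading off rotation groups.

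The case division then follows the shape of $F^{\ast}(G)$. When $F^{\ast}(G)$ is a $2$-group of symplectic type, such as the central products $Q_8\circ\dih_4\circ\dih_4$ appearing in case (J), the module is forced to be a tensor product of the natural two-dimensional representations of the quaternion and dihedral factors, $G/F^{\ast}(G)$ embeds into an orthogonal group over $\mathbb{F}_2$, and the special $2$-elements are recovered from this $\mathbb{F}_2$-geometry; this runs exactly parallel to Section \ref{sec:group_L_sec}, where the same quadratic-form picture over $\mathbb{F}_2$ governs the group $L$, and the cognate central-product-of-$\SL_2(5)$ situation produces case (K). When the module instead tensor-decomposes with a two-dimensional factor, one obtains case (C), with $A\in\{\alt_4,\sym_4,\alt_5\}$ and $K$ generated projectively by reflections. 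In the principal case $F^{\ast}(G)=E(G)$ is a single quasisimple group; here one invokes the classification of finite simple groups together with the tables of faithful irreducible representations of degree at most $10$ \cite{MR827219}, testing each candidate for the presence of an element with a two-dimensional $(-1)$-eigenspace. This yields the alternating and Weyl-group cases (A), (B) and the remaining entries $\PSL_2(7)$, $\PSU_3(3)$, $\hat{J}_2$, $\PSL_3(4)$, $\PSU_4(3)$, $\PSU_5(2)$ and the triple covers of $\alt_6$ and $\alt_7$ of cases (D)--(I), (L) and (M).

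The main obstacle is precisely this quasisimple case: it genuinely depends on the full classification of finite simple groups and on detailed knowledge of their small-degree representations, and for each surviving group one must still verify both that a special $2$-element exists \emph{and} that no special $r$-element for $r>2$ does — the latter condition being exactly what distinguishes this theorem from Theorem \ref{thm:quasiprimitive-special3}. Since a self-contained proof is infeasible, the honest route — and the one taken here — is to cite \cite{MR0435243} for the statement and, as in the proof of Theorem \ref{thm:quasiprimitive-special3}, to add only the supplementary observations needed to extract the rotation groups from the list.
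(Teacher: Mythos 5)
Your proposal ends exactly where the paper does: Theorem \ref{thm:primitive_and_special_2} is presented in the paper purely as a citation of \cite[Thm.~1, p.~58]{MR0435243}, with no proof supplied, and the work of extracting the actual rotation groups is deferred to Proposition \ref{prp:primitive_dimension6_conlusion}, so your decision to cite rather than reprove matches the paper's approach. One caveat about your surrounding sketch: the Brauer--Huffman--Wales arguments predate the classification of finite simple groups and proceed via Clifford theory, character theory and earlier classifications of low-dimensional linear groups rather than via an analysis of $F^{\ast}(G)$ resting on CFSG, so your description of the quasisimple case as genuinely depending on the full classification misrepresents the cited proof, although this does not affect the correctness of your conclusion.
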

\begin{proposition}\label{prp:primitive_dimension6_conlusion}
The primitive absolutely irreducible rotation groups $G \subgr  \SOr_n$ for $n\geq 6$ that contain no special $r$-element for $r\geq3$ are given, up to conjugation, by $M(\Ss_6)=R_6(\PSL_2(7))$ (cf. Lemma \ref{lem:exotic_psg_psl}) and $M(\Tt_8)=L$ (cf. Section \ref{sec:group_L_sec}).
\end{proposition}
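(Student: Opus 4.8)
The plan is to apply Theorem~\ref{thm:primitive_and_special_2} and to eliminate all but two of its cases. Since $G$ is a nontrivial rotation group it contains a special $2$-element, by hypothesis it contains no special $r$-element for $r\geq3$, and by Lemma~\ref{lem:quasiprimitive} its complexification is quasiprimitive; hence a suitable quotient $G/Z_1$ occurs among the cases $(A)$ through $(M)$. By Schur's lemma the center $Z$, and therefore also $Z_1$, has order at most two, so it suffices to examine the faithful representations of each listed group and of its two-fold central extensions. The recurring tools are the Schur indicator criterion for realizability over $\R$ together with its multiplicativity $\mathrm{Ind}(\rho_1\otimes\rho_2)=\mathrm{Ind}(\rho_1)\mathrm{Ind}(\rho_2)$, the requirement that the representation preserve the orientation, the criterion that an absolutely irreducible rotation group does not preserve a complex structure (Lemma~\ref{lem:abs_irr_or_complex}), and a final verification that the real linear group is actually generated by rotations; the character data are taken from \cite{MR827219}.

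Cases $(A)$ and $(B)$ repeat the group structures of cases $(A)$ and $(B)$ of Theorem~\ref{thm:quasiprimitive-special3}, whose realizations were found in Proposition~\ref{prp:primitive_dimension6}. Each of those groups contains a special $3$-element (for instance a $3$-cycle in the $\A_n$ case), contrary to the present hypothesis, so $(A)$ and $(B)$ contribute nothing here. The next eliminations are by representation theory. In cases $(I)$ and $(M)$ the group $G_1$ is a proper central extension with a center of order $3$; as the center of an absolutely irreducible rotation group has order at most two, the pertinent faithful representations are complex and drop out. In cases $(E)$, $(F)$, $(G)$, $(H)$ and $(L)$ the relevant faithful representations are either complex (Schur indicator $0$) or, when self-conjugate, quaternionic or orientation-reversing, and so yield no rotation group. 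Case $(K)$ is the cleanest instance: the faithful two-dimensional representation $\rho_1$ of $\SL_2(5)$ is quaternionic, so $\rho_{|H}=\rho_1\otimes\rho_1\otimes\rho_1$ has Schur indicator $(-1)^3=-1$ and cannot be realized over $\R$.

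This leaves cases $(C)$, $(D)$ and $(J)$. In case $(C)$ the degree-two tensor factor $Y$ is a faithful representation of a double cover of $\alt_4$, $\sym_4$ or $\alt_5$ and is therefore quaternionic; by multiplicativity of the Schur indicator, $\rho\subseteq Y\otimes Y_1$ is real only if $Y_1$ is quaternionic as well, which does not happen for the degree-$(n/2)$ reflection-type representations $Y_1$ that occur, so case $(C)$ produces no rotation group. In case $(D)$ the group $\PSL_2(7)$ has a unique faithful absolutely irreducible real $6$-dimensional representation, whose image is the rotation group $R_6(\PSL_2(7))=M(\Ss_6)$ of Lemma~\ref{lem:exotic_psg_psl}; the central extension by $\mathrm{-id}$ is not a rotation group by the same lemma, and the extension to $\PGL_2(7)$ adjoins elements that reverse the orientation or are not rotations, so no further group arises.

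The remaining case $(J)$ carries the real content of the proof, and I expect it to be the main obstacle. Here $n=8$ and $G$ contains a normal subgroup $H$ equal to one of $Q_8\circ\dih_4\circ\dih_4$, $\dih_4\circ\dih_4\circ\dih_4$ or $\dih_4\circ\dih_4\circ\dih_4\circ\Z_4$, with $\rho_{|H}$ the corresponding tensor product. The minus-type extraspecial group $Q_8\circ\dih_4\circ\dih_4$ carries a quaternionic $8$-dimensional representation and is excluded by reality, while the factor $\Z_4$ forces an invariant complex structure and is excluded by Lemma~\ref{lem:abs_irr_or_complex}. There remains $H\cong\dih_4\circ\dih_4\circ\dih_4$, which I would identify with the tensor cube $W(\I_2(4))\otimes W(\I_2(4))\otimes W(\I_2(4))$ of Section~\ref{sec:group_L_sec}; since $G_1/H$ embeds into $\Or_6^+(2)$ and $N/H\cong\Or_6^+(2)$ via the action on $H/\{\pm\mathrm{id}\}$, and $Z\subgr H$ forces $G=G_1$, the group $G$ satisfies $H<G\subgr N=N_{\SOr_8}(H)$. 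By Lemma~\ref{lem:pssgroups_L} the only primitive rotation group in this range is $L=M(\Tt_8)$, which completes the list and finishes the proof.
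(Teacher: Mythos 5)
Your overall strategy coincides with the paper's: run through the thirteen cases of Theorem~\ref{thm:primitive_and_special_2}, discard (A), (B), (H) as already containing special $3$-elements, kill most of the remaining cases with Atlas character data, Schur indicators and the bound $|Z|\leq 2$, extract $R_6(\PSL_2(7))$ from case (D) exactly as in Lemma~\ref{lem:exotic_psg_psl}, and reduce case (J) to Lemma~\ref{lem:pssgroups_L} after observing that only the representation of $\dih_4\circ\dih_4\circ\dih_4$ is real. Your treatments of (D), (J) and (K) match the paper's, and the small omissions elsewhere (e.g.\ in (F) and (I) the subcase where $G$ is a perfect central extension must be ruled out via Lemma~\ref{lem:perfect_extension} and Gr\"un's lemma rather than by a bare appeal to $|Z|\leq 2$) are routine to repair.

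The one genuine gap is case (C). You argue by reality: $Y$ is quaternionic, so $Y\otimes Y_1$ is real only if $Y_1$ is quaternionic, ``which does not happen for the degree-$(n/2)$ reflection-type representations $Y_1$ that occur.'' That last clause is an unproved classification-type assertion, and it is not obviously true when $n/2$ is even (quaternionic representations of even degree of groups generated projectively by reflections do exist in nature); moreover $Y$ need not be quaternionic --- for $A\cong\sym_4$ the degree-two faithful representations of $\GL_2(3)$ have indicator $0$, which still excludes reality but shows the ``therefore quaternionic'' step is not quite right. The paper avoids all of this with a different and more robust argument: from the eigenvalue structure of $A\otimes B\in Y\otimes Y_1$ and $n/2\geq 3$ one sees that a special $r$-element forces $A$ to be scalar, so every special element of $G$ lies in $K$ mod $Z$; hence $G$, whose quotient $G/Z\cong A\times K$ has nontrivial factor $A$, cannot be generated by rotations, independently of whether the representation is realizable over $\R$. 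You should replace your reality argument for (C) by this eigenvalue argument (or supply a proof of the claim about $Y_1$, plus the additional step that even a real $Y\otimes Y_1$ is not generated by rotations).
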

\begin{proof} We go through the cases listed in Theorem \ref{thm:primitive_and_special_2}. For (A), (B) and (H) the group contains special $3$-elements (cf. \cite[\emph{Remark} on p.~60]{MR0435243}).  These cases have already been treated in Proposition \ref{prp:primitive_dimension6}.

(C) It follows from the eigenvalue structure of an element  $A\otimes B \in Y \otimes Y_1$ and from $n/2\geq 3$ that all special $r$-elements in $G$ are contained in $K$ mod $Z$. In particular, $G$ cannot be generated by special $r$-elements and hence this case can be excluded.

(D) The simple group $\PSL_2(7)$ has only one 6-dimensional absolutely irreducible real representation \cite[p.~3]{MR827219} which gives rise to the rotation group $R_6(\PSL_2(7))$. It extends to two 6-dimensional absolutely irreducible real representations of $\PGL_2(7)$. According to their character tables all special $2$-elements lie in $\PSL_2(7)$ \cite[p.~3]{MR827219}. By Lemma \ref{lem:exotic_psg_psl}, the group $\left\langle R_6(\PSL_2(7)), -1 \right\rangle$ is not generated by rotations. Since there are no other appropriate representations \cite[p.~3]{MR827219} the rotation group $R_6(\PSL_2(7))$ is the only example that can occur in this case.

(E) None of the representations in question is real \cite[p.~14]{MR827219} and thus this case can be excluded.

(F) The group $\hat{J}_2$ does not have an absolutely irreducible real representations in dimension $6$ \cite[p.~43]{MR827219}. The case $Z=\{\pm 1\}$ and $G\neq \hat{J}_2 \times Z$ cannot occur, since then $G$ would be perfect by Lemma \ref{lem:perfect_extension} contradicting Gr\"un's lemma. Hence, no examples occur in this case.

(G) There are no $6$-dimensional absolutely irreducible real representation in this case \cite[p.~23]{MR827219} and thus it can be excluded.

(I) For $G= G_1 \times Z_1$ we have $Z(G)\geq3$, i.e. no rotation group can occur. The case $Z=Z_1=\{\pm 1\}$ with $G\neq G_1 \times Z$ is impossible by Gr\"un's lemma, since $G_1$ has a nontrivial center (cf. Lemma \ref{lem:perfect_extension}).

(J) The Schur indicators of the listed possible representations of $H$ are $-1$, $1$ and $0$ and thus only the second case comes into question. In this case the representation is real and its image in $\SOr_8$ is given by the group $H<\SOr_8$ described in Section \ref{sec:group_L_sec}. We have to look for primitive groups in the normalizer $N_{\GL(8,\R)}(H)$ that are generated by pseudoreflections, contain the group $H$ as a subgroup and only pseudoreflections of order $2$. By Schur's lemma, it suffices to look for rotation groups in $N_{\SOr_8}(H)$ with these properties. Therefore, according to Lemma \ref{lem:pssgroups_L} and Lemma \ref{lem:rot_contained_in_L} the group $L$ defined in Section \ref{sec:group_L_sec} is the only example that occurs in this case.

(K) There are two faithful representation of $\SL_2(5)$ in dimension two, both have Schur indicator $-1$ \cite[p.~2]{MR827219} and thus so has the irreducible representation $\rho_{|H}$. In  particular, the representation of $G$ is not real and hence no examples occur in this case.

(L) There are no absolutely irreducible real $10$-dimensional representation in this case \cite[p.~72]{MR827219} and thus it can be excluded.

(M) Because of $|Z|\geq3$ the representation of $G$ cannot be real and thus this case can be excluded.
  \end{proof}

We summarize what we have obtained in this section.

\begin{proposition}\label{prp:real_primitive_prg}
The primitive absolutely irreducible rotation groups $G \subgr  \SOr_n$ for $n\geq 5$ are given, up to conjugation, as follows.
\renewcommand\theenumi {\roman{enumi}}
\begin{enumerate}
\item The orientation preserving subgroups $\Wp$ of the reflection groups of type $\A_n$, $\E_6$, $\E_7$ and $\E_8$.
\item The group $M(\Rr_5)=R_5(\alt_5)\subgr \SOr_5$ (cf. Lemma \ref{lem:exotic_psg_A5}).
\item The group $\Ws(\A_5)\subgr \SOr_5$ (cf. Proposition \ref{prp:extended_reflection_groups}, $(i)$).
\item The group $M(\Ss_6)=R_6(\PSL_2(7))\subgr \SOr_6$  (cf. Lemma \ref{lem:exotic_psg_psl}).
\item The group $\Ws(\E_6)\subgr \SOr_6$ (cf. Proposition \ref{prp:extended_reflection_groups}).
\item The group $M(\Tt_8)=L\subgr \SOr_8$ (cf. Section \ref{sec:group_L_sec}).
\end{enumerate}
\end{proposition}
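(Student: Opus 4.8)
The statement merely collects the results of the three preceding propositions, so the plan is to organise the case distinction used throughout this section and check that it is exhaustive. First I would recall that by Lemma \ref{lem:quasiprimitive} the complexification of a primitive absolutely irreducible rotation group $G\subgr\SOr_n$ with $n\geq5$ is quasiprimitive, so that the cited classifications of quasiprimitive complex linear groups apply. Since $G$ is a nontrivial rotation group, it is generated by rotations, each of which is a special $r$-element for the order $r$ of the corresponding nontrivial root of unity. For $n\geq5$ no prime larger than $3$ can divide such an $r$, because the existence of a special $p$-element for a prime $p>3$ forces $n\leq4$ by \cite[Thm.~1, p.~54]{MR0401937}; passing to a suitable power of a given special element then reduces every case to $r=2$ or $r=3$.

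With this reduction in hand I would split according to dimension. For $n=5$ the classification is already complete and given by Proposition \ref{prp:primitive_dimension5}, which yields $R_5(\alt_5)$, $\Wp(\A_5)$ and $\Ws(\A_5)$, i.e. the cases $(ii)$, $(i)$ and $(iii)$ of the statement. For $n\geq6$ I would distinguish whether $G$ contains a special $3$-element or not. If it does, Proposition \ref{prp:primitive_dimension6} applies and produces the orientation preserving subgroups $\Wp(\A_n)$, $\Wp(\E_6)$, $\Wp(\E_7)$, $\Wp(\E_8)$ together with $\Ws(\E_6)$, accounting for the cases $(i)$ and $(v)$. In the complementary case every special element of $G$ has $2$-power order, so $G$ contains a special $2$-element but no special $r$-element with $3\mid r$; here Proposition \ref{prp:primitive_dimension6_conlusion} applies and delivers $R_6(\PSL_2(7))$ and $L$, i.e. the cases $(iv)$ and $(vi)$.

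Finally I would assemble the lists, noting that $\Wp(\A_n)$ occurs in every dimension $n\geq5$ while the remaining groups are attached to the fixed dimensions $5$, $6$ and $8$ in which the exceptional propositions place them, and that no group appears in more than one case. The step I expect to require genuine care -- and hence the main obstacle -- is making the case distinction over $n\geq6$ truly exhaustive: one must confirm, using the reduction to $r\in\{2,3\}$, that a primitive rotation group lacking a special $3$-element in fact contains only rotations of order $2$, and so satisfies the hypothesis ``no special $r$-element for $r\geq3$'' of Proposition \ref{prp:primitive_dimension6_conlusion} rather than falling outside both propositions through, say, a special $4$-element. Granting this, everything else is the bookkeeping of matching the groups listed in Propositions \ref{prp:primitive_dimension5}, \ref{prp:primitive_dimension6} and \ref{prp:primitive_dimension6_conlusion} to the six cases of the statement.
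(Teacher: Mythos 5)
Your proposal follows the paper exactly: Proposition \ref{prp:real_primitive_prg} is stated there without a separate proof, as a summary of Propositions \ref{prp:primitive_dimension5}, \ref{prp:primitive_dimension6} and \ref{prp:primitive_dimension6_conlusion}, obtained through precisely the case distinction (quasiprimitivity via Lemma \ref{lem:quasiprimitive}, reduction to special $2$- and $3$-elements, then $n=5$ versus $n\geq 6$) that you describe. The exhaustiveness point you rightly flag is likewise left implicit in the paper; it is settled by observing that a special $2^a$-element squares down to a special $2$-element, so Wales' classification cited as Theorem \ref{thm:primitive_and_special_2} still applies to such a group, and the case analysis in the proof of Proposition \ref{prp:primitive_dimension6_conlusion} does not actually use the absence of rotations of order greater than $2$.
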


\section{Irreducible reflection-rotation groups}
\label{sec:class_irr_rofl}
Let $G\subgr  \Or_n$ be an irreducible reflection-rotation group. The case in which $G$ is generated by rotations is subject of Theorem \ref{thm:clas_red_psg}. So suppose that $G$ contains a reflection. Let $F$ be the normal subgroup of $G$ generated by the reflections in $G$ and let $H$ be the normal subgroup of $G$ generated by the rotations in $G$. Then $H$ is the orientation preserving subgroup of $G$ and it is absolutely irreducible for $n>2$ by Lemma \ref{lem:orient_is_abs_irr}.

\begin{proposition}\label{prp:irr_ref_ro_red} Let $G<\Or_n$ be an irreducible reflection-rotation group that contains a reflection such that $F$ is reducible. Then $G$ is either one of the monomial groups $\Mt_5$, $\Mt_6$, $\Mt_7$, $\Mt_8$, $\Mt(\D_n)$ (cf. Section \ref{sub:monomial_examples} and Table \ref{tab:ref-rot4}) or an imprimitive group $\Gt (km,k,l)\subgr \SOr_{2l}$ with $k=1,2$ and $km\geq3$ (cf. Section \ref{sub:imprimitive_example} and Section \ref{sub:imprimitive}).
\end{proposition}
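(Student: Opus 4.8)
The plan is to study $G$ through its orientation-preserving subgroup $H=G^{+}$. Since $G$ contains a reflection we have $[G:H]=2$, so $G=\langle H,s\rangle$ for any reflection $s\in G$, and by hypothesis $H$ is generated by rotations and (as $n>2$, the case $n\le 2$ being vacuous) absolutely irreducible by Lemma \ref{lem:orient_is_abs_irr}. First I would record the constraints imposed by reducibility of $F$. As a reflection group $F$ splits along an orthogonal decomposition $\R^n=W_1\oplus\cdots\oplus W_m$ into its essential irreducible components $F=F_1\times\cdots\times F_m$; because $G$ is irreducible and $F$ is a nontrivial normal subgroup, the global fixed space of $F$ vanishes and $G$ permutes the $W_i$ transitively, so all $W_i$ share a dimension $d$, all $F_i$ are isomorphic irreducible reflection groups, and $m\ge 2$. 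As $F\trianglelefteq G$, the subgroup $H$ also normalizes $F$ and hence permutes $\{W_i\}$; being irreducible it does so transitively, so $H$ is an imprimitive absolutely irreducible rotation group with $d$-dimensional blocks.

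Now I would feed $H$ into the classification of imprimitive rotation groups from Section \ref{sub:imprimitive}. The structural input is that an irreducible rotation group permuting a system of $d$-dimensional blocks must contain a rotation genuinely permuting the blocks (otherwise the generators would preserve each block, contradicting irreducibility), and a codimension-two element permuting $d$-dimensional blocks forces $d\le 2$. Hence $d\in\{1,2\}$, and $H$ occurs in Proposition \ref{prp:class_monomial_psg} when $d=1$ or in Proposition \ref{prp:class_imprimit_psg}, respectively \ref{prp:class_imprimit_psg4}, when $d=2$. It remains to recover $G=\langle H,s\rangle$, the crucial point being that reducibility of $F$ forces every reflection of $G$ to be ``local'': when $d=1$ it is a single coordinate sign change, and when $d=2$ it is a dihedral reflection supported on one block.

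If $d=1$ then $F_i=W(\A_1)$, $F=D(n)$, and $D(n)\le G$, so by Proposition \ref{prp:class_monomial_psg} the monomial group $H$ is $M_n$, $\Wp(\D_n)$, $\Wp(\BC_n)$ or one of $M^p_7,M^p_8$. Adjoining a coordinate reflection $s$ and using transitivity of the permutation action to generate all sign changes, I would verify case by case that $\langle M_n,s\rangle=\Mt_n$, $\langle \Wp(\D_n),s\rangle=\Mt(\D_n)$, and $\langle M^p_7,s\rangle=\Mt_7$, $\langle M^p_8,s\rangle=\Mt_8$, each with $F=D(n)$ reducible, while $\langle \Wp(\BC_n),s\rangle=W(\BC_n)$ is discarded because its $F$ is the irreducible group $W(\BC_n)$, contrary to hypothesis. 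If $d=2$ then $F_i=W(\I_2(km))$ is dihedral with $km\ge 3$, $G$ preserves a two-dimensional system of imprimitivity, and $H=\Gs(km,k,l)$ with $k\in\{1,2\}$ by Proposition \ref{prp:class_imprimit_psg} (for $n=2l>4$) or \ref{prp:class_imprimit_psg4} (for $n=4$). Normalizing $s$ to the standard block reflection, I would use $\Gs(km,k,l)=\langle G(km,k,l),r\rangle$ and the fact that $\sym_l$ conjugates $s$ across blocks to see that the two-coordinate rotation $r$ already lies in $\langle G(km,k,l),s\rangle$, whence $G=\langle G(km,k,l),s\rangle=\Gt(km,k,l)$.

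The hard part will be the boundary bookkeeping between the two cases together with the precise reconstruction in the $d=2$ case. Concretely, one must confirm that adjoining the block reflection to $\Gs(km,k,l)$ yields exactly $\Gt(km,k,l)$, and that the degenerate parameter $km=2$—where $W(\I_2(2))$ is reducible and the $F$-components are again lines—produces no new group but collapses into the monomial family, so that the alternative $d=1$ versus $d=2$ is genuinely clean and matches the requirement $km\ge 3$ in the imprimitive family. All of these identifications rest on the locality of reflections forced by reducibility of $F$, which is exactly what makes the semidirect-product descriptions of $\Mt_n$, $\Mt(\D_n)$ and $\Gt(km,k,l)$ emerge.
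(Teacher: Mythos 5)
Your proposal follows essentially the same route as the paper's proof: pass to $H=G^{+}$, note that the irreducible components of the reducible reflection group $F$ form a system of imprimitivity for $H$ with blocks of dimension one or two, invoke Propositions \ref{prp:class_monomial_psg}, \ref{prp:class_imprimit_psg} and \ref{prp:class_imprimit_psg4} to identify $H$, and recover $G=\left\langle H,s\right\rangle$ by adjoining a reflection; this is correct. The one bookkeeping slip is that for $k=2$ the reflection subgroup of $\Gt(2m,2,l)$ is $W(\I_2(m))^l$ rather than $W(\I_2(2m))^l$, so the degenerate parameter where the components of $F$ collapse to lines is $m=2$ (the group $\Gt(4,2,l)$, which has $km=4\geq 3$) rather than $km=2$ --- a boundary case on which the paper's own proof is equally terse.
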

\begin{proof} Observe that $F$ is distinct from $G$ and thus we have $n>2$ and $H$ is absolutely irreducible. Since $G$ is irreducible, the group $H$ permutes the irreducible components of $F$ transitively. Therefore, the rotation group $H$ is imprimitive with a system of imprimitivity given by the irreducible components of $F$ which are all equivalent and either one- or two-dimensional. If they are one-dimensional, then $H$ is a monomial group occurring in Proposition \ref{prp:class_monomial_psg} that contains all transformations that change the sign of an even number of coordinates. Hence, the group $G$, being not a reflection group by assumption, is one of the listed monomial groups in this case.

In the second case, the Coxeter diagram of $F$ is given by
\[
			\bullet_{s^{(1)}_1}\stackrel{m}{-}\bullet_{s^{(1)}_2} \ \ \  \bullet_{s^{(2)}_1}\stackrel{m}{-}\bullet_{s^{(2)}_2}\\ \dots \\ \bullet_{s^{(l)}_1}\stackrel{m}{-}\bullet_{s^{(l)}_2}
\]
with $m>2$ and $l>1$. As in the proof of Proposition \ref{prp:class_imprimit_psg} we see that $H$ acts like the symmetric group on the irreducible components of $F$. Since the orientation preserving subgroup of $F$ is contained in $H$, this implies $\Gs(m,1,l) < H$. Moreover, the fact that $H$ normalizes $F$ implies $H<\Gs(2m,2,l)$. Therefore, $H$ is an imprimitive rotation group of type $\Gs(km,k,l)$ for $k=1$ or $k=2$ and $km\geq3$ by Proposition \ref{prp:class_imprimit_psg} and Proposition \ref{prp:class_imprimit_psg4}. Accordingly, $G$ is an imprimitive reflection-rotation group of type $\Gt (km,k,l)\subgr \SOr_{2l}$ for $k=1$ or $k=2$ and $km\geq3$ in this case (cf. Section \ref{sub:imprimitive_example}).
  \end{proof}

\begin{proposition} Let $G$ be an irreducible reflection-rotation group that contains a reflection such that $F$ is irreducible and distinct from $G$. Then $G$ is a group of type $\Wt$ generated by an irreducible reflection group $W$ of type $\A_4$, $\D_4$, $\F_4$, $\A_5$ or $\E_6$ and a normalizing rotation (cf. Lemma \ref{prp:extended_psrg}).
\end{proposition}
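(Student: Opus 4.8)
The plan is to produce, inside $G$, a single rotation that falls outside $F$, feed it into the chamber lemmas already proved, and then pin down the whole group by an order/automorphism count.

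First I would locate a suitable rotation. Since $G$ is generated by its reflections and rotations and every reflection of $G$ lies in the normal subgroup $F$, the assumption $F\neq G$ forces at least one of the generating rotations, call it $\rho$, to lie outside $F$. Thus $\rho\in\SOr_n\setminus F$ is a genuine rotation normalizing $F$. The group $\langle F,\rho\rangle$ cannot be a reflection group: all of its reflections already lie in $F$, so if it were generated by its reflections it would be contained in $F$, forcing $\rho\in F$. Hence Lemma \ref{lem:existence_chamber} applies and yields a chamber $C$ of $F$ with $\rho C=C$, and then Lemma \ref{lem:implication_of_fixed_chamber} forces $F$ to have one of the types $\A_4$, $\D_4$, $\F_4$, $\A_5$ or $\E_6$, with $\rho$ the essentially unique chamber-fixing rotation (unique, respectively one of an inverse pair of order-three rotations for $\D_4$). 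Setting $W:=F$, this already identifies $W$ and, via Lemma \ref{prp:extended_psrg}, the normalizing rotation, so that $\langle W,\rho\rangle=\langle\Ws,W\rangle=\Wt$.

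It remains to prove $G=\langle W,\rho\rangle$. Because $F$ acts simply transitively on its chambers, $G=F\rtimes A$ with $A:=\mathrm{Stab}_G(C)$, and $A$ injects into the group of automorphisms of the Coxeter diagram $\Gamma$ of $W$; moreover $A\cap F=\{e\}$ and $\langle\rho\rangle\subseteq A$. For types $\A_4$, $\A_5$, $\F_4$ and $\E_6$ the diagram automorphism group is $\Z_2$, so $\langle\rho\rangle=A$ is forced immediately and $G=\langle W,\rho\rangle=\Wt$. For $\D_4$ one has $\Aut(\Gamma)\cong\sym_3$, and I would rule out the transpositions: a transposition of two legs of the $\D_4$ diagram is realized by the orthogonal linear map permuting the simple roots accordingly, whose eigenvalues are $1,1,1,-1$, i.e.\ a reflection fixing $C$. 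Such a reflection would have to lie in $F$ (as $F$ contains every reflection of $G$), contradicting $A\cap F=\{e\}$. Hence $A=\langle\rho\rangle\cong\Z_3$ and again $G=\Wt$.

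The main obstacle is the $\D_4$ case, where triality enlarges the diagram automorphism group to $\sym_3$ and a priori permits $G/F$ to be larger than $\langle\rho\rangle$; the resolution is the observation that the extra (transposition) automorphisms are orientation-reversing reflections, which the definition of $F$ forbids outside $F$ while the chamber stabilizer meets $F$ trivially. A secondary point to handle carefully is the identification of the rotation $\rho$ produced by the chamber lemmas with the canonical normalizing rotation of Lemma \ref{prp:extended_psrg}, which rests on the uniqueness statements in Lemma \ref{lem:implication_of_fixed_chamber} (and on the fact that $\rho$ and $\rho^{-1}$ generate the same extension in the $\D_4$ case).
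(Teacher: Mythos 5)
Your proof is correct and follows essentially the same route as the paper: produce a rotation $h\in G\setminus F$, note that $\langle F,h\rangle$ is not a reflection group so that Lemma \ref{lem:existence_chamber} and Lemma \ref{lem:implication_of_fixed_chamber} pin down the type of $F$, and then conclude $G=\langle F,h\rangle$. The only difference is that where the paper disposes of the last step by citing the uniqueness statement of Lemma \ref{prp:extended_psrg}, you unpack it via the chamber stabiliser $A$ and the diagram automorphism group, with the correct observation that in the $\D_4$ case a transposition of legs would be an orthogonal map with eigenvalues $1,1,1,-1$, i.e.\ a reflection outside $F$ — this is a valid (and slightly more explicit) rendering of the same uniqueness argument.
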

\begin{proof} By assumption there is a rotation $h \in G \backslash F$. According to Lemma \ref{lem:existence_chamber} there exists a chamber of the reflection group $F$ such that $hC=C$ and by Lemma \ref{lem:implication_of_fixed_chamber} we deduce that $F$ has type $\A_4$, $\D_4$, $\F_4$, $\A_5$ or $\E_6$. Finally, the uniqueness statement of Lemma \ref{prp:extended_psrg} implies that the group $G$ is generated by $F$ and $h$ and is thus one of the listed groups. 
  \end{proof}

\section{General reflection-rotation groups}
\label{sec:class_reducible}
The structure of reducible rotation groups is described in \cite{Mikhailova2}. For a general reflection-rotation group $G\subgr \Or_n$ let $\R^n=V_1 \oplus \ldots \oplus V_k$ be a decomposition into irreducible components. For each $i \in I$ we denote the projection of $G$ to $\Or(V_i)$ by $\pi_i$ and set $G_i = \pi_i(G)$. Recall the following definition from the introduction.

\begin{definition}
A rotation $g\in G$ is called a rotation of the
\begin{enumerate}
	\item \emph{first kind}, if for some $i_0 \in I$, $\pi_{i_0}(g)$ is a rotation in $V_{i_0}$ and $\pi_i(g)$ is the identity on $V_{i}$ for all $i\neq i_0$.
	\item \emph{second kind}, if for some $i_1,i_2 \in I$, $i_1 \neq i_2$, $\pi_{i_1}(g)$ and $\pi_{i_2}(g)$ are reflections in $V_{i_1}$ and $V_{i_2}$, respectively, and $\pi_i(g)$ is the identity for all $i\neq i_1,i_2$.
\end{enumerate}
\end{definition}

Let $H$ be the normal subgroup of $G$ generated by rotations of the first kind, let $F$ be the normal subgroup of $G$ generated by reflections and rotations of the second kind and set $H_i=\pi_i(H)$ and $F_i=\pi_i(F)$. Then $H_i$ is a rotation group, $F_i$ is a reflection group, both are normal subgroups of $G_i$ and $G_i$ is generated by them. The triple $(G_i,H_i,F_i)$ has an additional property that does not hold in general. It is described in Lemma \ref{lem:red6} below. 

\begin{lemma}\label{lem:red2}
For every reflection $s\in F_i$ there exists a reflection or a rotation of the second kind $g\in G$ such that $s=\pi_i(g)$.
\end{lemma}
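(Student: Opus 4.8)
The plan is to identify the reflections of $F_i$ that visibly lift to single generators of $F$, to observe that this set is closed under conjugation and generates $F_i$, and then to invoke the fact that a conjugation-closed generating set of reflections in a finite reflection group must consist of \emph{all} its reflections.

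First I would record how the generators of $F$ interact with $\pi_i$. A reflection $g\in G$ has a one-dimensional $(-1)$-eigenspace, and since every $V_j$ is $G$-invariant this eigenspace lies in a single $V_{j_0}$; hence $\pi_{j_0}(g)$ is a reflection of $V_{j_0}$ while $\pi_j(g)=\mathrm{id}$ for $j\neq j_0$. A rotation of the second kind projects, by definition, to reflections in exactly two components and to the identity elsewhere. Setting
\[
\Sigma_i=\{\pi_i(g): g\in G\text{ is a reflection or a rotation of the second kind with }\pi_i(g)\text{ a reflection of }V_i\},
\]
each element of $\Sigma_i$ is then a reflection of $V_i$, and because $F$ is generated by reflections and rotations of the second kind, $F_i=\pi_i(F)$ is generated by $\Sigma_i$. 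The lemma asserts exactly that $\Sigma_i$ already contains every reflection of $F_i$.

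The key point would be that $\Sigma_i$ is closed under conjugation by $F_i$. Given $\sigma=\pi_i(g)\in\Sigma_i$ and $w\in F_i$, I would choose a preimage $f\in F$ with $\pi_i(f)=w$, so that $w\sigma w^{-1}=\pi_i(fgf^{-1})$. As $f$ preserves every $V_j$, conjugation by $f$ preserves the localization type of $g$: a reflection of $G$ is sent to a reflection of $G$, and a rotation of the second kind supported on $\{V_a,V_b\}$ is sent to one supported on the same pair. Thus $fgf^{-1}$ is again a reflection or a rotation of the second kind, and $w\sigma w^{-1}\in\Sigma_i$.

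It would then remain to prove the Coxeter-theoretic statement that a conjugation-closed set of reflections generating a finite reflection group exhausts all of its reflections. I would reduce to the irreducible factors of $F_i$, in each of which reflections form one or two conjugacy classes according to root length. A conjugation-closed generating set is a union of such classes; were it to omit a class, then in the relevant factor it would consist of the reflections of a single root length, which generate only the proper reflection subgroup of that root-length subsystem — contradicting that it generates the factor. Granting this, every reflection $s\in F_i$ lies in $\Sigma_i$, and writing $s=\pi_i(fgf^{-1})$ as above exhibits the required lift. The one genuinely delicate ingredient is this last reduction, specifically the standard fact that in an irreducible reflection group with two classes of reflections the reflections of a single root length generate a proper subgroup; the rest is bookkeeping about how $\pi_i$ respects the decomposition.
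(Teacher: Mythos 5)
Your proof is correct and follows the same route as the paper: both define the set of reflections in $F_i$ of the form $\pi_i(g)$ for $g$ a reflection or a rotation of the second kind, observe that this set generates $F_i$ and is closed under conjugation by $F_i$, and conclude that it must therefore contain every reflection of $F_i$. The only divergence is in the final step, where the paper simply cites \cite[Prop.~1.14, p.~24]{MR1066460} (every reflection in a reflection group is $s_\alpha$ for some root $\alpha$ of its root system), whereas you re-derive that fact by a conjugacy-class and root-length argument; your version is sound but the citation is the more economical way to finish.
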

\begin{proof} Let $X_i$ be the set of reflections in $F_i$ of the form $\pi_i(g)$ for some reflection or rotation of the second kind $g\in G$. Then $X_i$ generates $F_i$ and is invariant under conjugation by $F_i$. Therefore, every reflection in $F_i$ is contained in $X_i$, i.e. is a reflection of the form $s=\pi_i(g)$ for some reflection or rotation of the second kind $g\in G$ \cite[Prop. 1.14, p.~24]{MR1066460}.
  \end{proof}

\begin{lemma}\label{lem:red6}
Let $\tau \in F_i$ be a reflection and let $h \in H_i$. If $h\tau$ is a reflection, then it is contained in $F_i$.
\end{lemma}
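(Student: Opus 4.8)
The plan is to lift $h\tau$ to a reflection or a rotation of the second kind in $G$, whose $i$-th projection then lies automatically in $F_i=\pi_i(F)$. First I would lift $\tau$: by Lemma \ref{lem:red2} there is an element $f\in F$, either a reflection or a rotation of the second kind, with $\pi_i(f)=\tau$. Since the components $V_j$ are invariant under $G$, such an $f$ is block diagonal with respect to $\R^n=V_1\oplus\ldots\oplus V_k$; if $f$ is a reflection it is supported on the single component $V_i$, and if $f$ is a rotation of the second kind it acts as a reflection on $V_i$ and on exactly one further component $V_{i'}$, $i'\neq i$, and trivially on the remaining components.

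The key step is to lift $h$ to an element of $H$ that is the identity off $V_i$. Because $H$ is generated by rotations of the first kind and each such generator is supported on a single irreducible component, every generator supported on a component other than $V_i$ lies in the kernel of $\pi_i$. Hence $\pi_i(H)=\pi_i(H^{(i)})$, where $H^{(i)}\subgr H$ is the subgroup generated by those first-kind rotations that are supported on $V_i$, and every element of $H^{(i)}$ acts trivially on $V_j$ for all $j\neq i$. Consequently there is $\hat h\in H$ with $\pi_i(\hat h)=h$ and $\pi_j(\hat h)=\mathrm{id}$ for all $j\neq i$.

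Finally I would set $g=\hat h f\in G$ and read off its projections: $\pi_i(g)=\pi_i(\hat h)\pi_i(f)=h\tau$, which is a reflection of $V_i$ by hypothesis, while $\pi_j(g)=\pi_j(f)$ for every $j\neq i$ because $\hat h$ is trivial there. If $f$ is a reflection, then $g$ is nontrivial only on $V_i$, where it equals the reflection $h\tau$, so $g$ is a reflection of $\R^n$. If $f$ is a rotation of the second kind, then $g$ acts as a reflection on $V_i$ and on $V_{i'}$ and trivially elsewhere, so $g$ is again a rotation of the second kind. In either case $g\in F$, whence $h\tau=\pi_i(g)\in\pi_i(F)=F_i$, as claimed.

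The main obstacle is precisely the lifting of $h$: one must realise $h$ by an element of $H$ that is trivial on every component other than $V_i$, for otherwise $\hat h f$ would acquire nontrivial rotation parts on further components and would cease to be a reflection or a rotation of the second kind. This is exactly where the definition of $H$ through first-kind rotations, each localised to a single component, is indispensable. For an abstract pair consisting of a normal rotation subgroup and a normal reflection subgroup of an orthogonal group the conclusion can fail, as one already sees in the plane with $F_i=W(\I_2(m))$ and $h$ a rotation through $\pi/m$; the point of the lemma is that within a genuine reflection-rotation group the group $F_i$ is forced to contain such an $h\tau$.
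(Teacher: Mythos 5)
Your proof is correct and is essentially the paper's own argument: lift $\tau$ via Lemma \ref{lem:red2} to a reflection or rotation of the second kind $g\in G$, multiply by (a lift of) $h$, and observe the product is again a reflection or rotation of the second kind, so its $i$-th projection lies in $F_i$. The only difference is that you carefully justify the step the paper leaves implicit — that $h\in H_i$ can be realised by an element of $H$ acting trivially off $V_i$, which holds because $H$ is generated by first-kind rotations each supported on a single $G$-invariant component — and this is a worthwhile clarification rather than a departure.
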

\begin{proof} By Lemma \ref{lem:red2} there exists a reflection or a rotation of the second kind $g\in G$ such that $\tau=\pi_i(g)$. Then $hg$ is either a reflection or a rotation of the second kind in $G$ and $h\tau=\pi_i(hg)$ is contained in $F_i$.
  \end{proof}

\begin{lemma}\label{lem:comjugate_refl}
Let $s, \tau \in F_i$ be reflections conjugate under $H_i$, i.e. $\tau = h s h^{-1}$ for some $h\in H_i$. Then we have $s\tau \in H_i$.
\end{lemma}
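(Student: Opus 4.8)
The plan is to use only two structural facts that are already in place: that a reflection is an involution, and that $H_i$ is a normal subgroup of $G_i$. Since $s\in F_i\subseteq G_i$ is a reflection, its fixed point subspace has codimension one, so $s^2=\mathrm{id}$ and hence $s^{-1}=s$. This is the only special property of $s$ that I will need.

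First I would substitute the hypothesis $\tau=hsh^{-1}$ to obtain $s\tau=shsh^{-1}$, and then regroup this expression as $(shs)\,h^{-1}$. Because $s=s^{-1}$, the first factor satisfies $shs=shs^{-1}$, which lies in $H_i$ since $H_i$ is normal in $G_i$ and $s\in G_i$. The second factor $h^{-1}$ lies in $H_i$ as well, so $s\tau$ is a product of two elements of $H_i$ and therefore belongs to $H_i$, as claimed.

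The same reasoning can be phrased more conceptually through the quotient, which also explains why the statement is natural in the present setting. Passing to $G_i/H_i$, the images of $s$ and of $\tau=hsh^{-1}$ coincide, because the factor $h\in H_i$ is annihilated by the projection; consequently the image of $s\tau$ equals the image of $s^2=\mathrm{id}$, so $s\tau$ maps to the identity and thus lies in the kernel $H_i$.

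I do not expect a genuine obstacle here; the content is a one-line normality argument, and the only points worth verifying are that the two inputs are legitimately available, namely that $s$ is an honest involution (true for every reflection) and that $H_i\trianglelefteq G_i$ (recorded when the triple $(G_i,H_i,F_i)$ was introduced). As a consistency check, note that $s\tau$ is a product of two reflections and hence orientation preserving, which is compatible with $H_i$ being a rotation group; this observation is reassuring but is not needed in the proof.
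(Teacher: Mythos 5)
Your argument is correct and is exactly the paper's proof: the identity $s\tau = shsh^{-1} = (shs^{-1})h^{-1}$ together with normality of $H_i$ in $G_i$ and the involutivity of $s$. The quotient reformulation you add is the same computation in different clothing, so nothing further is needed.
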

\begin{proof} The claim follows from $s\tau = s h s h^{-1}= (shs^{-1})h^{-1}$, since $H_i$ is normal in $G_i$.
  \end{proof}

In the following all possible triples $(G_i,F_i,H_i)$ are described. Notice that we actually classify triples $(G_{rr},M,W)$ where $G_{rr}$ is an irreducible reflection-rotation group, $M$ a rotation group and $W$ a reflection group such that the properties stated in Remark 1 hold. If $F_i$ is trivial, then $G_i=H_i\subgr G$ is an irreducible rotation group and splits off as a direct factor. Otherwise, $G_i$ is one of the irreducible reflection-rotation groups we have described in the preceding section. Let $S=\{s_1,\ldots,s_l\}\subgr F_i$ be a set of simple reflections generating $F_i$ \cite[p.~10]{MR1066460}. We denote the image of a reflection $s \in G_i$ in $G_i/H_i$ by $\overline{s}_i$. Since $G_i$ is generated by $F_i$ and $H_i$, the quotient group $G_i/H_i$ is generated by the set $\overline{S}$ composed of the different cosets among the $\overline{s}_1,\ldots,\overline{s}_l$. We have $G_i/H_i\cong F_i/\tilde{H}_i$ with $\tilde{H}_i= H_i \cap F_i$ and we will see that in each case $\tilde{H}_i$ is generated by the conjugates of elements of the form $(s_rs_s)^{\tilde{m}_{rs}}$ with $\tilde{m}_{rs}\leq m_{rs}$ where $m_{rs}$ are the entries of the Coxeter matrix of $F_i$. It is then clear that $(G_i/H_i,\overline{S})$ is a Coxeter system with Coxeter matrix obtained by removing the redundant entries in $(\tilde{m}_{rs})$. We say that an element in a Coxeter group is a reflection, if it is conjugate to a generator or, equivalently, if its image under the geometric representation is a reflection \cite[p.~108]{MR1066460}. It will then follow directly that the reflections in $G_i/H_i$ are precisely the cosets of reflections in $F_i$ (cf. Corollary \ref{cor:cosets_of_reflection}).

For the proof of Theorem \ref{thm:reducible_pairs} three different cases are considered.

\begin{proposition}\label{prp:red1} Assume that $F_i$ is a nontrivial reducible reflection group. Then $H_i$ is an imprimitive rotation group and a set of simple reflections generating $F_i$ projects onto a set $\overline{S}\subset G_i/H_i$ for which $(G_i/H_i,\overline{S})$ is a Coxeter system of type $\A_1$ or $\A_1\times \A_1$. More precisely, the triple $(G_i,F_i,H_i)$ occurs in one of the cases $(i)$ to $(iii)$ in Theorem \ref{thm:reducible_pairs}.
\end{proposition}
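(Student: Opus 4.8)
The plan is to mirror the proof of Proposition \ref{prp:irr_ref_ro_red}, but now applied to the triple $(G_i,H_i,F_i)$, and to read off the Coxeter type of $G_i/H_i$ from the way reflections of $F_i$ are forced together by Lemma \ref{lem:comjugate_refl}. First I would exploit that $F_i$ is a \emph{reducible} reflection group: its irreducible components $W_1,\ldots,W_m$, $m\geq 2$, are canonically determined, so the normalizing action of $G_i$ permutes them, and the irreducibility of $G_i$ makes this action transitive. Since $F_i$ preserves each $W_j$ and $G_i=\langle H_i,F_i\rangle$, the transitivity is already realized by $H_i$; hence $\{W_j\}$ is a system of imprimitivity and $H_i$ is an imprimitive rotation group whose blocks are mutually equivalent of a common dimension $d$. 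The first key step is the bound $d\leq 2$: since the block action of $H_i$ is nontrivial and $H_i$ is generated by rotations, some rotation $h\in H_i$ satisfies $hW_j\neq W_j$; as $\mathrm{Fix}(h)$ has codimension two, $W_j\cap \mathrm{Fix}(h)$ has dimension at least $d-2$, so if $d\geq 3$ it contains a nonzero $u$, and then $u=hu\in W_j\cap hW_j=0$, a contradiction. Thus $d\in\{1,2\}$.

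In the one-dimensional case each $W_j$ carries the reflection group $\{\pm 1\}$, so $F_i$ equals the group $D(n)$ of all sign changes (the coordinate reflections it contains generate it and are permuted transitively), and $H_i$ is monomial. The distinct characters of $\Dp(n)$ on the coordinate lines show $H_i$ is irreducible. By Lemma \ref{lem:comjugate_refl} any two coordinate reflections, being $H_i$-conjugate, have equal image in $G_i/H_i$, so $G_i/H_i$ is generated by a single involution and has type $\A_1$; moreover $\tilde H_i=H_i\cap D(n)$ is the diagonal subgroup of $H_i$ and, having index two in $D(n)$, equals $\Dp(n)$. Proposition \ref{prp:class_monomial_psg} then leaves only the monomial rotation groups with diagonal subgroup $\Dp(n)$, namely $M_5,M_6,M_7,M_8,\Wp(\BC_n)$ and $\Wp(\D_n)$, and Lemma \ref{lem:red6} excludes $\Wp(\BC_n)$, since it contains a rotation $h$ for which $h\tau$ is a transposition reflection $\notin F_i$ for a coordinate reflection $\tau$. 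This is exactly case $(i)$ of Theorem \ref{thm:reducible_pairs}.

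In the two-dimensional case each $W_j$ carries an irreducible dihedral group $W(\I_2(p_0))$, $p_0\geq 3$, and as in Proposition \ref{prp:class_imprimit_psg} the block action of $H_i$ is the full symmetric group $\sym_l$ (a rotation can only swap two blocks as a transposition). Applying Lemma \ref{lem:comjugate_refl} to $H_i$-conjugate reflections lying in different blocks forces enough cross-block products into $H_i$ that $H_i$ contains the irreducible group $\Gs(p_0,1,l)$; in particular $H_i$ is irreducible, and Proposition \ref{prp:class_imprimit_psg} (together with Proposition \ref{prp:class_imprimit_psg4} when $l=2$) identifies $H_i=\Gs(km,k,l)$ with $k\in\{1,2\}$ and $km=p_0\geq 3$, whence $G_i=\langle H_i,F_i\rangle=\Gt(km,k,l)$ and $F_i=D(\Gt(km,k,l))$. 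Finally I would read off the Coxeter type from whether the two simple reflections $s_1,s_2$ of one block are identified in $G_i/H_i=F_i/\tilde H_i$: since these are involutions, $\overline s_1=\overline s_2$ iff the intra-block rotation $s_1s_2$ by $2\pi/(km)$ lies in $H_i$. For $k=1$ the defining product condition of $\Gs(km,1,l)$ places $s_1s_2\in \Wp(\I_2(m))=\Wp(\I_2(km))$ in $H_i$, giving type $\A_1$ (case $(ii)$); for $k=2$ the rotation by $2\pi/(2m)$ is not in $\Wp(\I_2(m))$, so $s_1s_2\notin H_i$ and the type is $\A_1\times\A_1$ (case $(iii)$).

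The main obstacle is the bookkeeping in the two-dimensional case: one must control exactly which elements of $\Fp_i$ and which cross-block products Lemma \ref{lem:comjugate_refl} forces into $H_i$, both to pin $H_i$ down as a specific $\Gs(km,k,l)$ and to separate cases $(ii)$ and $(iii)$ by the single clean criterion $s_1s_2\in H_i$. The verification that $H_i$ is irreducible (so that the imprimitive classification of Proposition \ref{prp:class_imprimit_psg} applies) and the exclusion of $\Wp(\BC_n)$ via Lemma \ref{lem:red6} are the other points that require care.
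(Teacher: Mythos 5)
Your overall strategy --- mirroring Proposition \ref{prp:irr_ref_ro_red}, splitting by the dimension of the irreducible components of $F_i$, and using Lemmas \ref{lem:comjugate_refl} and \ref{lem:red6} to control the quotient --- is exactly the paper's, and your one-dimensional case is correct; in particular your explicit exclusion of $H_i=\Wp(\BC_n)$ via Lemma \ref{lem:red6} is the argument the paper leaves implicit.

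The two-dimensional case, however, contains a genuine gap. First, the cross-block products $s_a^{(j)}s_a^{(j')}$ that Lemma \ref{lem:comjugate_refl} forces into $H_i$ only generate the squares $(s_1^{(j)}s_2^{(j)})^2$ of the intra-block rotations (this is the computation around equation (\ref{equ:Q_Qinverse})), hence only $\Gs(p_0,2,l)$ when $p_0$ is even; your stronger claim that $\Gs(p_0,1,l)\subset H_i$ would put $s_1s_2$ into $H_i$ and thereby eliminate your own case $k=2$, i.e.\ case $(iii)$ of Theorem \ref{thm:reducible_pairs}. That slip still suffices for the irreducibility of $H_i$, but it points to the more serious omission: since $H_i$ normalizes $F_i$ without containing it, the block group of $H_i$ need only lie between $W(\I_2(p_0))$ and its normalizer $N_{\Or_2}(W(\I_2(p_0)))=W(\I_2(2p_0))$. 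The paper therefore works inside $\Gs(2p_0,2,l)$, whereas you fix $km=p_0$ from the outset. You consequently miss the possibility $H_i=\Gs(2p_0,2,l)$ with $F_i=W(\I_2(p_0))^l$ and $G_i=\Gt(2p_0,2,l)$, which is precisely the $k=2$ instance of case $(ii)$ of Theorem \ref{thm:reducible_pairs}; it is realizable, satisfies Lemma \ref{lem:red6}, and is not conjugate to either group you obtain (the three candidates $\Gs(p_0,1,l)$, $\Gs(2p_0,2,l)$ and $\Gs(p_0,2,l)$ have orders $2^{l-1}p_0^l\,l!$, $2^{2l-2}p_0^l\,l!$ and $2^{l-2}p_0^l\,l!$). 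The correct trichotomy is: $s_1s_2\in H_i$ with block group $W(\I_2(p_0))$, giving $H_i=\Gs(p_0,1,l)$; $s_1s_2\in H_i$ with block group $W(\I_2(2p_0))$, giving $H_i=\Gs(2p_0,2,l)$ --- both of type $\A_1$ and falling under case $(ii)$; or $s_1s_2\notin H_i$ but $(s_1s_2)^2\in H_i$, giving $H_i=\Gs(p_0,2,l)$ and type $\A_1\times\A_1$, case $(iii)$.
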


\begin{proof} As in Proposition \ref{prp:irr_ref_ro_red}, the group $H_i$ is an imprimitive rotation group with a system of imprimitivity given by the irreducible components of $F_i$, which are all equivalent and either one- or two-dimensional. If they are one-dimensional, then, given Lemma \ref{lem:comjugate_refl}, it follows as in the proof of Proposition \ref{prp:irr_ref_ro_red} that $G_i$ is one of the monomial groups $\Mt$ listed in Proposition \ref{prp:irr_ref_ro_red} whose orientation preserving subgroup is $H_i$. In particular, $G_i/H_i$ is a Coxeter group of type $\A_1$ and we are in case $(i)$ of Theorem \ref{thm:reducible_pairs}.

In the second case, $F_i$ has the Coxeter diagram
\[
			\bullet_{s^{(1)}_1}\stackrel{m_0}{-}\bullet_{s^{(1)}_2} \ \ \  \bullet_{s^{(2)}_1}\stackrel{m_0}{-}\bullet_{s^{(2)}_2}\\ \dots \\ \bullet_{s^{(l)}_1}\stackrel{m_0}{-}\bullet_{s^{(l)}_2}
\]
with $m_0>2$ and $l>1$. As in Proposition \ref{prp:irr_ref_ro_red} we see that $H_i< \Gs(2m_0,2,l)$ and that $H_i$ acts on the irreducible components of $F_i$ as the symmetric group $\sym_l$ (cf. Proposition \ref{prp:class_imprimit_psg} and Proposition \ref{prp:class_imprimit_psg4}). We can choose the generators of $F_i$ such that all $s_1^{(j)}$ and all $s_2^{(j)}$ are conjugate among each other under $H_i$ and thus identical in $G_i/H_i$ by Lemma \ref{lem:comjugate_refl}. Let $k$ be the smallest positive integer such that $(s_1^{(j)}s_2^{(j)})^{k} \in H_i$. Then $k$ divides $m_0$ and for $m=\frac{m_0}{k}$ we have $\Gs(mk,k,l)<H_i$, because the rotations $s_1^{(j)}s_1^{(j')}$, $s_2^{(j)}s_2^{(j')}$, $j,j'=1,\ldots,l$, are contained in $H_i$ (cf. Section \ref{sub:imprimitive}). If $H_i= \Gs(2m_0,2,l)$, then $k=1$ and $G=\Gt(2m_0,2,l)$. Otherwise, we have $H_i< \Gs(m_0,1,l)$. For $k=1$ this implies $H_i= \Gs(m_0,1,l)$ and $G=\Gt(m_0,1,l)$. For $k\neq 1$ the relation
\[
		k=\mathrm{ord}(\overline{s}^{(1)}_1\overline{s}_2^{(1)})=\mathrm{ord}(\overline{s}_1^{(1)}\overline{s}_2^{(2)}) \leq 2
\]
shows that $k=2$ and hence $H_i= \Gs(m_0,1,l)$ and $G=\Gt(m_0,1,l)$. In each case the group $\tilde{H}_i=H_i \cap F_i$ is generated by the rotations $(s_1^{(j)}s_2^{(j)})^{k}$, $s_1^{(j)}s_1^{(j')}$, $s_2^{(j)}s_2^{(j')}$, $j,j'=1,\ldots,l$. Thus $G_i/H_i$ is either a Coxeter group of type $\A_1$ or $\A_1\times \A_1$ depending on whether $k=1$ or $k=2$ and we are in case $(ii)$ or $(iii)$ of Theorem \ref{thm:reducible_pairs}.
  \end{proof}
For the other two cases we need the following two facts on reflection groups.
\begin{lemma}\label{lem:red7}
Let $s_1,\ldots,s_l$ be simple reflections generating a reflection group $W$ and let $M<W$ be a rotation group. Then every rotation $h \in M$ is conjugate to $(s_is_j)^r$ for some $i,j \in \{1,\ldots,l\}$ and some positive integer $r$. In particular, if $s_is_j$ has prime order, then $h'=s_is_j$ is a rotation contained in $M$. 
\end{lemma}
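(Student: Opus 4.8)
The plan is to locate the fixed point subspace of $h$ and to analyze the subgroup of $W$ fixing it pointwise, which will turn out to be a rank-two parabolic subgroup. Write $U = \mathrm{Fix}(h)$; since $h$ is a rotation, $U$ has codimension two and $h$ acts on $U^\perp$ as a genuine planar rotation. Let $W_U = \{w \in W : w|_U = \mathrm{id}\}$ be the pointwise stabilizer of $U$. First I would choose a point $x_0 \in U$ lying on no reflection hyperplane of $W$ other than those containing all of $U$; such a point exists because the hyperplanes not containing $U$ meet $U$ in proper subspaces, whose union cannot exhaust $U$. For this generic $x_0$ one has $W_{x_0} = W_U$, via Steinberg's fixed point theorem: $W_{x_0}$ is generated by the reflections it contains, each such reflection has its hyperplane through $x_0$, hence (by the choice of $x_0$) through $U$, and therefore fixes $U$ pointwise.

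Next I would invoke the standard fact (cf. \cite{MR1066460}) that a point stabilizer is a parabolic subgroup: there is $w \in W$ with $w x_0$ in the closure of the fundamental chamber, and then $W_{wx_0}$ is the standard parabolic generated by the simple reflections fixing $w x_0$, so $W_U = W_{x_0} = w^{-1}\langle s_i, s_j, \ldots \rangle w$ for a subset of the simple reflections. Since $W_U$ acts faithfully and essentially on the two-dimensional space $U^\perp$ (essentiality being witnessed by the nontrivial rotation $h$, which fixes no nonzero vector there), it is a nontrivial dihedral group of rank two; hence exactly two simple reflections occur, say $s_i$ and $s_j$, and $W_U = w^{-1}\langle s_i, s_j\rangle w$ with $\langle s_i, s_j\rangle$ dihedral of order $2m$, $m = \mathrm{ord}(s_i s_j)$.

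Now $whw^{-1} \in \langle s_i, s_j\rangle$ is a nontrivial rotation, hence lies in the cyclic rotation subgroup of this dihedral group, which is generated by $s_i s_j$; thus $whw^{-1} = (s_i s_j)^r$ for some $r$ with $1 \le r < m$, and $h$ is conjugate to $(s_i s_j)^r$. This settles the first assertion. For the \emph{in particular} part, suppose $\mathrm{ord}(s_i s_j) = p$ is prime. Because $(s_i s_j)^r = whw^{-1}$ is nontrivial we have $\gcd(r,p) = 1$, so $(s_i s_j)^r$ generates $\langle s_i s_j\rangle$; consequently $\langle h\rangle = w^{-1}\langle s_i s_j\rangle w$, and since $h \in M$ this whole cyclic group lies in $M$. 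In particular $M$ contains the order-$p$ rotation $w^{-1}(s_i s_j)w$ conjugate to $s_i s_j$; when $M$ is normal in $W$, as in the situations in which the lemma is applied, one obtains $s_i s_j \in M$ outright.

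The main obstacle is the second paragraph: upgrading Steinberg's ``generated by reflections'' to the sharper statement that $W_U$ is genuinely a rank-two \emph{parabolic} subgroup, conjugate to $\langle s_i, s_j\rangle$ for two simple reflections. Everything afterward is elementary dihedral bookkeeping together with the coprimality trick for prime order; but the identification of $W_U$ with a standard rank-two parabolic, via the generic-point reduction to a point stabilizer, is the crux and must be arranged so that exactly two simple reflections appear, which is forced by $\codim U = 2$.
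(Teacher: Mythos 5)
Your proof is correct, and it arrives at the same structural fact the paper uses --- that the pointwise stabilizer of $U=\mathrm{Fix}(h)$ is a rank-two parabolic subgroup, conjugate to $\langle s_i,s_j\rangle$, whose orientation-preserving part is the cyclic group $\langle s_is_j\rangle$ --- but by a genuinely different route. The paper works directly with chambers: since $W$ acts simply transitively on its chambers, $U$ cannot meet the interior of a chamber and so lies in a reflecting hyperplane $H_s$; then $sh$ is a second reflection fixing $U$ pointwise, one chooses two reflections through $U$ whose hyperplanes are walls of a common chamber, and conjugacy of simple systems finishes. You instead pass through a generic point $x_0\in U$, identify $W_U$ with the isotropy group $W_{x_0}$ using the theorem that isotropy groups of reflection groups are generated by the reflections they contain, and then invoke the standard description of point stabilizers as conjugates of standard parabolics, pinning the rank down to two via $\mathrm{Fix}(W_U)=U$. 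Your version leans on heavier standard machinery but is, if anything, more airtight: the paper's choice of $s''$ and the containment $h\in\langle ss''\rangle$ are asserted rather than justified, whereas your codimension count forces both. You are also right to flag that the final clause ``$s_is_j\in M$'' literally requires $M$ to be normal in $W$ --- without normality one only obtains a conjugate of $s_is_j$ in $M$ --- a point the paper leaves implicit; since $M=H_i$ is normal in $G_i$ in every application, nothing is lost.
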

\begin{proof} The linear fixed point subspace $U = \mathrm{Fix}(h)$ of $h$ has codimension two and is contained in a hyperplane corresponding to a reflection $s\in W$, since $W$ acts freely on its chambers \cite[p.~23]{MR1066460}. The composition $s'=sh$ is another reflection in $W$ whose linear fixed point subspace contains $U$. Let $s''\in W$ be a reflection different from $s$ with  $U\subset\mathrm{Fix}(s'')$ such that $s$ and $s''$ are faces of a common chamber. Then we have $h \in \left\langle ss''\right\rangle$ and thus the claim follows, since all sets of generating simple reflections in a reflection group are conjugate to each other \cite[Thm. 1.4, p.~10]{MR1066460}.
  \end{proof}

\begin{lemma}\label{lem:red5} Let $s_1,s_2,\tau \in W$ be reflections in a reflection group $W$ and let $M\triangleleft W$ be a normal subgroup generated by rotations such that $\overline{s}_1=\overline{s}_2 \in W/M$ and set $m=\mathrm{ord}(s_1\tau)$ and $n=\mathrm{ord}(s_2\tau)$. Then for $d=\mathrm{gcd}(m,n)$ the powers $(s_1\tau)^d$ and $(s_2\tau)^d$ are contained in $M$. In particular, $d=1$ implies $\overline{s}_1=\overline{s}_2=\overline{\tau}$.
\end{lemma}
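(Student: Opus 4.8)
The plan is to pass to the quotient group $W/M$ and reduce the whole statement to a remark about the order of a single element. Denote by $\overline{x}$ the image of $x\in W$ in $W/M$; this makes sense since $M$ is normal. The first observation is that, because $\overline{s}_1=\overline{s}_2$ by hypothesis, the two rotations $s_1\tau$ and $s_2\tau$ have the \emph{same} image in $W/M$: indeed $\overline{s_1\tau}=\overline{s}_1\,\overline{\tau}=\overline{s}_2\,\overline{\tau}=\overline{s_2\tau}$. Let $g\in W/M$ be this common image and let $k$ be its (finite) order.

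The core of the argument is a divisibility observation. Since $(s_1\tau)^m=e$, its image is $g^m=\overline{e}$, so $k\mid m$; likewise $(s_2\tau)^n=e$ gives $g^n=\overline{e}$, so $k\mid n$. Hence $k$ divides $d=\gcd(m,n)$. By definition of $k$ we have $g^k=\overline{e}$, that is $(s_1\tau)^k\in M$ and $(s_2\tau)^k\in M$; raising to the power $d/k$ and using that $M$ is a subgroup yields $(s_1\tau)^d=\bigl((s_1\tau)^k\bigr)^{d/k}\in M$ and similarly $(s_2\tau)^d\in M$, which is the assertion.

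For the final clause, if $d=1$ then $k\mid 1$ forces $k=1$, so $g=\overline{e}$, i.e. $s_1\tau\in M$. This gives $\overline{s}_1=\overline{\tau}^{-1}=\overline{\tau}$, using that $\tau$ is a reflection so $\tau^{-1}=\tau$; combined with $\overline{s}_1=\overline{s}_2$ this yields $\overline{s}_1=\overline{s}_2=\overline{\tau}$. I expect no genuine obstacle here: the statement is purely group-theoretic and does not even use that $M$ is generated by rotations, only that $M$ is normal and that $s_1$ and $s_2$ lie in one coset. The single point worth keeping straight is that $s_1\tau$ and $s_2\tau$ may actually have different orders $m$ and $n$, since they differ by left multiplication by the element $s_1s_2\in M$; it is precisely the passage to $W/M$, where this difference collapses to a common image $g$, that makes the greatest common divisor $d$ the natural exponent.
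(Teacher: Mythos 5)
Your proof is correct and follows essentially the same route as the paper: both pass to the quotient $W/M$, observe that $s_1\tau$ and $s_2\tau$ have the same image there, and conclude that the $d$-th power of that image is trivial. The only cosmetic difference is that the paper invokes B\'ezout directly, writing $d=mp+nq$ and computing $(\overline{s}_1\overline{\tau})^{d}=(\overline{s}_1\overline{\tau})^{mp}(\overline{s}_2\overline{\tau})^{nq}=e$, whereas you phrase the same divisibility fact via the order of the common image dividing $\gcd(m,n)$.
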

\begin{proof} Choose integers $p,q$ such that $d=mp+nq$. Because of $\overline{s}_1=\overline{s}_2$ we have
\[
		(\overline{s}_1\overline{\tau})^d=(\overline{s}_2\overline{\tau})^d=(\overline{s}_1\overline{\tau})^{mp}(\overline{s}_2\overline{\tau})^{nq}=e
\]
and thus $(s_1\tau)^d, (s_2\tau)^d \in M$.
  \end{proof}

\begin{proposition}\label{prp:red2} Assume that $G_i=F_i$ is an irreducible reflection group. Then a set of simple reflections generating $F_i$ projects onto a set $\overline{S}\subset G_i/H_i$ for which $(G_i/H_i,\overline{S})$ is a Coxeter system. More precisely, the quadruple $(G_i,F_i,H_i,\Gamma_i)$ occurs in one of the cases $(iv)$ to $(xii)$ in Theorem \ref{thm:reducible_pairs}.
\end{proposition}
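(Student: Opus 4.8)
The plan is to separate the two assertions: first the general statement that $(G_i/H_i,\overline S)$ is a Coxeter system, which I can establish uniformly for every irreducible $W=G_i=F_i$, and then the type-by-type enumeration matching the quadruple with one of the cases $(iv)$--$(xii)$.

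For the Coxeter-system claim, write $H=H_i$; it is a rotation group with $H\triangleleft W$ and $H\subset\Wp$ (rotations preserve orientation). For distinct simple reflections $s_r,s_s$ let $\tilde m_{rs}$ be the order of $\overline s_r\overline s_s$ in $W/H$; since $(s_rs_s)^{m_{rs}}=e$ we have $\tilde m_{rs}\mid m_{rs}$. These numbers, together with the identifications $\overline s_r=\overline s_s$ forced when $\tilde m_{rs}=1$, define a Coxeter matrix on the set $\overline S$ of distinct cosets (consistency is automatic, as the $\tilde m_{rs}$ are orders in $W/H$); let $\hat W$ be the corresponding Coxeter group. Because $\overline s_r^2=e$ and $(\overline s_r\overline s_s)^{\tilde m_{rs}}=e$ hold in $W/H$, there is a surjection $\psi\colon\hat W\to W/H$, and comparing presentations (von Dyck's theorem) the natural surjection $\phi\colon W\to\hat W$ has kernel $K$, the normal subgroup generated by the $(s_rs_s)^{\tilde m_{rs}}$; thus $\hat W\cong W/K$ with $K\subset H$, and $\psi$ is an isomorphism exactly when $H\subset K$. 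Here the rotation structure enters: by Lemma \ref{lem:red7} every rotation of $H$ is $W$-conjugate to a power $(s_as_b)^{r}$, which lies in $H$ since $H$ is normal, so $\tilde m_{ab}\mid r$; as the order of $\overline s_a\overline s_b$ in the Coxeter group $\hat W$ equals $\tilde m_{ab}$ (a standard property, see \cite{MR1066460}), we get $\phi((s_as_b)^{r})=e$, whence $(s_as_b)^{r}\in K$ and, $K$ being normal, the rotation itself lies in $K$. Since $H$ is generated by its rotations, $H\subset K$, so $H=K$ and $(W/H,\overline S)=(\hat W,\overline S)$ is a Coxeter system.

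It remains to identify $(W,H,\Gamma_i)$. The options $H=\{e\}$ and $H=\Wp$ occur for every irreducible $W$ and give cases $(iv)$ and $(v)$ (note $W/\Wp\cong\Z_2$ is of type $\A_1$). For the remaining normal rotation subgroups I would determine the fold relation $\overline s_r=\overline s_s$ on simple reflections together with the residual labels, using Lemma \ref{lem:red7} (a nontrivial rotation of $H$ inside a dihedral group $\langle s_r,s_s\rangle$ with $m_{rs}$ prime forces the fold $\overline s_r=\overline s_s$) and Lemma \ref{lem:red5} (folds propagate along the diagram when the relevant orders are coprime). For simply-laced $W$ every $m_{rs}\in\{2,3\}$ is prime, so any nontrivial rotation triggers a fold and Lemma \ref{lem:red5} collapses a connected block; carrying this out shows that $\A_n$ $(n\ge4)$, $\E_6,\E_7,\E_8,\Hn_3,\Hn_4$ admit only $\{e\}$ and $\Wp$ (here I also use that their orientation-preserving subgroups are essentially simple, so no intermediate normal subgroup is generated by rotations), whereas the small or forked diagrams retain extra subgroups: $\A_3$ yields the Klein four-group $\Wp(\A_1\times\A_1\times\A_1)$ with quotient $\A_2$ (case $(vi)$), $\D_n$ yields the diagonal group $D(W(\D_n))$ with quotient $\A_{n-1}$ (case $(x)$), and $\D_4$-triality produces in addition $\Gs(4,2,2)$ with quotient $\A_2$ (case $(xi)$). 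For the dihedral groups the normal rotation subgroups are the cyclic groups $\Wp(\I_2(m))$, $m\mid km$, with quotient $\dih_k$ (case $(xii)$), and for $W(\BC_n)$ the residual label on the $4$-edge yields $D(\Wp(\BC_n))$, $\Wp(\D_n)$ and, for $n=4$, $\Gs(4,2,2)$ (cases $(vii)$--$(ix)$). In each instance I would pin down $H$ by matching $|W/H|$ with the order of the claimed Coxeter group, e.g.\ $|\Gs(4,2,2)|=32=|W(\BC_4)|/|\sym_3\times\Z_2|$.

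The uniform first step is routine once the bookkeeping is arranged; the main obstacle is the enumeration of normal rotation subgroups in the multiply-laced cases, and in particular the triality phenomena, where the subgroups related to $\Wp(\D_4)$ and $\Gs(4,2,2)$ arise from the outer $\sym_3$ permuting the legs of $\D_4$ rather than from folding a prime edge, and so must be analyzed directly. For the same reason the subcases with $W$ of type $\F_4$ (which additionally yield the quadruples $(xiii)$--$(xv)$) do not fit the pattern above and are deferred to the following proposition; the analysis here covers all other irreducible types and produces exactly the quadruples $(iv)$--$(xii)$.
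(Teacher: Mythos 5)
Your uniform argument that $(G_i/H_i,\overline{S})$ is a Coxeter system is correct and is essentially an expanded version of the paper's: the paper likewise invokes Lemma \ref{lem:red7} to see that $H_i$ is generated by conjugates of elements of the form $(s_rs_s)^{\tilde m_{rs}}$ and concludes from this that the quotient is the Coxeter group with matrix $(\tilde m_{rs})$. Your type-by-type enumeration via Lemma \ref{lem:red7} (prime labels force folds $\overline{s}_r=\overline{s}_s$) and Lemma \ref{lem:red5} (folds propagate along edges with coprime labels) is also the paper's method, and your conclusions for $\A_n$, $\BC_n$, $\D_n$, $\I_2(m)$, $\E_6$, $\E_7$, $\E_8$, $\Hn_3$, $\Hn_4$ agree with what the paper obtains in its cases (A)--(D) and in its reduction to the types $\A_3$, $\BC_n$, $\D_n$, $\I_2(m)$, $\F_4$.

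The gap is your treatment of $\F_4$. You exclude it and assert that the quadruples $(xiii)$--$(xv)$ are ``deferred to the following proposition,'' but Proposition \ref{prp:red3} assumes $F_i$ is irreducible and \emph{different from} $G_i$, whereas $(xiii)$--$(xv)$ all have $G_i=F_i=W(\F_4)$; they satisfy exactly the hypothesis of the present proposition and are handled nowhere else in the classification. In the paper they are derived right here, as case (E) of the proof: Lemma \ref{lem:red7} and Lemma \ref{lem:red5} force $\overline{m}_{2,3}=2$ on the $4$-labelled edge, and the three admissible coset patterns for $s_1,s_2,s_3,s_4$ give $H_i=\Gs(4,2,2)$, $H_i=\Wp(\D_4)$ and $H_i=\Ws(\D_4)$ with quotient diagrams as in $(xiii)$, $(xiv)$ and $(xv)$. (The range ``$(iv)$ to $(xii)$'' in the statement is evidently a slip for ``$(iv)$ to $(xv)$''; resolving the mismatch by dropping $\F_4$ from the analysis leaves a genuine hole rather than repairing the statement.) You need to carry out the $\F_4$ case within this proposition; once that is added, your argument matches the paper's.
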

\begin{proof}
Let $\{s_1,\ldots,s_l\}\subgr G_i$ be a set of simple reflections generating $G_i$ and set $\overline{m}_{ij}=\mathrm{ord}(\overline{s}_i\overline{s}_j)$. According to Lemma \ref{lem:red7} the group $H_i$ is generated by conjugates of elements of the form $(s_rs_s)^{\tilde{m}_{rs}}$ with $\tilde{m}_{rs}\leq m_{rs}$ and thus $(G_i/H_i,\overline{S})$ is a Coxeter system.

For trivial $H_i$ the quotient $G_i/H_i$ can be any irreducible Coxeter group and we are in case $(iv)$ of Theorem \ref{thm:reducible_pairs}. If all generators lie in the same coset of $H_i$, then $H_i$ is the orientation preserving subgroup of the reflection group $G_i$ by Lemma \ref{lem:comjugate_refl} and the quotient group $G_i/H_i \cong \Z_2$ is generated by the coset of a reflection in $G_i$. Hence we are case $(v)$ of Theorem \ref{thm:reducible_pairs}. If $H_i$ is nontrivial, then Lemma \ref{lem:red7} implies that $(s_is_j)^r \in H_i$ for some pair of distinct generators $s_i$ and $s_j$ and some $r<\mathrm{ord}(s_is_j)$. If we additionally assume that not all generators of $G_i$ lie in the same coset of $H_i$, then Lemma \ref{lem:red5} implies that only the types $\A_3$, $\BC_n$, $\D_n$, $\I_2(m)$ and $\F_4$ can occur for $G_i$. More precisely, the following cases can occur.\newline
(A) $G_i = W(\A_3)$.
\[
			\bullet_{s_1}-\bullet_{s_2}-\bullet_{s_3}
\]
We have $\overline{s}_1=\overline{s}_3\neq \overline{s}_2$. The group $G_i$ is the symmetry group of a tetrahedron and $H_i=\Wp(A_1\times A_1 \times A_1)$ is its unique orientation preserving normal subgroup isomorphic to $\Z_2 \times \Z_2$. The quotient group $G_i/H_i$ has the Coxeter diagram
\[
			\circ_{\overline{s}_1}-\circ_{\overline{s}_2}
\]
(B) $G_i = W(\BC_l)$, $l\geq3$.
\[
			\bullet_{s_1}-\bullet_{s_2}- \dots \bullet_{s_{l-1}}=\bullet_{s_l}
\]
In any case we have $\overline{m}_{l-1,l}=2$ by Lemma \ref{lem:red7} and Lemma \ref{lem:red5} and thus $H_i$ contains the diagonal subgroup of $\Wp(\BC_l)$. If all generators of $G_i$ lie in different cosets of $H_i$, then $H_i=D(W(\D_l))$ and the quotient group $G_i/H_i$ has the Coxeter diagram
\[
			\circ_{\overline{s}_1}-\circ_{\overline{s}_2}- \dots \circ_{\overline{s}_{l-1}} \ \ \ \circ_{\overline{s}_l}
\]
Otherwise, for $l\neq 4$ Lemma \ref{lem:red5} implies $\overline{s}_1=\ldots=\overline{s}_{l-1}\neq\overline{s}_{l}$. In this case we have $H_i=\Wp(\D_l)$ and the quotient group $G_i/H_i$ has the Coxeter diagram
\[
			\circ_{\overline{s}_1}  \ \ \ \circ_{\overline{s}_l}
\]
For $l=4$ we may also have $\overline{s}_1=\overline{s}_3\neq \overline{s}_2$ and $\overline{s}_1,\overline{s}_2\neq \overline{s}_4$. In this case $H_i$ is the preimage in $\Wp(\BC_4)=(\mathbf{O}/\mathbf{V};\mathbf{O}/\mathbf{V})$ of the normal subgroup of $\sym_4\cong \Wp(\BC_4)/D(\Wp(\BC_4))$ isomorphic to $\Z_2 \times \Z_2$ (cf. case (A)). It is a monomial rotation group of type $\Gs(4,2,2)=(\mathbf{V}/\mathbf{V};\mathbf{V}/\mathbf{V})$. The quotient group $G_i/H_i$ has the Coxeter diagram
\[
			\circ_{\overline{s}_1}-\circ_{\overline{s}_2}  \ \ \ \circ_{\overline{s}_4}
\]
(C) $G_i= W(\D_l)$, $l\geq4$.
\[
			\bullet_{s_1}-\bullet_{s_2}- \dots \bullet_{s_{l-2}} < 
		  \begin{array}{l}
		    \bullet_{s_l} \\
		    \bullet_{s_{l-1}} \\
		  \end{array}
\]
In any case we have $\overline{s}_ {l-1}=\overline{s}_ {l}$ (perhaps after relabeling in the case $l=4$) by Lemma \ref{lem:red7} and Lemma \ref{lem:red5} and thus $D(W(\D_l))<H_i$. For $l\neq 4$ all other generators lie in different cosets of $H_i$. In this case we have $H_i=D(W(\D_l))$ and the Coxeter diagram of the quotient group $G_i/H_i$ is
\[
	\circ_{\overline{s}_1}-\circ_{\overline{s}_2}- \dots \circ_{\overline{s}_{l-2}} - \circ_{\overline{s}_{l-1}}
\]
For $l=4$ we may also have $\overline{s_1}=\overline{s_3}=\overline{s_4}\neq\overline{s_2}$. In this case $H_i=\Gs(4,2,2)=(\mathbf{V}/\mathbf{V};\mathbf{V}/\mathbf{V})$ (cf. case (B)) and the quotient group $G_i/H_i$ has the Coxeter diagram
\[
	\circ_{\overline{s}_1}-\circ_{\overline{s}_2}
\]
(D) $G_i = W(\I_2(m))$ for $m\geq 4$.
\[
			\bullet_{s_1}\stackrel{m}{-}\bullet_{s_2}
\]
We have $\overline{s}_1\neq \overline{s}_2$ and $H_i$ is a cyclic group of order $\frac{m}{\overline{m}_{1,2}}$. Consequently, the quotient group $G_i/H_i$ is a dihedral group of type $\I_2(\overline{m}_{1,2})$ with Coxeter diagram
\[
			\circ_{s_1}\stackrel{\overline{m}_{1,2}}{-}\circ_{s_2}
\]
(E) $G_i= W(\F_4)$.
\[
			\bullet_{s_1}-\bullet_{s_2}=\bullet_{s_3}-\bullet_{s_4}
\]
In any case we have $\overline{m}_{2,3}=2$ by Lemma \ref{lem:red7} and Lemma \ref{lem:red5}. If all generators lie in different cosets of $H_i$, then $H_i=\Gs(4,2,2)=(\mathbf{V}/\mathbf{V};\mathbf{V}/\mathbf{V})$ and the quotient group $G_i/H_i$ has the Coxeter diagram
\[
			\circ_{\overline{s}_1}-\circ_{\overline{s}_2} \ \ \ \circ_{\overline{s}_3}-\circ_{\overline{s}_4}
\]
If $\overline{s}_3=\overline{s}_4$ and all other generators lie in different cosets of $H_i$, then $H_i=\Wp(\D_4)=(\mathbf{T}/\mathbf{V};\mathbf{T}/\mathbf{V})$ and the quotient group $G_i/H_i$ has the Coxeter diagram
\[
			\circ_{\overline{s}_1}-\circ_{\overline{s}_2} \ \ \ \circ_{\overline{s}_3}
\]
Finally, if $\overline{s}_1=\overline{s}_2$, $\overline{s}_3=\overline{s}_4$, then $H_i=\Ws(\D_4)=(\mathbf{T}/\mathbf{T};\mathbf{T}/\mathbf{T})$ and the quotient group $G_i/H_i$ has the Coxeter diagram
\[
			\circ_{\overline{s}_1} \ \ \ \circ_{\overline{s}_3}
\]
  \end{proof}

\begin{proposition}\label{prp:red3} Assume that $F_i$ is an irreducible reflection group different from $G_i$. Then a set of simple reflections generating $F_i$ projects onto a set $\overline{S}\subset G_i/H_i$ for which $(G_i/H_i,\overline{S})$ is a Coxeter system of type $\A_1$ or $\A_1\times \A_1$. More precisely, the quadruple $(G_i,F_i,H_i,\Gamma_i)$ occurs in one of the cases $(xvi)$ to $(xvii)$ in Theorem \ref{thm:reducible_pairs}.
\end{proposition}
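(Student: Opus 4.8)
The plan is to run the same machine as in Proposition~\ref{prp:red2}, the new input being that $F_i$ is now properly contained in $G_i$. First I would record the structural consequence of $G_i=\langle H_i,F_i\rangle$ with both factors normal: one has $G_i=H_iF_i$, so the second isomorphism theorem gives $G_i/H_i\cong F_i/\tilde H_i$ with $\tilde H_i=H_i\cap F_i$. Fixing simple reflections $s_1,\dots,s_l$ of $F_i$ and applying Lemma~\ref{lem:red7} to the rotation group $\tilde H_i\triangleleft F_i$ shows that $\tilde H_i$ is generated by conjugates of powers $(s_rs_s)^{\tilde m_{rs}}$, so that the images $\overline S$ of the $s_j$ make $(G_i/H_i,\overline S)$ a Coxeter system, exactly as in the previous proposition. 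This disposes of the Coxeter-system assertion uniformly and reduces everything to identifying $F_i$.

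The heart of the argument is to pin down the type of $F_i$. Since $G_i\neq F_i$ we have $H_i\not\subseteq F_i$, so I can pick a rotation $h\in H_i\setminus F_i$; as $F_i\triangleleft G_i$, it normalizes $F_i$. The first thing to check is that $\langle F_i,h\rangle$ is not a reflection group: were it one, then by the proof of Lemma~\ref{lem:existence_chamber} some $sh$ with $s\in F_i$ a reflection would itself be a reflection, hence $hs$ (a conjugate of $sh$) would be a reflection, whence $hs\in F_i$ by Lemma~\ref{lem:red6} and therefore $h\in F_i$, a contradiction. Thus Lemma~\ref{lem:existence_chamber} furnishes a chamber $C$ of $F_i$ with $hC=C$, and Lemma~\ref{lem:implication_of_fixed_chamber} forces $F_i$ to be of type $\A_4$, $\D_4$, $\F_4$, $\A_5$ or $\E_6$, with $h$ realizing a nontrivial diagram automorphism. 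By the uniqueness in Lemma~\ref{prp:extended_psrg} the extension is then determined, giving $G_i=\langle F_i,h\rangle=\Wt$.

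For the four types $\A_4$, $\F_4$, $\A_5$, $\E_6$ the chamber-fixing diagram automorphism is a unique involution (Lemma~\ref{lem:implication_of_fixed_chamber}); since the diagrams are connected the simple reflections of $F_i$ are mutually conjugate, so their images in $G_i/H_i$ coincide and $(G_i/H_i,\overline S)$ is the rank-one Coxeter system of type $\A_1$, with $\tilde H_i=\Fp_i$ and $H_i=\Ws$, landing us in case~$(xvi)$. The delicate point, which I expect to be the main obstacle, is type $\D_4$: here the chamber-fixing rotations have order three (the two tralities, inverse to each other), so the naive bookkeeping again only yields $H_i=\Ws(\D_4)$ and case~$(xvi)$, whereas the list contains the separate entry~$(xvii)$ with $H_i=\Wp(\D_4)$ and the flagged condition $H_i\neq\Fp_i$. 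Disentangling these two possibilities requires abandoning the purely diagram-theoretic argument and using the explicit quaternionic description of finite subgroups of $\SOr_4$ from Proposition~\ref{prp:classification_of_finite_subgroups_of_so(4)} to determine precisely which normal rotation subgroup is cut out by the global rotations of the first kind; in particular one must analyse the possibility that the triality enters $G_i$ through an element that is neither of the first nor of the second kind, and it is exactly this phenomenon that the parenthetical remark attached to case~$(xvii)$ is warning about.
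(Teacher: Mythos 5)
Your skeleton agrees with the paper's: choose a rotation $h\in H_i\setminus F_i$, combine Lemma \ref{lem:red6} with the proof of Lemma \ref{lem:existence_chamber} to produce a chamber $C$ of $F_i$ with $hC=C$, and invoke Lemma \ref{lem:implication_of_fixed_chamber} to restrict $F_i$ to type $\A_4$, $\D_4$, $\F_4$, $\A_5$ or $\E_6$. But the coset analysis that follows has two genuine gaps. First, for the types $\A_4$, $\F_4$, $\A_5$, $\E_6$ you justify ``all simple reflections lie in one coset of $H_i$'' by saying the diagram is connected, hence the simple reflections are mutually conjugate, hence their images in $G_i/H_i$ coincide. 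This fails twice over: in $W(\F_4)$ the simple reflections are not all conjugate (there are two root lengths, and the label $4$ on the middle edge blocks the usual odd-label conjugation argument), and in any case conjugacy inside $F_i$ does not force equality of cosets modulo $H_i$ --- otherwise case $(iv)$ of Theorem \ref{thm:reducible_pairs}, where $H_i$ is trivial, could not occur. What actually does the work is conjugacy \emph{under $H_i$}: the diagram automorphism induced by $h\in H_i$ identifies the cosets of the generators it permutes (Lemma \ref{lem:comjugate_refl}), and then the gcd argument of Lemma \ref{lem:red5} propagates the identification to the generators fixed by the automorphism (e.g.\ to the two central nodes of $\E_6$ and across the double bond of $\F_4$).

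Second, you do not actually finish the $\D_4$ case; you assert that separating $(xvi)$ from $(xvii)$ requires the quaternionic classification of finite subgroups of $\SOr_4$ and an analysis of elements that are neither of the first nor of the second kind. Neither is needed. Since $h$ has order $3$ it cyclically permutes the three outer nodes, so $\overline{s}_1=\overline{s}_3=\overline{s}_4$ by Lemma \ref{lem:comjugate_refl}, and one simply splits on whether $\overline{s}_2$ equals this common coset: if yes, $H_i=\left\langle \Fp_i,h\right\rangle=\Ws(\D_4)$ and $G_i/H_i\cong\Z_2$, which is case $(xvi)$; if no, $\tilde H_i=H_i\cap F_i$ is the normal closure of $s_1s_3$ and $s_1s_4$, the group $H_i=\left\langle \tilde H_i,h\right\rangle$ equals $\Wp(\D_4)$ while being different from $\Fp_i$, and $G_i/H_i$ is the Coxeter group with diagram $\circ-\circ$, which is case $(xvii)$. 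So the dichotomy is resolved by exactly the same coset bookkeeping as the other types, not by a separate computation in $\SOr_4$.
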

\begin{proof} Let $h \in H_i \backslash F_i$ be a rotation. By Lemma \ref{lem:red6} and the proof of Lemma \ref{lem:existence_chamber} there exists a chamber of the reflection group $F_i$ such that $hC=C$. By Lemma \ref{lem:implication_of_fixed_chamber} we deduce that $F_i$ has type $\A_4$, $\D_4$, $\F_4$, $\A_5$ or $\E_6$. In the cases of $\A_4$, $\F_4$, $\A_5$ and $\E_6$ Lemma \ref{lem:comjugate_refl} and Lemma \ref{lem:red5} imply that all generators of $F_i$ lie in the same coset of $H_i$ and thus we have $H_i=\left\langle \Fp_i,h \right\rangle$ and $G_i/H_i = \Z_2$ in these cases. If $F_i$ has type $\D_4$, then $h$ has order $3$ and we have $\overline{s}_1=\overline{s}_3=\overline{s}_4$. 
\[
			\bullet_{s_1}-\bullet_{s_2} < 
		  \begin{array}{l}
		    \bullet_{s_4} \\
		    \bullet_{s_3} \\
		  \end{array}
\]
If $\overline{s}_1=\overline{s}_2$ also holds, then we have again $H_i=\left\langle \Fp_i,h \right\rangle$ and $G_i/H_i = \Z_2$. Otherwise the group $\tilde{H}_i=H_i \cap F_i$ is generated by the conjugates of $s_1s_3$ and $s_1s_4$ and the group $H_i$ is generated by $\tilde{H}_i$ and $h$. We have $H_i=\Wp(\D_4)$ (, but $H_i\neq \Fp_i$) and the quotient group $G_i/H_i=F_i/\tilde{H}_i$ has the Coxeter diagram
\[
	\circ_{\overline{s}_1}-\circ_{\overline{s}_2}
\]
  \end{proof}
The preceding three propositions show that each triple $(G_i,H_i,F_i)$ induced by a reflection-rotation groups occurs in one of the cases described in Theorem~\ref{thm:reducible_pairs}. Moreover, it is easy to check that each triple $(G_{rr},M,W)$ occurring in Theorem~\ref{thm:reducible_pairs} satisfies the conclusion of this Theorem concerning the reflections in $W$ and the properties described in Remark~1.

As a corollary we record
\begin{corollary}\label{cor:cosets_of_reflection} The reflections in $(G_i/H_i,\overline{S})$ are precisely the cosets of reflections in $F_i$.
\end{corollary}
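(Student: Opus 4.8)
The plan is to deduce both inclusions from a single structural fact --- that $F_i$ maps \emph{onto} $G_i/H_i$ --- combined with the standard description of the reflections of a finite reflection group as the conjugates of its simple reflections. First I would record that, since $G_i$ is generated by the two subgroups $H_i$ and $F_i$ with $H_i$ normal in $G_i$, the product $H_iF_i$ is already a subgroup and therefore equals $G_i$. Consequently the restriction of the canonical projection $G_i\to G_i/H_i$ to $F_i$ is surjective with kernel $\tilde H_i=H_i\cap F_i$; this is precisely the isomorphism $G_i/H_i\cong F_i/\tilde H_i$ noted above, and the surjectivity is the only property I will really use.

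For the inclusion that cosets of reflections in $F_i$ are reflections in $(G_i/H_i,\overline{S})$, I would take a reflection $s\in F_i$ and write it as $s=w s_j w^{-1}$ for some $w\in F_i$ and some simple reflection $s_j$ \cite[Prop.~1.14, p.~24]{MR1066460}. Projecting gives $\overline{s}=\overline{w}\,\overline{s}_j\,\overline{w}^{-1}$, so $\overline{s}$ is conjugate in $G_i/H_i$ to the generator $\overline{s}_j\in\overline{S}$ and is thus a reflection of the Coxeter system. For the reverse inclusion I would start from an arbitrary reflection of $(G_i/H_i,\overline{S})$, which by definition is conjugate to a generator and hence has the form $\overline{g}\,\overline{s}_j\,\overline{g}^{-1}$ for some $\overline{g}\in G_i/H_i$; using surjectivity I would lift $\overline{g}$ to an element $f\in F_i$, so that this reflection equals $\overline{f s_j f^{-1}}$, the coset of the reflection $f s_j f^{-1}\in F_i$.

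The only point requiring care --- and the step I would single out as the crux, even though it is short --- is the reverse inclusion: a priori the conjugating element $\overline{g}$ lives only in $G_i/H_i$, and what makes the argument work is that it can be represented by an element of $F_i$ itself. This is exactly the surjectivity of $F_i\to G_i/H_i$, i.e. the identity $G_i=H_iF_i$. Everything else reduces to the fact that the reflections of a Coxeter group form a conjugation-closed set generated by the simple reflections; I would also note in passing that no $\overline{s}_j$ is trivial, since $H_i$, being a rotation group, contains no reflection.
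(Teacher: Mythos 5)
Your proof is correct and follows essentially the same route the paper has in mind: the paper records the corollary as an immediate consequence of the isomorphism $G_i/H_i\cong F_i/\tilde H_i$ (equivalently $G_i=H_iF_i$) together with the fact that reflections in a Coxeter system are the conjugates of the generators, and you simply make both inclusions explicit. Your observation that the crux is lifting the conjugating element to $F_i$ via surjectivity of $F_i\to G_i/H_i$ is exactly the point the paper leaves implicit.
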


In order to describe the structure of the whole group $G$ we need the following lemmas.

\begin{lemma}\label{lem:red3} Let $\tau, \tau' \in G_i$ and $s \in G_j$ be reflections such that $\tau s\in G$. Then $\tau's\in G$, if and only if $\overline{\tau}=\overline{\tau}'$.
\end{lemma}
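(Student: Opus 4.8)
The plan is to prove the two implications separately, in each case reducing everything to the structure of the normal subgroup $H$ generated by rotations of the first kind, together with the observation that a reflection $s\in G_j$ lives only in the $V_j$-block and squares to the identity.

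For the ``only if'' direction I would start from the two elements $\tau s,\tau' s\in G$ and form the quotient $(\tau s)(\tau' s)^{-1}\in G$. Because $s$ is involutive and acts nontrivially only on $V_j$, the $V_j$-components cancel, so this element acts as $\tau(\tau')^{-1}=\tau\tau'$ on $V_i$ and as the identity on every other component. If $\tau=\tau'$ there is nothing to show; otherwise $\tau\tau'$, being a product of two distinct reflections of $\Or(V_i)$, fixes a subspace of codimension two and is hence a rotation of $V_i$. Thus $(\tau s)(\tau' s)^{-1}$ is a rotation of the first kind in the sense of Definition \ref{dfn:first_second_kind}, so it lies in $H$, and projecting gives $\tau\tau'=\pi_i\bigl((\tau s)(\tau' s)^{-1}\bigr)\in H_i$, which is precisely the statement $\overline{\tau}=\overline{\tau}'$.

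For the ``if'' direction, suppose $\overline{\tau}=\overline{\tau}'$, i.e. $\tau'\tau\in H_i=\pi_i(H)$. The idea is to left-multiply $\tau s$ by a rotation of the first kind realizing $\tau'\tau$ on $V_i$: if $r\in H$ acts as $\tau'\tau$ on $V_i$ and as the identity on all other components, then $r(\tau s)\in G$ has $V_i$-component $\tau'\tau\cdot\tau=\tau'$, $V_j$-component $s$, and identity elsewhere, so $r(\tau s)=\tau' s$. Everything therefore comes down to producing such an $r$.

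The hard part will be exactly this lifting step: a priori $\tau'\tau\in H_i$ only furnishes some $\hat h\in H$ with $\pi_i(\hat h)=\tau'\tau$, and $\hat h$ may act nontrivially on the other components $V_l$. To repair this I would use that each $V_l$ is $G$-invariant, so that a conjugate of a rotation of the first kind is again a rotation of the first kind; consequently $H$ is generated, as an ordinary subgroup, by rotations of the first kind, each of which acts nontrivially on a single component. Writing $\hat h$ as such a product and keeping only the factors whose home component is $V_i$ (all other factors projecting to the identity under $\pi_i$ and acting trivially on $V_i$) yields an element $r\in H$ that acts only on $V_i$ and satisfies $\pi_i(r)=\pi_i(\hat h)=\tau'\tau$. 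This $r$ is the desired rotation of the first kind (or the identity, in the degenerate case $\tau=\tau'$), and $r(\tau s)=\tau' s\in G$ finishes the argument.
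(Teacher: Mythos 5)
Your proof is correct and follows essentially the same route as the paper's: the forward direction extracts the rotation of the first kind $\tau'\tau$ (the paper writes it as $\tau'ss\tau$, you as $(\tau s)(\tau's)^{-1}$) and concludes $\tau'\tau\in H_i$, while the converse multiplies $\tau s$ by an element of $H$ realizing $\tau'\tau$ on $V_i$ alone. Your explicit lifting of $\tau'\tau\in H_i$ to an element of $H$ supported only on $V_i$ (using that conjugates of rotations of the first kind are again of the first kind, so that the $V_i$-supported factors can be isolated) is precisely the point the paper leaves implicit when it writes $\tau's=h^{-1}\tau s$ for $h\in H_i$.
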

\begin{proof} If $\tau's\in G$ then $\tau'ss\tau =\tau' \tau$ is a rotation of the first kind in $G$ and thus $\tau'\tau \in H_i$, i.e. $\overline{\tau}=\overline{\tau}'$. On the other hand $\overline{\tau}=\overline{\tau}'$ implies $\tau=h\tau'$ for some $h\in H_i$ and thus $\tau's=h^{-1}\tau s \in G$.
  \end{proof}

\begin{lemma}\label{lem:red4} Let $s_1, s_2 \in F_i$, $s_3 \in F_j$ and $s_4 \in F_{j'}$, $i\neq j,j'$, be reflections such that $g=s_1s_3,g'=s_2s_4 \in G$. Then the following two implications hold.
\begin{enumerate}
\item $j=j' \Rightarrow \mathrm{ord}(\overline{s}_1\overline{s}_2)= \mathrm{ord}(\overline{s}_3\overline{s}_4)$.
\item $j\neq j' \Rightarrow \mathrm{ord}(\overline{s}_1\overline{s}_2)\leq 2$.
\end{enumerate}
\end{lemma}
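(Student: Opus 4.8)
The plan is to treat the two implications separately, the second being a quick degeneration of the setup that already carries the main idea, and the first requiring a genuine coset argument.

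For (ii) the three components $V_i$, $V_j$, $V_{j'}$ are mutually orthogonal, so $s_1,s_2\in\Or(V_i)$, $s_3\in\Or(V_j)$ and $s_4\in\Or(V_{j'})$ commute pairwise (disjoint supports). Hence I would simply compute $(gg')^2=(s_1s_3s_2s_4)^2=(s_1s_2)^2s_3^2s_4^2=(s_1s_2)^2$, which is supported on $V_i$ alone. Thus $(gg')^2$ is either trivial or a rotation of the first kind, so it lies in $H$; projecting by $\pi_i$ gives $(s_1s_2)^2\in H_i$ and therefore $\mathrm{ord}(\overline{s}_1\overline{s}_2)\leq 2$.

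For (i) I would build the relatedness correspondence between the dihedral groups $\left\langle s_1,s_2\right\rangle<G_i$ and $\left\langle s_3,s_4\right\rangle<G_j$ explicitly. Writing $u=s_1s_2$, $v=s_3s_4$ and $c=gg'=uv$ (a commuting product supported on $V_i\oplus V_j$), set $\rho^i_t:=u^ts_1$ and $\rho^j_t:=v^ts_3$; these enumerate the reflections of the two dihedral groups, with $\rho^i_0=s_1$, $\rho^i_{-1}=s_2$ and likewise on the $j$-side. First I would check that $\rho^i_t\rho^j_t\in G$ for every $t\in\Z$: the cases $t=0,-1$ are exactly $g,g'\in G$, and conjugating by $c^m$ raises the index by $2m$, since $c^m\rho^i_tc^{-m}=u^m\rho^i_tu^{-m}=\rho^i_{t+2m}$ (using $s_1u^{-m}=u^ms_1$, and that $v^m$ commutes with everything on $V_i$) and similarly on the $j$-side, so both parities are covered.

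The main step, and the only real obstacle, is to show that this correspondence descends to cosets in a two-sided way, namely $\overline{\rho^i_s}=\overline{\rho^i_t}\Rightarrow\overline{\rho^j_s}=\overline{\rho^j_t}$. Here I would feed the relations $\rho^i_t\rho^j_t\in G$ into Lemma \ref{lem:red3} twice, exploiting that the two factors commute: from $\rho^i_t\rho^j_t\in G$ together with $\overline{\rho^i_s}=\overline{\rho^i_t}$ the lemma yields $\rho^i_s\rho^j_t\in G$, and then feeding $\rho^i_s\rho^j_t=\rho^j_t\rho^i_s\in G$ together with $\rho^j_s\rho^i_s\in G$ back into the lemma, now with the roles of the two components interchanged, gives $\overline{\rho^j_s}=\overline{\rho^j_t}$. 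Since $\overline{\rho^i_s}=\overline{\rho^i_t}\iff u^{s-t}\in H_i\iff a\mid s-t$, the coset sequence $\overline{\rho^i_\bullet}$ has exact period $a:=\mathrm{ord}(\overline{s}_1\overline{s}_2)$ and $\overline{\rho^j_\bullet}$ exact period $b:=\mathrm{ord}(\overline{s}_3\overline{s}_4)$; the implication therefore reads $a\mid(s-t)\Rightarrow b\mid(s-t)$, so taking $s-t=a$ gives $b\mid a$. Running the symmetric argument with $i$ and $j$ interchanged gives $a\mid b$, whence $a=b$, as claimed. I expect the commutativity bookkeeping in the double application of Lemma \ref{lem:red3} to be the delicate point, while everything else is routine once the reflections $\rho^i_t,\rho^j_t$ are introduced.
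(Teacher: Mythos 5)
Your argument for $(ii)$ is exactly the paper's: $(gg')^2=(s_1s_2)^2$ is a rotation of the first kind or trivial, hence lies in $H$ and projects into $H_i$. For $(i)$, however, you take a genuinely different and correct route. The paper argues directly with powers of $gg'$: after dispatching the degenerate case $\overline{s}_1=\overline{s}_2$ via Lemma \ref{lem:red3}, it writes $(gg')^n=(s_1s_2)^n(s_3s_4)^n$ with $n=\mathrm{ord}(\overline{s}_3\overline{s}_4)$ and $(s_3s_4)^n\in H_j\subgr G$, so $(s_1s_2)^n\in G$ is a rotation of the first kind or trivial, hence lies in $H_i$, giving $m\mid n$; the symmetric computation gives $n\mid m$. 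You instead generate the whole family of pairs $\rho^i_t\rho^j_t\in G$ by conjugating $g$ and $g'$ with powers of $gg'$ (the index bookkeeping $c^m\rho^i_tc^{-m}=\rho^i_{t+2m}$ is correct, and the two starting parities $t=0,-1$ do cover all of $\Z$), and then apply Lemma \ref{lem:red3} twice to show that $t\mapsto(\overline{\rho^i_t},\overline{\rho^j_t})$ has the same period in both coordinates. Both proofs are sound. The paper's is shorter and uses nothing beyond the fact that a rotation of the first kind in $G$ lies in $H$; yours is longer but proves more, namely it exhibits the relatedness correspondence between all the reflections of the two dihedral groups $\left\langle s_1,s_2\right\rangle$ and $\left\langle s_3,s_4\right\rangle$ and shows it descends to an isomorphism of the quotient dihedral groups --- in effect a hands-on special case of the later unlabeled Proposition in Section \ref{sec:class_reducible} pairing up isomorphic components of $\tilde{G}$. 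A small bonus of your version is that the degenerate case $\overline{s}_1=\overline{s}_2$ needs no separate treatment.
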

\begin{proof}
$(i)$ Assume that $j=j'$ and set $m=\mathrm{ord}(\overline{s}_1\overline{s}_2)$ and $n=\mathrm{ord}(\overline{s}_3\overline{s}_4)$. If $\overline{s}_1=\overline{s}_2$ then Lemma \ref{lem:red3} implies that $\overline{s}_3=\overline{s}_4$ and thus $m=n=1$. Otherwise, $(gg')^n=(s_1s_2)^n(s_3s_4)^n = (s_1s_2)^n h$ for some $h\in H_j$ implies that $(s_1s_2)^n\in G$ is a rotation of the first kind contained in $H_i$ and therefore $m|n$. In the same way we obtain $n|m$ and thus $m=n$.

$(ii)$ Since $(s_1s_2)^2=(gg')^2$ is a rotation of the first kind in $G$ or trivial, we deduce that $\mathrm{ord}(\overline{s}_1\overline{s}_2)\leq 2$.
  \end{proof}

For reflections $s\in F_i$ and $\tau \in F_j$ we call $\overline{s}$ and $\overline{\tau}$ \emph{related} if $s\tau \in G$ and $s \notin G$. Lemma \ref{lem:red3} shows that this notion is well-defined. For a Coxeter group $C$ we denote the set of reflections contained in $C$ by $X(C)$ and we set $\tilde{G}=G_1/H_1 \times \dots \times G_k/H_k$, $X=X(\tilde{G})$ and $X_i=X(G_i/H_i)$.

\begin{lemma}\label{lem:8.4}
Relatedness of reflections defines an equivalence relation on the set $X$ such that related reflections belong to different components. 
\end{lemma}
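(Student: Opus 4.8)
The plan is to verify the three axioms of an equivalence relation for the reflexive closure of relatedness and to extract the ``different components'' statement along the way. Throughout I identify a reflection $s \in F_i$ with the element of $\Or_n$ that acts as $s$ on $V_i$ and trivially on the remaining components, so that a rotation of the second kind is precisely a product $s\tau$ of two such reflections $s \in F_i$, $\tau \in F_j$ with $i \neq j$. Well-definedness of relatedness on cosets has already been recorded via Lemma \ref{lem:red3} (for the condition $s\tau \in G$), together with the fact that $H_i \subset G$ (for the condition $s \notin G$, since changing the representative of $\overline{s}$ multiplies $s$ by a first-kind rotation lying in $G$). It therefore remains to check symmetry, reflexivity and transitivity.

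Symmetry is immediate: since $s$ and $\tau$ act on different components they commute, so $s\tau = \tau s$; and from $s\tau \in G$ together with $s^2 = \mathrm{id}$ one gets $\tau = s(s\tau) \in G$ whenever $s \in G$, whence $s \notin G$ if and only if $\tau \notin G$. Thus the two defining conditions are symmetric in $s$ and $\tau$. Reflexivity I obtain simply by passing to the reflexive closure, declaring each $\overline{s}$ equivalent to itself; the transitivity computation below will show that this does not enlarge the relation beyond what is already forced.

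The heart of the argument is transitivity. Suppose $\overline{s}$ is related to $\overline{\tau}$ and $\overline{\tau}$ is related to $\overline{\rho}$, with $s \in F_i$, $\tau \in F_j$, $\rho \in F_l$ and $i \neq j$, $j \neq l$. Using Lemma \ref{lem:red3} I first arrange that both second-kind rotations share the \emph{same} representative $\tau$ on $V_j$. The idea is then to multiply them: because $\tau^2 = \mathrm{id}$, the element $(s\tau)(\tau\rho) \in G$ acts trivially on $V_j$, hence is supported on $V_i$ and $V_l$ and equals $s\rho$. If $i \neq l$, this $s\rho$ is again a rotation of the second kind lying in $G$, and since $s \notin G$ it witnesses that $\overline{s}$ is related to $\overline{\rho}$. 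If instead $i = l$, then $s\rho$ is a product of two reflections in the single component $V_i$, hence a rotation of the first kind (or the identity) contained in $G$; by definition such an element lies in $H$, so $\pi_i(s\rho) \in H_i$ and therefore $\overline{s}\,\overline{\rho} = e$ in $G_i/H_i$, i.e. $\overline{s} = \overline{\rho}$. Absorbing the remaining cases, in which one of the two steps is an equality of cosets, into Lemma \ref{lem:red3}, the reflexive closure is transitive.

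Finally, the ``different components'' assertion is built into the definition, a genuine relation $\overline{s} \sim \overline{\tau}$ holding only for $s \in F_i$, $\tau \in F_j$ with $i \neq j$; the transitivity computation is exactly what makes this consistent, since any chain of relations returning to the same component $V_i$ collapses to an equality of cosets rather than producing a nontrivial relation inside one component. The main obstacle is precisely the case $i = l$ of transitivity: the nontrivial point is to recognize $s\rho$ as a first-kind rotation and hence as an element of $H$, which forces $\overline{s} = \overline{\rho}$ and prevents relatedness from identifying two distinct reflections belonging to the same irreducible component.
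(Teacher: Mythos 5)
Your proof is correct and follows essentially the same route as the paper: transitivity is obtained by multiplying the two rotations of the second kind (after using Lemma \ref{lem:red3} to align the representatives on $V_j$), and the case where the chain returns to the same component collapses to an equality of cosets because the product $s\rho$ is then a rotation of the first kind lying in $H$. You simply spell out the symmetry, reflexivity and well-definedness checks that the paper leaves implicit.
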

\begin{proof}
Let $\overline{s}_1 \in G_i/H_i$, $\overline{s}_2 \in G_j/H_j$ and $\overline{s}_3 \in G_l/H_l$ be reflections. If $\overline{s}_1$ and $\overline{s}_2$ are related as well as $\overline{s}_2$ and $\overline{s}_3$, then so are $\overline{s}_1$ and $\overline{s}_3$, because of $s_1s_3=(s_1s_2)(s_2s_3)\in G_i$. For $i=j$ the cosets $\overline{s}$ and $\overline{\tau}$ are related if and only if $\overline{s}=\overline{\tau}$.
  \end{proof}

For $i,j \in I$, $i\neq j$, we define $X_{ij}$ to be the set of reflections in $G_i/H_i$ that are related to reflections in $G_j/H_j$. Let $\Gamma_i$ be the Coxeter diagram of $G_i/H_i$ and set $\Gamma = \bigcup \Gamma_i$. The vertices of $\Gamma_i$ correspond to a set of simple reflections of $G_i/H_i$ (cf. \cite[p.~29]{MR1066460}).

\begin{lemma}\label{lem:8.5}
A reflection $\overline{s}$ in $G_i/H_i$ that is not related to any other reflection, corresponds to an isolated vertex of $\Gamma_i$.
\end{lemma}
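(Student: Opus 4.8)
The plan is to show that the hypothesis forces $\overline{s}$ to commute with every reflection of the Coxeter group $G_i/H_i$, so that $\overline{s}$ becomes a central reflection and hence the generator of an $\A_1$ direct factor, i.e. an isolated vertex of $\Gamma_i$. The whole argument rests on first converting ``not related'' into the statement that some genuine reflection of $G$ lies over $\overline{s}$, and then exploiting this through a single commutator identity.

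First I would extract such a reflection. Using Corollary \ref{cor:cosets_of_reflection} I pick a reflection $s\in F_i$ projecting to $\overline{s}$ and apply Lemma \ref{lem:red2} to obtain $g\in G$, either a reflection or a rotation of the second kind, with $\pi_i(g)=s$. If $g$ is a reflection then, since $\pi_i(g)=s\neq\mathrm{id}$, it acts nontrivially only on $V_i$ and coincides with $s$ as an element of $G$, so $s\in G$. If instead $g$ is a rotation of the second kind, then $g=s\tau$ for a reflection $\tau\in F_j$ with $j\neq i$, whence $s\tau\in G$; were $s\notin G$, the cosets $\overline{s}$ and $\overline{\tau}$ would be related, and $\overline{\tau}$ is a reflection in the different component $G_j/H_j$, contradicting the hypothesis. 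Either way $s\in G$.

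Then I would prove that $\overline{s}$ commutes with an arbitrary reflection $\overline{t}$ of $G_i/H_i$ by one computation. Choosing a reflection $t\in F_i$ over $\overline{t}$ and, via Lemma \ref{lem:red2}, an element $g\in G$ with $\pi_i(g)=t$, I would examine $[g,s]=gsg^{-1}s^{-1}\in G$. On $V_i$ it acts as $tst^{-1}s^{-1}=(ts)^2$, while on every other component it is trivial: this is immediate if $g$ is a reflection, and if $g$ is a rotation of the second kind supported on $V_i\oplus V_j$ then on $V_j$ the factor $s$ is the identity and the commutator collapses there as well. Hence $[g,s]$ is a rotation of the first kind (or the identity), so it lies in $H$, giving $(ts)^2=\pi_i([g,s])\in H_i$ and therefore $(\overline{t}\,\overline{s})^2=e$ in $G_i/H_i$.

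Finally, since the reflections generate $G_i/H_i$ and $\overline{s}$ commutes with all of them, $\overline{s}$ is central; being itself a reflection, it lies in a single irreducible factor of $G_i/H_i$ in which it is central, and a central reflection can occur only in a rank-one factor. Thus the connected component of $\Gamma_i$ containing $\overline{s}$ is a single vertex, which is the desired conclusion. I expect the main obstacle to be the first step, namely correctly ruling out the rotation-of-the-second-kind alternative in order to secure a genuine reflection $s\in G$ lying over $\overline{s}$; once such an $s$ is available, the commutator identity together with the elementary structure theory of Coxeter groups finishes the proof without difficulty.
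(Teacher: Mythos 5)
Your proof is correct and follows essentially the same route as the paper: the hypothesis together with Lemma \ref{lem:red2} forces a genuine reflection $s\in G$ supported on $V_i$, and then squaring (the paper, via the argument of Lemma \ref{lem:red4}~$(ii)$) or commutating (your version) $s$ against a lift of any other reflection produces a rotation of the first kind, hence an element of $H_i$, forcing $\mathrm{ord}(\overline{s}\,\overline{\tau})\leq 2$. Your phrasing is a direct argument ending with centrality of $\overline{s}$, whereas the paper argues by contradiction from $\mathrm{ord}(\overline{s}\,\overline{\tau})\geq 3$, but the mathematical content is the same.
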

\begin{proof}
Suppose that $\overline{s} \in G_i/H_i$ is a reflection not related to any other reflection and that $\overline{\tau} \in G_i/H_i$ is another reflection with $\mathrm{ord}(\overline{s}\overline{\tau})\geq 3$. Then $\overline{\tau}$ is related to some reflection $\overline{\tau}' \in G_j/H_j$ for some $j\neq i$, because otherwise we would have $s \tau \in H_i$ by Lemma \ref{lem:red2}. This implies $\mathrm{ord}(\overline{s}\overline{\tau}) \leq 2$ as in the proof of Lemma \ref{lem:red4}, $(ii)$, and thus the claim follows by contradiction.
  \end{proof}

\begin{lemma}\label{lem:red9}
Let $M$ be a connected component of $\Gamma_i$ and let $\mathcal{M}$ be the set of generators of $G_i/H_i$ that correspond to the vertices of $M$. If there exists a reflection in $\mathcal{M}$ related to another reflection, then there exists some $j\neq i$ such that $\mathcal{M} \subset X_{ij}$. Moreover, $\mathcal{M} \subset X_{ij} \cap X_{ik}$ for some $k\neq i,j$ only if $M = \circ$.
\end{lemma}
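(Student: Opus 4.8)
The plan is to prove both assertions by propagating information along the edges of the connected component $M$, the only inputs being Lemma~\ref{lem:8.5} (a non-isolated vertex is related to some reflection) and Lemma~\ref{lem:red4}~$(ii)$ (two rotations of the second kind whose reflection parts land in two \emph{different} factors force the corresponding reflections in $G_i/H_i$ to commute).

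For the first assertion I would start from the hypothesis that some generator $\overline{s}\in\mathcal{M}$ is related to another reflection; since distinct related reflections lie in different factors (Lemma~\ref{lem:8.4}), this partner is some $\overline{\tau}\in G_j/H_j$ with $j\neq i$, so $\overline{s}\in X_{ij}$. I would then establish $\mathcal{M}\subset X_{ij}$ by induction along $M$. If $\overline{s}_a\in\mathcal{M}$ is already known to be related to some $\tau_a\in F_j$ and $\overline{s}_b\in\mathcal{M}$ is adjacent to it in $M$ (so $\mathrm{ord}(\overline{s}_a\overline{s}_b)\geq 3$), then $\overline{s}_b$ is non-isolated and hence, by Lemma~\ref{lem:8.5}, related to some $\overline{\tau}_b\in G_{j'}/H_{j'}$ with $j'\neq i$. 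Applying Lemma~\ref{lem:red4}~$(ii)$ to $s_a,s_b\in F_i$, $\tau_a\in F_j$, $\tau_b\in F_{j'}$ with $s_a\tau_a,s_b\tau_b\in G$, I would note that $j'\neq j$ would force $\mathrm{ord}(\overline{s}_a\overline{s}_b)\leq 2$, contradicting the edge; hence $j'=j$ and $\overline{s}_b\in X_{ij}$. Connectedness of $M$ then finishes the induction, and the case $M=\circ$ is immediate.

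For the second assertion I would argue by contradiction: assume $\mathcal{M}\subset X_{ij}\cap X_{ik}$ with $k\neq i,j$ but $M\neq\circ$. Then $M$ contains an edge between two generators $\overline{s},\overline{s}'$ with $\mathrm{ord}(\overline{s}\,\overline{s}')\geq 3$. Since $\overline{s}\in X_{ij}$ it is related to some $\tau\in F_j$ and since $\overline{s}'\in X_{ik}$ it is related to some $\tau'\in F_k$, so $s\tau,s'\tau'\in G$; because $j\neq k$, Lemma~\ref{lem:red4}~$(ii)$ gives $\mathrm{ord}(\overline{s}\,\overline{s}')\leq 2$, a contradiction. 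Thus $M$ has no edge and reduces to a single vertex.

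The main obstacle will not be conceptual but bookkeeping: I must keep track of the specific reflection representatives $s_a,s_b\in F_i$ and $\tau_a,\tau_b$ furnished by relatedness so that the hypotheses $s_a\tau_a,s_b\tau_b\in G$ and $i\neq j,j'$ of Lemma~\ref{lem:red4} hold verbatim, and I must verify that the two relatedness partners genuinely lie in \emph{different} factors so as to be in case $(ii)$ rather than case $(i)$. This is precisely where adjacency (order $\geq 3$) of the two generators is exploited to produce the contradiction.
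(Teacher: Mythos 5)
Your argument is correct and follows essentially the same route as the paper: the first claim is obtained by propagating relatedness along the edges of $M$ using (the contrapositive of) Lemma \ref{lem:8.5} together with Lemma \ref{lem:red4}, $(ii)$, and the second claim is the observation that an edge between generators related to reflections in two different factors would contradict Lemma \ref{lem:red4}, $(ii)$. The paper's proof is just a more compressed version of the same induction, citing Lemma \ref{lem:red2} where you invoke the statement of Lemma \ref{lem:8.5} directly.
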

\begin{proof} The first claim follows from Lemma \ref{lem:red2} and the preceding lemma. Suppose we have distinct reflections $\overline{s},\overline{\tau} \in \mathcal{M}$ with $\mathrm{ord}(\overline{s}\overline{\tau}) \geq 3$. Again by Lemma \ref{lem:red2} and the preceding lemma there are $j,k$ such that $\overline{s}\in X_{ij}$ and $\overline{\tau}\in X_{ik}$. Lemma \ref{lem:red4}, $(ii)$ implies that $j=k$.
  \end{proof}
Due to this lemma the reflections related to the reflections of a nontrivial irreducible component of $\tilde{G}$ belong to a common $G_i/H_i$. The next proposition sharpens this statement.
\begin{proposition}
The set of nontrivial irreducible components of $\tilde{G}$ decomposes into pairs of isomorphic constituents that belong to different $G_i/H_i$ and for each such pair relatedness of reflections defines an isomorphism between its constituents that maps reflections onto related reflections.
\end{proposition}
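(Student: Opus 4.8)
The plan is to realize the claimed pairing through the relatedness bijection between the simple reflections of matching components, pinning down both the target factor and the isomorphism type by combining the symmetry of relatedness with the order-preservation supplied by Lemma \ref{lem:red4}.

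First I would fix a nontrivial irreducible component $M$ of $\tilde G$, say a connected component of $\Gamma_i$ with vertex set (simple reflections) $\mathcal M=\{\overline s_1,\dots,\overline s_p\}$, $p\geq2$. Since $M$ is connected with at least two vertices, each $\overline s_a$ is a non-isolated vertex, so Lemma \ref{lem:8.5} shows that every $\overline s_a$ is related to some reflection; Lemma \ref{lem:red9} then yields a single index $j\neq i$ with $\mathcal M\subset X_{ij}$. For each $a$ let $\overline\tau_a\in G_j/H_j$ be the reflection related to $\overline s_a$, well defined by Lemma \ref{lem:red3}, and set $\phi(\overline s_a)=\overline\tau_a$. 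By Lemma \ref{lem:red4}$(i)$, both pairs being related to the same factor $G_j/H_j$, we get $\mathrm{ord}(\overline\tau_a\overline\tau_b)=\mathrm{ord}(\overline s_a\overline s_b)$ for all $a,b$; in particular $\phi$ is injective and transports the diagram $M$ faithfully onto the non-commuting adjacency pattern of $\{\overline\tau_a\}$.

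Next I would locate the target component. Two reflections of a finite reflection group commute precisely when they lie in different irreducible factors, since the roots of reflections from different factors are orthogonal, while conversely a partition of all reflections into two mutually orthogonal classes would split the ambient space into orthogonal invariant subspaces; hence the non-commuting adjacency graph on the reflections has the irreducible factors as its connected components. As $M$ is connected and $\phi$ preserves the orders $\mathrm{ord}(\overline\tau_a\overline\tau_b)$, the $\overline\tau_1,\dots,\overline\tau_p$ are pairwise adjacency-connected and therefore all lie in one irreducible factor $M'$ of $G_j/H_j$, which is nontrivial because $M$ contains an edge.

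It remains to show that $\{\overline\tau_a\}$ is exactly a simple system of $M'$, equivalently that $M'\cong M$ and that the construction is symmetric. Here I would run the identical argument starting from $M'$: its simple reflections are all related, by Lemmas \ref{lem:8.5} and \ref{lem:red9}, to a single factor, and since relatedness is a well-defined symmetric involution on reflections (Lemmas \ref{lem:8.4} and \ref{lem:red3}) while the $\overline\tau_a$ are already related to reflections of $G_i/H_i$, this factor is $G_i/H_i$ and the resulting partners again lie inside $M$ by the adjacency argument. Applying Lemma \ref{lem:red4}$(i)$ in this direction shows that $M$ carries a reflection subsystem with the full diagram of $M'$, while the first direction shows $M'$ carries one with the diagram of $M$; comparing ranks gives $p=\mathrm{rank}\,M'$, and comparing the transported Coxeter-matrix entries (an edge of order $m$ in $M'$ forces a pair of reflections of product order $m$ in $M$, and conversely) forces the two diagrams to coincide. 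I expect this last point, excluding the possibility that $M$ embeds as a \emph{proper} reflection subsystem of a larger factor $M'$, e.g.\ an $\A_2$ sitting inside an $\I_2(6)$, to be the main obstacle; it is settled by the extremal order comparison just described, or, if a uniform argument is cumbersome, by inspecting the explicit short list of diagrams $\Gamma_i$ produced in Propositions \ref{prp:red1}, \ref{prp:red2} and \ref{prp:red3}. Once $M'\cong M$ with $\{\overline\tau_a\}$ its simple system, $\phi$ is a diagram isomorphism $M\to M'$ sending each simple reflection to its related reflection, and the induced Coxeter isomorphism agrees, by Corollary \ref{cor:cosets_of_reflection} and Lemma \ref{lem:red4}$(i)$, with the relatedness bijection on all reflections. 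Since relatedness is then a fixed-point-free involution on nontrivial components, the assignment $M\mapsto M'$ partitions them into the asserted isomorphic pairs lying in distinct $G_i/H_i$.
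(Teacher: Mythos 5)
Your proposal follows the paper's opening moves --- Lemmas \ref{lem:8.5} and \ref{lem:red9} to pin down a unique target factor $G_j/H_j$, Lemma \ref{lem:red3} for well-definedness, and Lemma \ref{lem:red4}$(i)$ to transport the Coxeter-matrix entries of the simple system $\mathcal M$ --- and you correctly isolate the real difficulty, namely excluding that the image lands in a proper reflection subsystem of a larger component $M'$. But your resolution of that difficulty has a genuine gap, and it sits exactly where the paper does its real work. Everything in your second half rests on knowing that the homomorphism $\varphi:\left\langle \mathcal M\right\rangle \To G_j/H_j$ obtained from the Coxeter presentation sends \emph{every} reflection of $\left\langle \mathcal M\right\rangle$, not merely the simple ones, to its related reflection. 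You assert this only at the very end, as if it followed from the isomorphism, but you already need it earlier: to see that the partners of the simple system $\mathcal N$ of $M'$ lie in $M$ (indeed in $G_i/H_i$ at all), and to make the two directions mutually inverse. The paper proves this claim separately and explicitly, by writing a reflection as $\overline s=\overline s_{i_1}\cdots\overline s_{i_l}$, lifting each factor to a reflection or rotation of the second kind $s_{i_a}\tau_{i_a}\in G$, and multiplying the lifts to exhibit $s\tau\in G$; nothing in Corollary \ref{cor:cosets_of_reflection} or Lemma \ref{lem:red4}$(i)$ substitutes for this.

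Second, the ``comparing ranks / comparing transported Coxeter-matrix entries'' step is not sound as stated. A family of reflections in a finite reflection group whose pairwise product orders realize a prescribed Coxeter matrix need not be a simple system of the subgroup it generates, and the corresponding roots need not be linearly independent: the dihedral angle attached to product order $m$ can be any primitive multiple of $\pi/m$, so for instance three reflections of $\I_2(6)$ pairwise at angle $\pi/3$ realize the Coxeter matrix of $\tilde{\A}_2$ inside a rank-two group. Hence ``$M$ carries a configuration with the diagram of $M'$ and vice versa'' does not by itself bound the ranks against each other, nor force $M\cong M'$. The paper sidesteps this entirely: it proves that $\varphi$ is \emph{surjective} onto the component $\left\langle \mathcal N\right\rangle$ by a chain argument through the connected diagram of $N$ (pulling each simple reflection $\overline\tau_0\in\mathcal N$ back along a path of non-commuting reflections, using Lemmas \ref{lem:red2}, \ref{lem:8.5} and \ref{lem:red4}), then produces the symmetric surjection $\left\langle \mathcal N\right\rangle \To \left\langle \mathcal M\right\rangle$ and concludes by comparing cardinalities of finite groups. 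Your fallback of consulting the lists in Propositions \ref{prp:red1}--\ref{prp:red3} does not rescue the argument cheaply either, since case $(iv)$ of Theorem \ref{thm:reducible_pairs} allows an arbitrary irreducible Coxeter diagram as a component. I would replace the diagram-comparison endgame by the reflections-to-partners claim followed by the surjectivity-plus-cardinality argument.
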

\begin{proof}Let $M$ be a nontrivial connected component of $\Gamma_i$ and let $\mathcal{M}$ be the set of simple reflections corresponding to the vertices of $M$. According to Lemma \ref{lem:red9} there exists a unique $j\neq i$ such that $\mathcal{M} \subset X_{ij}$. Define $\varphi: \mathcal{M} \To G_j/H_j$ by mapping a generator $\overline{s}_i \in  \mathcal{M}$ to its related reflection in $G_j/H_j$. Due to Lemma \ref{lem:red4}, $(i)$, this map can be extended to a homomorphism $\varphi: \left\langle \mathcal{M} \right\rangle \To G_j/H_j$. We claim that the image $\varphi(\overline{s})$ of any reflection $\overline{s} \in \left\langle \mathcal{M} \right\rangle$ is a reflection related to $\overline{s}$. Since $\overline{s}$ is conjugate to a reflection in $M$ its image $\varphi(\overline{s})$ is a reflection in $G_j/H_j$ and thus a coset of a reflection in $F_j$, say $\varphi(\overline{s})=\overline{\tau}$ for some $\tau \in F_j$ (cf. Corollary \ref{cor:cosets_of_reflection}). Write $\overline{s}=\overline{s}_{i_1}\cdots \overline{s}_{i_l}$ for generators $\overline{s}_{i_1},\ldots, \overline{s}_{i_l} \in \mathcal{M}$ and let $\overline{\tau}_{i_j}=\varphi(\overline{s}_{i_j})$ be the related reflection. Then we have $\overline{\tau}=\overline{\tau}_{i_1}\cdots \overline{\tau}_{i_l}$. There exist $h_i \in H_i$ and $h_j \in H_j$ such that $s=h_i s_1\cdots s_l$ and $\tau=h_j \tau_1\cdots \tau_l$ and thus $s\tau = h_i h_j s_{i_1}\tau_{i_1}\cdots s_{i_l}\tau_{i_l} \in G$. Hence the reflections $\overline{s}$ and $\overline{\tau}=\varphi(\overline{s})$ are related.

The fact that $\varphi$ maps reflections onto related reflections together with Lemma \ref{lem:red4}, $(i)$ implies that $\varphi(\left\langle \mathcal{M} \right\rangle)$ is contained in an irreducible component of $G_j/H_j$ (cf. the argument below). Let $N$ be the connected component of $\Gamma_j$ such that $\left\langle \varphi(\mathcal{M}) \right\rangle \subseteq \left\langle \mathcal{N} \right\rangle$ where $\mathcal{N}$ is the set of simple reflections corresponding to the vertices of $N$. Since $N$ is connected, for $\overline{\tau}_0 \in \mathcal{N}$ there exist reflections $\overline{s}_k \in \mathcal{M}$ and $\overline{\tau}_0,\overline{\tau}_1,\ldots,\overline{\tau}_k \in \left\langle \mathcal{N} \right\rangle$ with $\overline{\tau}_k = \varphi(\overline{s}_k)$ and $\mathrm{ord}(\overline{\tau}_l\overline{\tau}_{l+1})\geq 3$, $l=0,\ldots, k-1$. Therefore, according to Lemma \ref{lem:red2}, Lemma \ref{lem:8.5} and Lemma \ref{lem:red4}, $(ii)$, there are reflections $\overline{s}_0,\ldots, \overline{s}_{k-1} \in G_i/H_i$ such that $\overline{s}_l$ and $\overline{\tau}_l$ are related for $l=0,\ldots, k-1$. Lemma \ref{lem:red4}, $(i)$, implies that $\mathrm{ord}(\overline{s}_l\overline{s}_{l+1})=\mathrm{ord}(\overline{\tau}_l\overline{\tau}_{l+1})\geq 3$, $l=0,\ldots, k-1$, and thus $\overline{s}_0, \ldots, \overline{s}_k\in \left\langle \mathcal{M} \right\rangle$. In particular, we have $\overline{\tau}_0 = \varphi(\overline{s}_0 ) \in \left\langle \varphi(\mathcal{M}) \right\rangle$ by what has been shown above and hence $\left\langle \varphi(\mathcal{M}) \right\rangle=\left\langle \mathcal{N} \right\rangle$, i.e. $\varphi$ is an epimorphism between the irreducible component $\left\langle \mathcal{M} \right\rangle$ of $G_i/H_i$ and the irreducible component $\left\langle \mathcal{N} \right\rangle$ of $G_j/H_j$. By the same argument there exists a homomorphism from $\left\langle \mathcal{N} \right\rangle$ to $G_i/H_i$ which maps $\left\langle \mathcal{N} \right\rangle$ onto $\left\langle \mathcal{M} \right\rangle$. Therefore, $\left\langle \mathcal{M} \right\rangle$ and $\left\langle \mathcal{N} \right\rangle$ have the same cardinality and thus $\varphi : \left\langle \mathcal{M} \right\rangle \To \left\langle \mathcal{N} \right\rangle$ is an isomorphism of Coxeter groups. 
  \end{proof}

Now we can prove Theorem \ref{thm:clas_red_prg}.

\begin{proof}[Proof of Theorem \ref{thm:clas_red_prg}]
Let $G$ be a reflection-rotation group and $\tilde{G}$ be given as above. According to what has been shown so far, relatedness of reflections induces an equivalence relation on the set of irreducible components of $\tilde{G}$ such that two related components belong to different $G_i/H_i$ (cf. Lemma \ref{lem:8.4}). By Lemma \ref{lem:red2} each $G_i/H_i$ contains at most one trivial irreducible component that is not related to another component. By the preceding proposition each equivalence class of a nontrivial irreducible component of $\tilde{G}$ contains precisely two isomorphic components and an isomorphism between them is induced by relatedness of reflections. Conversely, a family of possible triples $\{(G_i,H_i,F_i)\}_{i\in I}$ and an equivalence relation on the irreducible components of $\tilde{G}=G_1/H_1\times\dots\times G_k/H_k$ satisfying the conditions in Theorem \ref{thm:clas_red_prg} together with isomorphisms between the equivalent nontrivial irreducible components of $\tilde{G}$ that map reflections onto reflections defines a reflection-rotation group as described in the Introduction.

It remains to show that these assignments are inverse to each other. If we start with a reflection-rotation group $G$, assign to it a set of data as in the theorem and to this set of data another reflection-rotation group $\tilde{G}$, then $\tilde{G}$ is generated by the rotations in $G$ and thus coincides with $G$. Suppose we start with a set of data as in the theorem, including a family $\{(G_i,H_i,F_i)\}_{i\in I}$ of triples occurring in Theorem \ref{thm:reducible_pairs}, assign to it a reflection-rotation group $G$ and to this reflection-rotation group another set of data including a family of triples $\{(\tilde{G}_i,\tilde{H}_i,\tilde{F}_i)\}_{i\in J}$. Then we clearly have $I=J=\{1,\ldots,k\}$ and $G_i=\tilde{G}_i$ for all $i\in I$. We also have $H_i<\tilde{H}_i$ and $F_i<\tilde{F}_i$. By construction (cf. condition $(ii)$ in Theorem \ref{thm:clas_red_prg}) the quotient $G/(H_1\times \cdots \times H_k)$ does not contain nontrivial cosets of rotations of the first kind in $G$ and thus $H_i=\tilde{H}_i$. Since each reflection in $G_i$ is contained in $F_i$ (cf. Theorem \ref{thm:reducible_pairs}) $F_i=\tilde{F}_i$ holds as well. Now it is clear that the two sets of data coincide and thus the theorem is proven.
  \end{proof}
We record the following two corollaries. Recall that a reflection-rotation group is called \emph{indecomposable} if it cannot be written as a product of nontrival subgroups that act in orthogonal spaces (cf. Section \ref{sub:reducible_examples}).
\begin{corollary}\label{lem:ps_firstkind_only}
Let $G$ be a reducible reflection-rotation group that only contains rotations of the first kind. Then $G$ is a direct product of indecomposable rotation groups.
\end{corollary}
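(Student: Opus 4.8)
The plan is to exploit the structure theory developed above, in particular Theorem~\ref{thm:clas_red_prg} and the description of the triples $(G_i,H_i,F_i)$, by first translating the hypothesis into a statement about these triples and then reading off the product decomposition. First I would record the dichotomy for rotations: if $g\in G$ is a rotation, then the orthogonal complement of $\mathrm{Fix}(g)$ is two-dimensional, so it either lies inside a single component $V_{i_0}$ or splits as a sum of two lines contained in two distinct components $V_{i_1},V_{i_2}$ on which $g$ acts by reflections. Hence $g$ is automatically of the first or of the second kind, and the hypothesis that $G$ contains only rotations of the first kind means precisely that $G$ contains no reflection and no rotation of the second kind. Consequently the normal subgroup $F$ generated by reflections and rotations of the second kind is trivial, and therefore each $F_i=\pi_i(F)$ is trivial as well. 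By the remark preceding Theorem~\ref{thm:reducible_pairs}, the triviality of $F_i$ forces $G_i=H_i$ to be an irreducible rotation group for every $i\in I$.

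Next I would observe that $G=H$. Indeed, being a reflection-rotation group, $G$ is generated by reflections and rotations; by the previous step it has no reflections, and its rotations are all of the first kind, so $G$ is generated by rotations of the first kind, which by definition generate $H$. It then remains to decompose $H$ into an orthogonal product. Each rotation of the first kind $g$ satisfies $\pi_i(g)=\mathrm{id}$ for all $i$ outside its support $i_0$, so, regarded as an element of $\Or(\R^n)$, it lies in the coordinate subgroup $\hat{H}_{i_0}:=\{f\in G\mid f|_{V_i}=\mathrm{id}\text{ for all }i\neq i_0\}$, and $\pi_{i_0}$ restricts to an isomorphism of $\hat{H}_{i_0}$ onto $H_{i_0}=G_{i_0}$.

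I would then assemble the direct product. The subgroups $\hat{H}_1,\dots,\hat{H}_k$ act in pairwise orthogonal spaces, hence commute elementwise, and $\hat{H}_i\cap\langle \hat{H}_j : j\neq i\rangle=\{\mathrm{id}\}$ because a common element would act trivially on every $V_m$. Since $H$ is generated by the rotations of the first kind, and these all lie in $\bigcup_i\hat{H}_i$, we obtain the internal direct product $G=H=\hat{H}_1\times\dots\times\hat{H}_k$. Finally, each factor $\hat{H}_i\cong G_i$ is an \emph{irreducible} rotation group acting in the orthogonal space $V_i$, hence indecomposable, which yields the claim.

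The argument is essentially a bookkeeping consequence of the structure theory, so I do not anticipate a genuine obstacle; the one point deserving care is that the generating rotations of the first kind are truly confined to the single coordinate subgroups $\hat{H}_i$ and that these subgroups commute with trivial pairwise intersection. This is exactly what upgrades the decomposition from a subdirect product to an honest direct product, and it is where the \emph{absence of rotations of the second kind} — the feature that would otherwise relate and thereby couple distinct components — enters decisively.
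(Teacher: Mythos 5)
Your proof is correct and is essentially the argument the paper intends: the corollary is recorded as an immediate consequence of Theorem \ref{thm:clas_red_prg}, and your explicit verification (triviality of $F$ forces each $F_i$ to be trivial, hence $G_i=H_i$ irreducible and $G=H$, and $H$ is the internal direct product of the commuting coordinate subgroups $\hat{H}_i\cong G_i$ with pairwise trivial intersections) is exactly how that consequence is extracted from the structure theory. The one point to adjust is your assertion that the hypothesis ``means precisely that $G$ contains no reflection'': since a reflection is not a rotation, this is not a consequence of the first/second-kind dichotomy but rather the intended reading of the hypothesis (without it, $W(\A_1)\times\Wp(\A_2)$ would be a counterexample), so it should be stated as an interpretation of the statement rather than derived.
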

\begin{corollary}\label{lem:ps_secondkind_only}
For an indecomposable reflection-rotation group $G$ that does not contain rotations of the first kind one of the following three cases holds
\begin{enumerate}
	\item $k=2$, $\dim V_1 =\dim V_2$ and $G_1 \cong G_2$ for irreducible reflection groups $G_1$, $G_2$.
	\item $k>2$, $\dim V_1 =\ldots = \dim V_k=1$ and $G$ consists of all elements that change the sign of an even number of coordinates, i.e. $G=\Wp(\A_1\times \dots \times \A_1)$.
	\item $G=W(\A_1)$.
\end{enumerate}
\end{corollary}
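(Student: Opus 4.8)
The plan is to combine the correspondence of Theorem~\ref{thm:clas_red_prg} with the relatedness lemmas in order to pin down the connection pattern between the irreducible components. First I would observe that, since $G$ contains no rotation of the first kind, the normal subgroup $H$ generated by such rotations is trivial, whence $H_i=\{e\}$ for every $i$. By Theorem~\ref{thm:reducible_pairs} this forces each triple to fall into case $(iv)$, i.e. $(G_i,\{e\},G_i,\Gamma(G_i))$, so every $G_i=F_i$ is an irreducible reflection group and $\tilde G=G_1\times\dots\times G_k$ has exactly $k$ irreducible components, one per $G_i$; such a component is trivial precisely when $G_i=W(\A_1)$. The case $k=1$ is then immediate, since a first-kind rotation is nothing but a rotation of $G$: the group $G=G_1$ has no rotation, and as the product of two distinct reflections is always a rotation, the irreducible reflection group $G_1$ must be $W(\A_1)$, which is case $(iii)$.

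Now assume $k\ge 2$ and that $G$ is indecomposable. The crucial preliminary step is to show that $G$ contains no reflection. By Theorem~\ref{thm:clas_red_prg} a reflection in $G$ corresponds to an equivalence class consisting of a single trivial component, that is, to some $G_i=W(\A_1)$ whose unique reflection $s_i$ lies in $G$ and is unrelated to every other reflection. Since $s_i$ is the only nontrivial element of $G_i$, acts solely on $V_i$, and is related to nothing, no rotation of the second kind connects $V_i$ to another component, so $\langle s_i\rangle$ splits off as an orthogonal factor, contradicting indecomposability (here one uses $k\ge 2$). Hence $G$ has no reflection, and being generated by reflections and rotations it is generated by rotations of the second kind; moreover, by Lemma~\ref{lem:red2} every reflection of each $G_i=F_i$ is related to a reflection in some other component.

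I would then split into two cases. If some $G_i$ has rank $\ge 2$, its Coxeter diagram is a nontrivial connected component, and Lemma~\ref{lem:red9} together with the preceding Proposition shows that all its reflections are related to reflections of a single other $G_j$, with relatedness inducing a reflection-preserving isomorphism $G_i\cong G_j$; in particular no second-kind rotation connects $V_i\oplus V_j$ to a further component, so indecomposability yields $k=2$, and since the isomorphism preserves reflections we get $\dim V_1=\dim V_2$, giving case $(i)$. If instead every $G_i=W(\A_1)$, then each $V_i$ is one-dimensional, $G$ sits inside the group of all sign changes, and the absence of reflections places $G$ inside $\Wp(\A_1\times\dots\times\A_1)$; as $G$ is generated by the second-kind rotations $s_is_j$, indecomposability means the graph on $\{1,\dots,k\}$ with an edge $\{i,j\}$ whenever $s_is_j\in G$ is connected, forcing $s_is_j\in G$ for all $i\ne j$ and hence $G=\Wp(\A_1\times\dots\times\A_1)$. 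For $k=2$ this falls under case $(i)$ with $G_1=G_2=W(\A_1)$, and for $k>2$ it is case $(ii)$.

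The main obstacle I anticipate is the faithful translation of \emph{indecomposability} into connectivity of the relatedness/connection structure, and the correct bookkeeping needed to invoke the pairing of nontrivial components: one must verify that a nontrivial component together with its unique related partner is genuinely closed under the connection relation, so that indecomposability collapses $k$ to $2$. A secondary point requiring care is the boundary overlap between the stated cases, namely that the all-$\A_1$ situation with $k=2$ is correctly subsumed under case $(i)$ rather than $(ii)$; once these combinatorial reductions are carried out, the remaining verifications are routine.
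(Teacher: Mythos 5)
Your proposal is correct and follows exactly the route the paper intends: the statement is recorded as an unproved corollary of Theorem~\ref{thm:clas_red_prg}, and your derivation uses precisely the ingredients that theorem and its supporting lemmas (case $(iv)$ of Theorem~\ref{thm:reducible_pairs}, Lemma~\ref{lem:red2}, Lemma~\ref{lem:red9} and the pairing proposition) were set up to provide. The translation of indecomposability into connectivity of the relatedness graph, and the absorption of the all-$\A_1$, $k=2$ case into case $(i)$, are both handled correctly.
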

Note that the group $G$ in case $(i)$ is only determind by $G_1$ \emph{and} the choice of an isomorphism between $G_1$ and $G_2$ that maps reflections onto reflection (cf. Section \ref{sub:reducible_examples}). 

\section{Isotropy groups of reflection-rotation groups}\label{sec:isotropy}

Isotropy groups of real reflection groups are generated by the reflections they contain \cite[Thm. 1.12 (c), p.~22]{MR1066460}. More generally, the same statement is true for isotropy groups of unitary reflection groups due to a theorem of Steinberg \cite[Thm.~1.5, p.~394]{MR0167535}. Independent proofs for Steinberg's theorem were given by Bourbaki \cite[Chapt. V, Exercise 8]{MR1080964} and Lehrer \cite{MR2052515}. Consequently, isotropy groups of rotation groups which are either unitary reflection groups considered as real groups or orientation preserving subgroups of real reflection groups are generated by the rotations they contain. In fact, it can be shown that isotropy groups of reflection-rotation groups are always generated by the reflections and rotations they contain. However, the only proof of this statement known to the authors uses the classification of reflection-rotation groups (cf. e.g. Lemma \ref{lem:isotropyL}). We end with the following question.

\begin{qst} Is it possible to prove that isotropy groups of rotation or reflection-rotation groups are generated by the reflections and rotations they contain \emph{without} using the classification?
\end{qst}

\section*{Notation and Tables}
\label{sec:notation}

We summarize the classification of irreducible reflection-rotation groups. There are $29$ primitive rotation groups that are a unitary reflection group considered as a real group. Among them $19$ groups occur in dimension $4$ and are listed under number $4-22$ in \cite[Chapt. 6 and Appendix D, Table 1]{MR2542964}. The remaining $10$ groups are generated by unitary reflections of order $2$ and are denoted as $W(\mathcal{J}_3^{(4)})$, $W(\mathcal{J}_3^{(5)})$, $W(\mathcal{K}_5)$, $W(\mathcal{K}_6)$, $W(\mathcal{L}_4)$, $W(\mathcal{M}_3)$, $W(\mathcal{M}_3)$, $W(\mathcal{N}_4)$, $W(\mathcal{O}_4)$ \cite[Chapt. 6 and Appendix D, Table 2]{MR2542964} (cf. Section \ref{sub:unitary}). All other primitive irreducible rotation groups are absolutely irreducible and are listed in Table \ref{tab:prim_abso_rot}.

\begin{table}[ht]
\begin{tabular}{ r | l } 
symbol & meaning \\
\hline
	$\cyc_n$, $\dih_n$ & Cyclic and dihedral group of order $n$ and $2n$, respectively \\
	$\sym_n$, $\alt_n$ & Symmetric and alternating group on $n$ letters. \\
	$\mathbf{G} $ & Preimage of a group $G<\SOr_3$ under the covering $\psi:\SU_2 \To \SOr_3$  \\
	 & (cf. Section \ref{sub:ps_low_dim} for the meaning of $(\mathbf{L}/\mathbf{L}_K;\mathbf{R}/\mathbf{R}_K)$  \\
		$\Wp$ & Orientation preserving subgroup of a real reflection group $W$. \\                      
	$\Ws$ & Unique extension of $\Wp$ by a normalizing rotation.\\
	$\Wt$ & Unique extension of $W$ by a normalizing rotation.\\
	$\mathfrak{P}$ & Plane system (cf. Introduction and Section \ref{sub:excep_subgroups}).\\
	$\Pp_5,\ldots,\Tt_8$ & Plane systems of certain type (cf. Section \ref{sub:excep_subgroups}).\\
	$M(\mathfrak{P})$ & Rotation group generated by rotations corresponding to the planes of $\mathfrak{P}$.\\
	$\Mt$ & Unique extension of a rotation group $M$ by a normalizing reflection.\\
	$R_n(G)$ & Image of the unique irreducible representation of $G$ in $\SOr_n$.\\
		$L$ & Rotation group in the normalizer of \\
		&  $N_{\SOr_8} (W(\I_2(4)) \otimes W(\I_2(4)) \otimes W(\I_2(4))) $ (cf. Section \ref{sec:group_L_sec}).\\
	$R_1$, $R_2$ & Root systems of type $E_8$ (cf. Section \ref{sec:group_L_sec}).\\
		$D(G)$ & Diagonal subgroup of a monomial group $G$. \\
	$D(n)$ & $D(W(\BC_n))$ \\
	$D^+(n)$ & $D(\Wp(\BC_n))$ \\
	
 \end{tabular}
\caption{List of notations.}
\label{tab:notations} 
\end{table}

\begin{table}[ht]
\begin{tabular}{ |r | l | l | l |}
\hline                       
  & G & description & order  \\
  \hline
  \hline  
 
 1.& $\Wp(\A_n)$ &  &$(n+1)!/2$  \\
 	\hline
 	2.&  $\Wp(\Hn_3)$&  & $2^2\cdot3\cdot5 = 60$  \\
  \hline 
  3.&&$(\mathbf{D}_{3m}/\mathbf{D}_{3m};\mathbf{T}/\mathbf{T})$  & $144m$  \\
 4.&&$(\mathbf{D}_{m}/\mathbf{D}_{m};\mathbf{O}/\mathbf{O})$  & $96m$  \\
 5.&&$(\mathbf{D}_{3m}/\mathbf{C}_{2m};\mathbf{O}/\mathbf{V})$  & $48m$  \\
 6.&&$(\mathbf{D}_{m}/\mathbf{C}_{2m};\mathbf{O}/\mathbf{T})$  & $48m$  \\
 7.&&$(\mathbf{D}_{2m}/\mathbf{D}_{m};\mathbf{O}/\mathbf{T})$  & $96m$  \\
 8.&&$(\mathbf{D}_{m}/\mathbf{D}_{m};\mathbf{I}/\mathbf{I})$  & $240m$  \\

 9.&&$(\mathbf{T}/\mathbf{T};\mathbf{O}/\mathbf{O})$  & $2^6\cdot 3^2=576$  \\
 10.&&$(\mathbf{T}/\mathbf{T};\mathbf{I}/\mathbf{I})$  & $2^5\cdot 3^2\cdot5=1440$  \\
 11.&&$(\mathbf{O}/\mathbf{O};\mathbf{I}/\mathbf{I})$  & $2^6\cdot 3^2\cdot5=2880$  \\
  12.&$\Wp(\A_4)$&$(\mathbf{I}/\mathbf{C}_1;\mathbf{I}/\mathbf{C}_1)^*$& $2^2\cdot3\cdot5 =60$  \\     
 13.&$\Ws(\A_4)$&$(\mathbf{I}/\mathbf{C}_2;\mathbf{I}/\mathbf{C}_2)^*$& $2^3\cdot3\cdot5 =120$  \\ 
 14.&$\Ws(\D_4)$&$(\mathbf{T}/\mathbf{T};\mathbf{T}/\mathbf{T})$  & $2^5\cdot 3^2=288$  \\
 15.& $\Wp(\F_4)$ &$(\mathbf{O}/\mathbf{T};\mathbf{O}/\mathbf{T})$ & $2^6\cdot 3^2=576$  \\
 16.&  $\Ws(\F_4)$&$(\mathbf{O}/\mathbf{O};\mathbf{O}/\mathbf{O}) $  & $2^7\cdot3^2=1152$  \\
 17.&$\Wp(\Hn_4)$&$(\mathbf{I}/\mathbf{I};\mathbf{I}/\mathbf{I})$  & $2^5\cdot3^2\cdot5^2=7200$  \\
 \hline 
 18.& $M(\Rr_5)$ & $R_5(\alt_5)$  & $2^2\cdot3\cdot5=60$  \\
 19.&  $\Ws(\A_5)$&   & $2^3\cdot3\cdot5^2=720$  \\
 \hline 
 20.&$M(\Ss_6)$ & $R_6(\PSL_2(7))$ & $2^3\cdot3\cdot7=168$  \\
 21.&  $\Wp(\E_6)$&   & $2^{6}\cdot 3^4 \cdot 5   = 25920$  \\
 22.&  $\Ws(\E_6)$&   & $2^{7}\cdot 3^4 \cdot 5  = 51840$  \\
 \hline
 23.&  $\Wp(\E_7)$&  & $2^{9}\cdot 3^4 \cdot 5 \cdot 7 = 1451520$  \\
 \hline
 24.& $M(\Tt_8)$&$L=W(R_1)\cap W(R_2)$  & $2^{13}\cdot 3^2 \cdot 5 \cdot 7= 2580480$  \\
 25.& $\Wp(\E_8)$&  & $2^{13}\cdot 3^5 \cdot 5^2 \cdot 7=348364800$  \\
 \hline
 \end{tabular}
\caption{Primitive absolutely irreducible rotation groups. Note that interchanging the left and right entry in $(\mathbf{L}/\mathbf{L}_K;\mathbf{R}/\mathbf{R}_K)$ yields isomorphic but nonconjugate groups in $\Or(4)$. See Table \ref{tab:notations} for unknown notations.}
\label{tab:prim_abso_rot} 
\end{table}

The imprimitive irreducible rotation groups that preserve a complex structure are induced by unitary reflection groups of type $G(m,p,n)$ (cf. Section \ref{sub:unitary}). All other imprimitive irreducible rotation groups are absolutely irreducible and are listed in Table \ref{tab:imprimitive_rot}. The groups $\Gs(km,k,n)$, $k=1,2$, are extensions of $G(km,k,n)$ by a normalizing rotation $\tau$ of the form $\tau(z_1,z_2,z_3\ldots,z_l)=(\overline{z}_1,\overline{z}_2,z_3\ldots,z_l)$ (cf. Lemma \ref{prp:class_imprimit_psg}). The groups $\Gs(km,k,2)_{\varphi}$ are described in Proposition \ref{prp:class_imprimit_psg}. 

\begin{table}[ht]
\begin{tabular}{ |r | l | l|l |}
\hline                       
  & G &description& order \\
  \hline 
  \hline 

 1.& $\Gs(km,k,2)_{\varphi}$& cf. Proposition \ref{prp:class_imprimit_psg} & $4km^2$  \\
 2.& $\Gs(km,k,l)$& $\left\langle G(km,k,l),\tau \right\rangle$, $k=1,2$, $l>2$, $km\geq 3$ & $2^{l-1} k^{l-1} m^l l!$  \\
 \hline  
 3.& $\Wp(\BC_n)$&  & $2^{n-1}n!$  \\ 
 4.& $\Wp(\D_n)$&   & $2^{n-2}n!$  \\ 
\hline 
 5.& $M(\Pp_5)$&$M_5=\Dp(5) \rtimes H_5$   & $2^4\cdot |H_5|=160$  \\
 \hline 
 6.& $M(\Pp_6)$&$M_6=\Dp(6) \rtimes H_6$    & $2^5\cdot |H_6|=1920$  \\
 \hline 
 7.& $M(\Qq_7)$&$M^p_7=\left\langle g_5,H_7  \right\rangle<M_7$, $|D(M^p_7)|=2^3$  & $2^3\cdot |H_7|=1344$  \\
 8.& $M(\Pp_7)$&$M_7=\Dp(7) \rtimes H_7$   & $2^6\cdot |H_7|=10752$  \\
 \hline 
 9.& $M(\Qq_8)$&$M^p_8=\left\langle g_5,H_8  \right\rangle<M_8$, $|D(M^p_8)|=2^4$ &$2^4\cdot |H_8|=21504$  \\
 10.& $M(\Pp_8)$&$M_8=\Dp(8) \rtimes H_8$   & $2^7\cdot |H_8|=172032$  \\
 \hline 
 \end{tabular}
\caption{Imprimitive absolutely irreducible rotation groups. See Table \ref{tab:permutation_Not} and Table \ref{tab:notations} for unknown notations.}
\label{tab:imprimitive_rot} 
\end{table}

All irreducible reflection-rotation groups that contain a reflection are listed in Table \ref{tab:ref-rot4}. The groups $\Gt (km,k,l)$, $k=1,2$, are generated by $G(km,k,l)$ and a reflection of type $s(z_1,\ldots, z_l)=(\overline{z}_1,z_2,\ldots, z_l)$.

\begin{table}[ht]
\begin{tabular}{ |r | l | l|l|} 
\hline                      
  & G &description& order \\
  \hline 
	 \hline 
	 1.& $W$ & any irreducible reflection group &  \\
  \hline 
  2.& $\Gt (km,k,l)$ & $\left\langle G(km,k,l),s \right\rangle$, $k=1,2$, $l\geq2$, $km\geq 3$ & $2^l k^{l-1} m^l l!$  \\
  \hline
  3.& $\Mt(\D_n)$ & $D(n) \rtimes \alt_n$  & $2^{n-1}n!$  \\
  \hline
 4.& $\Wt(\A_4)$&   & $2^4\cdot3\cdot5=240$  \\
 5.& $\Wt(\D_4)$&     & $2^6\cdot 3^2=576$  \\
 6.& $\Wt(\F_4)$&    & $2^8\cdot3^2=2304$  \\
 \hline
 7.& $\Mt(\Pp_5)$ & $\Mt_5=D(5) \rtimes H_5$  & $2^5\cdot |H_5|=320$  \\
 8.& $\Wt(\A_5)$&    & $2^5\cdot 3^2\cdot 5=1440$  \\
 \hline
  9.& $\Mt(\Pp_6)$ & $\Mt_6=D(6) \rtimes H_6$   & $2^6\cdot |H_6|=3840$  \\
  10.& $\Wt(\E_6)$   &    & $2^{8}\cdot 3^4 \cdot 5=103680$  \\ 
\hline
 11.& $\Mt(\Pp_7)$ &  $\Mt_7=D(7) \rtimes H_7$   & $2^7\cdot |H_7|=21504$  \\
 \hline
 12.& $\Mt(\Pp_8)$ & $\Mt_8=D(8) \rtimes H_8$  & $2^8\cdot |H_8|=344064$  \\
\hline 
 \end{tabular}
\caption{Irreducible reflection-rotation groups that contain a reflection. See Table \ref{tab:permutation_Not} and Table \ref{tab:notations} for unknown notations. }
\label{tab:ref-rot4} 
\end{table}

\begin{table}[ht]
\begin{tabular}{ r | l } 
symbol & meaning \\
\hline

	$H_5$ & $\left\langle (1,2)(3,4),(2,3)(4,5)\right\rangle \subgr  \sym_5$, $H_5\cong \dih_5$ \\
	$H_6$&$\left\langle (1,2)(3,4),(1,5)(2,3),(1,6)(2,4)\right\rangle \subgr  \sym_6$, $H_6\cong \alt_5$ \\
	$H_7$&$\left\langle g_1,g_2,g_3\right\rangle \subgr  \sym_7$, $H_7\cong \PSL_2(7)\cong \SL_3(2)$   \\                   
	$H_8$&$\left\langle g_1,g_2,g_3,g_4\right\rangle\subgr  \sym_8$, $H_8\cong \AG_3(2)\cong \Z_2^3 \rtimes \SL_3(2)$. \\
	$g_i$& $g_1=(1,2)(3,4),\; g_2=(1,5)(2,6),\; g_3=(1,3)(5,7),\; g_4=(1,2)(7,8) \; g_5 = (1,\overline{2})(3,\overline{4})$ \\
	$(i,\overline{j})$ & Linear transformation that maps $e_i$ to $-e_j$, $-e_j$ to $e_i$ and $e_k$ to $e_k$ for $k\neq i,j$,\\& where $e_1,\ldots,e_n$ are standard basis vectors.  \\
	
 \end{tabular}
\caption{Explanation of symbols appearing in Table \ref{tab:imprimitive_rot} and Table \ref{tab:ref-rot4}.}
\label{tab:permutation_Not} 
\end{table}

\clearpage

\end{document}